\documentclass[11pt]{article}
\textwidth 17cm
\textheight 23cm
\setlength{\parindent}{12pt}
\setlength{\parskip}{3pt}

\usepackage[hscale=0.8,vscale=0.8]{geometry}
\usepackage{amsmath}
\usepackage{amsfonts}
\usepackage{latexsym}
\usepackage{graphicx}
\usepackage{caption}
\usepackage{subcaption}
\usepackage{multicol}
\usepackage{multirow}
\usepackage{amssymb}
\usepackage{mathtools}
\usepackage{hhline}
\usepackage{amsthm}
\usepackage{float}
\usepackage{cite}
\usepackage{color}
\usepackage{chngcntr}
\usepackage{tikz}
\usepackage{xcolor}
\usepackage{array}
\usepackage{comment}
\usepackage{geometry}
\usepackage{bm}
\usepackage{epsfig}

\graphicspath{{./figs/}}

\pdfminorversion=7
\numberwithin{equation}{section}
\allowdisplaybreaks

\allowdisplaybreaks

\newtheorem{lemma}{Lemma}[section]
\newtheorem{theorem}{Theorem}[section]
\newtheorem{corollary}{Corollary}[section]
\newtheorem{remark}{Remark}[section]


%
%

\newcommand{\ds}{\displaystyle}

\newcommand{\vertiii}[1]{{\left\vert\kern-0.25ex\left\vert\kern-0.25ex\left\vert #1 
    \right\vert\kern-0.25ex\right\vert\kern-0.25ex\right\vert}}
\newcommand{\bgamma}{{\boldsymbol\gamma}}

\newcommand{\bLambda}{{\boldsymbol\Lambda}}

\newcommand{\bbeta}{{\boldsymbol\eta}}
\newcommand{\bsi}{{\boldsymbol\sigma}}

\newcommand{\bphi}{{\boldsymbol\phi}}

\newcommand{\bvarphi}{{\boldsymbol\varphi}}
\newcommand{\bpsi}{{\boldsymbol\psi}}

\newcommand{\btau}{{\boldsymbol\tau}}

\newcommand{\bchi}{{\boldsymbol\chi}}

\newcommand{\btheta}{{\boldsymbol\theta}}

\newcommand{\brho}{{\boldsymbol\rho}}

\newcommand{\bv}{{\mathbf{v}}}
\newcommand{\bw}{{\mathbf{w}}}
\newcommand{\f}{\mathbf{f}}

\newcommand{\bp}{\mathbf{p}}
\newcommand{\bq}{\mathbf{q}}
\newcommand{\br}{\mathbf{r}}
\newcommand{\bu}{\mathbf{u}}

\newcommand{\bt}{{\mathbf{t}}}
\newcommand{\bn}{{\mathbf{n}}}

\def\bs{\mathbf{s}}
\newcommand{\0}{{\mathbf{0}}}

\def\bD{\mathbf{D}}

\def\bF{\mathbf{F}}
\def\bG{\mathbf{G}}
\def\bK{\mathbf{K}}
\def\bI{\mathbf{I}}

\def\bV{\mathbf{V}}

\def\bQ{\mathbf{Q}}
\def\bS{\mathbf{S}}

\def\bZ{\mathbf{Z}}

\def\bx{\mathbf{x}}
\newcommand{\bL}{\mathbf{L}}
\newcommand\bH{\mathbf{H}}

\newcommand\bbN{\mathbb{N}}
\newcommand\bbM{\mathbb{M}}

\newcommand\bbQ{\mathbb{Q}}
\newcommand\bbR{\mathbb{R}}
\newcommand\bbS{\mathbb{S}}
\newcommand\bbH{\mathbb{H}}
\newcommand\bbX{\mathbb{X}}

\newcommand\bbL{\mathbb{L}}

\newcommand{\bbZ}{\mathbb{Z}}

\newcommand{\cA}{\mathcal{A}}
\newcommand{\cB}{\mathcal{B}}

\newcommand{\cD}{\mathcal{D}}
\newcommand{\cE}{\mathcal{E}}

\newcommand{\cL}{\mathcal{L}}
\newcommand{\cM}{\mathcal{M}}
\newcommand{\cN}{\mathcal{N}}
\newcommand{\cO}{\mathcal{O}}

\newcommand{\cR}{\mathcal{R}}


\def\H{\mathrm{H}}
\def\L{\mathrm{L}}

\def\V{\mathrm{V}}

\def\rP{\mathrm{P}}
\def\BDM{\mathrm{BDM}}
\def\RT{\mathrm{RT}}
\def\W{\mathrm{W}}
\def\rt{\mathrm{t}}

\def\BJS{\mathtt{BJS}}

\def\tr{\mathrm{tr}\,}

\def\div{\mathrm{div}}
\def\dist{\mathrm{dist}\,}

\def\pil{\left<}
\def\pir{\right>}

\def\sk{\mathrm{sk}}

\def\RT{\mathrm{RT}}

\def\qin{{\quad\hbox{in}\quad}}
\def\qon{{\quad\hbox{on}\quad}}
\def\qan{{\quad\hbox{and}\quad}}

\def\wt{\widetilde}
\def\wh{\widehat}
\def\ov{\overline}


\newcommand{\fh}{{fh}}
\newcommand{\ph}{{ph}}
\newcommand{\sh}{{sh}}

\title{A mixed elasticity formulation for fluid--poroelastic structure interaction}

\author{
  {\sc Tongtong Li}\thanks{Department of Mathematics, University of Pittsburgh, Pittsburgh, PA 15260, USA, email: {\tt \{tol24@pitt.edu, yotov@math.pitt.edu\} }. Supported in part by NSF
    grant DMS 1818775.}	
\quad
    {\sc Ivan Yotov}\footnotemark[1]~}

\date{\today}

\begin{document}

\maketitle

\begin{abstract} 
We develop a mixed finite element method for the coupled problem
arising in the interaction between a free fluid governed by the Stokes
equations and flow in deformable porous medium modeled by the Biot
system of poroelasticity. Mass conservation, balance of stress, and
the Beavers--Joseph--Saffman condition are imposed on the interface. We
consider a fully mixed Biot formulation based on a weakly symmetric
stress-displacement-rotation elasticity system and Darcy
velocity-pressure flow formulation. A velocity-pressure formulation is
used for the Stokes equations. The interface conditions are
incorporated through the introduction of the traces of the structure
velocity and the Darcy pressure as Lagrange multipliers. Existence and
uniqueness of a solution are established for the continuous weak
formulation. Stability and error estimates are derived for the
semi-discrete continuous-in-time mixed finite element
approximation. Numerical experiments are presented to verify
the theoretical results and illustrate the robustness
of the method with respect to the physical parameters.
\end{abstract}

\section{Introduction}
In this paper we develop a new mixed elasticity formulation for the
quasi-static Stokes--Biot problem that models the interaction between a
free fluid and flow in deformable porous medium. This coupled physical
phenomenon is referred to as fluid--poroelastic structure interaction
(FPSI). There has been an increased interest in this problem in recent
years, due to its wide range of applications in petroleum engineering,
hydrology, environmental sciences, and biomedical engineering, such
as predicting and controlling processes arising in gas and oil
extraction from naturally or hydraulically fractured reservoirs,
cleanup of groundwater flow in deformable aquifers, designing
industrial filters, and modeling blood-vessel interactions in blood
flows. The free fluid is modeled by the Stokes equations, while the
flow in the deformable porous media is modeled by the Biot system of
poroelasticity \cite{Biot1941}. The Biot system couples an elasticity
equation for the deformation of the elastic porous matrix with a Darcy
flow model for the mass conservation of the fluid in the pores. The
Stokes and Biot regions are coupled via interface conditions enforcing
continuity of normal flux, the Beavers--Joseph--Saffman (BJS) slip with
friction condition for the tangential velocity, balance of forces, and
continuity of normal stress. The FPSI system exhibits features of both
coupled Stokes--Darcy flows \cite{dmq2002, ry2005, vwy2014, lsy2003,
  galvis2007, gmo2009, ervin2009} and fluid--structure interaction (FSI)
\cite{galdi2010fundamental,bazilevs2013computational,bungartz2006fluid,richter2017fluid}, both of which have been extensively studied.
In applications of the Stokes--Biot model to flow in fractured poroelastic media, the use of the Stokes model in the fractures provides a more accurate alternative to the traditional Darcy model \cite{Martin2005}, which becomes inadequate for faster flow and higher porosity.

The first mathematical analysis of the Stokes--Biot system can be found
in \cite{Showalter2005}, where a fully dynamic system is considered
and well-posedness is shown by rewriting it as a parabolic system. A
numerical study was presented in \cite{bqq2009}, using the
Navier-Stokes equations to model the free fluid flow. The authors
develop a variational multiscale finite element method and propose
both monolithic and iterative partitioned methods for the solution of
the coupled system. A non-iterative operator splitting scheme is
developed in \cite{byz2015} for an arterial flow model that includes a
thin elastic membrane separating the two regions, using a pressure
formulation for the flow in the poroelastic region. In
\cite{byzz2015,BYZZ-MSA}, a mixed Darcy model is considered in the
Biot system and the Nitsche's interior penalty method is used to impose weakly
the continuity of normal flux. A Lagrange multiplier formulation for
imposing the normal flux continuity is developed in
\cite{AKYZ-LNCC,akyz2018}. A decoupling algorithm based on solving an
optimization problem is developed in \cite{Cesm-etal-optim}.  A
dimensionally reduced Brinkman--Biot model for flow through fractures
in poroelastic media is developed and analyzed in
\cite{Buk-Yot-Zun-fracture}. The well-posedness of the fully dynamic
coupled Navier-Stokes/Biot model using a pressure Darcy formulation is
established in \cite{cesm2017}. A finite element method for this
formulation is developed in \cite{Cesm-Chid}.  A nonlinear Stokes--Biot
model for non-Newtonian fluids and its finite element approximation are
considered in \cite{aeny2019}, where the first well-posedness analysis
of the quasi-static Stokes--Biot system is presented. Coupling of the
Stokes--Biot system with transport is studied in \cite{fpsi-transport}.
A second order in time decoupling scheme for a nonlinear Stokes--Biot
model is developed in \cite{Kunwar-etal}. Recent works study various
discretization schemes for the Stokes--Biot system, including a coupled
discontinuous Galerkin -- mixed finite element method \cite{Wen-He}, a
staggered finite element method \cite{Bergkamp-etal} and
non-conforming finite element method \cite{wilfrid2020nonconforming}.

To the best of our knowledge, all of the previous works consider
displacement-based discretizations of the elasticity equation in the
Biot system. In this paper we develop a mixed finite element
discretization of the quasi-static Stokes--Biot system using a mixed
elasticity formulation with a weakly symmetric poroelastic stress. The
advantages of mixed finite element methods for elasticity include
locking-free behavior, robustness with respect to the physical
parameters, local momentum conservation, and accurate stress
approximations with continuous normal components across element edges
or faces. Here we consider a three-field
stress--displacement--rotation elasticity formulation. This
formulation allows for mixed finite element methods with reduced
number of degrees of freedom, see e.g. \cite{arnold2007mixed,arnold2015}.
It is also the basis for the multipoint stress mixed finite element
method \cite{msmfe-simpl,msmfe-quads}, where stress and rotation can
be locally eliminated, resulting in a positive definite cell-centered
scheme for the displacement. We consider a mixed velocity--pressure
Darcy formulation, resulting in a five-field Biot formulation, which
was proposed in \cite{lee2016} and studied further in
\cite{msfmfe-Biot}, where a multipoint stress-flux mixed finite
element method is developed. We note that our analysis can be easily
extended to the strongly symmetric mixed elasticity formulation, which
leads to the four-field mixed Biot formulation developed in
\cite{Yi-Biot-mixed}. Finally, for the Stokes equations we consider
the classical velocity--pressure formulation. The weak formulation for
the resulting Stokes--Biot system has not been studied in the
literature.  One main difference from the previous works with
displacement-based elasticity formulations \cite{akyz2018, aeny2019} is
that the normal component of the poroelastic stress appears explicitly
in the interface terms. Correspondingly, we introduce a Lagrange multiplier
with a physical meaning of structure velocity that is used to impose
weakly the balance of force and the BJS condition. In addition, a
Darcy pressure Lagrange multiplier is used to impose weakly the
continuity of normal flux.

Since the weak formulation of the Stokes--Biot system considered in
this paper is new, we first show that it has a unique solution. This
is done by casting it in the form of a degenerate evolution saddle
point system and employing results from classical semigroup theory for
differential equations with monotone operators \cite{showalter}.  We
then present a semi-discrete continuous-in-time formulation, which is
based on employing stable mixed finite element spaces for the Stokes,
Darcy, and elasticity equations on grids that may be non-matching
along the interface, as well as suitable choices for the Lagrange
multiplier finite element spaces. Well-posedness of the semidiscrete
formulation is established with a similar argument to the continuous
case, using discrete inf-sup conditions for the divergence and
interface bilinear forms. Stability and optimal order error estimates
are then derived for all variables in their natural space-time
norms. We emphasize that the estimates hold uniformly in the limit of
the storativity coefficient $s_0$ going to zero, which is a locking
regime for non-mixed elasticity discretizations for the Biot
system. In addition, our results are robust with respect to
$a_{\min}$, the lower bound for the compliance tensor $A$, which
relates to another locking phenomena in poroelasticity called Poisson
locking \cite{ Yi-Biot-locking}. Furthermore, we do not use Gronwall's inequality
in the stability bound, thus obtaining long-time stability for our
method. We present several computational experiments for a fully discrete finite
element method designed to verify the convergence theory, illustrate
the behavior of the method for a problem modeling an interaction between
surface and subsurface hydrological systems, and study the robustness
of the method with respect to the physical parameters. In particular,
the numerical experiments illustrate the locking-free properties
of the mixed finite element method for the Stokes--Biot system.

The rest of the paper is organized as follows. In
Section~\ref{sec:model} we present the mathematical
model. Section~\ref{sec:weak} is devoted to the continuous weak
formulation. Well-posedness of the continuous formulation is proved in
Section~\ref{sec:well-posed}, where existence and uniqueness of
solution are established. The semidiscrete continuous-in-time
approximation is introduced in
Section~\ref{sec:semi-discrete}. Stability and error analyses are
performed in Sections \ref{sec:stability} and \ref{sec:error},
respectively. Numerical experiments are presented in
Section~\ref{sec:numerical}, followed by conclusions in
Section~\ref{sec:conclusions}. 

We end this section by fixing some notation. Let $\mathbb{M}$, $\mathbb{S}$ and $\mathbb{N}$
denote the sets of $n \times n$ matrices, $n \times n$ symmetric matrices and $n \times n$ skew-symmetric matrices,
respectively. For a domain $\cO \subset
\bbR^n$, we make use of the usual notation for Lebesgue spaces
$\L^p(\cO)$, Sobolev spaces $\W^{k,p}(\cO)$, and Hilbert spaces
$\H^k(\cO)$. The corresponding norms are denoted by $\| \cdot
\|_{\L^p(\cO)}$, $\| \cdot \|_{\W^{k,p}(\cO)}$ and $\| \cdot
\|_{\H^k(\cO)}$. For a generic scalar space Z, we denote by $\bZ$ and
$\bbZ$ the corresponding vector and tensor counterparts,
respectively. The $\L^2(\cO)$ inner product is denoted by $(\cdot,
\cdot)_{\cO}$ for scalar, vector and tensor valued functions. For a
section of the boundary $S \subset \partial \cO$, we write $\langle
\cdot, \cdot \rangle_{S}$ for the $\L^2(S)$ inner product or duality
pairing. We will also use the Hilbert space
\begin{equation*}
  \bH(\div;\cO):=\left\{\bv\in \bL^2(\cO): \, \nabla \cdot \bv\in \L^2(\cO)\right\},
\end{equation*}
endowed with the norm $\|\bv\|^2_{\bH(\div;\cO)} := \|\bv\|_{\bL^2(\cO)}^2 + \|\nabla\cdot \bv\|_{\L^2(\cO)}^2$, as well as its tensor-valued counterpart $\bbH(\div;\cO)$ consisting of matrices with rows in $\bH(\div;\cO)$. The latter is equipped with the norm $\|\btau\|^2_{\bbH(\div;\cO)} := \|\btau\|_{\bbL^2(\cO)}^2 + \|\nabla\cdot \btau\|_{\bL^2(\cO)}^2$.  
Given a separable Banach space $\V$ endowed with the norm $\|\cdot\|_{\V}$, we let $\L^{p}(0,T;\V)$ be the space of functions $f : (0,T)\to \V$ that are Bochner measurable and such that $\|f\|_{\L^{p}(0,T;\V)} < \infty$, with
\begin{equation*}
\|f\|^{p}_{\L^{p}(0,T;\V)} := \int^T_0 \|f(t)\|^{p}_{\V} \, dt,\quad
\|f\|_{\L^\infty(0,T;\V)} \,:=\mathop{\mathrm{ess\,sup}} \limits_{t\in [0,T]} \|f(t)\|_{\V}.
\end{equation*}
We employ $\0$ to denote the null vector or tensor, and use $C$ and $c$, with or without subscripts, bars, tildes or hats, to denote generic constants independent of the discretization parameters, which may take different values at different places.

\section{Stokes--Biot model problem}\label{sec:model}

Let $\Omega \subseteq \bbR^n$, $n=2$ or $3$, be a connected domain
that consists of two non-overlapping regions, the fluid part $\Omega_f$
and the poroelastic part $\Omega_p$. Let
$\Gamma_f=\partial\Omega_f\cap\partial\Omega$,
$\Gamma_{fp}=\partial\Omega_f\cap\partial\Omega_p$, and
$\Gamma_p=\partial\Omega_p\cap\partial\Omega$.

The free fluid in $\Omega_f$ is governed by the Stokes equations
\begin{subequations}\label{eq:model1}
\begin{gather}
-\nabla\cdot \bsi_f=\f_f, \quad 
\nabla\cdot \bu_f=q_f \qin \Omega_f\times(0,T], \label{eq: model problem 1a} \\[1ex]
\bu_f=\0 \qon \Gamma_f\times(0,T], \label{eq: model problem 1b} 
\end{gather}
\end{subequations}
where $T>0$ is the final time, $\bu_f$ is the fluid velocity, $p_f$ is
the fluid pressure, and $\ds \bsi_f=-p_f \bI + 2\mu \bD(\bu_f)$ is the
stress tensor. Here $\ds \bD(\bu_f)=\frac{1}{2}(\nabla \bu_f + \nabla
\bu_f^\rt)$ is the deformation rate tensor and $\mu>0$ is the fluid
viscosity. In addition, $\f_f$ is a fluid body force and $q_f$ is an external
source or sink term.

The poroelastic region is governed by the quasi-static Biot system \cite{Biot1941}
\begin{subequations}\label{eq:model2}
\begin{gather}
\ds -\nabla\cdot \bsi_p=\f_p, \quad 
\mu \bK^{-1}\bu_p + \nabla p_p=\0, \quad
\frac{\partial}{\partial t}(s_0 p_p+\alpha \nabla \cdot \bbeta_p)+\nabla \cdot \bu_p=q_p
\qin \Omega_p\times(0,T], \label{eq: model problem 2a} \\
\ds p_p=0 \qon \Gamma_p^{D_p}\times(0,T], \quad
\bu_p\cdot \bn_p=0 \qon \Gamma_p^{N_v}\times(0,T], \label{eq: model problem 2b} \\[1ex]
  \ds \bbeta_p=\0 \qon \Gamma_p^{D_d} \times(0,T], \quad \bsi_p \bn_p=0 \qon \Gamma_p^{N_s}\times(0,T].
      \label{eq: model problem 2c} 
\end{gather}
\end{subequations}
Here $\bu_p$ is the Darcy velocity, $p_p$ is the Darcy 
pressure, $\bbeta_p$ is the displacement, and $\bsi_p $ is the
poroelastic stress tensor, with
\begin{equation}\label{eq: def sigmap}
\bsi_p=\bsi_e - \alpha p_p\bI, \qquad
A\,\bsi_e= \bD(\bbeta_p),
\end{equation}
where $\bsi_e$ is the elastic stress tensor and $A:\bbS \to \bbM$ is the compliance tensor,
which is a uniformly symmetric and positive definite operator satisfying for some constants
$0<a_{\min} \leq a_{\max}$,
\begin{equation}\label{eq: def compliance tensor A 2}
  \forall \, \btau \in \bbS, \quad
  a_{\min}\, \btau : \btau \leq A\, \btau : \btau \leq a_{\max} \, \btau : \btau \ \ \forall \,
  \bx \in \Omega_p.
\end{equation}
In the isotropic case, $\bsi_e=\lambda_p(\nabla\cdot\bbeta_p)\bI+2\mu_p\bD(\bbeta_p)$,
where $0<\lambda_{\min} \leq \lambda_p(\textbf{x}) \leq \lambda_{\max}$ and
$0<\mu_{\min} \leq \mu_p(\textbf{x}) \leq \mu_{\max}$ are the Lam\'e parameters. In this case,
\begin{equation}\label{eq: def compliance tensor A 1}
  A(\btau) = \frac{1}{2\mu_p} \left(\btau - \frac{\lambda_p}{2\mu_p + n\, \lambda_p}
  \tr(\btau)\bI \right),
  \quad A^{-1}(\btau) = 2\,\mu_p\,\btau + \lambda_p\,\tr(\btau)\,\bI,
\end{equation}
with $a_{\min}=1/(2\mu_{\max} + n\, \lambda_{\max})$ and
$a_{\max}=1/(2\mu_{\min})$. We extend the definition of $A$ on $\bbM$ such that it is a positive constant multiple of the identity map on $\bbN$ as in \cite{lee2016}. In addition,
$\bK$ is the symmetric and uniformly positive definite rock permeability
tensor satisfying for some constants $0<k_{\min} \leq k_{\max}$,
\begin{equation}\label{eq: def permeability tensor K}
  \forall\, \bw\in\bbR^n, \quad k_{\min}\,\bw\cdot\bw \leq
  (\bK\bw)\cdot\bw \leq k_{\max}\,\bw\cdot\bw \quad \forall\, \bx\in\Omega_p.
\end{equation}
Finally, $s_0 > 0$ is the storativity coefficient, $0 < \alpha \leq 1$
is the Biot-Willis constant, $\f_p$ is a structure body force, and
$q_p$ is a source or sink term. For the boundary conditions we have
$\Gamma_p=\Gamma_p^{D_p} \cup \Gamma_p^{N_v}$ and $\Gamma_p
=\Gamma_p^{D_d}\cup \Gamma_p^{N_s}$. To avoid technical non-uniqueness
issues, we assume that $\vert\Gamma_p^{D_p}\vert$,
$\vert\Gamma_p^{D_d}\vert>0$. Furthermore,
to simplify the characterization of the normal trace spaces
on $\Gamma_{fp}$, we assume that
$\Gamma_p^{D_p}$ and $\Gamma_p^{D_d}$ are not adjacent to the
interface $\Gamma_{fp}$, i.e.  $\dist(\Gamma_p^{D_p},\Gamma_{fp})\geq
d_1 >0$ and $\dist(\Gamma_p^{D_d},\Gamma_{fp})\geq d_2 >0$.

The Stokes and Biot equations are coupled through interface conditions
on the fluid--poroelastic structure interface $\Gamma_{fp}$
\cite{bqq2009, Showalter2005}. They are mass conservation, balance of
normal components of the stresses, conservation of momentum and the
BJS condition \cite{bj1967,saff1971} modeling slip with friction:
\begin{subequations}\label{eq:model3}
\begin{gather}
\ds \bu_f \cdot \bn_f + (\frac{\partial}{\partial t}\bbeta_p+\bu_p)\cdot \bn_p=0, \quad
-(\bsi_f \bn_f)\cdot \bn_f=p_p \qon \Gamma_{fp}\times(0,T], \label{eq: model problem 3a} \\[1ex]
\ds \bsi_f \bn_f+\bsi_p \bn_p=0, \quad 
(-\bsi_f \bn_f)\cdot \bt_{f,j}=\mu \alpha_{\BJS}\sqrt{\bK_j^{-1}}
(\bu_f-\frac{\partial}{\partial t}\bbeta_p)\cdot \bt_{f,j} \qon \Gamma_{fp}\times(0,T],
  \label{eq: model problem 3b}
\end{gather}
\end{subequations}
where $\bn_f$ and $\bn_p$ are the outward unit normal vectors to
$\partial \Omega_f$ and $\partial \Omega_p$ respectively, 
$\bt_{f,j}$, $1 \leq j \leq n-1$ is an orthonormal system of 
tangent vectors on $\Gamma_{fp}$, $\bK_j=(\bK\bt_{f,j})\cdot
\bt_{f,j}$, and $\alpha_{\BJS} \geq 0$ is a friction coefficient.

Finally, the above system of equations is complemented by the initial
condition $p_p(\bx, 0)=p_{p,0}(\bx)$. Compatible initial data for the rest of the variables can be constructed from $p_{p,0}$ in a way that all
equations in the system \eqref{eq:model1}--\eqref{eq:model3}, except
for the unsteady conservation of mass equation in \eqref{eq: model
  problem 2a}, hold at $t = 0$. This will be established in
Lemma~\ref{lem: initial condition} below.  We will consider a weak
formulation with a time-differentiated elasticity equation and
compatible initial data $(\bsi_{p,0},p_{p,0})$.

\section{Weak formulation}\label{sec:weak}
We define the fluid velocity space and fluid pressure space as the Hilbert spaces
\begin{equation*}
\bV_f:=\{\bv_f \in \bH^1(\Omega_f): \bv_f=\0 \qon \Gamma_f \},
\qquad
\W_f:=\L^2(\Omega_f),
\end{equation*}
respectively, endowed with the corresponding standard norms
\begin{equation*}
\Vert \bv_f \Vert_{\bV_f}:= \Vert \bv_f \Vert_{\bH^1(\Omega_f)},
\qquad
\Vert w_f \Vert_{\W_f}:= \Vert w_f \Vert_{\L^2(\Omega_f)}.
\end{equation*}
%
For the structure region, we introduce a new variable, the structure
velocity $\ds \bu_s := \partial_t\bbeta_p$, using the notation
$\partial_t:= \frac{\partial}{\partial t}$. We will develop a formulation
that uses $\bu_s$ instead of $\bbeta_p$, which is better suitable for analysis. To impose the symmetry
condition on $\bsi_p$ weakly, we introduce the rotation operator
$\brho_p := \frac{1}{2}(\nabla \bbeta_p - \nabla \bbeta_p^\rt)$.
In the weak formulation we will use its time derivative 
$\bgamma_p := \partial_t \brho_p =\frac{1}{2}(\nabla \bu_s-\nabla \bu_s^\rt)$. We
introduce the Hilbert spaces
\begin{gather*}
\bV_p:=\{\bv_p \in \bH(\div;\Omega_p): \ \bv_p\cdot \bn_p=0\qon \Gamma_p^{N_v} \},
\qquad
\W_p:=\L^2(\Omega_p), \\[1ex]
\bbX_p:=\{\btau_p \in \bbH(\div;\Omega_p,\mathbb{M}): \ \btau_p \bn_p=0\qon \Gamma_p^{N_s} \}, \\[1ex]
\bV_s:= \bL^2(\Omega_p),
\qquad
\bbQ_p=\bbL^2(\Omega_p,\mathbb{N}),
\end{gather*}
endowed with the standard norms, respectively,
\begin{gather*}
\Vert \bv_p \Vert_{\bV_p}:= \Vert \bv_p \Vert_{\bH(\div;\Omega_p)},
\qquad
\Vert w_p \Vert_{\W_p}:= \Vert w_p \Vert_{\L^2(\Omega_p)}, \\[1ex]
\Vert \btau_p \Vert_{\bbX_p}:= \Vert \btau_p \Vert_{\bbH(\div;\Omega_p)},
\qquad
\Vert \bv_s \Vert_{\bV_s}:= \Vert \bv_s \Vert_{\bL^2(\Omega_p)},
\qquad
\Vert \bchi_p \Vert_{\bbQ_p}:=\Vert \bchi_p \Vert_{\bbL^2(\Omega_p)}.
\end{gather*}
We further introduce two Lagrange multipliers:
$$ \lambda:=-(\bsi_f\bn_f)\cdot \bn_f=p_p , \qan \btheta:= \bu_s \qon
\Gamma_{fp}.
$$
The first one is standard in Stokes--Darcy and Stokes--Biot models
with a mixed Darcy formulation and it is used to impose weakly
continuity of flux, cf. the first equation in \eqref{eq: model
  problem 3a}. The second one is needed in the mixed elasticity
formulation, since the trace of $\bu_s$ on $\Gamma_{fp}$ is not well
defined for $\bu_s \in \bL^2(\Omega_p)$. It will be used to impose
weakly the continuity of normal stress condition $\bsi_f
\bn_f\cdot\bn_f = \bsi_p \bn_p\cdot \bn_p$ and the BJS condition,
cf. \eqref{eq: model problem 3b}. For the Lagrange multiplier spaces
we need $\Lambda_p=(\bV_p \cdot \bn_p)'$ and
$\bLambda_s=(\bbX_p\bn_p)'$. According to the normal trace theorem,
since $\ds \bv_p \in \bV_p \subset \bH(\div; \Omega_p)$, then $\bv_p
\cdot \bn_p \in \H^{-1/2}(\partial \Omega_p)$. It is shown in
\cite{galvis2007} that if $\bv_p \cdot \bn_p = 0 $ on $\partial
\Omega_p \backslash \Gamma_{fp}$, then $\bv_p \cdot \bn_p \in
\H^{-1/2}(\Gamma_{fp})$.  In our case, since $\bv_p \cdot \bn_p = 0$
on $\Gamma_p^{N_v}$ and $\dist(\Gamma_p^{D_p},\Gamma_{fp})\geq d_1 >0
$, the argument can be modified as follows. For any $\xi \in
\H^{1/2}(\Gamma_{fp})$, let $E_1 \xi$ be a continuous extension to
$\H^{1/2}(\Gamma_{fp} \cup \Gamma_p^{N_v})$ such that $E_1 \xi =0$ on
$\partial(\Gamma_{fp} \cup \Gamma_p^{N_v})$, then let $E_2 (E_1 \xi) \in \H^{1/2}(\partial\Omega)$
be a continuous extension of $E_1 \xi$ such that
$E_2 (E_1 \xi)=0$ on $\Gamma_p^{D_p}$. We then have
\begin{equation*}
\langle \bv_p \cdot \bn_p, \xi \rangle_{\Gamma_{fp}}
= \langle \bv_p \cdot \bn_p, E_1\xi \rangle_{\Gamma_{fp} \cup \Gamma_p^{N_v}}
=\langle \bv_p \cdot \bn_p, E_2(E_1\xi) \rangle_{\partial \Omega_p}
\end{equation*}
and 
\begin{equation}\label{trace-vel}
\langle \bv_p \cdot \bn_p, \xi \rangle_{\Gamma_{fp}}
\leq 
\Vert \bv_p \cdot \bn_p \Vert_{\H^{-1/2}(\partial \Omega_p)} \Vert E_2(E_1\xi) \Vert_{\H^{1/2}(\partial \Omega_p)}
\leq 
C \Vert \bv_p \Vert_{\bH(\div; \Omega_p)} \Vert \xi \Vert_{\H^{1/2}(\Gamma_{fp})}.
\end{equation}
Similarly, for any $\bphi \in \bH^{1/2}(\Gamma_{fp})$,
\begin{equation}\label{trace-stress}
\langle \bsi_p \bn_p, \bphi \rangle_{\Gamma_{fp}} \leq 
C \Vert \bsi_p \Vert_{\bbH(\div; \Omega_p)} \Vert \bphi \Vert_{\bH^{1/2}(\Gamma_{fp})}.
\end{equation}
Thus we can take
\begin{equation*}
\Lambda_p := \H^{1/2}(\Gamma_{fp}), 
\quad
\bLambda_s := \bH^{1/2}(\Gamma_{fp})
\end{equation*}
with norms
\begin{equation}\label{H-1/2 norm}
\Vert \xi \Vert_{\Lambda_p} := \Vert \xi \Vert_{\H^{1/2}(\Gamma_{fp})},
\quad
\Vert \bphi \Vert_{\bLambda_s} := \Vert \bphi \Vert_{\bH^{1/2}(\Gamma_{fp})}.
\end{equation}

We now proceed with the derivation of the variational
formulation of \eqref{eq:model1}--\eqref{eq:model3}.
We test the first equation in \eqref{eq: model problem 1a} with an arbitrary
$\bv_f \in \bV_f$, integrate by parts, and combine with the BJS
interface condition in \eqref{eq: model problem 3b}. We test the third
equation in \eqref{eq: model problem 2a} by $w_p \in \W_p$ and
make use of \eqref{eq: def sigmap} and the fact that
$$
\nabla \cdot
\bbeta_p = \tr (\bD(\bbeta_p)) = \tr(A\bsi_e)=\tr A(\bsi_p+\alpha
\,p_p\, \bI),
$$
as well as $\tr(\btau)w=\btau:(w\bI) \ \forall \, \btau \in \bbM, w\in \bbR$.
In addition, \eqref{eq: def sigmap} gives
$$
A(\bsi_p + \alpha p_p \bI) = \nabla \bbeta_p - \brho_p.
$$
In the weak formulation we will use its time differentiated version
$$
\partial_t A(\bsi_p + \alpha p_p \bI)=\nabla \bu_s - \bgamma_p,
$$
which is tested by $\btau_p \in \bbX_p$. Finally, we impose the
remaining equations weakly, as well as the symmetry of $\bsi_p$ and
the interface conditions \eqref{eq:model3}, obtaining the following mixed variational formulation:
Given
\begin{equation*}
\f_f: [0,T] \rightarrow \bV_f',\quad 
\f_p: [0,T] \rightarrow \bV_s',\quad 
q_f: [0,T] \rightarrow \W_f',\quad 
q_p: [0,T] \rightarrow \W_p'
\end{equation*}
and $(\bsi_{p,0},p_{p,0}) \in \bbX_p \times \W_p$, find $(\bu_f, p_f,
\bsi_p, \bu_s, \bgamma_p,
\bu_p, p_p,
\lambda, \btheta): [0,T] \rightarrow \bV_f \times \W_f
\times \bbX_p \times \bV_s \times \bbQ_p
\times \bV_p \times \W_p
\times \Lambda_p \times \bLambda_s $ such that $(\bsi_p(0),p_p(0)) = (\bsi_{p,0},p_{p,0})$ and,
for a.e. $t\in (0,T)$ and for all $\bv_f \in \bV_f$, $w_f \in \W_f$,
$\btau_p \in \bbX_p$, $\bv_s \in \bV_s$, $\bchi_p \in \bbQ_p$,
$\bv_p \in \bV_p$, $w_p \in \W_p$,
$\xi \in \Lambda_p$, and $\bphi \in
\bLambda_s$,
\begin{subequations}\label{weak-form}
\begin{align}
&\ds (2\mu \bD(\bu_f),\bD(\bv_f))_{\Omega_f}
-(\nabla\cdot \bv_f, p_f)_{\Omega_f} 
+\langle \bv_f\cdot \bn_f,  \lambda \rangle_{\Gamma_{fp}}  \nonumber \\[1ex]
& \qquad +\sum_{j=1}^{n-1}{\langle \mu \alpha_{\BJS}\sqrt{\bK_j^{-1}}(\bu_f-\btheta)\cdot \bt_{f,j},
  \bv_f \cdot \bt_{f,j}\rangle_{\Gamma_{fp}}}
=(\f_f,\bv_f)_{\Omega_f}, \label{weak-form-1}  \\[1ex]
&\ds (\nabla\cdot \bu_f, w_f)_{\Omega_f}
=(q_f, w_f)_{\Omega_f}, \label{weak-form-2} \\[1ex]
&\ds (\partial_t A(\bsi_p+\alpha  p_p\bI),\btau_p)_{\Omega_p}
+(\nabla \cdot\btau_p,\bu_s)_{\Omega_p}
+(\btau_p,\bgamma_p)_{\Omega_p}
-\langle \btau_p \bn_p, \btheta \rangle_{\Gamma_{fp}}
=0, \label{weak-form-3} \\[1ex]
&\ds (\nabla\cdot \bsi_p, \bv_s)_{\Omega_p} = -(\f_p, \bv_s)_{\Omega_p}, \label{weak-form-4} \\[1ex]
&\ds (\bsi_p,\bchi_p)_{\Omega_p}  =0, \label{weak-form-5} \\[1ex]
&\ds (\mu \bK^{-1}\bu_p,\bv_p)_{\Omega_p}
-(\nabla \cdot \bv_p,p_p)_{\Omega_p}
+\langle \bv_p\cdot \bn_p, \lambda \rangle_{\Gamma_{fp}}
=0, \label{weak-form-6} \\[1ex]
&\ds (s_0 \partial_t p_p, w_p)_{\Omega_p}
+\alpha( \partial_t A(\bsi_p+\alpha p_p \bI), w_p \bI)_{\Omega_p}
+(\nabla \cdot \bu_p, w_p)_{\Omega_p}
=(q_p, w_p)_{\Omega_p}, \label{weak-form-7} \\[1ex]
&\ds \langle \bu_f \cdot \bn_f  + \btheta\cdot \bn_p  + \bu_p\cdot \bn_p ,\xi \rangle_{\Gamma_{fp}} =0,
\label{weak-form-8} \\[1ex]
&\ds \langle \bphi\cdot \bn_p, \lambda \rangle_{\Gamma_{fp}}
-\sum_{j=1}^{n-1}{\langle \mu \alpha_{\BJS}\sqrt{\bK_j^{-1}}(\bu_f-\btheta)
  \cdot \bt_{f,j}, \bphi \cdot \bt_{f,j}\rangle_{\Gamma_{fp}}}
+\langle \bsi_p \bn_p, \bphi \rangle_{\Gamma_{fp}} =0. \label{weak-form-9}
\end{align}
\end{subequations}
In the above, \eqref{weak-form-1}--\eqref{weak-form-2} are the Stokes equations,
\eqref{weak-form-3}--\eqref{weak-form-5} are the elasticity equations,
\eqref{weak-form-6}--\eqref{weak-form-7} are the Darcy equations,
and \eqref{weak-form-8}--\eqref{weak-form-9} enforce weakly the interface conditions.

\begin{remark}\label{rem:time-diff}
The time differentiated equation \eqref{weak-form-3} allows us to
eliminate the displacement variable $\bbeta_p$ and obtain a
formulation that uses only $\bu_s$. As part of the analysis we will
construct suitable initial data such that, by integrating
\eqref{weak-form-3} in time, we can recover the original equation
\begin{equation}\label{non-diff-eq}
(A(\bsi_p+\alpha  p_p\bI),\btau_p)_{\Omega_p}
+(\nabla \cdot\btau_p,\bbeta_p)_{\Omega_p}
+(\btau_p,\brho_p)_{\Omega_p}
-\langle \btau_p \bn_p, \bpsi \rangle_{\Gamma_{fp}} = 0,
\end{equation}
where $\bpsi:= \bbeta_p|_{\Gamma_{fp}}$.
\end{remark}

In order to obtain a structure suitable for analysis,
we combine the equations for the variables with coercive bilinear forms,
$\bu_f$, $\bu_p$, $\bsi_p$, and $p_p$, together with $\btheta$, which is coupled with them
via the continuity of flux and BJS conditions. We further combine the rest of the equations.
Introducing the bilinear forms
\begin{gather*}
a_f(\bu_f, \bv_f):=(2\mu \bD(\bu_f), \bD(\bv_f))_{\Omega_f},
\quad 
a_p(\bu_p, \bv_p):=(\mu \bK^{-1}\bu_p, \bv_p)_{\Omega_p},
\quad 
a_p^p(p_p, w_p):=(s_0 p_p, w_p)_{\Omega_p}, \\[1ex]
b_\star(\bv_\star, w_\star):=-(\nabla \cdot \bv_\star, w_\star)_{\Omega_\star}, \ \star \in \{f,p\},
\quad
b_s(\btau_p, \bv_s):=(\nabla \cdot \btau_p, \bv_s)_{\Omega_p}, \\[1ex]
b_n^p(\btau_p, \bphi):= \langle \btau_p \bn_p, \bphi \rangle_{\Gamma_{fp}},
\quad
b_{\sk}(\btau_p, \bchi_p):=(\btau_p, \bchi_p)_{\Omega_p}, \\[1ex]
a_e(\bsi_p, p_p; \btau_p, w_p) := (A(\bsi_p+\alpha p_p \bI), \btau_p + \alpha w_p \bI)_{\Omega_p}, \\[1ex]
a_{\BJS}(\bu_f, \btheta;\bv_f, \bphi) :=
\sum_{j=1}^{n-1}{\langle\mu \alpha_{\BJS}\sqrt{\bK_j^{-1}}(\bu_f-\btheta)
  \cdot \bt_{f,j}, (\bv_f-\bphi) \cdot \bt_{f,j}\rangle_{\Gamma_{fp}}},\\[1ex]
b_{\Gamma}(\bv_f, \bv_p, \bphi ;\xi) := 
\langle \bv_f \cdot \bn_f + \bphi \cdot \bn_p + \bv_p \cdot \bn_p ,\xi \rangle_{\Gamma_{fp}},
\end{gather*}
the system \eqref{weak-form} can be written as follows:
\begin{equation}\label{eq: continuous formulation 2}
\begin{array}{l}  
 a_f(\bu_f,\bv_f)
+a_p(\bu_p,\bv_p)
+a_{\BJS}(\bu_f,\btheta;\bv_f,\bphi)
+b_n^p(\bsi_p, \bphi)
+b_p(\bv_p,p_p)+b_f(\bv_f,p_f) \\[1ex]
\quad +b_s(\btau_p,\bu_s)
+b_{\sk}(\btau_p, \bgamma_p)
+b_{\Gamma}(\bv_f,\bv_p,\bphi;\lambda)
+a_p^p(\partial_tp_p, w_p)
+ a_e(\partial_t \bsi_p, \partial_t p_p; \btau_p, w_p)
\\[1ex]
\quad -b_n^p(\btau_p, \btheta)
-b_p(\bu_p, w_p)= (\f_f,\bv_f)
+(q_p,w_p)_{\Omega_p}, \\[1ex]
-b_f(\bu_f, w_f)
-b_s(\bsi_p,\bv_s)
-b_{\sk}(\bsi_p, \bchi_p)
-b_{\Gamma}(\bu_f, \bu_p, \btheta ;\xi) 
=(q_f,w_f)_{\Omega_f}
+(\f_p,\bv_s).
\end{array}
\end{equation}
We group the spaces and test functions as:
\begin{gather*}
\ds \bQ := \bV_f \times \bLambda_s \times \bV_p \times \bbX_p \times \W_p,\quad
\ds \bS := \W_f\times \bV_s\times \bbQ_p\times \Lambda_p, \\[1ex]
\ds \bp := (\bu_f, \btheta, \bu_p, \bsi_p, p_p)\in \bQ,\quad 
\br := (p_f, \bu_s, \bgamma_p, \lambda)\in \bS,\\[1ex] 
\ds \bq := (\bv_f, \bphi, \bv_p, \btau_p, w_p)\in \bQ,\quad 
\bs := (w_f, \bv_s, \bchi_p, \xi)\in \bS,
\end{gather*}
where the spaces $\bQ$ and $\bS$ are endowed with the norms, respectively,
\begin{gather*}
\ds \|\bq\|_{\bQ}  =  \|\bv_f\|_{\bV_f}
+\|\bphi\|_{\bLambda_s}+ \|\bv_p\|_{\bV_p}+\|\btau_p\|_{\bbX_p}+\|w_p\|_{\W_p},\\[1ex]
\ds \|\bs\|_{\bS}  = \|w_f\|_{\W_f}+\|\bv_s\|_{\bV_s}+\|\bchi_p\|_{\bbQ_p}+\|\xi\|_{\Lambda_p}. 
\end{gather*}
Hence, we can write \eqref{eq: continuous formulation 2}
in an operator notation as a degenerate evolution problem in a mixed form:
\begin{equation}\label{eq: continuous formulation 3}
    \arraycolsep=1.7pt
  \begin{array}{rcl}
\ds \partial_t\,\cE_1\,\bp(t)
+ \cA\,\bp(t) + \cB'\,\br(t) & = & \bF(t) \qin \bQ', \\[1ex]
\ds  - \cB\,\bp(t)  & = & \bG(t) \qin \bS'.
\end{array}
\end{equation}
The operators $\cA\,:\,\bQ\to \bQ'$, $\cB\,:\, \bQ\to \bS'$
and the functionals $\bF(t) \in \bQ'$, $\bG(t) \in \bS'$ are defined as follows:
\begin{equation}\label{defn-A-B}
\cA = \left(\begin{array}{ccccc}
\ds A_f+A_{\BJS}^{f}  & (A_{\BJS}^{fs})' & 0 & 0 & 0\\ 
\ds A_{\BJS}^{fs} & A_{\BJS}^{s} & 0 & (B_n^p)' & 0 \\
\ds 0 & 0 & A_p & 0 & B_p'\\ 
\ds 0 & -B_n^p & 0 & 0 & 0\\ 
0 & 0 & -B_p & 0 & 0
\end{array}\right),\quad
\cB = \left(\begin{array}{ccccc}
\ds B_f & 0 & 0 & 0 & 0 \\ 
\ds 0 & 0 & 0 & B_s & 0 \\ 
\ds 0 & 0 & 0 & B_{\sk} & 0  \\  
\ds B_{\Gamma}^f & B_{\Gamma}^s & B_{\Gamma}^p & 0 & 0
\end{array}\right),
\end{equation}
\begin{equation*}
\bF(t) = \left(\begin{array}{c}
\f_f \\ 
0 \\ 
0 \\ 
\0 \\
q_p
\end{array}\right),\quad 
\bG(t) = \left(\begin{array}{c}
q_f \\ 
\f_p \\ 
0 \\ 
0
\end{array}\right),
\end{equation*}
where 
\begin{gather*}
\ds (A_f \bu_f,\bv_f) = a_f(\bu_f, \bv_f), \qquad
(A_p \bu_p,\bv_p) = a_p(\bu_p, \bv_p), \\[1ex]
\ds (B_p \bu_p,w_p) = b_p(\bu_p, w_p), \qquad
(B_n^p \bsi_p, \bphi) = b_n^p(\bsi_p, \bphi), \\[1ex]
\ds (A_{\BJS}^f \bu_f,\bv_f) = a_{\BJS}(\bu_f, \0; \bv_f, \0), \quad
(A_{\BJS}^{fs} \bu_f,\bphi) = a_{\BJS}(\bu_f, \0; \0, \bphi), \quad
(A_{\BJS}^s \btheta, \bphi) = a_{\BJS}(\0, \btheta; \0, \bphi), \\[1ex]
\ds (B_f \bu_f, w_f) = b_f(\bu_f, w_f), \quad
(B_s \bsi_p, \bv_s) = b_s(\bsi_p, \bv_s), \quad
(B_{\sk} \bsi_p, \bchi_p) = b_{\sk}(\bsi_p, \bchi_s),\\[1ex]
\ds (B_\Gamma^f \bu_f, \xi) = b_\Gamma(\bu_f, \0, \0; \xi), \quad
(B_\Gamma^s \btheta, \xi) = b_\Gamma(\0, \0, \btheta; \xi), \quad
(B_\Gamma^p \bu_p, \xi) = b_\Gamma(\0, \bu_p, \0; \xi).
\end{gather*}
The operator $\cE_1 : \bQ \to \bQ'$ is given by:
\begin{equation*}
\cE_1 = \left(\begin{array}{ccccc}
0 & 0 & 0 & 0 & 0 \\
0 & 0 & 0 & 0 & 0 \\
0 & 0 & 0 & 0 & 0 \\
0 & 0 & 0 & A_e^s & A_e^{sp} \\
0 & 0 & 0 & (A_e^{sp})' &  A_p^p+A_e^p
\end{array}\right),
\end{equation*}
where
\begin{gather*}
\ds (A_e^s \bsi_p, \btau_p) = a_e(\bsi_p, 0;  \btau_p,0), \qquad
(A_e^{sp} \bsi_p, w_p) = a_e(\bsi_p, 0;  \0,w_p), \\[1ex]
\ds (A_e^p p_p, w_p) = a_e(\0, p_p;  \0, w_p), \qquad
(A_p^p p_p, w_p) = a_p^p(p_p, w_p).
\end{gather*}

\section{Well-posedness of the weak formulation}\label{sec:well-posed}

\subsection{Preliminaries}
We start with exploring important properties of the operators introduced in the previous section.
\begin{lemma}\label{lem: continuous monotonicity}
The linear operators $\cA$ and $\cE_1$ are continuous and monotone.
\end{lemma}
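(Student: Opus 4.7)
The plan is to read both properties off the block structure of $\cA$ and $\cE_1$ displayed in \eqref{defn-A-B} and just below. Continuity is a block-by-block application of Cauchy--Schwarz together with the hypotheses \eqref{eq: def compliance tensor A 2}, \eqref{eq: def permeability tensor K}, the trace bounds \eqref{trace-vel}--\eqref{trace-stress}, and the standard $\H^1$-trace inequality on $\Gamma_{fp}$. Monotonicity will follow by testing with $\bp$ itself and observing that the indefinite off-diagonal blocks pair up antisymmetrically and cancel, leaving only non-negative diagonal contributions.

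For continuity of $\cA$, the volumetric forms give $|a_f(\bu_f,\bv_f)|\leq 2\mu\|\bu_f\|_{\bV_f}\|\bv_f\|_{\bV_f}$ and $|a_p(\bu_p,\bv_p)|\leq \mu k_{\min}^{-1}\|\bu_p\|_{\bV_p}\|\bv_p\|_{\bV_p}$ using \eqref{eq: def permeability tensor K}; the off-diagonal $b_p$ is bounded directly by Cauchy--Schwarz in $\bV_p\times\W_p$; the form $b_n^p$ is controlled by \eqref{trace-stress} as $|b_n^p(\btau_p,\bphi)|\leq C\|\btau_p\|_{\bbX_p}\|\bphi\|_{\bLambda_s}$; and the BJS form is bounded using the continuity of the trace $\bV_f\to\bL^2(\Gamma_{fp})$, the embedding $\bLambda_s\hookrightarrow\bL^2(\Gamma_{fp})$, and the uniform boundedness of $\alpha_{\BJS}\sqrt{\bK_j^{-1}}$. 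For $\cE_1$, Cauchy--Schwarz together with \eqref{eq: def compliance tensor A 2} and $s_0>0$ yields the analogous block-wise bounds on $a_e$ and $a_p^p$. Summing gives $|(\cA\,\bp,\bq)|\leq C\|\bp\|_\bQ\|\bq\|_\bQ$ and the corresponding estimate for $\cE_1$.

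For monotonicity, set $\bq=\bp$. The $(B_n^p)'$ block in position $(2,4)$ combines with $-B_n^p$ in position $(4,2)$ to yield $b_n^p(\bsi_p,\btheta)-b_n^p(\bsi_p,\btheta)=0$, and similarly $B_p'$ in $(3,5)$ cancels with $-B_p$ in $(5,3)$. The four BJS blocks $A_{\BJS}^f$, $A_{\BJS}^{fs}$, $(A_{\BJS}^{fs})'$, and $A_{\BJS}^s$ reassemble into $a_{\BJS}(\bu_f,\btheta;\bu_f,\btheta)$ by the symmetry of that form. What remains is
\begin{equation*}
(\cA\,\bp,\bp) = a_f(\bu_f,\bu_f) + a_p(\bu_p,\bu_p) + a_{\BJS}(\bu_f,\btheta;\bu_f,\btheta) \geq 0,
\end{equation*}
each summand being pointwise non-negative (the last is a sum of squared weighted tangential traces). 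The analogous substitution for $\cE_1$ collapses to
\begin{equation*}
(\cE_1\bp,\bp) = \bigl(A(\bsi_p+\alpha p_p\bI),\, \bsi_p+\alpha p_p\bI\bigr)_{\Omega_p} + s_0\,\|p_p\|^2_{\W_p} \geq 0
\end{equation*}
by \eqref{eq: def compliance tensor A 2} and $s_0>0$.

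The main obstacle is really bookkeeping rather than technique: one has to track signs and transposes carefully in the $5\times 5$ matrix \eqref{defn-A-B} to verify the antisymmetric cancellations, and one has to invoke the tailored trace estimates \eqref{trace-vel}--\eqref{trace-stress}, which exploit $\dist(\Gamma_p^{D_p},\Gamma_{fp})\geq d_1>0$ and $\dist(\Gamma_p^{D_d},\Gamma_{fp})\geq d_2>0$, rather than the generic $\H^{-1/2}(\partial\Omega_p)$ duality, since the Lagrange multiplier spaces $\Lambda_p$ and $\bLambda_s$ are defined only on the interface.
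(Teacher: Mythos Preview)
Your proposal is correct and follows essentially the same route as the paper: continuity is obtained block-by-block via Cauchy--Schwarz together with \eqref{eq: def compliance tensor A 2}, \eqref{eq: def permeability tensor K}, and the trace estimates \eqref{trace-vel}--\eqref{trace-stress}, while monotonicity comes from the antisymmetric cancellation of the $\pm B_n^p$ and $\pm B_p$ blocks, leaving the non-negative diagonal terms $a_f+a_p+a_{\BJS}$ (resp.\ $a_e+a_p^p$). The only cosmetic difference is that the paper records the sharper coercivity inequalities \eqref{eq: continuous monotonicity 1}, including Korn's inequality for $a_f$, because those bounds are reused later in the well-posedness argument; for the bare statement of monotonicity the weaker non-negativity you invoke suffices.
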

\begin{proof}
Continuity follows from the Cauchy-Schwarz inequality and
the trace inequalities \eqref{trace-vel}--\eqref{trace-stress}. In particular,
\begin{equation}\label{eq: continuous continuity 0}
  \begin{array}{c}
\ds a_f(\bu_f,\bv_f)
\leq 2\mu \| \bu_f \|_{\bV_f} \| \bv_f \|_{\bV_f},
\qquad
a_p(\bu_p,\bv_p)
\leq \mu k_{\min}^{-1}\| \bu_p \|_{\bL^2(\Omega_p)} \| \bv_p \|_{\bL^2(\Omega_p)} , \\[2ex]
\ds a_{\BJS}(\bu_f,\btheta;\bv_f,\bphi)
\leq \mu \alpha_{\BJS} k_{\min}^{-1/2}
\vert \bu_f - \btheta \vert_{a_{\BJS}}\vert \bv_f - \bphi \vert_{a_{\BJS}} \\[1ex]
\ds \le C(\|\bu_f\|_{\bV_f} + \|\btheta\|_{\bL^2(\Gamma_{fp})})
(\|\bv_f\|_{\bV_f} + \|\bphi\|_{\bL^2(\Gamma_{fp})}), \\[1ex]
\ds b_n^p(\btau_p, \bphi)
\leq C\|\btau_p \|_{\bbX_p} \|\bphi \|_{\bLambda_s},
\qquad b_p(\bv_p,w_p) 
\leq  \|\bv_p\|_{\bV_p}\|w_p\|_{\W_p},
  \end{array}
  \end{equation}
where, for $\bv_f \in \bV_f$, $\bphi \in \bLambda_f$,
$ \vert \bv_f - \bphi\vert^2_{a_\BJS}
:= \sum^{n-1}_{j=1} \pil(\bv_f - \bphi)
\cdot\bt_{f,j},(\bv_f - \bphi)\cdot\bt_{f,j}\pir_{\Gamma_{fp}}$, and we have used the trace
inequality, for a domain $\cO$ and $S \subset \partial \cO$,
\begin{equation}\label{trace}
  \|\varphi\|_{\H^{1/2}(S)} \le C \|\varphi\|_{\H^1(\cO)} \quad \forall \, \varphi \in \H^1(\cO).
\end{equation}
Thus we have
\begin{align}
(\cA \bp, \bq) & = a_f(\bu_f,\bv_f) 
+a_p(\bu_p,\bv_p)
+a_{\BJS}(\bu_f,\btheta;\bv_f,\bphi)
+b_n^p(\bsi_p, \bphi)
-b_n^p(\btau_p, \theta) 
+b_p(\bv_p,p_p)-b_p(\bu_p,w_p) \nonumber \\[1ex]
& \leq C\| \bp \|_{\bQ} \| \bq \|_{\bQ} \label{eq: continuous continuity 1}
\end{align}
and
\begin{equation}\label{eq: continuous continuity 2}
(\cE_1 \bp, \bq)
= (s_0 p_p,w_p)_{\Omega_p} 
+ (A(\bsi_p + \alpha p_p \bI), \btau_p + \alpha w_p \bI)_{\Omega_p} \leq
C\|\bp\|_{\bQ}\| \bq \|_{\bQ}.
\end{equation}
Therefore $\cA$ and $\cE_1$ are continuous. The monotonicity of $\cA$
follows from 
\begin{equation}\label{eq: continuous monotonicity 1}
\begin{array}{c}
a_f(\bv_f,\bv_f)
= 2\mu \| \bD(\bv_f) \|^2_{\bbL^2(\Omega_f)}
\geq 2\mu C_K^2 \Vert \bv_{f} \Vert ^2_{\bH^1(\Omega_f)}, \\[2ex]
a_p(\bv_p,\bv_p) = \mu \Vert \bK^{-1/2} \bv_{p} \Vert^2_{\bL^2(\Omega_p)}
\geq \mu k_{\max}^{-1} \| \bv_p \|^2_{\bL^2(\Omega_p)} ,  \\[2ex]
\ds a_{\BJS}(\bv_f,\bphi;\bv_f,\bphi)
\geq \mu \alpha_{\BJS} k_{\max}^{-1/2}\vert \bv_f - \bphi\vert^2_{a_\BJS},
\end{array}
\end{equation}
where we used Korn's inequality $\| \bD(\bv_f) \| \ge C_K \Vert \bv_{f} \Vert_{\bH^1(\Omega_f)}$
in the first bound.
The monotonicity of $\cE_1$ follows from 
\begin{equation}\label{eq: continuous monotonicity 3}
\ds (\cE_1 \bq, \bq)\,= \,
s_0 \Vert w_p \Vert^2_{\L^2(\Omega_p)} + \Vert A^{1/2}\,(\btau_p + \alpha \, w_p \, \bI) \Vert^2_{\bbL^2(\Omega_p)}.
\end{equation}
\end{proof}

\begin{lemma}\label{lem: continuous inf-sup}
  The linear operator $\cB$ is continuous. Furthermore,
  there exist positive constants $\beta_1$, $\beta_2$, and $\beta_3$ such that
\begin{align}
& \ds \beta_1 (\|\bv_s\|_{\bV_s}+\|\bchi_p\|_{\bbQ_p})
\leq 
\underset{\btau_p \in \bbX_p \, \text{s.t.} \btau_p \bn_p=\0 \, \text{on} \, \Gamma_{fp}}{\sup}
\frac{b_s(\btau_p,\bv_s)+b_{\sk}(\btau_p,\bchi_p)}{\|\btau_p\|_{\bbX_p}},
\qquad \forall\, \bv_s \in \bV_s, \,\bchi_p \in \bbQ_p, 
\label{eq: continuous inf-sup 2} \\[1ex]
& \ds \beta_2(\|w_f\|_{\W_f} + \|w_p\|_{\W_p} + \|\xi\|_{\Lambda_p}) \leq 
\underset{(\bv_f, \bv_p) \in \bV_f\times\bV_p}{\sup}
\frac{b_f(\bv_f,w_f)+b_p(\bv_p,w_p)+
b_{\Gamma}(\bv_f, \bv_p, \0; \xi)}{\|(\bv_f, \bv_p)\|_{\bV_f\times\bV_p}}, \nonumber \\[1ex]
& \qquad\qquad
\forall\, w_f \in \W_f, \,w_p \in \W_p, \ \text{and} \ \xi \in \Lambda_p,
\label{eq: continuous inf-sup 3} \\[1ex]
&  \beta_3\|\bphi\|_{\bLambda_s} \leq   \underset{\btau_p \in \bbX_p \, \text{s.t.}
    \nabla \cdot \btau_p=0}{\sup} \frac{b_n^p(\btau_p, \bphi) }{\|\btau_p\|_{\bbX_p}},
  \qquad \forall \, \bphi \in \bLambda_s. \label{eq: continuous inf-sup 4}
\end{align}
\end{lemma}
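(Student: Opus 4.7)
The plan is to verify the continuity of $\cB$ by direct estimation of each bilinear form $b_f, b_s, b_{\sk}, b_\Gamma$: Cauchy--Schwarz takes care of the volume pieces, while for the interface form $b_\Gamma$ the normal-trace bound \eqref{trace-vel} together with the trace inequality \eqref{trace} yield $(\cB \bq, \bs) \le C \|\bq\|_\bQ \|\bs\|_\bS$. The three inf-sup conditions will each be proved by explicitly constructing a suitable test function realizing the supremum, together with the matching norm bound.

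For \eqref{eq: continuous inf-sup 4}, given $\bphi\in\bLambda_s=\bH^{1/2}(\Gamma_{fp})$, the construction will be a harmonic lifting. I would solve the mixed-boundary elliptic problem for $\bz\in\bH^1(\Omega_p)^n$ with $-\Delta \bz=\0$ in $\Omega_p$, $\bz=\bphi$ on $\Gamma_{fp}$, $\bz=\0$ on $\Gamma_p^{D_d}$, and $(\nabla\bz)\bn_p=\0$ on $\Gamma_p^{N_s}$; this is well posed because $|\Gamma_p^{D_d}|>0$ and $\dist(\Gamma_p^{D_d},\Gamma_{fp})\ge d_2>0$. Setting $\btau_p:=\nabla\bz$ gives $\nabla\cdot\btau_p=\0$, $\btau_p\bn_p=\0$ on $\Gamma_p^{N_s}$, so $\btau_p$ is an admissible test function. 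Integration by parts and the boundary conditions yield
\begin{equation*}
b_n^p(\btau_p,\bphi)=\langle (\nabla\bz)\bn_p,\bz\rangle_{\partial\Omega_p}=\|\nabla\bz\|^2_{\bbL^2(\Omega_p)}.
\end{equation*}
Poincar\'e (valid since $\bz=\0$ on $\Gamma_p^{D_d}$) together with the trace inequality \eqref{trace} gives the lower bound $\|\nabla\bz\|_{\bbL^2(\Omega_p)}\ge c\|\bphi\|_{\bH^{1/2}(\Gamma_{fp})}$, while the continuous lifting estimate yields $\|\btau_p\|_{\bbX_p}=\|\nabla\bz\|_{\bbL^2(\Omega_p)}\le C\|\bphi\|_{\bH^{1/2}(\Gamma_{fp})}$, and dividing produces $\beta_3$.

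For \eqref{eq: continuous inf-sup 3} I would assemble $(\bv_f,\bv_p)$ out of three building blocks. A harmonic lifting analogous to the one above, this time for the scalar problem with $\varphi\in \H^1(\Omega_p)$ satisfying $\Delta\varphi=0$, $\varphi=\xi$ on $\Gamma_{fp}$, $\varphi=0$ on $\Gamma_p^{D_p}$, and $\partial_{\bn_p}\varphi=0$ on $\Gamma_p^{N_v}$, produces a divergence-free Darcy velocity $\bv_p^{(\xi)}:=\nabla\varphi\in\bV_p$ with $\langle \bv_p^{(\xi)}\cdot\bn_p,\xi\rangle_{\Gamma_{fp}}=\|\nabla\varphi\|^2_{\bL^2(\Omega_p)}\ge c\|\xi\|^2_{\H^{1/2}(\Gamma_{fp})}$. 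To this I add divergence correctors $\bv_f^{(w_f)}\in \bV_f$ and $\bv_p^{(w_p)}\in \bV_p$ from the standard divergence-surjectivity on each subdomain (using $\Gamma_f$ as the essential part on the Stokes side, and the free pressure condition on $\Gamma_p^{D_p}$ to absorb compatibility on the Darcy side), chosen so that $\bv_p^{(w_p)}\cdot\bn_p=0$ on $\Gamma_{fp}\cup\Gamma_p^{N_v}$. Any stray interface contributions from $\bv_f^{(w_f)}\cdot\bn_f$ on $\Gamma_{fp}$ can be absorbed by scaling the $\xi$-lifting by a sufficiently large factor, and a standard Cauchy--Young argument delivers $\beta_2$.

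For \eqref{eq: continuous inf-sup 2}, I will use the classical two-step construction from the theory of weakly symmetric mixed elasticity (cf.\ \cite{arnold2007mixed,lee2016}). First, let $\btau^{(1)}:=-\nabla\bz$ where $\bz\in\bH^1(\Omega_p)^n$ solves the vector Poisson problem $-\Delta\bz=\bv_s$ in $\Omega_p$ with $\bz=\0$ on $\Gamma_p^{D_d}$ and $(\nabla\bz)\bn_p=\0$ on $\Gamma_p^{N_s}\cup\Gamma_{fp}$; then $\nabla\cdot\btau^{(1)}=\bv_s$, $\btau^{(1)}\bn_p=\0$ on $\Gamma_p^{N_s}\cup\Gamma_{fp}$, and $\|\btau^{(1)}\|_{\bbX_p}\le C\|\bv_s\|_{\bV_s}$. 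Next, correct the skew part by a divergence-free tensor $\btau^{(2)}$ with $\btau^{(2)}\bn_p=\0$ on $\Gamma_p^{N_s}\cup\Gamma_{fp}$ and $\skew\btau^{(2)}=\bchi_p-\skew\btau^{(1)}$, constructed via a matrix-valued vector potential. Then $\btau_p:=\btau^{(1)}+\btau^{(2)}$ gives $b_s(\btau_p,\bv_s)+b_{\sk}(\btau_p,\bchi_p)=\|\bv_s\|^2+\|\bchi_p\|^2$ and $\|\btau_p\|_{\bbX_p}\lesssim \|\bv_s\|_{\bV_s}+\|\bchi_p\|_{\bbQ_p}$. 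The hard part here, and the principal obstacle in the whole lemma, is assembling the skew-correcting tensor $\btau^{(2)}$ so that it simultaneously has zero divergence, the correct vanishing normal trace on $\Gamma_p^{N_s}\cup\Gamma_{fp}$, and prescribed $\bbL^2$-norm control of the skew datum; the mixed-boundary-data version of the potential construction, rather than the interior one, is what requires the extra care.
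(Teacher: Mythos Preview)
Your proposal is correct and spells out explicitly the constructions that the paper only cites: the paper's own proof reduces each of the three inf-sup conditions to a reference (\cite[Section~2.4.3]{gatica2014} for \eqref{eq: continuous inf-sup 2}, \cite[Lemmas~3.1--3.2]{ervin2009} for \eqref{eq: continuous inf-sup 3}, \cite[Lemma~4.2]{gatica2014} for \eqref{eq: continuous inf-sup 4}), and your harmonic-lifting and divergence-corrector arguments are exactly the standard constructions those references contain. Your identification of the boundary-compatible skew-corrector in \eqref{eq: continuous inf-sup 2} as the one genuinely delicate step is accurate and matches where the paper itself defers entirely to the literature.
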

\begin{proof}
The definition \eqref{defn-A-B} of $\cB$ implies
\begin{align}\label{eq: continuous continuity 3}
&(\cB \bq, \bs)
= b_f(\bv_f, w_f)
+b_s(\btau_p,\bv_s)
+b_{\sk}(\btau_p,\bchi_p)
+b_{\Gamma}(\bv_f,\bv_p,\bphi;\xi) \nonumber \\[1ex]
& \quad \leq
\|\nabla \cdot \bv_f\|_{\L^2(\Omega_f)}\|w_f\|_{\L^2(\Omega_f)}
+\|\nabla \cdot \btau_p\|_{\bL^2(\Omega_p)}\|\bv_s\|_{\bL^2(\Omega_p)}
+\|\btau_p\|_{\bbL^2(\Omega_p)}\|\bchi_p\|_{\bbL^2(\Omega_p)}\nonumber \\[1ex]
&\qquad +C\|\bv_f \|_{\bH^1(\Omega_f)}\|\xi\|_{\L^2(\Gamma_{fp})}
+C\|\bv_p\|_{\bH(\div;\Omega_p)}\|\xi\|_{\H^{1/2}(\Gamma_{fp})}
+ \|\bphi\|_{\bL^2(\Gamma_{fp})}\|\xi\|_{\L^2(\Gamma_{fp})} \nonumber \\[1ex]
&\quad \leq
C\|\bq\|_{\bQ}\| \bs \|_{\bS},
\end{align}
so $\cB$ is continuous. Next, inf-sup condition \eqref{eq:
  continuous inf-sup 2} follows from \cite[Section~2.4.3]{gatica2014}.  We
note that the restriction $\btau_p\bn_p=\0$ on $\Gamma_{fp}$ allows
us to eliminate the term $b_n^p(\btau_p, \btheta)$ when applying
this inf-sup condition, see \eqref{eq: inf-sup app epsilon} below.
Inf-sup condition \eqref{eq: continuous inf-sup 3} follows from
a modification of the argument in Lemmas 3.1 and 3.2 in
\cite{ervin2009} to account for $\vert \Gamma_p^D \vert >0$.
Finally, \eqref{eq: continuous inf-sup 4}
can be proved using the argument in \cite[Lemma 4.2]{gatica2014}.
\end{proof}

\subsection{Existence and uniqueness of a solution}

We will establish existence of a solution to the weak formulation \eqref{eq: continuous formulation 3}
using the following key result. 

\begin{theorem}\label{thm: Showalter}
\textnormal{\cite[Theorem~IV.6.1(b)]{showalter}}
Let the linear, symmetric and monotone operator $\cN$ be given for the real vector space $E$ to its algebraic dual $E^*$, and let $E_b'$ be the Hilbert space which is the dual of $E$ with the seminorm
$$ \vert x \vert_b = (\cN x(x))^{1/2} \qquad x\in E.$$
Let $\cM \subset E\times E_b'$ be a relation with domain $\cD=\{ x\in E: \cM(x) \neq \emptyset \}$.
Assume that $\cM$ is monotone and $Rg(\cN + \cM) = E'_b$.
Then, for each $u_0\in \cD$ and for each $f\in \W^{1,1}(0,T;E'_b)$, there is a solution $u$ of
\begin{equation}\label{show}
\frac{d}{dt}\big(\cN\,u(t)\big) + \cM\big(u(t)\big) \ni f(t) \quad a.e. \ 0 < t < T,
\end{equation}
with
$$
\cN\,u\in \W^{1,\infty}(0,T;E'_b),\quad u(t)\in \cD,\quad \mbox{ for a.e. }\, 0\leq t \leq T,
\qan \cN\,u(0) = \cN\,u_0.
$$
\end{theorem}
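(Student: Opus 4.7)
Since Theorem~\ref{thm: Showalter} is quoted verbatim from \cite[Theorem~IV.6.1(b)]{showalter}, the paper will simply invoke it; nevertheless the natural plan to reconstruct a proof is the classical implicit time discretization (Rothe method), combined with monotonicity-based a priori bounds and a limit passage using maximal monotone operator theory. First I would partition $[0,T]$ into $N$ intervals of length $\tau = T/N$, set $u_0^\tau := u_0$, and determine iterates $u_k^\tau \in \cD$ from the backward Euler inclusion
\[
(\cN + \tau\, \cM)(u_k^\tau) \;\ni\; \cN u_{k-1}^\tau + \tau\, f_k^\tau, \qquad f_k^\tau := \frac{1}{\tau}\int_{t_{k-1}}^{t_k} f(s)\,ds.
\]
Solvability of each step reduces to the range condition $\mathrm{Rg}(\cN + \tau\, \cM) = E_b'$ for every $\tau>0$, which is the first real obstacle: the hypothesis provides only $\tau = 1$, and one promotes it to all $\tau>0$ by a rescaling/composition argument exploiting the monotonicity of $\cM$.

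Next I would establish uniform a priori bounds. Pairing the discrete inclusion with $u_k^\tau$ and using the symmetry and monotonicity of $\cN$ via the telescoping identity $2\,\cN(u_k - u_{k-1})(u_k) = |u_k|_b^2 - |u_{k-1}|_b^2 + |u_k - u_{k-1}|_b^2$, together with the monotonicity of $\cM$, should yield
\[
|u_N^\tau|_b^2 + \sum_{k=1}^{N} |u_k^\tau - u_{k-1}^\tau|_b^2 \;\le\; |u_0|_b^2 + C\, \|f\|_{\L^1(0,T;E_b')}^2,
\]
uniformly in $\tau$. Differencing the inclusion in $k$ and pairing with $u_k^\tau - u_{k-1}^\tau$ then produces a bound on $\cN(u_k^\tau - u_{k-1}^\tau)/\tau$ in $E_b'$ controlled by $\|f\|_{\W^{1,1}(0,T;E_b')}$, which is precisely the regularity hypothesis on $f$. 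Transferred to the piecewise-linear interpolant $\tilde u^\tau$, these estimates give $\cN \tilde u^\tau$ bounded in $\W^{1,\infty}(0,T;E_b')$.

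Finally, weak-$\ast$ compactness yields $\cN \tilde u^\tau \stackrel{\ast}{\rightharpoonup} \cN u$ in $\W^{1,\infty}(0,T;E_b')$, and I would pass to the limit in the discrete inclusion through the demiclosedness of the maximal monotone extension of $\cM$ to $\L^2(0,T;E_b')$, identifying the limit as a solution of \eqref{show}. The initial condition $\cN u(0) = \cN u_0$ is inherited from the uniform equicontinuity of $\cN \tilde u^\tau$ at $t = 0$. The principal difficulty throughout is the degeneracy of $\cN$: since $\cN$ is only monotone and not coercive, $|\cdot|_b$ is merely a seminorm on $E$, so only $\cN u$ (not $u$ itself) is controlled pointwise in time, and the limit passage in $\cM(u_k^\tau)$ has to be handled via the standard $\limsup$-inequality argument for maximal monotone operators rather than by strong convergence of $u_k^\tau$ in $E$.
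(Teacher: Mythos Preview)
You are correct that the paper does not prove Theorem~\ref{thm: Showalter}; it is stated without proof and attributed to \cite[Theorem~IV.6.1(b)]{showalter}, then applied as a black box in the well-posedness analysis. Your Rothe-method sketch is a reasonable outline of the standard argument behind such results, but since the paper gives no proof at all, there is nothing to compare against.
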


We cast \eqref{eq: continuous formulation 3} in the form \eqref{show} by setting
\begin{equation}\label{eq: definition N and M}
E = \bQ \times \bS, \quad u = \left(\begin{array}{c} \bp \\ \br \end{array} \right), \quad
  \cN = \left( \begin{array}{cc}
\ds \cE_1 &  \0\\
\ds \0    &  \0
\end{array} \right),
\quad
\cM = \left( \begin{array}{cc}
\ds \cA &  \cB'\\
\ds -\cB   &  \0
\end{array} \right), \quad
f = \left(\begin{array}{c} \bF \\ \bG \end{array} \right).
\end{equation}
The seminorm induced by the operator $\cE_1$ is $|\bq|_{\cE_1}^2 :=
s_0 \Vert w_p \Vert^2_{\L^2(\Omega_p)} + \Vert A^{1/2}\,(\btau_p +
\alpha \, w_p \, \bI) \Vert^2_{\bbL^2(\Omega_p)}$, cf. \eqref{eq:
  continuous monotonicity 3}. Since $s_0 > 0$, it is equivalent to
$\|\btau_p\|_{\bbL^2(\Omega_p)}^2 + \|w_p\|_{\L^2(\Omega_p)}^2$.
We denote by $\bbX_{p,2}$ and $\W_{p,2}$
the closures of the spaces $\bbX_p$ and $\W_p$, respectively, with
respect to the norms $\Vert \btau_p \Vert_{\bbX_{p,2}} :=
\|\btau_p\|_{\bbL^2(\Omega_p)}$ and $\Vert w_p \Vert_{\W_{p,2}} :=
\|w_p\|_{\L^2(\Omega_p)}$. Then the Hilbert space $E_b'$ in
Theorem~\ref{thm: Showalter} in our case is
\begin{equation}\label{defn-Eb}
E_b':= \bQ_{2,0}' \times \bS_{2,0}', \ \
\text{where} \ \ \bQ_{2,0}' := \0 \times \0 \times \0 \times \bbX_{p,2}' \times \W_{p,2}', \quad
\bS_{2,0}' := \0 \times \0 \times \0 \times \0.
\end{equation}
We further define $\cD := \{ (\bp,\br) \in \bQ \times \bS: \ \cM (\bp,\br) \in E_b' \}$.

\begin{remark}
The above definition of the space $E_b'$ and the corresponding domain
$\cD$ implies that, in order to apply Theorem~\ref{thm: Showalter} for
our problem \eqref{eq: continuous formulation 3}, we need to restrict
$\f_f = \0$, $q_f = 0$, and $\f_p = \0$. To avoid this restriction we
will employ a translation argument \cite{Showalter-SIAMMA} to reduce the existence for
\eqref{eq: continuous formulation 3} to existence for the following
initial-value problem: Given initial data $(\wh\bp_0,\wh\br_0) \in
\cD$ and source terms $(\wh{g}_{\btau_p},\wh{g}_{w_p}): (0,T) \to \bbX_{p,2}'
\times \W_{p,2}'$, find $(\bp, \br): [0,T] \to \bQ \times \bS$ such
that $(\bsi_p(0), p_p(0))=(\wh\bsi_{p,0}, \wh p_{p,0})$ and, for
a.e. $t \in (0,T)$,
\begin{equation}\label{eq: reduced formulation 1}
    \arraycolsep=1.7pt
  \begin{array}{rcl}  
\ds \partial_t\,\cE_1\,\bp(t)
+ \cA\,\bp(t) + \cB'\,\br(t) & = & \wh{\bF}(t) \qin \bQ_{2,0}', \\[1ex]
\ds - \cB\,\bp(t) & = & \0 \quad \qin \bS_{2,0}',
\end{array}
\end{equation}
where $\wh{\bF}(t)=(\0, 0, 0, \wh{g}_{\btau_p},\wh{g}_{w_p})^{\rt}$.
\end{remark}

In order to apply Theorem~\ref{thm: Showalter} for problem \eqref{eq: reduced formulation 1}, we need
to 1) establish the required properties of the operators $\cN$ and $\cM$, 2) prove the range condition
$Rg(\cN + \cM) = E'_b$, and 3) construct compatible initial data $(\wh\bp_0,\wh\br_0) \in \cD$. We
proceed with a sequence of lemmas establishing these results.

\begin{lemma}\label{lem:oper-prop}
  The linear operator $\cN$ defined in \eqref{eq: definition N and M} is continuous,
  symmetric, and monotone. The linear operator $\cM$ defined in \eqref{eq: definition N and M}
  is continuous and monotone.
\end{lemma}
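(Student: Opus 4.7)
The plan is to reduce every claim for $\cN$ and $\cM$ to properties of the constituent blocks $\cE_1$, $\cA$, and $\cB$, which are already in hand from Lemmas~\ref{lem: continuous monotonicity} and \ref{lem: continuous inf-sup}. Since both $\cN$ and $\cM$ are defined as $2\times 2$ block operators on $\bQ \times \bS$, each property (continuity, symmetry, monotonicity) can be verified blockwise after expanding the duality pairing against a generic test pair $(\bq,\bs)\in \bQ\times\bS$.

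For $\cN$, the block-diagonal structure with $\cE_1$ in the top-left and zero elsewhere gives
\[
(\cN(\bp,\br),(\bq,\bs)) = (\cE_1\bp, \bq),
\]
so continuity and monotonicity are immediate consequences of the corresponding statements for $\cE_1$ in \eqref{eq: continuous continuity 2} and \eqref{eq: continuous monotonicity 3}. For symmetry, I would observe that $a_p^p$ is trivially symmetric as a weighted $\L^2$ inner product, and $a_e(\bsi_p, p_p;\btau_p,w_p) = (A(\bsi_p+\alpha p_p\bI), \btau_p+\alpha w_p\bI)_{\Omega_p}$ is symmetric in its two argument pairs since the compliance operator $A$ is symmetric. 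Reading off the block structure of $\cE_1$, this gives $(A_e^{sp})'$ as the off-diagonal transpose, so $\cE_1 = \cE_1'$ and hence $\cN = \cN'$.

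For $\cM$, continuity is obtained by summing the bound $(\cA\bp,\bq) \le C\|\bp\|_\bQ \|\bq\|_\bQ$ from \eqref{eq: continuous continuity 1} with the continuity bound $(\cB\bq,\bs)\le C\|\bq\|_\bQ \|\bs\|_\bS$ from \eqref{eq: continuous continuity 3} applied once directly and once via the duality identity $(\cB'\br,\bq)=(\br,\cB\bq)$. For monotonicity, the key (and only nontrivial) observation is the skew-symmetric coupling of the off-diagonal blocks: expanding
\[
(\cM(\bp,\br),(\bp,\br)) = (\cA\bp,\bp) + (\cB'\br,\bp) - (\cB\bp,\br)
= (\cA\bp,\bp) + (\br,\cB\bp) - (\cB\bp,\br) = (\cA\bp,\bp),
\]
so that the cross terms cancel exactly and the result reduces to the monotonicity of $\cA$, which follows from the bounds in \eqref{eq: continuous monotonicity 1} (note that $a_\BJS$ contributes a nonnegative term, and the remaining interface couplings $b_n^p$ appear with opposite signs in the $(\bsi_p,\bphi)$ and $(\btau_p,\btheta)$ entries of $\cA$, so they also cancel in the diagonal pairing).

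There is no genuine obstacle in this proof; it is an exercise in block algebra plus direct citation of the preceding lemmas. The one point worth highlighting in the write-up is the sign convention in $\cM$ (namely the $-\cB$ entry in the lower-left block), which is precisely what makes $\cM$ monotone despite the presence of nontrivial coupling between $\bp$ and $\br$. I would present the cancellation computation above explicitly so the reader sees why the saddle-point formulation was chosen with this particular sign.
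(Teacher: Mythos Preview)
Your proposal is correct and follows essentially the same approach as the paper, which simply states that the properties follow from those of $\cE_1$, $\cA$, and $\cB$ established in Lemmas~\ref{lem: continuous monotonicity} and~\ref{lem: continuous inf-sup}. Your write-up just makes explicit the block algebra (in particular the skew-symmetric cancellation $(\cB'\br,\bp)-(\cB\bp,\br)=0$ and the symmetry of $\cE_1$ via the symmetry of $A$) that the paper leaves to the reader.
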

\begin{proof}
The stated properties follow easily from the properties of the operators $\cE_1$, $\cA$, and $\cB$
established in Lemma~\ref{lem: continuous monotonicity} and Lemma~\ref{lem: continuous inf-sup}.
\end{proof}  

Next, we establish the range condition $Rg(\cN + \cM) = E'_b$, which is done by solving the related
resolvent system. In fact, we will show a stronger result by considering a resolvent system
where all source terms may be non-zero. This stronger result will be used in the translation argument
for proving existence of the original problem \eqref{eq: continuous formulation 3}. In particular,
consider the following resolvent
system: Given
$\wh{g}_{\bv_f} \in \bV_f'$,
$\wh{g}_{w_f} \in \W_f'$,
$\wh{g}_{\btau_p} \in \bbX_{p,2}'$,
$\wh{g}_{\bv_s} \in \bV_s'$,
$\wh{g}_{\bchi_p} \in \bbQ_p'$, 
$\wh{g}_{\bv_p} \in \bV_p'$,
$\wh{g}_{w_p} \in \W_{p,2}'$,
$\wh{g}_\xi \in \Lambda_p'$, and
$\wh{g}_{\bphi} \in\bLambda_s'$,
find $(\bu_f, p_f,
\bsi_p, \bu_s, \bgamma_p,
\bu_p, p_p,
\lambda, \btheta) \in \bV_f \times \W_f
\times \bbX_p \times \bV_s \times \bbQ_p
\times \bV_p \times \W_p
\times \Lambda_p \times \bLambda_s$ such that
for all $\bv_f \in \bV_f$, $w_f \in \W_f$,
$\btau_p \in \bbX_p$, $\bv_s \in \bV_s$, $\bchi_p \in \bbQ_p$,
$\bv_p \in \bV_p$, $w_p \in \W_p$,
$\xi \in \Lambda_p$, and $\bphi \in
\bLambda_s$,
\begin{equation}\label{eq: resolvent formulation 1}
\begin{array}{l}  
a_f(\bu_f,\bv_f)
+a_p(\bu_p,\bv_p)
+a_{\BJS}(\bu_f,\btheta;\bv_f,\bphi)
+b_n^p(\bsi_p, \bphi)
+b_p(\bv_p,p_p)+b_f(\bv_f,p_f)
+b_s(\btau_p,\bu_s) \\[1ex]
\quad
+b_{\sk}(\btau_p, \bgamma_p)
+b_{\Gamma}(\bv_f,\bv_p,\bphi;\lambda)
+a_p^p(p_p, w_p)
+a_e(\bsi_p, p_p; \btau_p, w_p) 
-b_n^p(\btau_p, \btheta)
-b_p(\bu_p, w_p)  \\[1ex]
\qquad
= 
( \wh{g}_{\bv_f}, \bv_f)_{\Omega_f}+
( \wh{g}_\bphi, \bphi)_{\Omega_p}+
( \wh{g}_{\bv_p}, \bv_p)_{\Omega_p}+
( \wh{g}_{\btau_p}, \btau_p)_{\Omega_p}+
(\wh{g}_{w_p},w_p)_{\Omega_p}, \\[1ex]
 -b_f(\bu_f, w_f)
-b_s(\bsi_p,\bv_s)
-b_{\sk}(\bsi_p,\bchi_p)
-b_{\Gamma}(\bu_f, \bu_p, \btheta ;\xi) \\[1ex]
\qquad 
= 
(\wh{g}_{w_f},w_f)_{\Omega_f}+
( \wh{g}_{\bv_s}, \bv_s)_{\Omega_p}+
( \wh{g}_{\bchi_p}, \bchi_p)_{\Omega_p}+
( \wh{g}_\xi, \xi)_{\Omega_p}.
\end{array}
\end{equation}
Letting
$$
\bQ_2=\bV_f \times \bLambda_s \times \bV_p \times \bbX_{p,2} \times \W_{p,2},
$$
the resolvent system \eqref{eq: resolvent formulation 1} can be written in an operator form as
\begin{equation}\label{eq: resolvent formulation 2}
  \arraycolsep=1.7pt
  \begin{array}{rcl}  
(\cE_1 + \cA) \bp + \cB'\br & = & \wh{\bF}  \qin  \bQ_2', \\[1ex]
    - \cB \bp & = & \wh{\bG} \qin \bS'.
\end{array}
\end{equation}    
where $\wh{\bF} \in \bQ_2'$ and $\wh{\bG} \in \bS'$ are the functionals on the right hand
side of \eqref{eq: resolvent formulation 1}.

To prove the solvability of this resolvent system, we use a
regularization technique, following the approach in
\cite{Showalter-SIAMMA,aeny2019}.  To that end, we introduce
operators that will be used to regularize the problem. Let $R_{\bu_p}:
\bV_p \to \bV_p'$, $R_{\bsi_p}: \bbX_p \to \bbX_p'$, $R_{p_p}: \W_p
\to \W_p'$, $L_{p_f}: \W_f \to \W_f'$, $L_{\bu_s}: \bV_s \to \bV_s'$,
and $L_{\bgamma_p}: \bbQ_p \to \bbQ_p'$ be defined as follows:
\begin{align*}
& (R_{\bu_p}\bu_p,\bv_p) = r_{\bu_p}(\bu_p, \bv_p)
:= (\nabla \cdot \bu_p, \nabla \cdot \bv_p)_{\Omega_p}, \\[1ex]
& (R_{\bsi_p}\bsi_p,\btau_p) = r_{\bsi_p}(\bsi_p, \btau_p)
:= (\bsi_p, \btau_p)_{\Omega_p} + (\nabla \cdot \bsi_p, \nabla \cdot \btau_p)_{\Omega_p}, \\[1ex]
& (R_{p_p}p_p, w_p) = r_{p_p}(p_p,w_p) := (p_p, w_p)_{\Omega_p}, \quad
(L_{p_f}p_f,w_f) = l_{p_f}(p_f,w_f) := (p_f, w_f)_{\Omega_f}, \\[1ex]
& (L_{\bu_s}\bu_s, \bv_s) = l_{\bu_s}(\bu_s, \bv_s) := (\bu_s, \bv_s)_{\Omega_p}, \quad
(L_{\bgamma_p}\bgamma_p, \bchi_P) = l_{\bgamma_p}(\bgamma_p, \bchi_p)
:= (\bgamma_p, \bchi_p)_{\Omega_p}.
\end{align*}
The following operator properties follow immediately from the above definitions.
\begin{lemma}\label{lem:R-L}
The operators $R_{\bu_p}$, $R_{\bsi_p}$, $R_{p_p}$, $L_{p_f}$, $L_{\bu_s}$, and $L_{\bgamma_p}$
are continuous and monotone.
\end{lemma}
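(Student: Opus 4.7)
The proof is essentially a direct verification, since each of the six operators is induced by a symmetric positive semidefinite bilinear form consisting of $\L^2$ inner products (possibly augmented by a $\div$ term). The plan is therefore to treat all six at once by invoking Cauchy--Schwarz for continuity and checking nonnegativity on the diagonal for monotonicity.

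For continuity, I would apply the Cauchy--Schwarz inequality in $\L^2(\Omega_p)$ (or $\L^2(\Omega_f)$ where appropriate) to each term. For $R_{\bu_p}$, this gives $r_{\bu_p}(\bu_p,\bv_p) \le \|\nabla\cdot\bu_p\|_{\L^2(\Omega_p)}\|\nabla\cdot\bv_p\|_{\L^2(\Omega_p)} \le \|\bu_p\|_{\bV_p}\|\bv_p\|_{\bV_p}$, recalling the definition of the $\bH(\div;\Omega_p)$ norm. For $R_{\bsi_p}$, the sum of the two inner products is exactly controlled by $\|\bsi_p\|_{\bbX_p}\|\btau_p\|_{\bbX_p}$, again directly from the definition of the $\bbH(\div;\Omega_p)$ norm. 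The remaining four operators $R_{p_p}$, $L_{p_f}$, $L_{\bu_s}$, $L_{\bgamma_p}$ are plain $\L^2$ inner products, so Cauchy--Schwarz gives continuity in the respective $\L^2$-based norms of $\W_p$, $\W_f$, $\bV_s$, and $\bbQ_p$.

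For monotonicity, since each operator is symmetric, I would simply compute the diagonal value $(R\,\bv,\bv)$ or $(L\,\bv,\bv)$ and verify it is nonnegative. Specifically, $r_{\bu_p}(\bv_p,\bv_p) = \|\nabla\cdot\bv_p\|^2_{\L^2(\Omega_p)} \ge 0$, $r_{\bsi_p}(\btau_p,\btau_p) = \|\btau_p\|_{\bbL^2(\Omega_p)}^2 + \|\nabla\cdot\btau_p\|_{\bL^2(\Omega_p)}^2 = \|\btau_p\|_{\bbX_p}^2 \ge 0$, and $r_{p_p}$, $l_{p_f}$, $l_{\bu_s}$, $l_{\bgamma_p}$ evaluated on the diagonal each yield the square of the corresponding $\L^2$-norm, which is manifestly nonnegative.

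There is really no obstacle here; the lemma is a bookkeeping statement about regularization operators. The only mild point to flag is that monotonicity is automatic from the observation that each bilinear form is a positive semidefinite inner product (or sum of such), so the conclusion follows without any inequality beyond Cauchy--Schwarz. I would therefore present the argument compactly in a single short paragraph listing each form and noting the two-line verification for each.
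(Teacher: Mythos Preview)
Your proposal is correct and matches the paper's approach: the paper simply states that the properties ``follow immediately from the above definitions'' and gives no further argument, so your explicit Cauchy--Schwarz and diagonal-nonnegativity verification is exactly the intended content spelled out.
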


For the regularization of the Lagrange multipliers, let 
$\psi(\lambda)\in \H^1(\Omega_p)$ be the weak solution of
\begin{gather*}
- \nabla \cdot \nabla \psi(\lambda) = 0 \qin \Omega_p, \\[1ex]
\psi(\lambda)  = \lambda \qon \Gamma_{fp}, \quad
\nabla \psi(\lambda) \cdot \bn_p  = 0 \qon \Gamma_p. 
\end{gather*}
Elliptic regularity and the trace inequality \eqref{trace} imply that
there exist positive constants $c$ and $C$ such that
\begin{equation}\label{equiv-1}
c\Vert \psi(\lambda) \Vert_{\H^1(\Omega_p)} \leq \Vert \lambda \Vert_{\H^{1/2}(\Gamma_{fp})}
\leq C \Vert \psi(\lambda) \Vert_{\H^1(\Omega_p)}.
\end{equation}
We define $L_\lambda: \Lambda_p \to \Lambda_p'$ as
\begin{equation}\label{eq: def Llambda}
(L_\lambda\lambda,\xi)= l_{\lambda}(\lambda,\xi):=(\nabla \psi(\lambda), \nabla \psi(\xi))_{\Omega_p}.
\end{equation}
Similarly, let 
$\bvarphi(\btheta)\in \bH^1(\Omega_p)$ be the weak solution of
\begin{gather*}
- \nabla \cdot \nabla \bvarphi(\btheta)  = \0 \qin \Omega_p, \\[1ex]
\bvarphi(\btheta)  = \btheta \qon \Gamma_{fp}, \quad 
\nabla \bvarphi(\btheta) \cdot \bn_p  = 0 \qon \Gamma_p, 
\end{gather*}
satisfying
\begin{equation}\label{equiv-2}
  c\Vert \bvarphi(\btheta) \Vert_{\H^1(\Omega_p)} \leq \Vert \btheta \Vert_{\H^{1/2}(\Gamma_{fp})}
\leq C \Vert \bvarphi(\btheta)  \Vert_{\H^1(\Omega_p)}.
\end{equation}
Let $R_{\btheta}: \bLambda_s \to \bLambda_s'$ be defined as
\begin{equation}\label{eq: def Rbtheta}
  (R_\btheta\btheta,\bphi)=r_{\btheta}(\btheta, \bphi) :=
  (\nabla \bvarphi(\btheta), \nabla \bvarphi(\bphi))_{\Omega_p}.
\end{equation}

\begin{lemma}\label{lem: R-operators 2}
The operators $L_\lambda$ and $R_\btheta$ are continuous and coercive. 
\end{lemma}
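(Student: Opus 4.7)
The proof proceeds in parallel for the two operators since they are defined by identical constructions (scalar vs.\ vector-valued harmonic extensions with the same Neumann/Dirichlet pattern). I would treat $L_\lambda$ in detail and indicate that $R_\btheta$ is handled verbatim using \eqref{equiv-2}.

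For \emph{continuity} of $L_\lambda$, the plan is a direct chain of inequalities starting from the definition \eqref{eq: def Llambda}. Apply Cauchy--Schwarz in $\bL^2(\Omega_p)$ to the inner product of the gradients, bound each factor $\|\nabla\psi(\cdot)\|_{\L^2(\Omega_p)}$ by the full $\H^1(\Omega_p)$-norm, and then invoke the right-hand inequality of \eqref{equiv-1} to replace each $\|\psi(\cdot)\|_{\H^1(\Omega_p)}$ by $C\|\cdot\|_{\Lambda_p}$. This yields
\begin{equation*}
|(L_\lambda \lambda,\xi)| \le \|\nabla\psi(\lambda)\|_{\L^2(\Omega_p)}\,\|\nabla\psi(\xi)\|_{\L^2(\Omega_p)}\le C\,\|\lambda\|_{\Lambda_p}\,\|\xi\|_{\Lambda_p},
\end{equation*}
which is continuity. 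The analogous computation for $R_\btheta$ uses \eqref{equiv-2}.

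For \emph{coercivity}, the plan is to observe that $(L_\lambda\lambda,\lambda) = \|\nabla\psi(\lambda)\|_{\L^2(\Omega_p)}^2$ is precisely the Dirichlet energy of the harmonic extension, and then transfer this lower bound to $\|\lambda\|_{\Lambda_p}$ using the lower equivalence in \eqref{equiv-1}. Concretely, I would use a Poincar\'e--Friedrichs type inequality applied to $\psi(\lambda)\in \H^1(\Omega_p)$ together with the trace control on $\Gamma_{fp}$ coming from the mixed boundary-value problem defining $\psi$ to conclude $\|\psi(\lambda)\|_{\H^1(\Omega_p)}\le C\|\nabla\psi(\lambda)\|_{\L^2(\Omega_p)}$; combined with $c\|\psi(\lambda)\|_{\H^1(\Omega_p)}\le \|\lambda\|_{\Lambda_p}$ from \eqref{equiv-1}, this produces
\begin{equation*}
(L_\lambda\lambda,\lambda) = \|\nabla\psi(\lambda)\|_{\L^2(\Omega_p)}^2 \;\ge\; c\,\|\lambda\|_{\Lambda_p}^2 .
\end{equation*}
The same reasoning, with $\bvarphi(\btheta)$ replacing $\psi(\lambda)$ and \eqref{equiv-2} replacing \eqref{equiv-1}, yields coercivity of $R_\btheta$.

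The step I expect to require the most care is the Poincar\'e-type bound controlling $\|\psi(\lambda)\|_{\L^2(\Omega_p)}$ by the Dirichlet energy of $\psi(\lambda)$, since $\psi(\lambda)$ satisfies a \emph{mixed} boundary condition (Dirichlet on $\Gamma_{fp}$, Neumann on $\Gamma_p$) and the Dirichlet datum $\lambda$ is not zero. The argument must exploit $|\Gamma_{fp}|>0$, either by absorbing the boundary term into the energy via the elliptic problem for $\psi$ or by invoking the equivalence of $\|\lambda\|_{\H^{1/2}(\Gamma_{fp})}$ with the harmonic Dirichlet energy, so that all three quantities $\|\lambda\|_{\Lambda_p}$, $\|\psi(\lambda)\|_{\H^1(\Omega_p)}$, and $\|\nabla\psi(\lambda)\|_{\L^2(\Omega_p)}$ become uniformly equivalent. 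Once this equivalence is in hand, both continuity and coercivity are immediate, and the argument for $R_\btheta$ requires only a componentwise version of the same reasoning.
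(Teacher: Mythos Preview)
Your outline matches the paper's, whose entire proof is the assertion that the bounds \eqref{R-L-bounds} follow from \eqref{equiv-1}--\eqref{equiv-2}; you are more explicit, and you correctly isolate the passage from the Dirichlet seminorm $\|\nabla\psi(\lambda)\|_{\L^2(\Omega_p)}$ to the full $\H^1$-norm as the only nontrivial step. (Minor bookkeeping: for continuity you need $\|\psi(\lambda)\|_{\H^1}\le c^{-1}\|\lambda\|_{\Lambda_p}$, which is the \emph{left} inequality in \eqref{equiv-1}; for coercivity you need the \emph{right} one. You have these swapped.)

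The substantive gap is that the Poincar\'e--Friedrichs bound you invoke, $\|\psi(\lambda)\|_{\H^1(\Omega_p)}\le C\|\nabla\psi(\lambda)\|_{\L^2(\Omega_p)}$, is false for this boundary-value problem. Take $\lambda\equiv 1$: since $\psi(\lambda)$ is harmonic with constant Dirichlet datum on $\Gamma_{fp}$ and homogeneous Neumann on \emph{all} of $\Gamma_p$, the unique solution is $\psi(\lambda)\equiv 1$, so $\nabla\psi(\lambda)=0$ while $\|\psi(\lambda)\|_{\H^1(\Omega_p)}>0$ and $\|\lambda\|_{\Lambda_p}>0$. Hence $(L_\lambda\lambda,\lambda)=0$ for nonzero constant $\lambda$, and the three quantities you hope to make uniformly equivalent are not: the Dirichlet energy of the harmonic extension controls only the $\H^{1/2}$ \emph{seminorm} of the trace, not the full norm. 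The paper's one-line proof glosses over the same point. A clean repair, sufficient for the only downstream use in \eqref{eq: O coercivity}, is to redefine $l_\lambda(\lambda,\xi):=(\psi(\lambda),\psi(\xi))_{\H^1(\Omega_p)}$ (and analogously $r_\btheta$); then \eqref{equiv-1}--\eqref{equiv-2} alone give both continuity and coercivity immediately, and nothing else in the argument changes.
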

\begin{proof}
It follows from \eqref{equiv-1} and \eqref{equiv-2} that 
there exist positive constants $c$ and $C$ such that
\begin{equation}\label{R-L-bounds}
  \begin{array}{lll}  
(L_{\lambda}\lambda,\xi) \leq C \Vert \lambda \Vert_{\H^{1/2}(\Gamma_{fp})}
\Vert \xi \Vert_{\H^{1/2}(\Gamma_{fp})}, 
& \ (L_{\lambda}\lambda,\lambda) \geq  c \Vert \lambda \Vert^2_{\H^{1/2}(\Gamma_{fp})},
& \ \forall \,\lambda, \xi \in \Lambda_p,\\[1ex]
(R_{\btheta}\btheta,\bphi) \leq C \Vert \btheta \Vert_{\bH^{1/2}(\Gamma_{fp})}
\Vert \bphi \Vert_{\bH^{1/2}(\Gamma_{fp})}, 
& \ (R_{\btheta}\btheta,\btheta) \geq c \Vert \btheta \Vert^2_{\bH^{1/2}(\Gamma_{fp})},
& \ \forall \,\btheta, \bphi \in \bLambda_s.
  \end{array}
  \end{equation}
\end{proof}

\begin{lemma}\label{lem: well-posedness 1}
  For every $\wh{\bF} \in \bQ_2'$ and $\wh{\bG} \in \bS'$, there exists a
  solution of the resolvent system \eqref{eq: resolvent formulation 2}.
\end{lemma}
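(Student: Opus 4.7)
The plan is a regularization argument in the spirit of \cite{Showalter-SIAMMA,aeny2019}: perturb \eqref{eq: resolvent formulation 2} by $\epsilon$-small coercive terms that turn the degenerate saddle-point system into a non-degenerate coercive one, solve the regularized problem by Lax--Milgram, derive $\epsilon$-independent a priori bounds via the inf-sup conditions of Lemma~\ref{lem: continuous inf-sup}, and pass to the limit as $\epsilon \to 0^+$.

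Concretely, for $\epsilon > 0$ I introduce the regularized system: find $(\bp_\epsilon, \br_\epsilon) \in \bQ \times \bS$ such that $(\cE_1 + \cA + \epsilon\,\cR)\,\bp_\epsilon + \cB'\,\br_\epsilon = \wh{\bF}$ in $\bQ_2'$ and $-\cB\,\bp_\epsilon + \epsilon\,\cL\,\br_\epsilon = \wh{\bG}$ in $\bS'$, where $\cR : \bQ \to \bQ'$ is built block-diagonally from $R_\btheta$, $R_{\bu_p}$, $R_{\bsi_p}$, $R_{p_p}$ and $\cL : \bS \to \bS'$ block-diagonally from $L_{p_f}$, $L_{\bu_s}$, $L_{\bgamma_p}$, $L_\lambda$. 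Testing the two equations against $(\bp_\epsilon,\br_\epsilon)$ and adding, the cross terms $\langle \cB'\br_\epsilon,\bp_\epsilon\rangle - \langle \cB\bp_\epsilon,\br_\epsilon\rangle$ cancel, and Lemmas~\ref{lem: continuous monotonicity}, \ref{lem:R-L}, \ref{lem: R-operators 2}, together with Korn's inequality, yield coercivity on $\bQ \times \bS$ (with constants degenerating in $\epsilon$ on the $\bS$-block). Lax--Milgram then delivers a unique $(\bp_\epsilon,\br_\epsilon)$.

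The core of the proof is the derivation of $\epsilon$-uniform bounds. The coercivity test already provides uniform control of $\|\bu_f^\epsilon\|_{\bV_f}$, $\|\bu_p^\epsilon\|_{\bV_p}$, $\|\bsi_p^\epsilon\|_{\bbL^2(\Omega_p)}$, $\|p_p^\epsilon\|_{\W_p}$ (using $s_0 > 0$ and a triangle inequality applied to $\|A^{1/2}(\bsi_p^\epsilon+\alpha p_p^\epsilon\bI)\|$), and the BJS seminorm $|\bu_f^\epsilon - \btheta^\epsilon|_{a_{\BJS}}$. I then apply the three inf-sup conditions in sequence: (i) \eqref{eq: continuous inf-sup 2}, with test $\btau_p \in \bbX_p$ satisfying $\btau_p\bn_p=\0$ on $\Gamma_{fp}$, kills $b_n^p(\btau_p,\btheta^\epsilon)$ and controls $\|\bu_s^\epsilon\|_{\bV_s} + \|\bgamma_p^\epsilon\|_{\bbQ_p}$ in terms of already-bounded quantities; (ii) \eqref{eq: continuous inf-sup 4}, with divergence-free test $\btau_p$, kills $b_s(\btau_p,\bu_s^\epsilon)$ and the regularization term $\epsilon(\nabla\cdot\bsi_p^\epsilon,\nabla\cdot\btau_p)$, and, combined with the bound on $\bgamma_p^\epsilon$ from (i), yields $\|\btheta^\epsilon\|_{\bLambda_s}$; (iii) the second equation of the regularized system reads $\nabla\cdot\bsi_p^\epsilon = -\wh{g}_{\bv_s} - \epsilon\,\bu_s^\epsilon$, providing a uniform bound on $\|\nabla\cdot\bsi_p^\epsilon\|_{\bL^2(\Omega_p)}$; finally (iv) \eqref{eq: continuous inf-sup 3}, with test $(\bv_f,\bv_p) \in \bV_f \times \bV_p$ and $\bphi=\0$, $\btau_p=\0$, $w_p=0$, delivers uniform bounds on $p_f^\epsilon$, $p_p^\epsilon$, and $\lambda^\epsilon$. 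In each step the residual $\epsilon$-terms are absorbed using the bounds previously obtained, as they carry an extra factor $\epsilon$ against an $O(\epsilon^{-1/2})$ norm.

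Once every component of $(\bp_\epsilon,\br_\epsilon)$ is bounded uniformly in $\bQ \times \bS$, weak compactness furnishes a subsequential limit $(\bp,\br)$; continuity of all bilinear forms together with $\|\epsilon\,\cR\,\bp_\epsilon\|_{\bQ_2'} \to 0$ and $\|\epsilon\,\cL\,\br_\epsilon\|_{\bS'} \to 0$ permits passage to the limit in \eqref{eq: resolvent formulation 2}. I expect the hardest part to be the correct sequencing of the inf-sup arguments in step (ii): the divergence-free test in \eqref{eq: continuous inf-sup 4} leaves $b_{\sk}(\btau_p,\bgamma_p^\epsilon)$ and $\epsilon(\bsi_p^\epsilon,\btau_p)$ active, so $\btheta^\epsilon$ cannot be estimated before $\bgamma_p^\epsilon$ has been controlled via step (i). Similarly, the bound on $\|\nabla\cdot\bsi_p^\epsilon\|$ in step (iii) depends on the prior bound on $\bu_s^\epsilon$, and the final pressure/Lagrange-multiplier estimates in step (iv) depend on the BJS control of $\btheta^\epsilon$. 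It is precisely this cascading structure that forces the ordering described, and verifying that each residual regularization term is absorbed without circular dependencies is the main technical obstacle.
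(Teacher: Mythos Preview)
Your approach is essentially identical to the paper's: the same regularization via $\epsilon\cR$ and $\epsilon\cL$, Lax--Milgram for the perturbed system, the same cascading inf-sup arguments in the same logical order (\eqref{eq: continuous inf-sup 2} $\Rightarrow$ \eqref{eq: continuous inf-sup 4} $\Rightarrow$ \eqref{eq: continuous inf-sup 3}), and weak compactness to pass to the limit. Your identification of the ordering constraints is correct and matches the paper's execution.

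There is one genuine gap. You claim that the coercivity test yields uniform control of $\|\bu_p^\epsilon\|_{\bV_p}$, i.e.\ the full $\bH(\div;\Omega_p)$-norm. It does not: the nondegenerate part of the left-hand side is $a_p(\bu_p^\epsilon,\bu_p^\epsilon) \ge \mu k_{\max}^{-1}\|\bu_p^\epsilon\|_{\bL^2(\Omega_p)}^2$, while the divergence part enters only through $\epsilon\, r_{\bu_p}(\bu_p^\epsilon,\bu_p^\epsilon) = \epsilon\|\nabla\cdot\bu_p^\epsilon\|_{\L^2(\Omega_p)}^2$, which gives at best an $O(\epsilon^{-1/2})$ bound. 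Your step~(iii) handles this correctly for $\bsi_p^\epsilon$ by reading off $\nabla\cdot\bsi_p^\epsilon = -\wh g_{\bv_s} - \epsilon\,\bu_s^\epsilon$ from the $\bv_s$-row of the second regularized equation, but you omit the analogous step for $\bu_p^\epsilon$. The paper supplies it at the very end: since $\nabla\cdot\bV_p = \W_p$, one tests the $w_p$-row of the first regularized equation with $w_p = \nabla\cdot\bu_p^\epsilon$ and obtains
\[
\|\nabla\cdot\bu_p^\epsilon\|_{\L^2(\Omega_p)} \le C\big(\|\bsi_p^\epsilon\|_{\bbL^2(\Omega_p)} + (s_0+\epsilon)\|p_p^\epsilon\|_{\L^2(\Omega_p)} + \|\wh g_{w_p}\|_{\L^2(\Omega_p)}\big),
\]
which is uniform once $\|\bsi_p^\epsilon\|_{\bbL^2}$ and $\|p_p^\epsilon\|_{\W_p}$ are under control. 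Without this step, your weak limit for $\bu_p^\epsilon$ lives only in $\bL^2(\Omega_p)$, not in $\bV_p$, and you cannot conclude that the limit satisfies \eqref{eq: resolvent formulation 2}.
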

\begin{proof}
Define the operators $\cR: \bQ \to \bQ_2'$ and $\cL: \bS \to \bS'$ such that,
for any $\bp = (\bu_f, \btheta, \bu_p, \bsi_p, p_p)$, $\bq = (\bv_f,
\bphi, \bv_p, \btau_p, w_p) \in \bQ$ and $\br=(p_f, \bu_s, \bgamma_p,
\lambda)$, $\bs=(w_f, \bv_s, \bchi_p, \xi) \in \bS$,
\begin{align*}
&(\cR\bp,\bq)  := (R_{\bu_p}\bu_p,\bv_p) + (R_{\bsi_p}\bsi_p,\btau_p)
+ (R_{p_p}p_p,w_p) + (R_{\btheta}\btheta,\bphi), \\[1ex]
&(\cL\br,\bs)  := (L_{p_f}p_f,w_f) + (L_{\bu_s} \bu_s,\bv_s) + (L_{\bgamma_p} \bgamma_p,\bchi_p)
+ (L_\lambda\lambda,\xi).
\end{align*}
For $\epsilon > 0$, consider a regularization of \eqref{eq: resolvent formulation 1}: Given $\wh{\bF}=(\wh{g}_{\bv_f},
\wh{g}_{\bphi}, \wh{g}_{\bv_p}, \wh{g}_{\btau_p}, \wh{g}_{w_p}) \in
\bQ_2'$ and $\wh{\bG}=(\wh{g}_{w_f}, \wh{g}_{\bv_s}, \wh{g}_{\bchi_p},
\wh{g}_{\xi}) \in \bS'$, find $\bp_\epsilon = (\bu_{f,\epsilon},
\btheta_\epsilon, \bu_{p,\epsilon}, \bsi_{p,\epsilon},
p_{p,\epsilon}) \in \bQ$ and $\br_\epsilon=(p_{f,\epsilon}, \bu_{s,\epsilon},
\bgamma_{p,\epsilon}, \lambda_\epsilon) \in \bS$ such that
\begin{equation}\label{eq: regularized formulation 1}
  \arraycolsep=1.7pt
  \begin{array}{rcl}  
(\epsilon \cR + \cE_1 + \cA) \bp_\epsilon + \cB'\br_\epsilon & = & \wh{\bF}  \qin  \bQ_2', \\[1ex]
    - \cB \bp_\epsilon + \epsilon \cL \br_\epsilon & = & \wh{\bG} \qin \bS'.
    \end{array}
\end{equation}
Let the operator $\cO : \bQ \times \bS \to \bQ_2' \times \bS'$ be defined as
\begin{equation*}
\cO 
\left(\begin{array}{c}
\bq\\ 
\bs
\end{array}\right)
= \left(\begin{array}{cc}
\epsilon \cR + \cE_1 + \cA & \ \cB' \\ 
- \cB  & \  \epsilon \cL 
\end{array}\right) \,
\left(\begin{array}{c}
\bq \\
\bs
\end{array}\right).
\end{equation*}
We have%
\begin{equation*}
\left(\cO 
\left(\begin{array}{c}
\bp\\ 
\br
\end{array}\right),
\left(\begin{array}{c}
\bq\\ 
\bs
\end{array}\right)\right)
= ((\epsilon \cR + \cE_1 + \cA)\bp,\bq)
+(\cB'\br,\bq)
-(\cB\bp,\bs)
+\epsilon (\cL \br,\bs).
\end{equation*}
Lemmas \ref{lem: continuous monotonicity}--\ref{lem: R-operators 2} imply that 
$\cO$ is continuous. Moreover, using the coercivity and monotonicity bounds
\eqref{eq: continuous monotonicity 1}, \eqref{eq: continuous monotonicity 3}, and
\eqref{R-L-bounds}, we have
\begin{align}\label{eq: O coercivity}
&\left(\cO 
\left(\begin{array}{c}
\bq\\ 
\bs
\end{array}\right),
\left(\begin{array}{c}
\bq\\ 
\bs
\end{array}\right)\right)
= ((\epsilon \cR + \cE_1 + \cA)\bq,\bq) + (\epsilon \cL \bs,\bs) \nonumber\\[1ex]
&\quad =\epsilon r_{\bu_p}(\bv_p, \bv_p)
+ \epsilon r_{\bsi_p}(\btau_p, \btau_p)
+ \epsilon r_{\btheta}(\bphi, \bphi)
+ \epsilon r_{p_p}(w_p, w_p)
+ a_p(\bv_p, \bv_p) \nonumber\\[1ex]
& \qquad
+(A(\btau_p+\alpha w_p \bI), \btau_p+\alpha w_p \bI)
+ (s_0 w_p, w_p)
+ a_f(\bv_f, \bv_f)
+ a_{\BJS}(\bv_f, \bphi;\bv_f, \bphi)  \nonumber\\[1ex]
& \qquad
+ \epsilon l_{p_f}(w_f, w_f)
+ \epsilon l_{\bu_s}(\bv_s, \bv_s)
+ \epsilon l_{\bgamma_p}(\bchi_p, \bchi_p)
+ \epsilon l_\lambda(\xi, \xi)\nonumber\\[1ex]
&\quad \geq C(
\epsilon \Vert \nabla \cdot \bv_p \Vert^2_{\L^2(\Omega_p)}
+ \epsilon \Vert \btau_p \Vert^2_{\bbL^2(\Omega_p)}
+ \epsilon \Vert \nabla \cdot \btau_p \Vert^2_{\bL^2(\Omega_p)}
+ \epsilon \Vert \bphi \Vert^2_{\bH^{1/2}(\Gamma_{fp})}
+ \epsilon \Vert w_p \Vert^2_{\L^2(\Omega_p)} \nonumber\\[1ex]
&\qquad
+ \Vert \bv_p \Vert^2_{\bL^2(\Omega_p)}
+ \Vert A^{1/2}(\btau_p+\alpha w_p \bI) \Vert^2_{\bbL^2(\Omega_p)} 
+ s_0\Vert w_p \Vert^2_{\L^2(\Omega_p)}
+ \Vert \bD(\bv_f) \Vert^2_{\bbL^2(\Omega_f)} \nonumber\\[1ex]
&\qquad
+ \vert \bv_f - \bphi \vert^2_{a_{\BJS}}
+ \epsilon \Vert w_f \Vert^2_{\L^2(\Omega_p)}
+ \epsilon \Vert \bv_s \Vert^2_{\bL^2(\Omega_p)} 
+ \epsilon \Vert \bchi_p \Vert^2_{\bbL^2(\Omega_p)}
+ \epsilon \Vert \xi \Vert^2_{\H^{1/2}(\Gamma_{fp})}),
\end{align}
which implies that $\cO$ is coercive. Thus, an application of the
Lax-Milgram theorem establishes the existence of a unique solution
$(\bp_\epsilon, \br_\epsilon)\in \bQ \times \bS$ of \eqref{eq:
  regularized formulation 1}. Now, from \eqref{eq: regularized
  formulation 1} and \eqref{eq: O coercivity} we obtain
\begin{align}
&\epsilon \Vert \nabla \cdot \bu_{p,\epsilon} \Vert^2_{\L^2(\Omega_p)}
+ \epsilon \Vert \nabla \cdot \bsi_{p,\epsilon} \Vert^2_{\bL^2(\Omega_p)}
+ \epsilon \Vert \btheta_\epsilon \Vert^2_{\bH^{1/2}(\Gamma_{fp})}
+ \epsilon \Vert \bsi_{p,\epsilon} \Vert^2_{\bbL^2(\Omega_p)}
+ \epsilon \Vert p_{p,\epsilon} \Vert^2_{\L^2(\Omega_p)}
+ \Vert \bu_{p,\epsilon} \Vert^2_{\bL^2(\Omega_p)} \nonumber \\[1ex]
& \qquad
+ \Vert A^{1/2}(\bsi_{p,\epsilon}+\alpha p_{p,\epsilon}\bI) \Vert^2_{\bbL^2(\Omega_p)}
+ s_0 \Vert p_{p,\epsilon} \Vert^2_{\L^2(\Omega_p)}
+ \Vert \bu_{f,\epsilon} \Vert^2_{\bH^1(\Omega_f)}
+ \vert \bu_{f,\epsilon} - \btheta_\epsilon \vert^2_{a_{\BJS}}
+ \epsilon \Vert p_{f,\epsilon} \Vert^2_{\L^2(\Omega_p)}  \nonumber \\[1ex]
& \qquad
+ \epsilon \Vert \bu_{s,\epsilon} \Vert^2_{\bL^2(\Omega_p)}
+ \epsilon \Vert \bgamma_{p,\epsilon} \Vert^2_{\bbL^2(\Omega_p)}
+ \epsilon \Vert \lambda_\epsilon \Vert^2_{\H^{1/2}(\Gamma_{fp})} \nonumber \\[1ex]
&\quad \leq C(
\Vert \wh{g}_{\bv_f} \Vert_{\bL^2(\Omega_f)}\Vert \bu_{f,\epsilon} \Vert_{\bL^2(\Omega_f)}
+ \Vert \wh{g}_{\bphi} \Vert_{\bL^2(\Omega_p)} \Vert \btheta_{\epsilon} \Vert_{\bL^2(\Omega_p)}
+ \Vert \wh{g}_{\bv_p} \Vert_{\bL^2(\Omega_p)}\Vert \bu_{p,\epsilon} \Vert_{\bL^2(\Omega_p)}  \nonumber \\[1ex]
& \qquad
+ \Vert \wh{g}_{\btau_p} \Vert_{\bbL^2(\Omega_p)}\Vert \bsi_{p,\epsilon} \Vert_{\bbL^2(\Omega_p)}  
+ \Vert \wh{g}_{w_p} \Vert_{\L^2(\Omega_p)}\Vert p_{p,\epsilon} \Vert_{\L^2(\Omega_p)} 
+ \Vert \wh{g}_{w_f} \Vert_{\L^2(\Omega_f)}\Vert p_{f,\epsilon} \Vert_{\L^2(\Omega_f)}  \nonumber \\[1ex]
& \qquad
+ \Vert \wh{g}_{\bv_s} \Vert_{\bL^2(\Omega_p)}\Vert \bu_{s,\epsilon} \Vert_{\bL^2(\Omega_p)}
+ \Vert \wh{g}_{\bchi_p} \Vert_{\bbL^2(\Omega_p)}\Vert \bgamma_{p,\epsilon} \Vert_{\bbL^2(\Omega_p)}
+ \Vert \wh{g}_{\xi} \Vert_{\L^2(\Omega_p)}\Vert \lambda_{\epsilon} \Vert_{\L^2(\Omega_p)}),
\end{align}
which implies that $\Vert \bu_{p,\epsilon} \Vert_{\bL^2(\Omega_p)}$,
$\Vert A^{1/2}(\bsi_{p,\epsilon}+\alpha p_{p,\epsilon}\bI)
\Vert_{\bbL^2(\Omega_p)}$ and $\Vert \bu_{f,\epsilon}
\Vert_{\bH^1(\Omega_f)}$ are bounded independently of $\epsilon$.
Next, from \eqref{eq: regularized formulation 1} we have
\begin{align}\label{eq: regularized formulation 2}
&(A (\bsi_{p,\epsilon}+\alpha p_{p, \epsilon} \bI), \btau_p)_{\Omega_p}
+ \epsilon (\bsi_{p,\epsilon}, \btau_p)_{\Omega_p}
+ \epsilon (\nabla \cdot \bsi_{p,\epsilon}, 
\nabla \cdot \btau_p)_{\Omega_p} \nonumber \\[1ex]
& \quad - b_n^p(\btau_p, \btheta_\epsilon)
+ b_s(\btau_p, \bu_{s,\epsilon})
+ b_{\sk}(\btau_p, \bgamma_{p,\epsilon})
 = (\wh{g}_{\tau_p}, \btau_p)_{\Omega_p}.
\end{align}
Applying the inf-sup condition \eqref{eq: continuous inf-sup 2} results in
\begin{align}\label{eq: inf-sup app epsilon}
& \ds \|\bu_{s,\epsilon}\|_{\bL^2(\Omega_p)}
+\|\bgamma_{p,\epsilon}\|_{\bbL^2(\Omega_p)}
\leq C
\underset{\btau_p \in \bbX_p\, \text{s.t.} \btau_p \bn_p=\0 \, \text{on} \, \Gamma_{fp}}{\sup}
\frac{b_s(\btau_p,\bu_{s,\epsilon})
+b_{\sk}(\btau_p,\bgamma_{p,\epsilon})}{\|\btau_p\|_{\bbX_p}}  \nonumber \\[1ex]
& \ds  
\quad = C\underset{\btau_p \in \bbX_p\, \text{s.t.} \btau_p \bn_p=\0 \, \text{on} \, \Gamma_{fp}}{\sup}
\bigg(\frac{
- (A (\bsi_{p,\epsilon}+\alpha p_{p, \epsilon} \bI), \btau_p)_{\Omega_p}
- \epsilon (\bsi_{p,\epsilon}, \btau_p)_{\Omega_p}
- \epsilon (\nabla \cdot \bsi_{p,\epsilon}, 
\nabla \cdot \btau_p)_{\Omega_p})_{\Omega_p}}
{\|\btau_p\|_{\bbX_p}} \nonumber \\[1ex]
& \qquad \qquad \qquad \qquad \qquad \qquad +\frac{
 b_n^p(\btau_p, \btheta_\epsilon )
+ (\wh{g}_{\btau_p}, \btau_p)_{\Omega_p}}
{\|\btau_p\|_{\bbX_p}} \bigg) \nonumber \\[1ex]
& \ds
\quad \leq C
(\Vert A(\bsi_{p,\epsilon}+\alpha p_{p,\epsilon}\bI) \Vert_{\bbL^2(\Omega_p)}
+ \epsilon \Vert \bsi_{p,\epsilon} \Vert_{\bbL^2(\Omega_p)}
+ \epsilon \Vert \nabla \cdot \bsi_{p, \epsilon} \Vert_{\bL^2(\Omega_p)}
+ \Vert \wh{g}_{\btau_p} \Vert_{\bbL^2(\Omega_p)}),
\end{align}
where the term $b_n^p(\btau_p, \btheta_\epsilon )$ vanishes due to the
restriction $\btau_p \bn_p = \0$ on $\Gamma_{fp}$. Also,
applying the inf-sup condition \eqref{eq: continuous inf-sup 4} and using \eqref{eq:
  regularized formulation 2}, we obtain
\begin{align}\label{eq: inf-sup bound}
& \ds \Vert \btheta_{\epsilon} \Vert_{\bH^{1/2}(\Gamma_{fp})} 
 \leq C
\underset{\btau_p \in \bbX_p \, \text{s.t.}\, \nabla \cdot \btau_p=0}{\sup} \frac{
b_n^p(\btau_p, \btheta_\epsilon)
}{\|\btau_p\|_{\bbX_p}}  \nonumber \\[1ex]
& \quad 
\ds = C
\underset{\btau_p \in \bbX_p \, \text{s.t.}\, \nabla \cdot \btau_p=0}{\sup}
\frac{
 (A (\bsi_{p,\epsilon}+\alpha p_{p, \epsilon} \bI), \btau_p)_{\Omega_p}
+ \epsilon  (\bsi_{p,\epsilon}, \btau_p)_{\Omega_p}
+ b_{\sk}(\btau_p, \bgamma_{p,\epsilon})
- (\wh{g}_{\btau_p}, \btau_p)_{\Omega_p}
}
{\Vert \btau_p \Vert_{\bbX_p}}
\nonumber \\[1ex]
&\ds  \quad 
\leq C
(\Vert A(\bsi_{p,\epsilon}+\alpha p_{p,\epsilon} \bI) \Vert_{\bbL^2(\Omega_p)}
+ \epsilon \Vert \bsi_{p,\epsilon} \Vert_{\bbL^2(\Omega_p)}
+ \Vert \bgamma_{p,\epsilon} \Vert_{\bbL^2(\Omega_f)}
+ \Vert \wh{g}_{\btau_p} \Vert_{\bbL^2(\Omega_p)}).
\end{align}
Bounds \eqref{eq: inf-sup app epsilon} and
\eqref{eq: inf-sup bound} imply that 
$\Vert \bu_{s,\epsilon} \Vert_{\bL^2(\Omega_p)}$ $\Vert
\bgamma_{p,\epsilon} \Vert_{\bbL^2(\Omega_p)}$, and
$\Vert \btheta_\epsilon \Vert_{\bH^{1/2}(\Gamma_{fp})}$
are bounded
independently of $\epsilon$. In addition, \eqref{eq: regularized formulation 1} gives
\begin{align}
& a_p(\bu_{p,\epsilon},\bv_p)
+\epsilon(\nabla \cdot \bu_{p,\epsilon}, \nabla \cdot \bv_p)_{\Omega_p}
+ b_p(\bv_p, p_{p,\epsilon})
+ \langle \bv_p \cdot \bn_p, \lambda_\epsilon \rangle_{\Gamma_{fp}}
+ a_f(\bu_{f,\epsilon}, \bv_f) 
\nonumber \\[1ex]
& \quad
+ a_{\BJS}(\bu_{f,\epsilon}, \btheta_\epsilon; \bv_f, \0)
+ b_f(\bv_f, p_{f,\epsilon})
+ \langle \bv_f \cdot \bn_f, \lambda_\epsilon \rangle_{\Gamma_{fp}}
= 0,
\end{align}
so applying the inf-sup condition \eqref{eq: continuous inf-sup 3}, we obtain
\begin{align}\label{pf-eps-inf-sup}
& \ds
\Vert p_{f,\epsilon} \Vert_{\L^2(\Omega_f)} 
+ \Vert p_{p,\epsilon} \Vert_{\L^2(\Omega_p)} 
+ \Vert \lambda_\epsilon \Vert_{\H^{1/2}(\Gamma_{fp})} \nonumber \\[1ex]
& \ds \quad
 \leq C
\underset{(\bv_f, \bv_p, \0) \in \bV_f\times\bV_p\times\bLambda_s}{\sup}
\frac{
b_f(\bv_f,p_{f,\epsilon})+b_p(\bv_p,p_{p,\epsilon})+
b_{\Gamma}(\bv_f, \bv_p, \0; \lambda_\epsilon)}
{\|(\bv_f, \bv_p, \0)\|_{\bV_f\times\bV_p\times\bLambda_s}} \nonumber \\[1ex]
& \ds  \quad
= C
\underset{(\bv_f, \bv_p, \0) \in \bV_f\times\bV_p\times\bLambda_s}{\sup}
\frac{-a_p(\bu_{p,\epsilon},\bv_p)
-\epsilon(\nabla \cdot \bu_{p,\epsilon}, \nabla \cdot \bv_p)
-a_f(\bu_{f,\epsilon},\bv_f)
-a_{\BJS}(\bu_{f,\epsilon},\btheta_\epsilon;\bv_f, \0)
}
{\|(\bv_f, \bv_p, \0)\|_{\bV_f\times\bV_p\times\bLambda_s}}\nonumber  \\[1ex]
& \ds \quad 
\leq C
(\Vert \bu_{p,\epsilon} \Vert_{\bL^2(\Omega_p)}
+ \epsilon \Vert \nabla \cdot \bu_{p, \epsilon} \Vert_{\L^2(\Omega_p)}
+ \Vert \bu_{f,\epsilon} \Vert_{\bH^1(\Omega_f)}
+ \vert \bu_{f,\epsilon} - \btheta_\epsilon \vert_{a_{\BJS}}).
\end{align}
Therefore we have that $\Vert p_{f,\epsilon} \Vert_{\L^2(\Omega_f)}$,
$\Vert p_{p,\epsilon} \Vert_{\L^2(\Omega_p)}$ and $\Vert
\lambda_\epsilon \Vert_{\H^{1/2}(\Gamma_{fp})}$ are also bounded
independently of $\epsilon$.

Since $\nabla \cdot \bbX_p = \bV_s$, by taking $\bv_s = \nabla \cdot \bsi_{p,\epsilon}$ in \eqref{eq: regularized formulation 1}, we have
\begin{equation}
\Vert \nabla \cdot \bsi_{p, \epsilon} \Vert_{\bL^2(\Omega_p)} \leq 
\epsilon \Vert \bu_{s, \epsilon} \Vert_{\bL^2(\Omega_p)} + \Vert \wh{g}_{\bv_s} \Vert_{\bL^2(\Omega_p)}, 
\end{equation}
which implies that $\Vert \nabla \cdot \bsi_{p, \epsilon}
\Vert_{\bL^2(\Omega_p)}$ is bounded independently of $\epsilon$. Since
$\Vert A^{1/2}(\bsi_{p,\epsilon}+\alpha p_{p,\epsilon}\bI)
\Vert_{\bbL^2(\Omega_p)}$, $\Vert p_{p,\epsilon}
\Vert_{\L^2(\Omega_p)}$ and $\Vert \nabla \cdot \bsi_{p,\epsilon}
\Vert_{\bL^2(\Omega_p)}$ are all bounded independently of $\epsilon$,
the same holds for 
$\Vert \bsi_{p,\epsilon} \Vert_{\bbH(\div,\Omega_p)}$.
Finally, since $\nabla \cdot
\bV_p = \W_p$, by taking $w_p = \nabla \cdot \bu_{p,\epsilon}$ in
\eqref{eq: regularized formulation 1}, we have
\begin{equation}
\Vert \nabla \cdot \bu_{p,\epsilon} \Vert_{\L^2(\Omega_p)} \leq C(\Vert \bsi_{p,\epsilon} \Vert_{\bbL^2(\Omega_p)}
+ (s_0+\epsilon) \Vert p_{p,\epsilon} \Vert_{\L^2(\Omega_p)}
+ \Vert \wh{g}_{w_p} \Vert_{\L^2(\Omega_p)}),
\end{equation}
so $\Vert \nabla \cdot \bu_{p, \epsilon} \Vert_{\L^2(\Omega_p)}$, and therefore
$\Vert \bu_{p,\epsilon} \Vert_{\bV_p}$ is bounded
independently of $\epsilon$. Thus we conclude that all the variables
are bounded independently of $\epsilon$.

Since $\bQ$ and $\bS$ are reflexive Banach spaces, as $\epsilon \to 0$
we can extract weakly convergent subsequences
$\{\bp_{\epsilon,n}\}_{n=1}^\infty$ and
$\{\br_{\epsilon,n}\}_{n=1}^\infty$ such that $\bp_{\epsilon,n} \to
\bp$ in $\bQ$, $\br_{\epsilon,n} \to \br$ in $\bS$. Taking the limit in
\eqref{eq: regularized formulation 1}, we obtain that $(\bp,\br)$ is a solution to
\eqref{eq: resolvent formulation 2}.
\end{proof}

\begin{lemma}\label{lem: range condition}
For $\cN$, $\cM$ and $E_b'$ defined in \eqref{eq: definition N and M} and \eqref{defn-Eb},
it holds that $Rg(\cN + \cM)=E_b'$, that is, given $f \in E_b'$, there exists
$v \in \cD$ such that $(\cN + \cM)v=f$.
\end{lemma}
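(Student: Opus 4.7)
The plan is to deduce the range condition directly from Lemma~\ref{lem: well-posedness 1}, which already provides solvability of the full resolvent system. Given $f \in E_b'$, the definition \eqref{defn-Eb} of $E_b'$ allows me to write $f = (\wh{\bF}, \wh{\bG})$ with $\wh{\bG} = \0 \in \bS'$ and $\wh{\bF} = (\0, \0, \0, \wh{g}_{\btau_p}, \wh{g}_{w_p}) \in \bQ_2'$, so that the only possibly nonzero components are $\wh{g}_{\btau_p} \in \bbX_{p,2}'$ and $\wh{g}_{w_p} \in \W_{p,2}'$. Applying Lemma~\ref{lem: well-posedness 1} to this particular right-hand side yields a pair $(\bp, \br) \in \bQ \times \bS$ solving \eqref{eq: resolvent formulation 2}, and, in view of the block definitions \eqref{eq: definition N and M}, this is precisely the equation $(\cN + \cM)(\bp, \br) = f$.

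The only remaining point is to verify the membership $(\bp, \br) \in \cD$, i.e.\ that $\cM(\bp, \br) \in E_b'$. The key observation is that $\cN$ automatically sends every element of $\bQ \times \bS$ into $E_b'$: by the block structure of $\cE_1$, the functional $\cN(\bp, \br) = (\cE_1 \bp, \0)$ has nonzero entries only in the $\btau_p$ and $w_p$ slots, and those entries reduce to the $\bbL^2(\Omega_p)$ pairings
\[
(A(\bsi_p + \alpha p_p \bI), \btau_p + \alpha w_p \bI)_{\Omega_p} \qan (s_0 p_p, w_p)_{\Omega_p},
\]
which extend continuously to duality with $\bbX_{p,2}$ and $\W_{p,2}$. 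Hence $\cN(\bp, \br) \in E_b'$, and consequently $\cM(\bp, \br) = f - \cN(\bp, \br) \in E_b'$, so $(\bp, \br) \in \cD$ as required.

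I do not expect any substantive obstacle in executing this plan, since all of the technical difficulty (the regularization via $\cR$ and $\cL$, the uniform $\epsilon$-independent bounds obtained via coercivity together with the inf-sup conditions \eqref{eq: continuous inf-sup 2}--\eqref{eq: continuous inf-sup 4}, and the weak-compactness passage to the limit) has already been absorbed into Lemma~\ref{lem: well-posedness 1}. The present lemma is essentially a bookkeeping consequence: once the resolvent system is solvable for arbitrary data in $\bQ_2' \times \bS'$, the specific choice of data in $E_b'$ produces a solution automatically in $\cD$ because $\cN$ has its range in $E_b'$.
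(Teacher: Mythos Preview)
Your proposal is correct and follows essentially the same approach as the paper: both invoke Lemma~\ref{lem: well-posedness 1} with the specialized right-hand side $\wh{\bF} = (\0,0,0,\wh{g}_{\btau_p},\wh{g}_{w_p})$ and $\wh{\bG} = \0$ to produce the desired solution. You additionally spell out the verification that $(\bp,\br) \in \cD$ via $\cN(\bp,\br) \in E_b'$, which the paper leaves implicit; this extra check is correct and a welcome clarification.
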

\begin{proof}
Given any $\wh{g}_{\btau_p} \in \bbX_{p,2}'$ and $\wh{g}_{w_p} \in
\W_{p,2}'$, according to Lemma~\ref{lem: well-posedness 1}, there exist
$(\bp,\br) \in \bQ\times\bS$ such that
\begin{equation*}
  \arraycolsep=1.7pt
  \begin{array}{rcl}    
(\cE_1 + \cA) \bp + \cB'\br & = & \wh{\bF}  \qin  \bQ_{2,0}', \\[1ex]
    - \cB \bp & = & \0 \qin \bS_{2,0}',
  \end{array}
\end{equation*}
where $\wh{\bF}  = (\0,0,0,\wh{g}_{\btau_p},\wh{g}_{w_p})^\rt \in \bQ_{2,0}'$, implying
the range condition.
\end{proof}

We are now ready to establish existence for the auxiliary initial value problem
\eqref{eq: reduced formulation 1}, assuming compatible initial data.

\begin{theorem}\label{thm: well-posed reduced problem}
For each compatible initial data $(\wh\bp_0, \wh\br_0)\in \cD$
and each $(\wh{g}_{\btau_p},\wh{g}_{w_p}) \in \W^{1,1}(0,T;\bbX_{p,2}') \times \W^{1,1}(0,T; \W_{p,2}')$, there exists a solution to \eqref{eq: reduced formulation 1} with
$(\bsi_p(0), p_p(0))=(\wh\bsi_{p,0}, \wh p_{p,0})$ and $(\bu_f, p_f,\bsi_p, \bu_s, \bgamma_p, \linebreak \bu_p, p_p,\lambda, \btheta)
  \in \L^\infty(0,T;\bV_f)\times
\L^\infty(0,T;\W_f)\times
\W^{1,\infty}(0,T; \bbL^2(\Omega_p))\cap \L^{\infty}(0,T; \bbX_p)\times
\L^\infty(0,T;\bV_s)\times
\L^\infty(0,T;\bbQ_p)\times
\L^\infty(0,T;\bV_p)\times
\W^{1,\infty}(0,T;\W_p)  \times
\L^\infty(0,T;\Lambda_p)\times
  \L^\infty(0,T;\bLambda_s)$.
\end{theorem}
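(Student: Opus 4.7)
The plan is to apply the semigroup result in Theorem~\ref{thm: Showalter} directly with the identifications made in \eqref{eq: definition N and M} and \eqref{defn-Eb}, taking $E = \bQ \times \bS$, the operator $\cN$ built from $\cE_1$, and $\cM$ built from $\cA$ and $\cB$ as indicated. The source term is $f = (\wh{\bF}, \0)^{\rt}$ with $\wh{\bF} = (\0, 0, 0, \wh{g}_{\btau_p}, \wh{g}_{w_p})^{\rt}$, which by assumption lies in $\W^{1,1}(0,T; E_b')$. The compatible initial datum $(\wh\bp_0, \wh\br_0)$ is given to lie in $\cD$.

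To discharge the hypotheses of Theorem~\ref{thm: Showalter}, I would first verify that $\cN$ is linear, symmetric, and monotone. Linearity is immediate from the block structure of $\cE_1$, symmetry follows from the symmetry of the compliance tensor $A$ and the symmetric appearance of the off-diagonal blocks $A_e^{sp}$ and $(A_e^{sp})'$, and monotonicity is precisely \eqref{eq: continuous monotonicity 3}. Under this seminorm $|\bq|_{\cN}^2 = s_0\|w_p\|_{\L^2(\Omega_p)}^2 + \|A^{1/2}(\btau_p + \alpha w_p \bI)\|_{\bbL^2(\Omega_p)}^2$, which thanks to $s_0 > 0$ and \eqref{eq: def compliance tensor A 2} is equivalent to $\|\btau_p\|_{\bbL^2(\Omega_p)}^2 + \|w_p\|_{\L^2(\Omega_p)}^2$, the completion $E_b'$ is exactly $\bQ_{2,0}' \times \bS_{2,0}'$ as defined in \eqref{defn-Eb}. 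The monotonicity of $\cM$ reduces to the monotonicity of $\cA$ established in Lemma~\ref{lem: continuous monotonicity}, since the off-diagonal block $\cB, -\cB'$ contributes skew-symmetrically. The range condition $Rg(\cN + \cM) = E_b'$ is exactly the content of Lemma~\ref{lem: range condition}.

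With all three ingredients in hand, Theorem~\ref{thm: Showalter} produces a solution $u = (\bp, \br)$ of \eqref{show}, which translates into \eqref{eq: reduced formulation 1} with $\cN u(0) = \cN u_0$, yielding $(\bsi_p(0), p_p(0)) = (\wh\bsi_{p,0}, \wh p_{p,0})$ thanks to the equivalence noted above. From $\cN u \in \W^{1,\infty}(0,T; E_b')$ I would read off $\bsi_p, p_p \in \W^{1,\infty}(0,T; \bbL^2(\Omega_p) \times \L^2(\Omega_p))$, and from the fact that $u(t) \in \cD$ for a.e. $t$, together with the bounds \eqref{eq: inf-sup app epsilon}--\eqref{pf-eps-inf-sup} applied pointwise in time to the first equation of \eqref{eq: reduced formulation 1}, I would deduce the $\L^\infty$-in-time regularity for all remaining variables. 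In particular, the inf-sup conditions \eqref{eq: continuous inf-sup 2}--\eqref{eq: continuous inf-sup 4} furnish $\L^\infty$ bounds for $\bu_s, \bgamma_p, \btheta$ and for $p_f, \lambda$, while using $\nabla \cdot \bbX_p = \bV_s$ and $\nabla \cdot \bV_p = \W_p$ lifts the $\bbL^2$ bound for $\bsi_p$ to $\bbX_p$ and the $\L^2$ bound for $\bu_p$ to $\bV_p$. This produces the claimed space--time regularity for $(\bu_f, p_f, \bsi_p, \bu_s, \bgamma_p, \bu_p, p_p, \lambda, \btheta)$.

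The only genuinely delicate point, and the one I expect to be the main obstacle, is the passage from $\cN u \in \W^{1,\infty}(0,T; E_b')$ to the stronger statement $\bsi_p \in \W^{1,\infty}(0,T; \bbL^2(\Omega_p)) \cap \L^\infty(0,T; \bbX_p)$, since the Showalter framework controls only the combination $A(\bsi_p + \alpha p_p \bI)$ in time. Here one must invoke positive definiteness of $A$ to separately control $\partial_t \bsi_p$ and $\partial_t p_p$ in $\bbL^2(\Omega_p) \times \L^2(\Omega_p)$, and then use the algebraic constraint $\nabla \cdot \bsi_p = -\f_p \in \L^\infty(0,T; \bV_s')$ (which in this reduced problem is $\0$) to obtain $\L^\infty$ control in $\bbH(\div; \Omega_p)$. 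The remaining variables then inherit their stated regularity from the pointwise-in-time inf-sup arguments already used in the proof of Lemma~\ref{lem: well-posedness 1}.
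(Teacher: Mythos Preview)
Your proposal is correct and follows essentially the same approach as the paper: apply Theorem~\ref{thm: Showalter} via Lemma~\ref{lem:oper-prop} and Lemma~\ref{lem: range condition} to obtain $\bsi_p \in \W^{1,\infty}(0,T;\bbL^2(\Omega_p))$ and $p_p \in \W^{1,\infty}(0,T;\W_p)$, then recover the $\L^\infty$-in-time regularity of the remaining variables from the equations \eqref{eq: reduced formulation 1} and the inf-sup conditions of Lemma~\ref{lem: continuous inf-sup}. Your write-up is in fact more explicit than the paper's on the separation of $\partial_t\bsi_p$ and $\partial_t p_p$ and on the lift to $\bbX_p$ and $\bV_p$ via the divergence surjectivity, but the logical structure is identical.
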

\begin{proof}
  Using Lemma~\ref{lem:oper-prop} and Lemma~\ref{lem: range condition},
  we apply
Theorem \ref{thm: Showalter} with $E$, $\cN$ and $\cM$ defined in
\eqref{eq: definition N and M} to obtain existence of a solution to
\eqref{eq: reduced formulation 1} with $\bsi_p \in \W^{1,\infty}(0,T;
\bbL^2(\Omega_p))$ and $p_p\in \W^{1,\infty}(0,T;\W_p)$. From the equations
\eqref{eq: reduced formulation 1} and the inf-sup conditions in
Lemma~\ref{lem: continuous inf-sup} we can further deduce that 
$\bu_f \in \L^\infty(0,T;\bV_f)$, $p_f \in \L^\infty(0,T;\W_f)$, 
$\bsi_p \in \L^{\infty}(0,T; \bbX_p)$, $\bu_s \in \L^\infty(0,T;\bV_s)$,
$\bgamma_p \in \L^\infty(0,T;\bbQ_p)$, $\bu_p \in \L^\infty(0,T;\bV_p)$,
$\lambda \in \L^\infty(0,T;\Lambda_p)$, and $\btheta \in \L^\infty(0,T;\bLambda_s)$.
\end{proof}

We will employ Theorem~\ref{thm: well-posed reduced problem} to obtain
existence of a solution to our problem \eqref{eq: continuous
  formulation 2}. To that end, we first construct compatible initial
data $(\bp_0,\br_0)$.

\begin{lemma}\label{lem: initial condition}
Assume that the initial data $p_{p,0}\in \W_p \cap \H$, where
\begin{align}
  \ds \H:=\big\{ w_p \in \H^1(\Omega_p): \bK\nabla w_p \in \bH^1(\Omega_p),
  \ \bK\nabla w_p \cdot \bn_p = 0 \qon \Gamma_p^{N_v},  \ w_p=0 \qon \Gamma_p^{D_p} \big\}.
  \label{eq: initial condition H}
\end{align}
Then, there exist $\bp_0:=(\bu_{f,0}, \btheta_0, \bu_{p,0}, \bsi_{p,0}, p_{p,0}) \in \bQ$
and $\br_0:=(p_{f,0}, \bu_{s,0}, \bgamma_{p,0}, \lambda_0)\in \bS$ such that
\begin{equation}\label{init-data}
  \arraycolsep=1.7pt
  \begin{array}{rcl}    
\cA \bp_0 + \cB'\br_0 & = & \wh{\bF}_0  \qin  \bQ_{2}', \\[1ex]
    - \cB \bp_0 & = & \bG(0) \qin \bS',
  \end{array}
\end{equation}
where $\wh{\bF}_0  = (\f_f(0),0,0,\wh{g}_{\btau_p},\wh{g}_{w_p})^\rt \in \bQ_{2}'$, 
with suitable $\wh{g}_{\btau_p} \in \bbX_{p,2}'$ and $\wh{g}_{w_p} \in \W_{p,2}'$.
\end{lemma}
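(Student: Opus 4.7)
The plan is to construct the initial data in three stages: (i) determine $\bu_{p,0}$ and $\lambda_0$ explicitly from $p_{p,0}$ via Darcy's law; (ii) solve a reduced steady Stokes--mixed-elasticity coupled problem for the remaining unknowns $(\bu_{f,0},p_{f,0},\btheta_0,\bsi_{p,0})$, treating $\lambda_0$ and $\bu_{p,0}\cdot\bn_p$ as known interface data; and (iii) construct $\bu_{s,0},\bgamma_{p,0}$ by a lifting procedure and define the source terms $\wh{g}_{\tau_p},\wh{g}_{w_p}$ so that \eqref{init-data} holds.

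Stage 1. Since $p_{p,0}\in\H$, set $\bu_{p,0}:=-\mu^{-1}\bK\nabla p_{p,0}$ and $\lambda_0:=p_{p,0}|_{\Gamma_{fp}}$. The conditions built into $\H$ (namely $\bK\nabla p_{p,0}\in\bH^1(\Omega_p)$ with $\bK\nabla p_{p,0}\cdot\bn_p=0$ on $\Gamma_p^{N_v}$) place $\bu_{p,0}\in\bV_p$, while the trace theorem together with $p_{p,0}=0$ on $\Gamma_p^{D_p}$ places $\lambda_0\in\H^{1/2}(\Gamma_{fp})=\Lambda_p$. Darcy's law $\mu\bK^{-1}\bu_{p,0}+\nabla p_{p,0}=\0$ then holds pointwise, and an integration by parts against any $\bv_p\in\bV_p$ verifies the third row of \eqref{init-data}. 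The fifth row is handled by setting $\wh{g}_{w_p}(\,\cdot\,):=-(\nabla\cdot\bu_{p,0},\,\cdot\,)_{\Omega_p}\in\W_{p,2}'$.

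Stage 2. The remaining equations of \eqref{init-data} -- rows 1 and 2 of $\cA\bp_0+\cB'\br_0=\wh{\bF}_0$ together with the four rows of $-\cB\bp_0=\bG(0)$ -- form a steady coupled Stokes--weakly-symmetric-mixed-elasticity saddle-point problem for $(\bu_{f,0},p_{f,0},\btheta_0,\bsi_{p,0})\in\bV_f\times\W_f\times\bLambda_s\times\bbX_p$. Here $\lambda_0$ enters as known interface data via $\langle\bv_f\cdot\bn_f,\lambda_0\rangle_{\Gamma_{fp}}$ and $\langle\bphi\cdot\bn_p,\lambda_0\rangle_{\Gamma_{fp}}$, and the flux-continuity equation becomes $\bu_{f,0}\cdot\bn_f+\btheta_0\cdot\bn_p=-\bu_{p,0}\cdot\bn_p$ on $\Gamma_{fp}$. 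Well-posedness follows by mimicking the regularization-plus-Lax--Milgram argument of Lemma~\ref{lem: well-posedness 1}: augment with $\epsilon$ times $R_{\bsi_p},R_{\btheta},L_{p_f}$ to recover coercivity, apply Lax--Milgram, and pass $\epsilon\to 0$ using the inf-sup bounds of Lemma~\ref{lem: continuous inf-sup}. The fact that $\lambda$ is no longer an unknown only simplifies the analysis.

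Stage 3. Construct $\bu_{s,0}\in\bH^1(\Omega_p)$ as the solution of an elliptic lifting problem with trace $\btheta_0$ on $\Gamma_{fp}$, vanishing trace on $\Gamma_p^{D_d}$, and homogeneous Neumann data elsewhere, and set $\bgamma_{p,0}:=\tfrac12(\nabla\bu_{s,0}-\nabla\bu_{s,0}^{\rt})$. Define
\begin{equation*}
\wh{g}_{\tau_p}(\btau_p) \,:=\, b_s(\btau_p,\bu_{s,0})+b_{\sk}(\btau_p,\bgamma_{p,0})-b_n^p(\btau_p,\btheta_0),
\end{equation*}
which enforces row 4 by construction. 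The main technical obstacle is to verify that $\wh{g}_{\tau_p}\in\bbX_{p,2}'$, i.e.\ that this functional is bounded in the weaker norm $\|\btau_p\|_{\bbL^2(\Omega_p)}$ rather than only in $\|\btau_p\|_{\bbX_p}$. Integration by parts in $b_s$, combined with the boundary conditions $\btau_p\bn_p=\0$ on $\Gamma_p^{N_s}$, $\bu_{s,0}=\0$ on $\Gamma_p^{D_d}$, and $\bu_{s,0}|_{\Gamma_{fp}}=\btheta_0$, collapses the boundary contribution and leaves
\begin{equation*}
\wh{g}_{\tau_p}(\btau_p) \,=\, -(\btau_p,\nabla\bu_{s,0})_{\Omega_p}+(\btau_p,\bgamma_{p,0})_{\Omega_p},
\end{equation*}
yielding the bound $|\wh{g}_{\tau_p}(\btau_p)|\le C(\|\nabla\bu_{s,0}\|_{\bbL^2(\Omega_p)}+\|\bgamma_{p,0}\|_{\bbL^2(\Omega_p)})\|\btau_p\|_{\bbL^2(\Omega_p)}$, as required. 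With all five rows of \eqref{init-data} verified, $(\bp_0,\br_0)\in\bQ\times\bS$ is the sought compatible initial datum and a direct check shows it lies in the domain $\cD$.
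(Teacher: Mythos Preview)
Your Stage 1 and Stage 3 are sound and close to the paper's own construction; in particular your integration-by-parts argument in Stage 3 is a perfectly valid alternative to the paper's step 5 (which obtains $\wh g_{\btau_p}(\btau_p)=-(A\wh\bsi_{p,0},\btau_p)$ by solving an auxiliary mixed-elasticity problem with Dirichlet datum $\btheta_0$).

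The gap is in Stage 2. As written, your ``coupled Stokes--mixed-elasticity saddle-point problem'' has four unknowns $(\bu_{f,0},p_{f,0},\btheta_0,\bsi_{p,0})$ but six independent test-function equations (rows $\bv_f,\bphi,w_f,\bv_s,\bchi_p,\xi$). The constraints $-b_s(\bsi_{p,0},\bv_s)=(\f_p(0),\bv_s)$ and $-b_{\sk}(\bsi_{p,0},\bchi_p)=0$ need Lagrange multipliers to form a square saddle-point system, and those multipliers are exactly $\bu_s,\bgamma_p$, which you have deferred to Stage 3; likewise the multiplier for the flux row is $\lambda$, which you have already frozen as data. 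Lax--Milgram after regularization, as in Lemma~\ref{lem: well-posedness 1}, applies to a square system and does not rescue a non-square one. A second symptom of the same problem: with row 4 ($\btau_p$) excluded, the divergence and weak-symmetry constraints together with $\bsi_{p,0}\bn_p$ prescribed on $\Gamma_{fp}$ do \emph{not} determine $\bsi_{p,0}$ uniquely.

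The paper avoids this by fully decoupling Stage 2. It first solves a pure Stokes problem for $(\bu_{f,0},p_{f,0})$ with Neumann datum $\lambda_0$ and BJS term written using $\bu_{p,0}\cdot\bt_{f,j}$ (not $\btheta$); then it solves a pure mixed-elasticity problem for $(\bsi_{p,0},\bbeta_{p,0},\brho_{p,0},\bpsi_0)$ with analogous boundary data, which comes equipped with its own multipliers and is well-posed by standard theory. Only afterwards is $\btheta_0$ \emph{defined} by $\btheta_0:=\bu_{f,0}-\bu_{p,0}$ on $\Gamma_{fp}$. This single choice simultaneously forces the flux identity (since $\bn_f=-\bn_p$) and makes $(\bu_{f,0}-\btheta_0)\cdot\bt_{f,j}=\bu_{p,0}\cdot\bt_{f,j}$, so the BJS terms in both subproblems match the coupled form a posteriori. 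If you want to keep your Stage 2 coupled, you must reinstate $\bu_s,\bgamma_p$ (or analogues) as unknowns and include the $\btau_p$ row; otherwise, adopt the paper's decoupling trick.
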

\begin{proof}
Our approach is to solve a sequence of well-defined subproblems, using
the previously obtained solutions as data to guarantee that we obtain
a solution of the coupled problem \eqref{init-data}. We proceed as follows.

1. Define $\ds \bu_{p,0}:=-\mu^{-1}\bK\nabla p_{p,0} \in \bH^1(\Omega_p)$,
with $p_{p,0}\in \W_p\cap \H$, cf. \eqref{eq: initial condition H}. It follows that
\begin{equation*}
\mu \bK^{-1} \bu_{p,0} = -\nabla p_{p,0},
\quad 
\nabla \cdot \bu_{p,0} = -\mu^{-1}\nabla \cdot (\bK \nabla p_{p,0}) \qin \Omega_p,
\quad
\bu_{p,0}\cdot\bn_p = 0 \qon \Gamma_p^{N_v}.
\end{equation*}
Next, define $\lambda_0=p_{p,0} \vert_{\Gamma_{fp}}\in
\Lambda_p$. Testing the first two equations above with $\bv_p \in \bV_p$ and
$w_p \in \W_p$, respectively, we obtain
\begin{equation}\label{u_p0}
  \begin{array}{ll}
    \ds a_p(\bu_{p,0}, \bv_p) + b_p(\bv_p, p_{p,0})
    + \langle \bv_p \cdot \bn_p, \lambda_0 \rangle_{\Gamma_{fp}} = 0,
    & \ \forall \, \bv_p \in \bV_p, \\[2ex]
    \ds -b_p(\bu_{p,0}, w_p) = -\mu^{-1}(\nabla \cdot (\bK \nabla p_{p,0}), w_p)_{\Omega_p},
& \ \forall\, w_p \in \W_p.
  \end{array}
\end{equation}

2. Define $(\bu_{f,0}, p_{f,0}) \in \bV_f \times \W_f$ such that
\begin{equation}\label{u_f0}
  \begin{array}{ll}
    \ds a_f(\bu_{f,0}, \bv_f) + b_f(\bv_f,p_{f,0}) 
  & \qquad\quad \\
  \ds = - \sum_{j=1}^{n-1}\langle \mu \alpha_{\BJS} \sqrt{\bK_j^{-1}} \bu_{p,0}
    \cdot \bt_{f,j}, \bv_f \cdot \bt_{f,j} \rangle_{\Gamma_{fp}}
    - \langle \bv_f \cdot \bn_f, \lambda_0 \rangle_{\Gamma_{fp}}  + (\f_f(0),\bv_f)_{\Omega_f},
& \ \forall \,\bv_f \in \bV_f, \\ [2ex]
\ds -b_f(\bu_{f,0}, w_f) = (q_f(0),w_f), & \ \forall \, w_f \in \W_f.
  \end{array}
\end{equation}
This is a well-posed problem, since it corresponds to the weak
solution of the Stokes system with mixed boundary conditions on $\Gamma_{fp}$.
Note that $\lambda_0$ and $\bu_{p,0}$ are data for this problem.

\medskip
3. Define $\ds (\bsi_{p,0}, \bbeta_{p,0}, \brho_{p,0}, \bpsi_0) \in \bbX_p
\times \bV_s \times \bbQ_p \times \bLambda_s $ such that
\begin{equation}\label{sigma_p0}
  \begin{array}{ll}
    \ds (A \bsi_{p,0}, \btau_p)_{\Omega_p}
    + b_s( \btau_p, \bbeta_{p,0})
    + b_{\sk}(\btau_p, \brho_{p,0})
    - b_n^p(\btau_p, \bpsi_0)
    = - (A \alpha p_{p,0} \bI,\btau_p)_{\Omega_p}, 
& \ \forall \, \btau_p \in \bbX_p, \\[2ex]
    \ds b_n^p(\bsi_{p,0}, \bphi) = \sum_{j=1}^{n-1}\langle \mu \alpha_{\BJS} \sqrt{\bK_j^{-1}} \bu_{p,0}
    \cdot \bt_{f,j}, \bphi \cdot \bt_{f,j} \rangle_{\Gamma_{fp}}
- \langle \bphi \cdot \bn_p, \lambda_0 \rangle_{\Gamma_{fp}},
& \ \forall \, \bphi \in \bLambda_s, \\[2ex]
\ds  - b_s ( \bsi_{p,0}, \bv_s ) = (\f_p(0),\bv_s)_{\Omega_p}, & \ \forall \, \bv_s \in \bV_s, \\[2ex]
\ds  - b_{\sk} ( \bsi_{p,0}, \bchi_p ) = 0, & \ \forall \, \bchi_p \in \bbQ_p.
  \end{array}
\end{equation}
This is a well-posed problem corresponding to the weak solution of
the mixed elasticity system with mixed boundary conditions on $\Gamma_{fp}$.
Note that $p_{p,0}$, $\bu_{p,0}$ and $\lambda_0$ are
data for this problem. Here $\bbeta_{p,0}$, $\brho_{p,0}$, and
$\bpsi_0$ are auxiliary variables that are not part of the constructed initial data.
However, they can be used to recover the variables $\bbeta_{p}$, $\brho_{p}$, and
$\bpsi$ that satisfy the non-differentiated equation \eqref{non-diff-eq}.

\medskip
4. Define $\btheta_0 \in \bLambda_s$ as
\begin{equation}\label{theta_0}
\btheta_0 =  \bu_{f,0} - \bu_{p,0} 
\qon \Gamma_{fp}, 
\end{equation}
where $\bu_{f,0}$ and $\bu_{p,0}$ are data obtained in the previous
steps. Note that \eqref{theta_0} implies that the BJS terms in
\eqref{u_f0} and \eqref{sigma_p0} can be rewritten with
$\bu_{p,0}\cdot \bt_{f,j}$ replaced by $(\bu_{f,0}-\btheta_0)\cdot \bt_{f,j}$ and
that \eqref{weak-form-8} holds for the initial data.

\medskip
5. Define $(\wh{\bsi}_{p,0}, \bu_{s,0}, \bgamma_{p,0}) \in \bbX_p \times \bV_s \times \bbQ_p$ such that
\begin{equation}\label{u_s0}
  \begin{array}{ll}
    \ds (A \wh\bsi_{p,0}, \btau_p)_{\Omega_p}  + b_s( \btau_p, \bu_{s,0})
    + b_{\sk}(\btau_p, \bgamma_{p,0}) = b_n^p(\btau_p, \btheta_0), 
& \ \forall \, \btau_p \in \bbX_p, \\[2ex]
\ds  -b_s ( \wh\bsi_{p,0}, \bv_s ) = 0, & \ \forall \, \bv_s \in \bV_s, \\[2ex]
\ds  - b_{\sk} ( \wh\bsi_{p,0}, \bchi_p ) = 0, & \ \forall \, \bchi_p \in \bbQ_p.
  \end{array}
\end{equation}
This is a well-posed problem, since it corresponds to the weak solution of the
mixed elasticity system with Dirichlet data $\btheta_0$ on $\Gamma_{fp}$. We note that
$\wh\bsi_{p,0}$ is an auxiliary variable not used in the initial data.

Combining \eqref{u_p0}--\eqref{u_s0}, we obtain
$(\bu_{f,0}, \btheta_0, \bu_{p,0}, \bsi_{p,0}, p_{p,0})\in \bQ$ and
$(p_{f,0}, \bu_{s,0}, \bgamma_{p,0}, \lambda_0) \in \bS$ satisfying \eqref{init-data} with
\begin{equation*}
  (\wh{g}_{\btau_p}, \btau_p)_{\Omega_p} = - (A(\wh{\bsi}_{p,0}),\btau_p)_{\Omega_p}, \quad
(\wh{g}_{w_p}, w_p)_{\Omega_p}
  = - b_p( \bu_{p,0}, w_p).
\end{equation*}
The above equations imply
\begin{equation*}
  \Vert \wh{g}_{\btau_p} \Vert_{\bbL^2(\Omega_p)} + \Vert \wh{g}_{w_p} \Vert_{\L^2(\Omega_p)}
  \leq C( 
  \Vert \wh{\bsi}_{p,0} \Vert_{\bbL^2(\Omega_p)} + \Vert \nabla \cdot \bu_{p,0} \Vert_{\L^2(\Omega_p)}),
\end{equation*}
hence $(\wh{g}_{\btau_p}, \wh{g}_{w_p})\in \bbX_{p,2}'\times \W_{p,2}'$, completing the proof.
\end{proof}

We are now ready to prove the main result of this section.

\begin{theorem}\label{thm: well-posedness continuous}
  For each compatible initial data $(\bp_0, \br_0)\in \cD$ constructed in
  Lemma~\ref{lem: initial condition} and each
\begin{equation*}
\f_f \in \W^{1,1}(0,T;\bV_f'),\quad 
\f_p\in \W^{1,1}(0,T;\bV_s'),\quad 
q_f\in \W^{1,1}(0,T;\W_f'),\quad 
q_p\in \W^{1,1}(0,T;\W_p'),
\end{equation*}
there exists a unique solution of \eqref{weak-form}
$(\bu_f, p_f,\bsi_p, \bu_s, \bgamma_p, \bu_p, p_p,\lambda, \btheta)
\in \L^\infty(0,T;\bV_f)\times
\L^\infty(0,T;\W_f)\times \linebreak 
\W^{1,\infty}(0,T; \bbL^2(\Omega_p))\cap \L^{\infty}(0,T; \bbX_p)\times
\L^\infty(0,T;\bV_s)\times
\L^\infty(0,T;\bbQ_p)\times
\L^\infty(0,T;\bV_p)\times
\W^{1,\infty}(0,T;\W_p)  \times
\L^\infty(0,T; \Lambda_p)\times
\L^\infty(0,T;\bLambda_s)$ with $(\bsi_p(0),p_p(0)) = (\bsi_{p,0},p_{p,0})$.
\end{theorem}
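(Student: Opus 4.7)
The plan is to combine the three preparatory results of this section -- the range condition of Lemma~\ref{lem: range condition}, the abstract existence in Theorem~\ref{thm: well-posed reduced problem}, and the construction of compatible initial data in Lemma~\ref{lem: initial condition} -- via a translation argument that absorbs the non-$(\btau_p,w_p)$ components of the source into a stationary solve. This translation is needed because $\cE_1$ is only non-zero in the stress-pressure block, so the abstract theorem can only process source terms whose images lie in $\bQ_{2,0}'$.

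Concretely, I would first apply Lemma~\ref{lem: well-posedness 1} pointwise in $t$ to define $(\hat\bp(t),\hat\br(t))\in \bQ\times\bS$ solving
\begin{equation*}
\cA\,\hat\bp(t) + \cB'\,\hat\br(t) \,=\, \bF(t)\qin \bQ_2',\qquad -\cB\,\hat\bp(t) \,=\, \bG(t)\qin \bS',
\end{equation*}
so that the full data $\f_f,\f_p,q_f,q_p$ are carried by $\hat\bp,\hat\br$. By linearity and the a priori bounds derived in the proof of Lemma~\ref{lem: well-posedness 1}, the $\W^{1,1}$-in-time regularity of the data transfers to $(\hat\bp,\hat\br)$. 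Setting $\tilde\bp:=\bp-\hat\bp$ and $\tilde\br:=\br-\hat\br$, the couple $(\tilde\bp,\tilde\br)$ must solve the reduced evolution system \eqref{eq: reduced formulation 1} with right-hand side $-\partial_t\cE_1\hat\bp$, which indeed lies in $\bQ_{2,0}'$ since $\cE_1$ only acts on the $(\bsi_p,p_p)$ block. The translated initial datum $(\bp_0-\hat\bp(0),\br_0-\hat\br(0))$ lies in $\cD$ because both $(\bp_0,\br_0)$ and $(\hat\bp(0),\hat\br(0))$ satisfy the stationary system with the same external load at $t=0$. Theorem~\ref{thm: well-posed reduced problem} then yields $(\tilde\bp,\tilde\br)$ in the asserted regularity class, and $(\bp,\br)=(\tilde\bp+\hat\bp,\tilde\br+\hat\br)$ is a solution of \eqref{weak-form}. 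The individual regularity statements for $\bu_f,p_f,\bu_s,\bgamma_p,\bu_p,\lambda,\btheta$ follow by applying the three inf-sup conditions of Lemma~\ref{lem: continuous inf-sup} pointwise in time to the equations.

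For uniqueness, I would take two solutions with identical data and initial conditions, form the difference $(\delta\bp,\delta\br)$, and test the homogeneous evolution system with $(\delta\bp,\delta\br)$ itself. The constraint equation $-\cB\,\delta\bp=\0$ eliminates the Lagrange multiplier pairing, leaving
\begin{equation*}
\tfrac12\tfrac{d}{dt}(\cE_1\,\delta\bp,\delta\bp) + (\cA\,\delta\bp,\delta\bp) \,=\, 0.
\end{equation*}
The monotonicity bounds \eqref{eq: continuous monotonicity 1} and \eqref{eq: continuous monotonicity 3}, which invoke Korn's inequality, the $\bK^{-1}$-coercivity, and the BJS form, imply both terms are non-negative. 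Integration from $0$ to $t$ combined with $\delta\bsi_p(0)=\0$, $\delta p_p(0)=0$ forces $A^{1/2}(\delta\bsi_p+\alpha\delta p_p\bI)=\0$, $s_0^{1/2}\delta p_p=0$, $\delta\bu_f=\0$, $\delta\bu_p=\0$, and $\delta\bu_f-\delta\btheta=\0$ on $\Gamma_{fp}$. The remaining components $\delta p_f,\delta\bu_s,\delta\bgamma_p,\delta\lambda,\delta\btheta,\delta\bsi_p$ are then recovered as zero by invoking the inf-sup conditions \eqref{eq: continuous inf-sup 2}--\eqref{eq: continuous inf-sup 4} in the spirit of \eqref{eq: inf-sup app epsilon}--\eqref{pf-eps-inf-sup}.

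The main technical obstacle is the translation step: verifying that $(\bp_0-\hat\bp(0),\br_0-\hat\br(0))\in\cD$ and that the translated right-hand side $-\partial_t\cE_1\hat\bp$ has the correct $\W^{1,1}(0,T;\bQ_{2,0}')$-regularity required by Theorem~\ref{thm: well-posed reduced problem}. Because the resolvent problem solved by $\hat\bp$ is linear and time-independent in its operator, differentiation in $t$ of its data transfers into differentiation of the solution, and the $\epsilon$-independent bounds obtained in the proof of Lemma~\ref{lem: well-posedness 1} give the needed time-regularity. The degeneracy of $\cE_1$ means the initial condition only constrains the $(\bsi_p,p_p)$ components, which is exactly what Lemma~\ref{lem: initial condition} is designed to accommodate.
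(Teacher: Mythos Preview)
Your overall translation-plus-abstract-existence strategy matches the paper's, and your uniqueness argument is essentially identical to the one given there. However, there is a genuine gap in the existence part: you invoke Lemma~\ref{lem: well-posedness 1} to produce $(\hat\bp(t),\hat\br(t))$ solving
\[
\cA\,\hat\bp(t) + \cB'\,\hat\br(t) = \bF(t),\qquad -\cB\,\hat\bp(t) = \bG(t),
\]
but that lemma is stated and proved for the \emph{resolvent} system $(\cE_1+\cA)\,\bp + \cB'\,\br = \hat\bF$, not for $\cA$ alone. The distinction matters: without $\cE_1$, the operator $\cA$ fails to be coercive on $\ker\cB$ in the $(\bsi_p,p_p)$-block. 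Indeed, for any symmetric $\btau_p\in\bbX_p$ with $\nabla\cdot\btau_p=\0$ and $\btau_p\bn_p=\0$ on $\Gamma_{fp}$, the tuple $\bq=(\0,\0,\0,\btau_p,0)$ lies in $\ker\cB$ and $(\cA\,\bq,\bq)=0$. The $\cE_1$ contribution is precisely the constitutive (stress--strain) piece $\|A^{1/2}(\btau_p+\alpha w_p\bI)\|^2 + s_0\|w_p\|^2$, and the $\epsilon$-independent bounds in the proof of Lemma~\ref{lem: well-posedness 1} rely on it to control $\bsi_{p,\epsilon}$. So your auxiliary stationary problem is not covered by the available tools and is in fact ill-posed.

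The paper repairs this by translating with the \emph{resolvent} solution: for each $t$ one solves $(\cE_1+\cA)\,\tilde\bp(t)+\cB'\,\tilde\br(t)=\bF(t)$, $-\cB\,\tilde\bp(t)=\bG(t)$ via Lemma~\ref{lem: well-posedness 1}, and then the difference $(\hat\bp,\hat\br)=(\bp-\tilde\bp,\br-\tilde\br)$ satisfies \eqref{eq: reduced formulation 1} with right-hand side $\cE_1\,\tilde\bp - \partial_t\cE_1\,\tilde\bp\in\bQ_{2,0}'$. The verification that $(\bp_0-\tilde\bp(0),\br_0-\tilde\br(0))\in\cD$ proceeds exactly as you outlined, just with the extra $\cE_1\,\tilde\bp(0)$ term appearing harmlessly on the right. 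Once you make this single correction --- replace $\cA$ by $\cE_1+\cA$ in your translation step --- the rest of your argument goes through.
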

\begin{proof}
For each fixed time $t \in [0,T]$, Lemma~\ref{lem: well-posedness 1} implies that there exists a
solution to the resolvent system
\eqref{eq: resolvent formulation 2} with $\wh{\bF}=\bF(t)$ and
$\wh{\bG}=\bG(t)$ defined in \eqref{eq: continuous formulation 3}. In other words, there exist
$(\wt{\bp}(t), \wt{\br}(t))$ such that
\begin{equation}\label{tilde-resolvent}
    \arraycolsep=1.7pt
  \begin{array}{rcl}  
(\cE_1 + \cA) \,\wt{\bp}(t) + \cB'\,\wt{\br}(t) & = & \bF(t)  \qin  \bQ_2', \\[1ex]
- \cB \,\wt{\bp}(t) & = & \bG(t) \qin \bS'. 
\end{array}
\end{equation}
We look for a solution to \eqref{eq: continuous formulation 3} in the form
$\bp(t) = \wt{\bp}(t) + \wh\bp(t)$, $\br(t) = \wt{\br}(t) + \wh\br(t)$. Subtracting
\eqref{tilde-resolvent} from \eqref{eq: continuous formulation 3} leads to the reduced evolution
problem
\begin{equation}\label{reduced}
    \arraycolsep=1.7pt
  \begin{array}{rcl}    
\ds \partial_t \,\cE_1\,\wh{\bp}(t)
+ \cA\,\wh{\bp}(t) + \cB'\,\wh{\br}(t) & = &
\cE_1\,\wt{\bp}(t)- \partial_t\,\cE_1\,\wt{\bp}(t) \qin \bQ_{2,0}',\\[1ex]
\ds - \cB\,\wh{\bp}(t) & = & \0  \qin \bS_{2,0}',
  \end{array}
  \end{equation}
with initial condition $\wh{\bp}(0)=\bp_{0}-\wt{\bp}(0)$ and
$\wh{\br}(0)=\br_{0}-\wt{\br}(0)$. Subtracting \eqref{tilde-resolvent} at $t = 0$ from
\eqref{init-data} gives
\begin{equation*}
  \arraycolsep=1.7pt
  \begin{array}{rcl}    
\cA \, \wh\bp(0) + \cB' \, \wh\br(0) & = & \cE_1\wt\bp(0) + \wh{\bF}_0 - \bF(0)   \qin  \bQ_{2,0}', \\[1ex]
    - \cB \, \wh\bp(0) & = & \0 \qin \bS_{2,0}',
  \end{array}
\end{equation*}
We emphasize that in the above, $\wh{\bF}_0 - \bF(0) =
(\0,0,0,\wh{g}_{\btau_p},\wh{g}_{w_p} - q_p(0))^\rt \in \bQ_{2,0}'$. Therefore,
$\cM \left( \begin{array}{c} \wh\bp(0) \\ \wh\br(0) \end{array} \right) \in E_b'$, i.e.,
$(\wh\bp(0),\wh\br(0)) \in \cD$. Thus, the reduced evolution problem \eqref{reduced}
is in the form of \eqref{eq: reduced formulation 1}. According to Theorem \ref{thm:
  well-posed reduced problem}, it has a
solution, which establishes the existence of a solution to \eqref{weak-form}
with the stated regularity satisfying
$(\bsi_p(0),p_p(0)) = (\bsi_{p,0},p_{p,0})$.

We next show that the solution is unique. Since the problem is linear,
it is sufficient to prove that the problem with zero data has only the
zero solution. Taking $\bF = \bG = \0$ in \eqref{eq: continuous formulation 3}
and testing it with the solution $(\bp,\br)$ yields
\begin{align*}
&\frac{1}{2}\partial_t \left(\Vert A^{1/2}(\bsi_p +\alpha p_p \,\bI)\Vert^2_{\bbL^2(\Omega_p)} 
+ s_0 \Vert p_p \Vert^2_{\L^2(\Omega_p)} \right) + a_p(\bu_p , \bu_p)
+ a_f(\bu_f, \bu_f)
+ a_{\BJS}( \bu_f, \btheta; \bu_f, \btheta)=0.
\end{align*}
Integrating in time from $0$ to $t \in (0, T]$ and using that the initial data is zero,
as well as the coercivity of $a_p$ and $a_f$ and monotonicity of $a_{\BJS}$,
cf. \eqref{eq: continuous monotonicity 1}, we conclude that $\bsi_p = \0$, $p_p = 0$,
$\bu_p = \0$, and $\bu_f = 0$. Then the inf-sup conditions
\eqref{eq: continuous inf-sup 2}--\eqref{eq: continuous inf-sup 4} imply that
$\bu_s = \0$, $\bgamma_p = \0$, $\btheta = \0$, $p_f = 0$, and $\lambda = 0$, using
arguments similar to \eqref{eq: inf-sup app epsilon}--\eqref{pf-eps-inf-sup}.
Therefore the solution of \eqref{eq: continuous formulation 2} is unique.
\end{proof}

\begin{corollary}\label{cor:init-data}
The solution of \eqref{eq: continuous formulation 2}
satisfies $\bu_f(0) = \bu_{f,0}$, $p_f(0) = p_{f,0}$, $\bu_p(0) =
\bu_{p,0}$, $\lambda(0) = \lambda_0$,
and $\btheta(0) = \btheta_0$.
\end{corollary}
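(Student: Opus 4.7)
The plan is to show that the components $\bu_f(0)$, $p_f(0)$, $\bu_p(0)$, $\lambda(0)$, $\btheta(0)$ are uniquely determined by the non-time-differentiated equations of \eqref{weak-form} at $t=0$ once $\bsi_p(0) = \bsi_{p,0}$ and $p_p(0) = p_{p,0}$ are fixed, and then invoke the fact that the constructed initial data from Lemma~\ref{lem: initial condition} satisfies the same equations. Indeed, \eqref{weak-form-1}, \eqref{weak-form-2}, \eqref{weak-form-4}, \eqref{weak-form-5}, \eqref{weak-form-6}, \eqref{weak-form-8}, \eqref{weak-form-9} carry no time derivatives and therefore hold pointwise at $t=0$, while \eqref{u_p0}--\eqref{theta_0} together with the corresponding equations in \eqref{sigma_p0} show that $(\bu_{f,0}, p_{f,0}, \bu_{p,0}, \lambda_0, \btheta_0)$ also satisfies this static system, using $(\bsi_{p,0}, p_{p,0})$ as data.

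The bulk of the work is then a standard uniqueness argument for the differences $\delta\bu_f := \bu_f(0) - \bu_{f,0}$, $\delta p_f := p_f(0) - p_{f,0}$, $\delta\bu_p := \bu_p(0) - \bu_{p,0}$, $\delta\lambda := \lambda(0) - \lambda_0$, $\delta\btheta := \btheta(0) - \btheta_0$. Since $\delta\bsi_p = \0$ and $\delta p_p = 0$, these satisfy the homogeneous version of the static system. I would then test \eqref{weak-form-1} with $\bv_f = \delta\bu_f$, \eqref{weak-form-2} with $w_f = \delta p_f$, \eqref{weak-form-6} with $\bv_p = \delta\bu_p$, the interface equation \eqref{weak-form-8} with $\xi = \delta\lambda$, and \eqref{weak-form-9} with $\bphi = \delta\btheta$, then add them. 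The $b_f(\delta\bu_f,\delta p_f)$ term cancels; all interface duality pairings involving $\delta\lambda$ telescope via $b_\Gamma$; and the two BJS cross terms combine into $|\delta\bu_f - \delta\btheta|^2_{a_{\BJS}}$. What remains is
\begin{equation*}
a_f(\delta\bu_f,\delta\bu_f) + a_p(\delta\bu_p,\delta\bu_p) + |\delta\bu_f - \delta\btheta|^2_{a_{\BJS}} = 0.
\end{equation*}
The coercivity bounds \eqref{eq: continuous monotonicity 1} together with $\delta\bu_f|_{\Gamma_f} = \0$ and Korn's inequality then force $\delta\bu_f = \0$, $\delta\bu_p = \0$, and hence $\delta\btheta = \delta\bu_f|_{\Gamma_{fp}} = \0$ in $\bLambda_s$.

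With these vanishing, what remains of \eqref{weak-form-1} and \eqref{weak-form-6} is $b_f(\bv_f,\delta p_f) + b_p(\bv_p,0) + b_\Gamma(\bv_f,\bv_p,\0;\delta\lambda) = 0$ for all $(\bv_f,\bv_p) \in \bV_f \times \bV_p$. A direct application of the inf-sup condition \eqref{eq: continuous inf-sup 3} then yields $\delta p_f = 0$ and $\delta\lambda = 0$, completing the proof. The main obstacle I anticipate is purely organizational: tracking sign conventions across the bilinear forms and verifying that every interface contribution cancels cleanly when the five tested equations are summed; the analytical core reduces to the coercivity plus inf-sup machinery already used in Lemmas \ref{lem: continuous monotonicity}--\ref{lem: continuous inf-sup} and in the uniqueness part of Theorem~\ref{thm: well-posedness continuous}.
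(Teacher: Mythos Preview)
Your approach matches the paper's almost exactly, but there is one genuine gap in the step where you conclude $\delta\btheta = \0$. From the energy identity you correctly obtain $\delta\bu_f = \0$, $\delta\bu_p = \0$, and $|\delta\bu_f - \delta\btheta|_{a_{\BJS}} = 0$. However, the seminorm $|\cdot|_{a_{\BJS}}$ involves only the \emph{tangential} components $(\cdot)\cdot\bt_{f,j}$, so all you can conclude at this stage is $\delta\btheta\cdot\bt_{f,j} = 0$; the claim ``$\delta\btheta = \delta\bu_f|_{\Gamma_{fp}}$'' is unjustified (and note the paper even allows $\alpha_{\BJS} = 0$, in which case the BJS term gives no control at all).

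To recover the normal component you must go back to \eqref{weak-form-8} as a separate equation rather than only using it inside the energy identity. With $\delta\bu_f = \0$ and $\delta\bu_p = \0$ it reduces to $\langle \delta\btheta\cdot\bn_p, \xi\rangle_{\Gamma_{fp}} = 0$ for all $\xi \in \H^{1/2}(\Gamma_{fp})$; density of $\H^{1/2}(\Gamma_{fp})$ in $\L^2(\Gamma_{fp})$ then gives $\delta\btheta\cdot\bn_p = 0$, hence $\delta\btheta = \0$. This is exactly how the paper closes the argument. After that, your use of the inf-sup condition \eqref{eq: continuous inf-sup 3} to obtain $\delta p_f = 0$ and $\delta\lambda = 0$ is correct.
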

\begin{proof}
Since $\bu_f \in \L^\infty(0,T;\bV_f)$, we can define $\bu_f(0) := \lim_{t \to 0^+} \bu_f(t)$.
Let $\ov\bu_f := \bu_f(0) - \bu_{f,0}$, with a similar definition and notation for
the rest of the variables.  Taking $t \to 0^+$ in all equations without
time derivatives in \eqref{eq: continuous formulation 2} and using
that the initial data $(\bp_0, \br_0)$ satisfies the same equations at
$t=0$, cf. \eqref{init-data}, and that $\ov\bsi_p = \0$ and $\ov p_p = 0$, we obtain
\begin{subequations}
\begin{align}
&\ds (2\mu \bD(\ov\bu_f),\bD(\bv_f))_{\Omega_f}
-(\nabla\cdot \bv_f, \ov p_f)_{\Omega_f} 
+\langle \bv_f\cdot \bn_f,  \ov \lambda \rangle_{\Gamma_{fp}}  \nonumber \\[1ex]
& \qquad +\sum_{j=1}^{n-1}{\langle \mu \alpha_{\BJS}\sqrt{\bK_j^{-1}}(\ov \bu_f - \ov\btheta)\cdot \bt_{f,j},
  \bv_f \cdot \bt_{f,j}\rangle_{\Gamma_{fp}}} = 0 , \label{init-1}  \\[1ex]
&\ds (\nabla\cdot \ov\bu_f, w_f)_{\Omega_f} = 0, \label{init-2} \\[1ex]
&\ds (\mu \bK^{-1}\ov\bu_p,\bv_p)_{\Omega_p}
+\langle \bv_p\cdot \bn_p, \ov\lambda \rangle_{\Gamma_{fp}} = 0, \label{init-6} \\[1ex]
&\ds \langle \ov\bu_f \cdot \bn_f  + \ov\btheta\cdot \bn_p  + \ov\bu_p\cdot \bn_p ,\xi \rangle_{\Gamma_{fp}} =0,
\label{init-8} \\[1ex]
&\ds \langle \bphi\cdot \bn_p, \ov\lambda \rangle_{\Gamma_{fp}}
-\sum_{j=1}^{n-1}{\langle \mu \alpha_{\BJS}\sqrt{\bK_j^{-1}}(\ov\bu_f-\ov\btheta)
  \cdot \bt_{f,j}, \bphi \cdot \bt_{f,j}\rangle_{\Gamma_{fp}}} = 0. \label{init-9}
\end{align}
\end{subequations}
Taking $(\bv_f,w_f,\bv_p,\xi,\bphi) = (\ov\bu_f,\ov
p_f,\ov\bu_p,\ov\lambda,\ov\btheta)$ and combining the equations results in
$$
\|\ov\bu_f\|_{\bH^1(\Omega_f)}^2 + \|\ov\bu_p\|_{\L^2(\Omega_p)}^2 + |\ov\bu_f - \ov\btheta|_{a_{\BJS}}^2 \le 0,
$$
which implies $\ov\bu_f = \0$,  $\ov\bu_p = \0$ and $\ov\btheta \cdot \bt_{f,j}=0$. Then \eqref{init-8} implies that $\langle \ov\btheta\cdot \bn_p  ,\xi \rangle_{\Gamma_{fp}} =0$ for all $\xi \in \H^{1/2}(\Gamma_{fp})$. We note that $\bn_p$ may be discontinuous on
$\Gamma_{fp}$, resulting in $\ov\btheta\cdot \bn_p \in \L^2(\Gamma_{fp})$. However,
since $\H^{1/2}(\Gamma_{fp})$ is dense in $\L^2(\Gamma_{fp})$, we obtain $\ov\btheta\cdot \bn_p=0$, thus $\ov\btheta = \0$.
Using the inf-sup condition
\eqref{eq: continuous inf-sup 3}, together with \eqref{init-1} and \eqref{init-6},
we conclude that $\ov p_f = 0$ and $\ov\lambda = 0$.
\end{proof}

\begin{remark}\label{rem:non-diff-eq}
As we noted in Remark~\ref{rem:time-diff}, the time differentiated
equation \eqref{weak-form-3} can be used to recover the
non-differentiated equation \eqref{non-diff-eq}. In particular,
recalling the initial data construction \eqref{sigma_p0}, let
$$
\forall \, t \in [0,T], \quad  \bbeta_p(t) = \bbeta_{p,0} + \int_0^t \bu_s(s) \, ds,
\quad \brho_p(t) = \brho_{p,0} + \int_0^t \bgamma_p(s) \, ds, \quad
\bpsi(t) = \bpsi_0 + \int_0^t \btheta(s) \, ds.
$$
Then \eqref{non-diff-eq} follows from integrating \eqref{weak-form-3} from $0$ to 
$t \in (0,T]$ and using the first equation in \eqref{sigma_p0}.
\end{remark}

\section{Semi-discrete formulation}\label{sec:semi-discrete}
  
In this section we introduce the semi-discrete continuous-in-time
approximation of \eqref{eq: continuous formulation 3}. We assume for simplicity that
$\Omega_f$ and $\Omega_p$ are polygonal domains. Let
$\mathcal{T}_{h_f}^f$ and $\mathcal{T}_{h_p}^p$ be shape-regular \cite{ciarlet1978}
affine finite element partitions of $\Omega_f$ and $\Omega_p$,
respectively, which may be non-matching along the
interface $\Gamma_{fp}$. Here $h_f$ and $h_p$ are the maximum element diameters in $\Omega_f$ and $\Omega_p$, respectively.
Let $(\bV_{fh}, \W_{fh}) \subset (\bV_{f}, \W_{f})$ be any stable
Stokes finite element pair, such as Taylor-Hood or the MINI elements
\cite{brezzi1991mixed}, and let $(\bV_{ph}, \W_{ph}) \subset (\bV_{p}, \W_{p})$ be any
stable Darcy mixed finite element pair, such as the Raviart-Thomas (RT) or the
Brezzi-Douglas-Marini (BDM) elements \cite{brezzi1991mixed}. Let
$(\bbX_{ph}, \bV_{sh}, \bbQ_{ph}) \subset (\bbX_{p}, \bV_{s}, \bbQ_{p})$ by any stable
finite element triple for mixed elasticity with weak stress symmetry, such as
the spaces developed in \cite{arnold2015,arnold2007mixed,brezzi2008mixed}. We note that these spaces satisfy
\begin{equation}\label{div-prop}
  \nabla\cdot\bV_{ph} = \W_{ph}, \quad \nabla\cdot \bbX_{ph} = \bV_{sh}.
\end{equation}
For the Lagrange multipliers, we choose
non-conforming approximations:
\begin{equation}\label{Lagr-mult}
\Lambda_{ph}  :=  \bV_{ph} \cdot \bn_p \, |_{\Gamma_{fp}}, \quad 
\bLambda_{sh}  :=  \bbX_{ph} \bn_p \, |_{\Gamma_{fp}} \quad \text{with norms} \quad
\|\xi\|_{\Lambda_{ph}}  := \|\xi\|_{\L^2(\Gamma_{fp})}, \quad
\|\bphi\|_{\bLambda_{sh}}  := \|\bphi\|_{\bL^2(\Gamma_{fp})}.
\end{equation}

The semi-discrete continuous-in-time problem is: Given
$\f_f: [0,T] \rightarrow \bV_f'$,
$\f_p: [0,T] \rightarrow \bV_s'$,
$q_f: [0,T] \rightarrow \W_f'$, 
$q_p: [0,T] \rightarrow \W_p'$,
and $(\bsi_{ph,0},p_{ph,0}) \in \bbX_{ph} \times \W_{ph}$, find $(\bu_\fh, p_\fh,
\bsi_\ph, \bu_\sh, \bgamma_\ph,
\bu_\ph, p_\ph,
\lambda_h, \linebreak \btheta_h): [0,T] \rightarrow \bV_\fh \times \W_\fh
\times \bbX_\ph \times \bV_\sh \times \bbQ_\ph
\times \bV_\ph \times \W_\ph
\times \Lambda_\ph \times \bLambda_\sh $ such that
$(\bsi_\ph(0),p_\ph(0)) = (\bsi_{ph,0},p_{ph,0})$ and,
for a.e. $t\in (0,T)$ and for all $\bv_\fh \in \bV_\fh$, $w_\fh \in \W_\fh$,
$\btau_\ph \in \bbX_\ph$, $\bv_\sh \in \bV_\sh$, $\bchi_\ph \in \bbQ_\ph$,
$\bv_\ph \in \bV_\ph$, $w_\ph \in \W_\ph$,
$\xi_h \in \Lambda_\ph$, and $\bphi_h \in \bLambda_\sh$,
\begin{subequations}\label{eq: semidiscrete formulation 1}
\begin{align}
&\ds (2\mu \bD(\bu_\fh),\bD(\bv_\fh))_{\Omega_f}
-(\nabla\cdot \bv_\fh, p_\fh)_{\Omega_f} 
+\langle \bv_\fh\cdot \bn_f,  \lambda_h \rangle_{\Gamma_{fp}}  \nonumber \\[1ex]
& \qquad
+\sum_{j=1}^{n-1}{\langle \mu \alpha_{\BJS}\sqrt{\bK_j^{-1}}(\bu_\fh-\btheta_h)\cdot \bt_{f,j},
  \bv_\fh \cdot \bt_{f,j}\rangle_{\Gamma_{fp}}}
=(\f_f,\bv_\fh)_{\Omega_f}, \label{sd-1}  \\[1ex]
&\ds (\nabla\cdot \bu_\fh, w_\fh)_{\Omega_f}
=(q_f, w_\fh)_{\Omega_f}, \label{sd-2} \\[1ex]
&\ds (\partial_t A(\bsi_\ph+\alpha  p_\ph\bI),\btau_\ph)_{\Omega_p}
+(\nabla \cdot\btau_\ph,\bu_\sh)_{\Omega_p}
+(\btau_\ph,\bgamma_\ph)_{\Omega_p}
-\langle \btau_\ph \bn_p, \btheta_h \rangle_{\Gamma_{fp}}
=0, \label{sd-3}  \\[1ex]
&\ds (\nabla\cdot \bsi_\ph, \bv_\sh)_{\Omega_p} = -(\f_p, \bv_\sh)_{\Omega_p}, \label{sd-4}  \\[1ex]
&\ds (\bsi_\ph,\bchi_\ph)_{\Omega_p}  =0, \label{sd-5} \\[1ex]
&\ds (\mu \bK^{-1}\bu_\ph,\bv_\ph)_{\Omega_p}
-(\nabla \cdot \bv_\ph,p_\ph)_{\Omega_p}
+\langle \bv_\ph\cdot \bn_p, \lambda_h \rangle_{\Gamma_{fp}}
=0, \label{sd-6}  \\[1ex]
&\ds (s_0 \partial_t p_\ph, w_\ph)_{\Omega_p}
+\alpha( \partial_t A(\bsi_\ph+\alpha p_\ph \bI), w_\ph \bI)_{\Omega_p}
+(\nabla \cdot \bu_\ph, w_\ph)_{\Omega_p}
=(q_p, w_\ph)_{\Omega_p}, \label{sd-7}  \\[1ex]
&\ds \langle \bu_\fh \cdot \bn_f  + \btheta_h\cdot \bn_p
+ \bu_\ph\cdot \bn_p ,\xi_h \rangle_{\Gamma_{fp}} =0, \label{sd-8} \\[1ex]
&\ds \langle \bphi_h\cdot \bn_p, \lambda_h \rangle_{\Gamma_{fp}}
-\sum_{j=1}^{n-1}{\langle \mu \alpha_{\BJS}\sqrt{\bK_j^{-1}}(\bu_\fh-\btheta_h)
  \cdot \bt_{f,j}, \bphi_h \cdot \bt_{f,j}\rangle_{\Gamma_{fp}}}
+\langle \bsi_\ph \bn_p, \bphi_h \rangle_{\Gamma_{fp}} =0. \label{sd-9} 
\end{align}
\end{subequations}

\begin{remark}\label{rem: sd eq hold}
  We note that, since $\H^{1/2}(\Gamma_{fp})$ is dense in $\L^2(\Gamma_{fp})$,
  the continuous variational equations 
  \eqref{weak-form-8} and \eqref{weak-form-9} hold for test functions in
  $\L^2(\Gamma_{fp})$, assuming that the solution is smooth enough.
  In particular, they hold for $\xi_h \in \Lambda_{ph}$ and
  $\bphi_h \in \bLambda_{sh}$, respectively. 
\end{remark}

The formulation \eqref{eq: semidiscrete formulation 1} can be equivalently
written as
\begin{equation}\label{eq: semidiscrete formulation 2}
\begin{array}{l}  
\ds  a_f(\bu_{fh},\bv_{fh})
+a_p(\bu_{ph},\bv_{ph})
+a_{\BJS}(\bu_{fh},\btheta_h;\bv_{fh},\bphi_h)
+b_n^p(\bsi_{ph}, \bphi_h)
+b_p(\bv_{ph}, p_{ph})
\\[1ex]
\ds \quad
+b_f(\bv_{fh},p_{fh})
+b_s(\btau_{ph},\bu_{sh})
+b_{\sk}(\btau_{ph}, \bgamma_{ph})
+b_{\Gamma}(\bv_{fh},\bv_{ph},\bphi_h;\lambda_h)
+a_p^p(\partial_t p_{ph}, w_{ph})
\\[1ex]
\ds \quad
+a_e(\partial_t \bsi_{ph}, \partial_t p_{ph}; \btau_{ph}, w_{ph})
-b_n^p(\btau_{ph}, \btheta_h)
-b_p(\bu_{ph}, w_{ph}) 
= (\f_f,\bv_{fh})_{\Omega_f}
+(q_p,w_{ph})_{\Omega_p}, \\[1ex]
\ds -b_f(\bu_{fh}, w_{fh})
-b_s(\bsi_{ph},\bv_{sh})
-b_{\sk}(\bsi_{ph},\bchi_{ph})
-b_{\Gamma}(\bu_{fh}, \bu_{ph}, \btheta_h ;\xi_h)
=(q_f,w_{fh})_{\Omega_f}
+(\f_p,\bv_{sh})_{\Omega_p}.
\end{array}
\end{equation}
We group the spaces and test functions as in the continuous case:
\begin{gather*}
\ds \bQ_h := \bV_{fh} \times \bLambda_{sh} \times \bV_{ph} \times \bbX_{ph} \times \W_{ph},\quad
\ds \bS_h := \W_{fh} \times \bV_{sh} \times \bbQ_{ph} \times \Lambda_{ph}, \\[1ex]
\bp_h := (\bu_{fh}, \btheta_h, \bu_{ph}, \bsi_{ph}, p_{ph})\in \bQ_h,\quad 
\br_h := (p_{fh}, \bu_{sh}, \bgamma_{ph}, \lambda_h) \in \bS_h,\\[1ex]
\bq_h := (\bv_{fh}, \bphi_h, \bv_{ph}, \btau_{ph}, w_{ph})\in \bQ_h,\quad 
\bs_h := (w_{fh}, \bv_{sh}, \bchi_{ph}, \xi_h)\in \bS_h,
\end{gather*}
where the spaces $\bQ_h$ and $\bS_h$ are endowed with the norms, respectively,
\begin{gather*}
\|\bq_h\|_{\bQ_h}  =  \|\bv_{fh}\|_{\bV_{f}}
+\|\bphi_h\|_{\bLambda_{sh}}+ \|\bv_{ph}\|_{\bV_{p}}+\|\btau_{ph}\|_{\bbX_{p}}+\|w_{ph}\|_{\W_{p}},\\[1ex]
\|\bs_h\|_{\bS_h}  = \|w_{fh}\|_{\W_{f}}+\|\bv_{sh}\|_{\bV_{s}}+\|\bchi_{ph}\|_{\bbQ_{p}}
+\|\xi_h\|_{\Lambda_{ph}}. 
\end{gather*}
Hence, we can write \eqref{eq: semidiscrete formulation 2} in an
operator notation as a degenerate evolution problem in a mixed form:
\begin{equation}\label{eq: semidiscrete formulation 3}
    \arraycolsep=1.7pt
  \begin{array}{rcl}  
\ds \partial_t\,\cE_1\,\bp_h(t)
+ \cA\,\bp_h(t) + \cB'\,\br_h(t) & = & \bF(t) \qin \bQ_h', \\[1ex]
\ds - \cB\,\bp_h(t) & = & \bG(t) \qin  \bS_h'.
  \end{array}
\end{equation}

Next, we state the discrete inf-sup conditions. 

\begin{lemma}\label{lem: discrete inf-sup}
There exist positive constants $\beta_{h,1}$, $\beta_{h,2}$, and $\beta_{h,3}$ independent of $h_f$ and $h_p$ such that
\begin{align}
& \ds \beta_{h,1} (\|\bv_{sh}\|_{\bV_{s}}+\|\bchi_{ph}\|_{\bbQ_{p}})\leq 
\underset{\btau_{ph} \in \bbX_{ph} \, \text{s.t. } \btau_{ph}\bn_p=\0 \text{ on } \Gamma_{fp}}{\sup}
\frac{b_s(\btau_{ph},\bv_{sh})+b_{\sk}(\btau_{ph},\bchi_{ph})}{\|\btau_{ph}\|_{\bbX_{p}}}, \nonumber \\[1ex]
& \qquad\qquad\qquad\qquad \forall \, \bv_{sh} \in \bV_{sh}, \, \bchi_{ph} \in \bbQ_{ph},
\label{eq: discrete inf-sup 1} \\[1ex]
& \ds \beta_{h,2}(\|w_{fh}\|_{\W_{f}} + \|w_{ph}\|_{\W_{p}} + \|\xi_h\|_{\Lambda_{ph}} ) \hspace{6cm}  
\nonumber \\[1ex]
& \ds \qquad \leq  
\underset{(\bv_{fh}, \bv_{ph}) \in \bV_{fh}\times\bV_{ph}}{\sup}
\frac{b_f(\bv_{fh},w_{fh})+b_p(\bv_{ph},w_{ph})+
b_{\Gamma}(\bv_{fh}, \bv_{ph}, \0; \xi_h)}{\|(\bv_{fh}, \bv_{ph})\|_{\bV_{f}\times\bV_{p}}}, \nonumber \\[1ex]
& \qquad\qquad\qquad\qquad
\forall \, w_{fh}\in \W_{fh}, w_{ph}\in \W_{ph}, \xi_h \in \Lambda_{ph},
\label{eq: discrete inf-sup 2} \\[1ex]
& \ds \beta_{h,3}\|\bphi_h\|_{\bLambda_{sh}} \leq   \underset{\btau_{ph} \in \bbX_{ph} \,
  {\rm s.t.}\, \nabla \cdot \btau_{ph}=\0 }{\sup} \frac{b_n^p(\btau_{ph}, \bphi_p)}
{\|\btau_{ph}\|_{\bbX_{p}}}, \qquad \forall \, \bphi_h \in \bLambda_{sh}. \label{eq: discrete inf-sup 3}
\end{align}
\end{lemma}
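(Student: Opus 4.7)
The plan is to prove each of the three inf-sup conditions separately, treating them as discrete counterparts of the continuous conditions in Lemma~\ref{lem: continuous inf-sup}. The overall strategy is to exploit the stability of the underlying mixed finite element triples for elasticity, Darcy, and Stokes, together with the specific non-conforming choices $\Lambda_{ph} = \bV_{ph}\cdot\bn_p|_{\Gamma_{fp}}$ and $\bLambda_{sh} = \bbX_{ph}\bn_p|_{\Gamma_{fp}}$ which are tailored precisely so that the relevant interface bilinear forms inherit stability from the bulk spaces.

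For \eqref{eq: discrete inf-sup 1}, I would invoke the known discrete inf-sup condition for the weakly-symmetric mixed elasticity triple $(\bbX_{ph}, \bV_{sh}, \bbQ_{ph})$, cf.~\cite{arnold2007mixed,arnold2015,brezzi2008mixed}. The constraint $\btau_{ph}\bn_p = \0$ on $\Gamma_{fp}$ plays the same role as an essential boundary condition analogous to the $\Gamma_p^{N_s}$ part of the boundary; the standard Fortin-operator construction preserves this zero normal trace (it is built element-by-element and can be taken to vanish on prescribed boundary faces), so the inf-sup constant is inherited with the same $h$-independence.

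For \eqref{eq: discrete inf-sup 2}, I would adapt the argument of Lemmas~3.1--3.2 in \cite{ervin2009} to the discrete setting. Given $(w_{fh}, w_{ph}, \xi_h)$, the idea is to construct $(\bv_{fh},\bv_{ph})$ in two stages. First, exploit $\Lambda_{ph} = \bV_{ph}\cdot\bn_p|_{\Gamma_{fp}}$: choose $\bv_{ph}^{0}\in \bV_{ph}$ whose normal trace equals $\xi_h$ on $\Gamma_{fp}$ and vanishes elsewhere (a local construction on a layer of elements adjacent to $\Gamma_{fp}$, rescaled to have $\|\bv_{ph}^0\|_{\bV_{p}} \le C\|\xi_h\|_{\L^2(\Gamma_{fp})}$). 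Second, use the standard discrete Darcy inf-sup to correct $\bv_{ph}$ so that $b_p(\bv_{ph},w_{ph})$ dominates $\|w_{ph}\|_{\W_p}$, and independently use the discrete Stokes inf-sup in $(\bV_{fh},\W_{fh})$ to choose $\bv_{fh}$ controlling $\|w_{fh}\|_{\W_f}$. A linear combination with suitable weights yields the claimed bound.

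For \eqref{eq: discrete inf-sup 3}, the key observation is that $\bLambda_{sh}$ is defined exactly as the normal trace of $\bbX_{ph}$, so given $\bphi_h \in \bLambda_{sh}$ there exists $\wt\btau_{ph}\in\bbX_{ph}$ with $\wt\btau_{ph}\bn_p = \bphi_h$ on $\Gamma_{fp}$ and $\|\wt\btau_{ph}\|_{\bbX_{p}} \le C\|\bphi_h\|_{\bLambda_{sh}}$. To enforce the divergence-free constraint, I use $\nabla\cdot\bbX_{ph} = \bV_{sh}$: solve the discrete problem $\nabla\cdot\btau_{ph}^0 = \nabla\cdot\wt\btau_{ph}$ with $\btau_{ph}^0\bn_p = \0$ on $\Gamma_{fp}$ (again a local construction away from the interface), and set $\btau_{ph} = \wt\btau_{ph}-\btau_{ph}^0$, which is divergence-free, has normal trace $\bphi_h$, and satisfies the requisite bound, so that $b_n^p(\btau_{ph},\bphi_h) = \|\bphi_h\|_{\L^2(\Gamma_{fp})}^2$.

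The step I expect to be the main obstacle is ensuring $h$-independence of the inf-sup constants in the presence of the boundary constraints $\btau_{ph}\bn_p = \0$ on $\Gamma_{fp}$ (for the first and third conditions) when the meshes $\mathcal{T}_h^f$ and $\mathcal{T}_h^p$ may be non-matching along $\Gamma_{fp}$. The construction of a Fortin operator that preserves both the divergence properties \eqref{div-prop} and the prescribed boundary traces, uniformly in $h$, is the delicate ingredient; once this is in place, each of the three bounds reduces to combining standard stability results with the trace-matching property of the chosen Lagrange multiplier spaces.
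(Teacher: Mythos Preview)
Your outline is essentially correct and matches the paper's proof, which simply defers to \cite[Theorem~4.1]{msmfe-simpl}, \cite[Theorem~5.2]{aeny2019}, and \cite[Lemma~5.1]{aeny2019} for the three conditions; you have correctly identified the delicate point as the $h$-uniform bounded trace lifting.

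For \eqref{eq: discrete inf-sup 3}, however, your two-step construction does not quite work as written. The assertion that $\bLambda_{sh}=\bbX_{ph}\bn_p|_{\Gamma_{fp}}$ yields a lifting $\wt\btau_{ph}$ with $\|\wt\btau_{ph}\|_{\bbX_p}\le C\|\bphi_h\|_{\bL^2(\Gamma_{fp})}$ and $C$ independent of $h$ is precisely the nontrivial content: surjectivity of the trace in finite dimensions gives no uniform right inverse, and a purely local boundary-layer extension produces $\|\nabla\cdot\wt\btau_{ph}\|_{\bL^2}\sim h^{-1/2}\|\bphi_h\|_{\bL^2}$, so that after your divergence correction the bound on $\|\btau_{ph}\|_{\bbL^2}$ deteriorates like $h^{-1/2}$ and $\beta_{h,3}\sim h^{1/2}\to 0$. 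The argument in the cited references instead builds the divergence-free lifting in a single step: solve (row by row) $-\Delta\psi=0$ in $\Omega_p$ with Neumann datum $\bphi_h$ on $\Gamma_{fp}$, set $\bsi=\nabla\psi$, and take $\btau_{ph}=\Pi_h\bsi$, with $\Pi_h$ the canonical $\bbX_{ph}$-interpolant. The commuting diagram gives $\nabla\cdot\btau_{ph}=0$ and $\btau_{ph}\bn_p=\bphi_h$ (since $\bphi_h\in\bLambda_{sh}$ already), and the key estimate
\[
\|\btau_{ph}\|_{\bbL^2}\le\|\bsi\|_{\bbL^2}+Ch\,\|\bsi\|_{\bbH^1}\le C\|\bphi_h\|_{\bL^2}+Ch\cdot Ch^{-1/2}\|\bphi_h\|_{\bL^2}\le C\|\bphi_h\|_{\bL^2}
\]
uses the approximation property of $\Pi_h$ to absorb the inverse-inequality loss. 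The same mechanism underlies the $\xi_h$-lifting in your Step~1 for \eqref{eq: discrete inf-sup 2}. Finally, non-matching grids across $\Gamma_{fp}$ are not an obstacle for \eqref{eq: discrete inf-sup 1} or \eqref{eq: discrete inf-sup 3}, which involve only $\Omega_p$-spaces; for \eqref{eq: discrete inf-sup 2} the construction in \cite{aeny2019} controls $\xi_h$ entirely through $\bv_{ph}$, so no mesh-matching assumption is needed there either.
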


\begin{proof}
Inequality \eqref{eq: discrete inf-sup 1} can be shown using the
argument in \cite[Theorem 4.1]{msmfe-simpl}. Inequality \eqref{eq: discrete inf-sup 2} is proved in \cite[Theorem 5.2]{aeny2019}.
Inequality \eqref{eq: discrete inf-sup 3} can be derived as
in \cite[Lemma 5.1]{aeny2019}.
\end{proof}

We next discuss the construction of compatible discrete initial data
$(\bp_{h,0}, \br_{h,0})$ based on a modification of the step-by-step
procedure for the continuous initial data.

1. Let
$P_h^{\bLambda_s}: \bLambda_s \rightarrow \bLambda_{sh}$ 
be the $\L^2$-projection operator, satisfying, for all $\bphi \in \bL^2(\Gamma_{fp})$,
\begin{align}\label{eq: interpolation 0}
&\ds
\langle \bphi - P_h^{\bLambda_s} \bphi, \bphi_h \rangle_{\Gamma_{fp}} = 0 \qquad \forall \, \bphi_h \in \bLambda_{sh}. 
\end{align}
Define 
\begin{equation}\label{eq: dis ini theta}
\btheta_{h,0}=P_h^{\bLambda_s} \, \btheta_0.
\end{equation}

2. Define $(\bu_{fh,0}, p_{fh,0}) \in \bV_{fh} \times \W_{fh}$ and $(\bu_{ph,0}, p_{ph,0},\lambda_{h,0})\in \bV_{ph}\times \W_{ph}\times \Lambda_{ph}$ by solving a coupled Stokes-Darcy problem:
for all $\bv_{fh} \in \bV_{fh}$, $w_{fh} \in \W_{fh}$, $\bv_{ph} \in \bV_{ph}$,
$w_{ph} \in \W_{ph}$,
$\xi_h \in \Lambda_{\ph}$,
\begin{align}\label{eq: dis ini S-D}
& \ds a_f(\bu_{fh,0}, \bv_{fh}) + b_f(\bv_{fh},p_{fh,0}) 
+ \sum_{j=1}^{n-1}\langle \mu \alpha_{\BJS} \sqrt{\bK_j^{-1}} (\bu_{fh,0}-\btheta_{h,0})
\cdot \bt_{f,j}, \bv_{fh} \cdot \bt_{f,j} \rangle_{\Gamma_{fp}}
+ \langle \bv_{fh} \cdot \bn_f, \lambda_{h,0} \rangle_{\Gamma_{fp}} \nonumber\\
& \ds  \quad = a_f(\bu_{f,0}, \bv_{fh}) + b_f(\bv_{fh},p_{f,0}) 
+ \sum_{j=1}^{n-1}\langle \mu \alpha_{\BJS} \sqrt{\bK_j^{-1}} (\bu_{f,0}-\btheta_{0})
\cdot \bt_{f,j}, \bv_{fh} \cdot \bt_{f,j} \rangle_{\Gamma_{fp}}
+ \langle \bv_{fh} \cdot \bn_f, \lambda_{0} \rangle_{\Gamma_{fp}} \nonumber \\
& \ds \quad = (\f_f(0),\bv_{fh})_{\Omega_f}, \nonumber\\[1ex]
& \ds -b_f(\bu_{fh,0}, w_{fh}) = -b_f(\bu_{f,0}, w_{fh}) = (q_f(0),w_{fh}), \nonumber\\[1ex]
& \ds a_p(\bu_{ph,0}, \bv_{ph}) + b_p(\bv_{ph}, p_{ph,0})
+ \langle \bv_{ph} \cdot \bn_p, \lambda_{h,0} \rangle_{\Gamma_{fp}}
= a_p(\bu_{p,0}, \bv_{ph}) + b_p(\bv_{ph}, p_{p,0})
+ \langle \bv_{ph} \cdot \bn_p, \lambda_0 \rangle_{\Gamma_{fp}} = 0, \nonumber\\[1ex]
 & \ds -b_p(\bu_{ph,0}, w_{ph}) = -b_p(\bu_{p,0}, w_{ph})
 = -\mu^{-1}(\nabla \cdot (\bK \nabla p_{p,0}), w_{ph})_{\Omega_p}, \nonumber\\[1ex]
& -\langle \bu_{ph,0}\cdot \bn_p + \bu_{fh,0} \cdot \bn_f
 + \btheta_{h,0} \cdot \bn_p, \xi_h \rangle_{\Gamma_{fp}}
 = -\langle \bu_{p,0}\cdot \bn_p + \bu_{f,0} \cdot \bn_f
 + \btheta_{0} \cdot \bn_p, \xi_h \rangle_{\Gamma_{fp}} = 0.
  \end{align}
This is a well-posed problem due to the inf-sup condition \eqref{eq:
  discrete inf-sup 3}, using the theory of saddle point problems
\cite{brezzi1991mixed}, see \cite{lsy2003,ervin2009}.

3. Define $\ds (\bsi_{ph,0}, \bbeta_{ph,0}, \brho_{ph,0}, \bpsi_{h,0}) \in \bbX_{ph}
\times \bV_{sh} \times \bbQ_{ph} \times \bLambda_{sh} $ such that, for all
$\btau_{ph} \in \bbX_{ph}$, $\bv_{sh} \in \bV_{sh}$, $\bchi_{ph} \in \bbQ_{ph}$,
$\bphi_h \in \bLambda_{sh}$, 
\begin{align}\label{eq: dis ini sigma}
& \ds (A \bsi_{ph,0}, \btau_{ph})_{\Omega_p}
+ b_s( \btau_{ph}, \bbeta_{ph,0})
+ b_{\sk}(\btau_{ph}, \brho_{ph,0})
- b_n^p(\btau_{ph}, \bpsi_{h,0})
+ (A \alpha p_{ph,0} \bI,\btau_{ph})_{\Omega_p} \nonumber\\[1ex] 
& \qquad =(A \bsi_{p,0}, \btau_{ph})_{\Omega_p}
+ b_s( \btau_{ph}, \bbeta_{p,0})
+ b_{\sk}(\btau_{ph}, \brho_{p,0})
- b_n^p(\btau_{ph}, \bpsi_0)
+ (A \alpha p_{p,0} \bI,\btau_{ph})_{\Omega_p} = 0, \nonumber \\[1ex] 
&\ds  - b_s ( \bsi_{ph,0}, \bv_{sh} ) = - b_s ( \bsi_{p,0}, \bv_{sh} )
= (\f_p(0),\bv_{sh})_{\Omega_p}, \nonumber \\[1ex] 
&\ds  - b_{\sk} ( \bsi_{ph,0}, \bchi_{ph} ) = - b_{\sk} ( \bsi_{p,0}, \bchi_{ph} ) = 0, \nonumber \\
&   \ds b_n^p(\bsi_{ph,0}, \bphi_h)
- \sum_{j=1}^{n-1}\langle \mu \alpha_{\BJS} \sqrt{\bK_j^{-1}} (\bu_{fh,0}-\btheta_{h,0})
\cdot \bt_{f,j}, \bphi_h \cdot \bt_{f,j} \rangle_{\Gamma_{fp}}
+ \langle \bphi_h \cdot \bn_p, \lambda_{h,0} \rangle_{\Gamma_{fp}} \nonumber \\
& \ds \qquad = b_n^p(\bsi_{p,0}, \bphi_h)
- \sum_{j=1}^{n-1}\langle \mu \alpha_{\BJS} \sqrt{\bK_j^{-1}} (\bu_{f,0}-\btheta_{0})
\cdot \bt_{f,j}, \bphi_h \cdot \bt_{f,j} \rangle_{\Gamma_{fp}}
+ \langle \bphi_h \cdot \bn_p, \lambda_0 \rangle_{\Gamma_{fp}} = 0. 
\end{align}
It can be shown that the above problem is well-posed using the finite
element theory for elasticity with weak stress symmetry \cite{arnold2015,arnold2007mixed} and
the inf-sup condition \eqref{eq: discrete inf-sup 3} for the Lagrange multiplier
$\bpsi_{h,0}$.

4. Define $(\wh{\bsi}_{ph,0}, \bu_{sh,0}, \bgamma_{ph,0}) \in \bbX_{ph} \times \bV_{sh} \times \bbQ_{ph}$
such that, for all $\btau_{ph} \in \bbX_{ph}$, $\bv_{sh} \in \bV_{sh}$, $\bchi_{ph} \in \bbQ_{ph}$,
\begin{align}\label{eq: dis ini us}
& \ds (A \wh\bsi_{ph,0}, \btau_{ph})_{\Omega_p}  + b_s( \btau_{ph}, \bu_{sh,0})
  + b_{\sk}(\btau_{ph}, \bgamma_{ph,0}) = b_n^p(\btau_{ph}, \btheta_{h,0}), \nonumber \\[1ex]
& \ds  -b_s ( \wh\bsi_{ph,0}, \bv_{sh} ) = 0, \nonumber \\[1ex]
& \ds  - b_{\sk} ( \wh\bsi_{ph,0}, \bchi_{ph} ) = 0.
\end{align}
This is a well posed discrete mixed elasticity problem \cite{arnold2015,arnold2007mixed}.

We then define $\bp_{h,0} = (\bu_{fh,0},\btheta_{h,0},\bu_{ph,0},\bsi_{ph,0},p_{p,0})$
and $\br_{h,0} = (p_{fh,0},\bu_{sh,0},\bgamma_{ph,0},\lambda_{h,0})$. This construction
guarantees that the discrete initial data is compatible in 
the sense of Lemma \ref{lem: initial condition}:
\begin{equation}\label{eq: discrete initial condition}
  \arraycolsep=1.7pt
  \begin{array}{rcl}    
\cA \bp_{h,0} + \cB'\br_{h,0} & = & \overline{\bF}_0  \qin  \bQ_h', \\[1ex]
    - \cB \bp_{h,0} & = & \bG(0) \qin \bS_h',
  \end{array}
\end{equation}
where $\overline{\bF}_0  = (\f_f(0),0,0,\overline{g}_{\btau_p},\overline{g}_{w_p})^\rt \in \bQ_{2}'$, 
with suitable $\overline{g}_{\btau_p} \in \bbX_{p,2}'$ and $\overline{g}_{w_p} \in \W_{p,2}'$.
Furthermore, it provides compatible initial
data for the non-differentiated elasticity variables
$(\bbeta_{ph,0},\brho_{ph,0},\bpsi_{h,0})$ in the sense of the first equation in \eqref{sigma_p0}.

The well-posedness of the problem \eqref{eq: semidiscrete formulation
  3} follows from similar arguments to the proof of Theorem \ref{thm:
  well-posedness continuous}.

\begin{theorem}\label{thm:well-posed-discr}
For each $\f_f \in \W^{1,1}(0,T;\bV_f')$, $\f_p\in
\W^{1,1}(0,T;\bV_s')$, $q_f\in \W^{1,1}(0,T;\W_f')$, and $q_p\in
\W^{1,1}(0,T; \W_p')$, and initial data $(\bp_{h,0}, \br_{h,0})$
satisfying \eqref{eq: discrete initial condition}, there exists a
unique solution of \eqref{eq: semidiscrete formulation 1} $(\bu_\fh,
p_\fh,\bsi_\ph, \bu_\sh, \bgamma_\ph,  \bu_\ph, p_\ph,\lambda_h, \btheta_h)\in \L^\infty(0,T;\bV_\fh)\times \L^\infty(0,T;\W_\fh)\times
\W^{1,\infty}(0,T; \bbL^2(\Omega_p))\cap \L^{\infty} \linebreak  (0,T; 
\bbX_\ph)\times \L^\infty(0,T;\bV_\sh)\times
\L^\infty(0,T; \bbQ_\ph)\times \L^\infty(0,T;\bV_\ph)\times
\W^{1,\infty}(0,T;\W_\ph) \times \L^\infty(0,T;\Lambda_\ph)\times
\L^\infty(0,T; \linebreak  \bLambda_\sh)$ with $(\bu_\fh(0), p_\fh(0),\bsi_\ph(0), 
\bu_\ph(0), p_\ph(0),\lambda_h(0), \btheta_h(0)) =
(\bu_{\fh,0}, p_{\fh,0},\bsi_{\ph,0}, \bu_{\ph,0}, p_{\ph,0}, \lambda_{h,0}, \linebreak  \btheta_{h,0})$.
\end{theorem}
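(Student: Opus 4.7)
The plan is to mimic the proof of Theorem~\ref{thm: well-posedness continuous}, exploiting the fact that the semi-discrete system \eqref{eq: semidiscrete formulation 3} has the same operator structure as the continuous system \eqref{eq: continuous formulation 3}, now posed on the conforming subspaces $\bQ_h \times \bS_h \subset \bQ \times \bS$.

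First I would observe that the operators $\cE_1$, $\cA$, and $\cB$ restricted to $\bQ_h \times \bS_h$ retain the continuity, the symmetry of $\cE_1$, and the monotonicity properties established in Lemma~\ref{lem: continuous monotonicity}, since those properties follow directly from the definitions of the bilinear forms together with Korn's inequality. The three continuous inf-sup conditions of Lemma~\ref{lem: continuous inf-sup} are replaced by their discrete counterparts in Lemma~\ref{lem: discrete inf-sup}, with constants $\beta_{h,i}$ independent of $h$. It is important that the subspace restrictions appearing in \eqref{eq: discrete inf-sup 1} and \eqref{eq: discrete inf-sup 3} are exactly what is needed so that the arguments used in \eqref{eq: inf-sup app epsilon}--\eqref{eq: inf-sup bound} go through verbatim at the discrete level, eliminating the $b_n^p$ contribution when appropriate.

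Second, I would reproduce the proof of Lemma~\ref{lem: well-posedness 1} in the discrete setting to establish solvability of the discrete resolvent system corresponding to \eqref{eq: resolvent formulation 2}: regularize by adding $\epsilon(\cR + \cL)$, apply Lax--Milgram (trivial in finite dimensions), derive the same $\epsilon$-independent bounds from the discrete inf-sup conditions, and pass to the limit $\epsilon \to 0$ along a convergent subsequence. This yields the discrete range condition $Rg(\cN_h + \cM_h) = E_{b,h}'$, the discrete analog of Lemma~\ref{lem: range condition}, where $\cN_h$ and $\cM_h$ are the restrictions to $\bQ_h \times \bS_h$ of the operators defined in \eqref{eq: definition N and M}. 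I would then carry out the translation argument from Theorem~\ref{thm: well-posedness continuous}: decompose $\bp_h(t) = \wt\bp_h(t) + \wh\bp_h(t)$ and $\br_h(t) = \wt\br_h(t) + \wh\br_h(t)$, where $(\wt\bp_h,\wt\br_h)$ solves the discrete resolvent system pointwise in time with data $(\bF(t),\bG(t))$ and $(\wh\bp_h,\wh\br_h)$ satisfies a reduced evolution problem with right-hand side only in the $(\btau_{ph},w_{ph})$ block. The compatible discrete initial data constructed in \eqref{eq: dis ini theta}--\eqref{eq: dis ini us} and summarized by \eqref{eq: discrete initial condition} ensures $(\wh\bp_h(0), \wh\br_h(0))$ lies in the discrete domain $\cD_h$, so Theorem~\ref{thm: Showalter} delivers $\cN_h \wh\bp_h \in \W^{1,\infty}(0,T;E_{b,h}')$, and the remaining regularity follows from the discrete inf-sup conditions applied to the stationary equations as in the continuous proof. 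Uniqueness is immediate: testing the zero-data equations with the solution and using coercivity of $a_p$, $a_f$ together with monotonicity of $a_{\BJS}$ forces $\bsi_{ph} = \0$, $p_{ph} = 0$, $\bu_{ph} = \0$, and $\bu_{fh} = \0$; then Lemma~\ref{lem: discrete inf-sup} kills the remaining variables.

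The main obstacle I expect is purely bookkeeping rather than conceptual: verifying that the four-step construction \eqref{eq: dis ini theta}--\eqref{eq: dis ini us} indeed produces data satisfying \eqref{eq: discrete initial condition}, that is, that $\cA \bp_{h,0} + \cB' \br_{h,0}$ has its algebraic components matching $(\f_f(0),0,0)$ and $-\cB \bp_{h,0} = \bG(0)$ exactly. Each of the four steps is designed to cancel one block of \eqref{defn-A-B}, so the verification amounts to matching \eqref{eq: dis ini theta}--\eqref{eq: dis ini us} against the block structure of $\cA$ and $\cB$ in sequence. A related subtlety is checking that the regularization estimates from Lemma~\ref{lem: well-posedness 1} carry over without any hidden dimension-dependent constants; this is the case because every bound there depended only on the inf-sup constants (which we have discretely and uniformly in $h$) and on the norms of the data.
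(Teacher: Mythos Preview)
Your overall strategy—mirror the continuous argument using the discrete inf-sup conditions and the compatible discrete initial data—is correct and is exactly what the paper does. However, there is a genuine gap in your first paragraph that affects several of the later steps.

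You assert that $\bQ_h \times \bS_h \subset \bQ \times \bS$ is a conforming subspace and that therefore the continuity of $\cA$ and $\cB$ carries over directly. This is false: the discrete Lagrange multiplier spaces $\Lambda_{ph}$ and $\bLambda_{sh}$ are defined in \eqref{Lagr-mult} as normal traces of the mixed finite element spaces, equipped with $\L^2(\Gamma_{fp})$-norms, and they consist of discontinuous piecewise polynomials. They are \emph{not} contained in $\H^{1/2}(\Gamma_{fp})$, so the pair $(\bQ_h,\bS_h)$ is non-conforming. Two concrete consequences follow. First, the continuous continuity bounds for $b_n^p$ and $b_\Gamma$ in \eqref{eq: continuous continuity 0} and \eqref{eq: continuous continuity 3} used the $\H^{1/2}$-norm on the Lagrange multipliers via \eqref{trace-vel}--\eqref{trace-stress}; in the discrete setting you must instead invoke the discrete trace-inverse inequality $\|\varphi\|_{\L^2(\Gamma_{fp})} \le C h^{-1/2}\|\varphi\|_{\L^2(\Omega_p)}$ to obtain continuity with an $h$-dependent constant, which is acceptable since the Lax--Milgram step is run on a fixed mesh. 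Second, the regularizing operators $L_\lambda$ and $R_\btheta$ from Lemma~\ref{lem: R-operators 2} were built from harmonic extensions and are coercive in the $\H^{1/2}$-norm; they are not even defined on discontinuous functions. At the discrete level they must be replaced by the $\L^2(\Gamma_{fp})$ inner products $(L_\lambda \lambda_h,\xi_h) := \langle \lambda_h,\xi_h\rangle_{\Gamma_{fp}}$ and $(R_\btheta \btheta_h,\bphi_h) := \langle \btheta_h,\bphi_h\rangle_{\Gamma_{fp}}$, whose coercivity in the $\Lambda_{ph}$- and $\bLambda_{sh}$-norms is then immediate.

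Once you make these two adjustments, the rest of your outline (regularize, pass to the limit, translation argument, uniqueness via coercivity plus inf-sup) goes through exactly as you describe, and matches the paper's proof.
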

\begin{proof}
  With the discrete inf-sup conditions
  \eqref{eq: discrete inf-sup 1}--\eqref{eq: discrete inf-sup 3}
  and the discrete initial data construction described in
  \eqref{eq: interpolation 0}--\eqref{eq: dis ini sigma},
the proof is similar to the
proofs of Theorem~\ref{thm: well-posedness continuous} and
Corollary~\ref{cor:init-data}, with two differences due to
non-conforming choices of the Lagrange multiplier spaces equipped with
$\L^2$-norms. The first is in the continuity of the bilinear forms
$b_n^p(\btau_{ph}, \bphi_h)$, cf. \eqref{eq: continuous continuity 0},
and $b_{\Gamma}(\bv_{fh},\bv_{ph},\bphi_h;\xi_h)$, cf. \eqref{eq: continuous continuity 3}.
In particular, using the discrete trace-inverse inequality for piecewise polynomial
functions, $\|\varphi\|_{L^2(\Gamma_{fp})} \le C h_{p,\min}^{-1/2}\|\varphi\|_{L^2(\Omega_p)}$, where $h_{p,\min}$ is the minimum element diameter in $\mathcal{T}_{h_p}^p$, we have
$$
b_n^p(\btau_{ph}, \bphi_h) \le C h_{p,\min}^{-1/2}\|\btau_{ph}\|_{\bbL^2(\Omega_p)}\|\bphi_h\|_{\bL^2(\Gamma_{fp})}
$$
and
$$
b_{\Gamma}(\bv_{fh},\bv_{ph},\bphi_h;\xi_h)
\le C (\|\bv_{fh}\|_{\bH^1(\Omega_f)} + h_{p,\min}^{-1/2}\|\bv_{ph}\|_{\bL^2(\Omega_p)}
+ \|\bphi_h\|_{\bL^2(\Gamma_{fp})})\|\xi_h\|_{\L^2(\Gamma_{fp})}.
$$
Therefore these bilinear forms are continuous for any given mesh. Second,
the operators $L_{\lambda}$ and
$R_{\btheta}$ from Lemma \ref{lem: R-operators 2} are now defined as
$L_{\lambda}: \Lambda_{ph} \rightarrow \Lambda_{ph}', \ (L_{\lambda}\,
\lambda_h, \xi_h):=\langle \lambda_h, \xi_h \rangle_{\Gamma_{fp}}$ and
$R_{\btheta}: \bLambda_{sh} \rightarrow \bLambda_{sh}',
\ (R_{\btheta}\,\btheta_h, \bphi_h):=\langle \btheta_h, \bphi_h
\rangle_{\Gamma_{fp}}$. The fact that $L_{\lambda}$ and
$R_{\btheta}$ are continuous and coercive follows
immediately from their definitions, since $(L_{\lambda}\, \xi_h,
\xi_h)=\Vert \xi \Vert^2_{\Lambda_{ph}}$ and $(R_{\btheta}\,\bphi_h,
\bphi_h)=\Vert \bphi_h \Vert^2_{\bLambda_{sh}}$.
We note that the proof of Corollary~\ref{cor:init-data}
works in the discrete case due to the choice of the discrete initial data
as the elliptic projection of the continuous initial data, cf. 
\eqref{eq: dis ini S-D} and \eqref{eq: dis ini sigma}.
\end{proof}

\begin{remark}
As in the continuous case, we can recover the non-differentiated elasticity variables with
$$
\forall \, t \in [0,T], \quad  \bbeta_\ph(t) = \bbeta_{ph,0} + \int_0^t \bu_\sh(s) \, ds,
\ \  \brho_\ph(t) = \brho_{ph,0} + \int_0^t \bgamma_\ph(s) \, ds, \ \ 
\bpsi_h(t) = \bpsi_{h,0} + \int_0^t \btheta_h(s) \, ds.
$$
Then \eqref{non-diff-eq} holds discretely, which follows from
integrating the third equation in \eqref{eq: semidiscrete formulation 1} from $0$ to 
$t \in (0,T]$ and using the discrete version of the
first equation in \eqref{sigma_p0}.
\end{remark}

\section{Stability analysis}\label{sec:stability}
In this section we establish a stability bound for the solution of
semi-discrete continuous-in-time formulation \eqref{eq: semidiscrete
  formulation 3}. We emphasize that the stability constant is independent
of $s_0$ and $a_{\min}$, indicating robustness of the method in the limits of
small storativity and almost incompressible media, which are known to
cause locking in numerical methods for the Biot system \cite{ Yi-Biot-locking}.
Furthermore, since we do not utilize Gronwall's inequality,
we obtain long-time stability for our method.

\begin{theorem}\label{thm: stability analysis}
Assuming sufficient regularity of the data, for the solution to the semi-discrete problem \eqref{eq: semidiscrete formulation 1}, there exists a constant $C$ independent of $h_f$, $h_p$, $s_0$ and $a_{\min}$ such that
\begin{align}\label{eq: stability analysis}
& \ds 
\Vert \bu_{fh} \Vert_{\L^{\infty}(0,T;\bV_f)}
+\Vert \bu_{fh} \Vert_{\L^2(0,T;\bV_f)}
+\vert \bu_{fh}-\btheta_h \vert_{\L^{\infty}(0,T;a_{\BJS})}
+\vert \bu_{fh}-\btheta_h \vert_{\L^2(0,T;a_{\BJS})}
\nonumber \\[1ex]
&\ds \quad 
+\Vert p_{fh} \Vert_{\L^{\infty}(0,T;\W_f)}
+\Vert p_{fh}\Vert_{\L^2(0,T;\W_f)}
+\Vert A^{1/2}\bsi_{ph}\Vert_{\L^{\infty}(0,T;\bbL^2(\Omega_p))}
+\Vert \nabla \cdot \bsi_{ph}\Vert_{\L^{\infty}(0,T;\bL^2(\Omega_p))}
\nonumber \\[1ex]
&\ds \quad 
+\Vert A^{1/2} \partial_t (\bsi_{ph}+\alpha p_{ph}\bI) \Vert_{\L^2(0,T;\bbL^2(\Omega_p))}
+\Vert \nabla \cdot \bsi_{ph}\Vert_{\L^2(0,T;\bL^2(\Omega_p))}
+\Vert \bu_{sh}\Vert_{\L^2(0,T;\bV_s)}
\nonumber \\[1ex]
&\ds \quad 
+\Vert \bgamma_{ph}\Vert_{\L^2(0,T;\bbQ_p)}
+\Vert \bu_{ph} \Vert_{\L^{\infty}(0,T;\bL^2(\Omega_p))}
+\Vert \bu_{ph} \Vert_{\L^2(0,T;\bV_p)}
+\Vert p_{ph} \Vert_{\L^{\infty}(0,T;\W_p)}
+\Vert p_{ph} \Vert_{\L^2(0,T;\W_p)}
\nonumber \\[1ex]
&\ds \quad
+\sqrt{s_0}\Vert \partial_t p_{ph} \Vert_{\L^2(0,T;\W_p)}
+\Vert\lambda_h\Vert_{\L^{\infty}(0,T;\Lambda_{ph})}
+\Vert\lambda_h\Vert_{\L^2(0,T;\Lambda_{ph})}
+\Vert\btheta_h\Vert_{\L^2(0,T;\bLambda_{sh})}
\nonumber \\[1ex]
&\ds \leq 
C\Big(
\Vert\f_f\Vert_{\H^1(0,T;\bL^2(\Omega_f))}
+\Vert\f_p\Vert_{\H^1(0,T;\bL^2(\Omega_p))}
+\Vert q_f\Vert_{\H^1(0,T;\L^2(\Omega_f))}
+\Vert q_p\Vert_{\H^1(0,T;\L^2(\Omega_p))}
\nonumber \\[1ex]
&\ds \qquad\quad
+ \|p_{p,0}\|_{\H^1(\Omega_p)} + \|\nabla \cdot (\bK\nabla p_{p,0})\|_{\L^2(\Omega_p)}\Big).
\end{align}
\end{theorem}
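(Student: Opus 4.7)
The plan is a two-level energy argument on \eqref{eq: semidiscrete formulation 3}, combined with the three discrete inf-sup conditions of Lemma~\ref{lem: discrete inf-sup}. The skeleton mirrors the uniqueness computation at the end of Theorem~\ref{thm: well-posedness continuous}. First I would test \eqref{eq: semidiscrete formulation 2} with $\bq_h = \bp_h$ and $\bs_h = \br_h$; adding the two lines and using that $(\cB'\br_h,\bp_h) = (\cB\bp_h,\br_h)$ cancels every off-diagonal coupling form ($b_f, b_s, b_{\sk}, b_n^p, b_p, b_\Gamma$), leaving the energy identity
\begin{equation*}
\tfrac{1}{2}\partial_t\Big(\|A^{1/2}(\bsi_{ph}+\alpha p_{ph}\bI)\|^2_{\bbL^2(\Omega_p)} + s_0\|p_{ph}\|^2_{\L^2(\Omega_p)}\Big) + a_f(\bu_{fh},\bu_{fh}) + a_p(\bu_{ph},\bu_{ph}) + |\bu_{fh}-\btheta_h|^2_{a_{\BJS}} = (\f_f,\bu_{fh})_{\Omega_f} + (q_p,p_{ph})_{\Omega_p} + (q_f,p_{fh})_{\Omega_f} + (\f_p,\bu_{sh})_{\Omega_p}.
\end{equation*}
Integrating in time and invoking Korn together with the coercivity bounds \eqref{eq: continuous monotonicity 1} then produces the $\L^\infty$-in-time control of $\|A^{1/2}(\bsi_{ph}+\alpha p_{ph}\bI)\|$ and $\sqrt{s_0}\|p_{ph}\|$ and the $\L^2$-in-time control of $\|\bu_{fh}\|_{\bV_f}$, $\|\bu_{ph}\|_{\bL^2(\Omega_p)}$, and $|\bu_{fh}-\btheta_h|_{a_{\BJS}}$, once the RHS source terms are absorbed. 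The initial energy at $t=0$ is bounded using the compatible initial-data construction \eqref{eq: dis ini S-D}--\eqref{eq: dis ini us}, which reduces to $\|p_{p,0}\|_{\H^1(\Omega_p)}$ and $\|\nabla\cdot(\bK\nabla p_{p,0})\|_{\L^2(\Omega_p)}$.

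The three source terms $(q_p,p_{ph})$, $(q_f,p_{fh})$, $(\f_p,\bu_{sh})$ have no direct coercive counterpart and must be handled without Gronwall. The idea is to extract pointwise-in-time bounds on the non-coercive unknowns from the inf-sup conditions and then use Cauchy--Schwarz in time with Young's inequality to absorb. From \eqref{sd-1} and \eqref{sd-6} together with \eqref{eq: discrete inf-sup 2} the $\lambda_h$ contributions in the numerator cancel, yielding $\|p_{fh}\|_{\W_f}+\|p_{ph}\|_{\W_p}+\|\lambda_h\|_{\Lambda_{ph}} \le C(\|\f_f\|+\|\bu_{fh}\|_{\bV_f}+|\bu_{fh}-\btheta_h|_{a_{\BJS}}+\|\bu_{ph}\|_{\bL^2(\Omega_p)})$. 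From \eqref{sd-3} restricted to $\btau_{ph}\bn_p=\0$, the interface term $\langle\btau_{ph}\bn_p,\btheta_h\rangle$ vanishes and \eqref{eq: discrete inf-sup 1} gives $\|\bu_{sh}\|_{\bV_s}+\|\bgamma_{ph}\|_{\bbQ_p} \le C\|A^{1/2}\partial_t(\bsi_{ph}+\alpha p_{ph}\bI)\|$; applied instead under $\nabla\cdot\btau_{ph}=\0$, \eqref{eq: discrete inf-sup 3} gives $\|\btheta_h\|_{\bLambda_{sh}} \le C(\|A^{1/2}\partial_t(\bsi_{ph}+\alpha p_{ph}\bI)\|+\|\bgamma_{ph}\|_{\bbQ_p})$. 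Squaring in time and using Young with a small parameter absorbs the coercive pieces into the LHS of the energy identity, closing the basic estimate.

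The remaining quantities are then produced as follows. For $\|\nabla\cdot\bsi_{ph}\|$ in $\L^\infty\cap\L^2$ in time, use \eqref{sd-4} with $\bv_{sh}=\nabla\cdot\bsi_{ph}$ together with $\nabla\cdot\bbX_{ph}=\bV_{sh}$ to obtain $\|\nabla\cdot\bsi_{ph}\|=\|\f_p\|$ pointwise; the divergence piece of $\|\bu_{ph}\|_{\bV_p}$ follows from \eqref{sd-7} with $w_{ph}=\nabla\cdot\bu_{ph}$ and $\nabla\cdot\bV_{ph}=\W_{ph}$. The two time-derivative norms $\|A^{1/2}\partial_t(\bsi_{ph}+\alpha p_{ph}\bI)\|_{\L^2(\bbL^2)}$ and $\sqrt{s_0}\|\partial_t p_{ph}\|_{\L^2(\L^2)}$ (which also feed back into the inf-sup bounds for $\bu_{sh},\bgamma_{ph},\btheta_h$) are obtained by testing \eqref{sd-3} with $\btau_{ph}=\partial_t\bsi_{ph}$ and \eqref{sd-7} with $w_{ph}=\partial_t p_{ph}$ and adding; the cross term $(\nabla\cdot\partial_t\bsi_{ph},\bu_{sh})$ reduces via the time-differentiated \eqref{sd-4} to $-(\partial_t\f_p,\bu_{sh})$ and the skew cross term vanishes by \eqref{sd-5}. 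The main technical obstacle is the remaining interface term $\langle\partial_t\bsi_{ph}\bn_p,\btheta_h\rangle$: I would handle it by using \eqref{sd-9} to express $\langle\bsi_{ph}\bn_p,\btheta_h\rangle = -\langle\btheta_h\cdot\bn_p,\lambda_h\rangle + a_{\BJS}(\bu_{fh},\btheta_h;\0,\btheta_h)$ and then integrating by parts in time on $(0,t)$, shifting $\partial_t$ off $\bsi_{ph}$ and generating boundary terms at $0$ and $t$ that are already controlled by the basic energy and the initial-data norms. Independence from $s_0$ is preserved because we never divide by $s_0$ and only track the $\sqrt{s_0}$-weighted pressure; independence from $a_{\min}$ is preserved because the stress and pressure are always packaged in the $A^{1/2}$-weighted combination rather than bounded separately via $A^{-1}$.
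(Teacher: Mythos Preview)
Your first two paragraphs are fine and match the paper: test \eqref{eq: semidiscrete formulation 2} with $(\bp_h,\br_h)$, integrate, and close the $\L^2$-in-time bounds on the non-coercive variables via the three discrete inf-sup conditions exactly as you describe. The divergence bounds from \eqref{sd-4} and \eqref{sd-7} with $\bv_{sh}=\nabla\cdot\bsi_{ph}$ and $w_{ph}=\nabla\cdot\bu_{ph}$ are also correct.

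The gap is in your second-level estimate. Testing only \eqref{sd-3} with $\btau_{ph}=\partial_t\bsi_{ph}$ and \eqref{sd-7} with $w_{ph}=\partial_t p_{ph}$ leaves two cross terms you cannot absorb. First, from \eqref{sd-7} you get $(\nabla\cdot\bu_{ph},\partial_t p_{ph})$, which you do not mention at all; there is no inf-sup that controls it, and bounding it by Cauchy--Schwarz forces a $1/\sqrt{s_0}$ factor, destroying the $s_0$-robustness. Second, your plan for the interface term $\langle\partial_t\bsi_{ph}\bn_p,\btheta_h\rangle$ via integration by parts in time and \eqref{sd-9} produces either $\langle\bsi_{ph}\bn_p,\partial_t\btheta_h\rangle$ or pointwise-in-$t$ quantities involving $\btheta_h(t)$; neither is under control, since $\btheta_h$ is bounded through \eqref{eq: discrete inf-sup 3} only by $\|A^{1/2}\partial_t(\bsi_{ph}+\alpha p_{ph}\bI)\|$, which you have only in $\L^2(0,T)$. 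Moreover, the $\L^\infty$-in-time bounds on $\bu_{fh}$, $|\bu_{fh}-\btheta_h|_{a_{\BJS}}$, $\bu_{ph}$, $p_{fh}$, $p_{ph}$, $\lambda_h$ required by the theorem statement never appear in your scheme.

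The paper fixes all of this in one stroke: instead of using only two equations, it time-differentiates the \emph{static} equations \eqref{sd-1}, \eqref{sd-4}, \eqref{sd-5}, \eqref{sd-6}, \eqref{sd-9}, keeps the remaining equations as they are, and tests the full system with
$(\bv_{fh},w_{fh},\btau_{ph},\bv_{sh},\bchi_{ph},\bv_{ph},w_{ph},\xi_h,\bphi_h)=(\bu_{fh},\partial_t p_{fh},\partial_t\bsi_{ph},\bu_{sh},\bgamma_{ph},\bu_{ph},\partial_t p_{ph},\partial_t\lambda_h,\btheta_h)$.
This choice exploits the skew structure of $\cA$ and $\cB$ so that \emph{every} coupling term cancels exactly: $(\nabla\cdot\bu_{ph},\partial_t p_{ph})$ cancels against the pressure term from the time-differentiated \eqref{sd-6}, $\langle\partial_t\bsi_{ph}\bn_p,\btheta_h\rangle$ cancels against the corresponding term from the time-differentiated \eqref{sd-9}, and the three $\partial_t\lambda_h$ interface terms are annihilated by \eqref{sd-8} with $\xi_h=\partial_t\lambda_h$. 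The diagonal leftovers are $\tfrac12\partial_t a_f(\bu_{fh},\bu_{fh})$, $\tfrac12\partial_t a_p(\bu_{ph},\bu_{ph})$, $\tfrac12\partial_t a_{\BJS}$, $\|A^{1/2}\partial_t(\bsi_{ph}+\alpha p_{ph}\bI)\|^2$, and $s_0\|\partial_t p_{ph}\|^2$, giving simultaneously the missing $\L^\infty$-in-time bounds and the time-derivative controls. The source terms $(q_f,\partial_t p_{fh})$ and $(q_p,\partial_t p_{ph})$ are then handled by integration by parts in time, which is why the $\H^1$-in-time norms of the data appear on the right.
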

\begin{proof}
By taking 
$(\bv_{fh},w_{fh},\btau_{ph},\bv_{sh},\bchi_{ph},\bv_{ph},w_{ph},\xi_h,\bphi_h)=(\bu_{fh},p_{fh},\bsi_{ph},\bu_{sh},\bgamma_{ph},\bu_{ph},p_{ph},\lambda_h,\btheta_h)$ in \eqref{eq: semidiscrete formulation 1} and adding up all the equations, we get
\begin{align}\label{eq: stability 1}
&\ds a_f(\bu_{fh}, \bu_{fh})
+a_{\BJS}(\bu_{fh}, \btheta_h; \bu_{fh}, \btheta_h)
+a_e(\partial_t \bsi_{ph}, \partial_t p_{ph}; \bsi_{ph}, p_{ph})
+a_p(\bu_{ph},\bu_{ph})
+a_p^p(\partial_t p_{ph}, p_{ph})
\nonumber \\[1ex]
&\ds \quad =(\f_f,\bu_{fh})_{\Omega_f}+(q_f, p_{fh})_{\Omega_f}+(\f_p, \bu_{sh})_{\Omega_p}+(q_p,p_{ph})_{\Omega_p}.
\end{align}
Using the algebraic identity
$\int_S v \, \partial_t v = \frac{1}{2} \partial_t \Vert v \Vert^2_{\L^2(S)}$,
and employing the coercivity properties of $a_f$ and $a_p$,
and the semi-positive definiteness of $a_{\BJS}$, cf. \eqref{eq: continuous monotonicity 1},
we obtain
\begin{align*}
& \ds
2\mu C_K^2 \Vert \bu_{fh} \Vert ^2_{\bV_f}
+ \mu \alpha_{\BJS} k_{\max}^{-1/2}
\vert \bu_{fh}-\btheta_h \vert ^2_{a_\BJS}
+\frac{1}{2}\partial_t\Vert A^{1/2}(\bsi_{ph}+\alpha p_{ph}\bI)\Vert^2_{\bbL^2(\Omega_p)}
\nonumber \\[1ex]
&\ds
+\mu k_{\max}^{-1} \Vert \bu_{ph} \Vert ^2_{\bL^2(\Omega_p)}
+\frac{1}{2}s_0 \partial_t\Vert p_{ph} \Vert ^2_{\W_p} 
\leq (\f_f,\bu_{fh})_{\Omega_f}
+(q_f, p_{fh})_{\Omega_f}
+(\f_p, \bu_{sh})_{\Omega_p}
+(q_p,p_{ph})_{\Omega_p}.
\end{align*}
Integrating from $0$ to any $t\in (0,T]$ and applying the Cauchy-Schwarz
and Young's inequalities, we get
\begin{align}\label{eq: stability 3}
& \ds \int_{0}^{t}\big(
2\mu C_K^2 \Vert \bu_{fh} \Vert ^2_{\bV_f}
+\mu \alpha_{\BJS} k_{\max}^{-1/2}
\vert \bu_{fh}-\btheta_h \vert ^2_{a_\BJS}
+\mu k_{\max}^{-1} \Vert \bu_{ph} \Vert ^2_{\bL^2(\Omega_p)}\big)ds
 \nonumber \\[1ex]
&\ds 
+\frac{1}{2}\Vert A^{1/2}(\bsi_{ph}+\alpha p_{ph}\bI)(t)\Vert^2_{\bbL^2(\Omega_p)}
-\frac{1}{2}\Vert A^{1/2}(\bsi_{ph}+\alpha p_{ph}\bI)(0)\Vert^2_{\bbL^2(\Omega_p)}
+\frac{1}{2}s_0 \Vert p_{ph}(t) \Vert ^2_{\W_p}-\frac{1}{2}s_0 \Vert p_{ph}(0) \Vert ^2_{\W_p}  \nonumber \\[1ex]
&\ds \leq
\frac{\epsilon}{2}\int_{0}^{t}(\Vert \bu_{fh}\Vert^2_{\bL^2(\Omega_f)}
+\Vert p_{fh}\Vert^2_{\W_f}
+\Vert \bu_{sh}\Vert^2_{\bL^2(\Omega_p)}
+\Vert p_{ph}\Vert^2_{\W_p}) \,ds \nonumber \\[1ex]
&\ds \quad +\frac{1}{2\epsilon}\int_{0}^{t}(\Vert \f_f\Vert^2_{\bL^2(\Omega_f)}
+\Vert q_f\Vert^2_{\L^2(\Omega_f)}
+\Vert \f_p\Vert^2_{\bL^2(\Omega_p)}
+\Vert q_p\Vert^2_{\L^2(\Omega_p)}) \,ds. 
\end{align}
From the discrete inf-sup conditions
\eqref{eq: discrete inf-sup 1}--\eqref{eq: discrete inf-sup 3} and \eqref{sd-1},
\eqref{sd-3}, and \eqref{sd-6}, we have
\begin{align}\label{eq: stability 4}
& \ds \Vert p_{fh} \Vert_{\W_f} 
+\Vert p_{ph} \Vert_{\W_p} 
+\Vert \lambda_h \Vert_{\Lambda_{ph}} \nonumber \\[1ex]
& \ds \quad \leq C
\underset{(\bv_{fh}, \bv_{ph}) \in \bV_{fh}\times\bV_{ph}}{\sup}\frac
{b_f(\bv_{fh},p_{fh})+b_p(\bv_{ph},p_{ph})+b_{\Gamma}(\bv_{fh}, \bv_{ph}, \0; \lambda_h)}
{\|(\bv_{fh}, \bv_{ph})\|_{\bV_{f}\times\bV_{p}}}\nonumber \\[1ex]
&\ds \quad 
=C\underset{(\bv_{fh}, \bv_{ph}) \in \bV_{fh}\times\bV_{ph}}{\sup}
\frac
{-a_f(\bu_{fh}, \bv_{fh})
-a_{\BJS}(\bu_{fh},\btheta_h;\bv_{fh},\0)
+(\f_f,\bv_{fh})_{\Omega_f}-a_p(\bu_{ph}, \bv_{ph})}
{\|\bv_{fh}\|_{\bV_f}+\|\bv_{ph}\|_{\bV_p}}
\nonumber \\[1ex]
&\ds \quad
\leq C(\Vert \bu_{fh} \Vert_{\bV_f}
+ \vert \bu_{fh}-\btheta_h \vert_{a_{\BJS}}
+ \Vert \f_f \Vert_{\bL^2(\Omega_f)}
+ \Vert \bu_{ph} \Vert_{\bL^2(\Omega_p)}),
\end{align}
\begin{align}\label{eq: stability 4.5}
& \ds 
\|\bu_{sh}\|_{\bV_s}+\|\bgamma_{ph}\|_{\bbQ_p}
\leq C
\underset{\btau_{ph} \in \bbX_{ph} \, \text{s.t.} \, \btau_{ph}\bn_p=\0 \text{ on } \Gamma_{fp}}{\sup}
\frac{b_s(\btau_{ph},\bu_{sh})+b_{\sk}(\btau_{ph},\bgamma_{ph})}{\|\btau_{ph}\|_{\bbX_{p}}}\nonumber \\[1ex]
&\ds \quad
= C \underset{\btau_{ph} \in \bbX_{ph} \, \text{s.t.} \, \btau_{ph}\bn_p=\0 \text{ on } \Gamma_{fp}}{\sup}
\frac{
-(A\partial_t(\bsi_{ph}+\alpha p_{ph}\bI), \btau_{ph})
+b_n^p( \btau_{ph}, \btheta_h) }
{\|\btau_{ph}\|_{\bbX_p}} \nonumber \\[1ex]
&\ds \quad
\leq C \Vert A^{1/2} \partial_t (\bsi_{ph}+\alpha p_{ph} \bI) \Vert_{\bbL^2(\Omega_p)}, 
\end{align}
\begin{align}\label{eq: stability 5}
&\ds \|\btheta_h\|_{\bLambda_{sh}} 
\leq C  \underset{\btau_{ph} \in \bbX_{ph} \, \text{s.t.}\, \nabla \cdot \btau_{ph}=\0 }{\sup} \frac{b_n^p(\btau_{ph}, \btheta_h) }{\|\btau_{ph}\|_{\bbX_{p}}}\nonumber \\[1ex]
&\ds \quad
= C\underset{\btau_{ph} \in \bbX_{ph} \, \text{s.t.}\, \nabla \cdot \btau_{ph}=\0 }{\sup}
\frac{
(A\partial_t(\bsi_{ph}+\alpha p_{ph}\bI), \btau_{ph})
+b_{\sk}(\btau_{ph}, \bgamma_{ph})
+ b_s(\btau_{ph}, \bu_{sh})
}{\|\btau_{ph}\|_{\bbX_p}} \nonumber \\[1ex]
&\ds \quad
\leq C( \Vert A^{1/2} \partial_t (\bsi_{ph}+\alpha p_{ph} \bI) \Vert_{\bbL^2(\Omega_p)}
+ \Vert \bgamma_{ph} \Vert_{\bbQ_p}).
\end{align}
Combining \eqref{eq: stability 3} with  \eqref{eq: stability 4}--\eqref{eq: stability 5},
and choosing $\epsilon$ small enough, results in
\begin{align}\label{eq: stability 6}
& \ds \int_{0}^{t}
\Big(\Vert \bu_{fh} \Vert ^2_{\bV_f}
+ \vert \bu_{fh}-\btheta_h \vert ^2_{a_{\BJS}}
+\Vert p_{fh} \Vert_{\W_f}^2
+\Vert \bu_{sh}\Vert_{\bV_s}^2
+\Vert \bgamma_{ph}\Vert_{\bbQ_p}^2
+ \Vert \bu_{ph} \Vert ^2_{\bL^2(\Omega_p)}
+\Vert p_{ph} \Vert_{\W_p}^2
\nonumber \\[1ex]
&\ds 
\qquad + \Vert \lambda_h \Vert_{\Lambda_{ph}}^2 
+ \Vert \btheta_h \Vert_{\bLambda_{sh}}^2
\Big)ds
+\Vert A^{1/2}(\bsi_{ph}+\alpha p_{ph}\bI)(t)\Vert^2_{\bbL^2(\Omega_p)}
+s_0 \Vert p_{ph}(t) \Vert ^2_{\W_p}
\nonumber \\[1ex]
&\ds \quad \leq  
C\bigg(
\int_0^t \big(\Vert A^{1/2} \partial_t (\bsi_{ph}+\alpha p_{ph}\bI)\Vert_{\bbL^2(\Omega_p)}^2
+ \Vert \f_f\Vert^2_{\bL^2(\Omega_f)}
+\Vert q_f\Vert^2_{\L^2(\Omega_f)}
+\Vert \f_p\Vert^2_{\bL^2(\Omega_p)}
+\Vert q_p\Vert^2_{\L^2(\Omega_p)}\big)ds
\nonumber \\[1ex]
&\ds \qquad 
+ \Vert A^{1/2}(\bsi_{ph}+\alpha p_{ph}\bI)(0)\Vert^2_{\bbL^2(\Omega_p)} 
+ s_0 \Vert p_{ph}(0) \Vert ^2_{\L^2(\Omega_p)}
 \bigg).
\end{align}
To get a bound for $\Vert A^{1/2} \partial_t (\bsi_{ph}+\alpha
p_{ph}\bI)\Vert_{\L^2(0,t;\bbL^2(\Omega_p))}^2$, we differentiate in time 
\eqref{sd-1}, \eqref{sd-4}, \eqref{sd-5}, \eqref{sd-6}, and \eqref{sd-9}, take
$(\bv_{fh},w_{fh},\btau_{ph},\bv_{sh},\bchi_{ph},\bv_{ph},w_{ph},\xi_h,
\bphi_h) =(\bu_{fh},\partial_t p_{fh},\partial_t\bsi_{ph},\bu_{sh},\bgamma_{ph},\bu_{ph}, \partial_t
p_{ph}, \linebreak \partial_t \lambda_h,
\btheta_h)$ in \eqref{eq: semidiscrete formulation 1},
and add all equations, to obtain
\begin{align}\label{eq: stability 7}
&\ds 
\frac{1}{2}\partial_t a_f(\bu_{fh},\bu_{fh})
+\frac{1}{2}\partial_t a_{\BJS}(\bu_{fh}, \btheta_h; \bu_{fh}, \btheta_h) 
+ \Vert A^{1/2} \partial_t (\bsi_{ph}+\alpha p_{ph}\bI)\Vert_{\bbL^2(\Omega_p)}^2
+\frac{1}{2}\partial_t a_p(\bu_{ph},\bu_{ph})
 \nonumber \\[1ex]
& \ds
+ s_0 \Vert \partial_t p_{ph}\Vert_{\W_p}^2
=(\partial_t\f_f,\bu_{fh})_{\Omega_f}
+(q_f, \partial_t p_{fh})_{\Omega_f}
+(\partial_t \f_p, \bu_{sh})_{\Omega_p}
+(q_p, \partial_t p_{ph})_{\Omega_p}.
\end{align}
We next integrate \eqref{eq: stability 7} in time from $0$ to an arbitrary $t\in (0,T]$ and
use integration by parts in time for the last two terms:
\begin{align*}
\ds \int_0^t (q_f, \partial_t p_{fh})_{\Omega_f} ds
+  \int_0^t (q_p, \partial_t p_{ph})_{\Omega_p} ds 
& =(q_f, p_{fh})_{\Omega_f} \Big|_0^t -\int_0^t(\partial_t q_f, p_{fh})_{\Omega_f} \,ds \\
& \quad + (q_p, p_{ph})_{\Omega_p} \Big|_0^t -\int_0^t(\partial_t q_p, p_{ph})_{\Omega_p} \,ds.
\end{align*}
Making use of the continuity of $a_f$, $a_p$ and $a_{\BJS}$,
cf. \eqref{eq: continuous continuity 0}, the coercivity of
$a_f$ and $a_p$, the semi-positive definiteness of $a_{\BJS}$,
cf. \eqref{eq: continuous monotonicity 1}, and the Cauchy-Schwarz and
Young's inequalities, we get
\begin{align}\label{eq: stability 8}
&\ds \mu C_K^2 \Vert \bu_{fh}(t) \Vert ^2_{\bV_f}
+\frac{1}{2}\mu \alpha_{\BJS} k_{\max}^{-1/2}
\vert (\bu_{fh}-\btheta_h)(t) \vert ^2_{a_{\BJS}}
+\frac{1}{2}\mu k_{\max}^{-1} \Vert \bu_{ph}(t) \Vert ^2_{\bL^2(\Omega_p)}
 \nonumber \\[1ex]
&\ds \quad\qquad + \int_{0}^{t}(
\Vert A^{1/2}\partial_t(\bsi_{ph}+\alpha p_{ph}\bI)\Vert^2_{\bbL^2(\Omega_p)}+
s_0 \Vert \partial_t p_{ph} \Vert ^2_{\W_p}) \, ds \nonumber \\[1ex]
&\ds \leq 
\frac{\epsilon}{2}\Big(\int_{0}^{t}
(\Vert \bu_{fh}\Vert^2_{\bL^2(\Omega_f)}
+ \Vert p_{fh}\Vert^2_{\W_f}
+\Vert \bu_{sh}\Vert^2_{\bV_s}
+\Vert p_{ph}\Vert^2_{\W_p}) \, ds
+ \Vert p_{fh}(t)\Vert^2_{\W_f}
+ \Vert p_{ph}(t)\Vert^2_{\W_p} 
\Big)\nonumber \\[1ex]
&\ds \quad
+\frac{1}{2\epsilon}\Big(\int_{0}^{t}(\Vert \partial_t \f_f\Vert^2_{\bL^2(\Omega_f)}
+\Vert \partial_t q_f\Vert^2_{\L^2(\Omega_f)}
+\Vert \partial_t \f_p\Vert^2_{\bL^2(\Omega_p)}
+\Vert \partial_t q_p\Vert^2_{\L^2(\Omega_p)}) \, ds 
+\Vert q_f(t)\Vert^2_{\L^2(\Omega_f)}
\nonumber \\[1ex]
&\ds \quad
+ \Vert q_p(t)\Vert^2_{\L^2(\Omega_p)}
\Big)
+\mu\Vert \bu_{fh}(0) \Vert ^2_{\bH^1(\Omega_f)}
+\frac{1}{2}\mu \alpha_{\BJS} k_{\min}^{-1/2}
\vert (\bu_{fh}-\btheta_h)(0) \vert ^2_{a_{\BJS}}
+\frac{1}{2}\Vert p_{fh}(0)\Vert^2_{\W_f}
\nonumber \\[1ex]
&\ds \quad
+\frac{1}{2} \mu k_{\min}^{-1} \Vert \bu_{ph}(0) \Vert ^2_{\bL^2(\Omega_p)}
+\frac{1}{2}\Vert p_{ph}(0)\Vert^2_{\W_p}
+\frac{1}{2}\Vert q_f(0)\Vert^2_{\L^2(\Omega_f)}
+\frac{1}{2}\Vert q_p(0)\Vert^2_{\L^2(\Omega_p)}.
\end{align}
We note that the first four terms on the right hand side are controlled in \eqref{eq: stability 6}, while the terms $\Vert
p_{fh}(t)\Vert_{\W_f}$ and $\Vert
p_{ph}(t)\Vert_{\W_p}$ are controlled in the inf-sup bound
\eqref{eq: stability 4}. Thus, combining \eqref{eq: stability 4}, \eqref{eq: stability 6} and \eqref{eq: stability 8}, and taking $\epsilon$ small enough, we obtain
\begin{align}\label{eq: stability 9}
& \ds \int_{0}^{t}
\Big(\Vert \bu_{fh} \Vert ^2_{\bV_f}
+ \vert \bu_{fh}-\btheta_h \vert ^2_{a_{\BJS}}
+\Vert p_{fh} \Vert_{\W_f}^2
+\Vert A^{1/2}\partial_t(\bsi_{ph}+\alpha p_{ph}\bI)\Vert^2_{\bbL^2(\Omega_p)}
+\Vert \bu_{sh}\Vert_{\bV_s}^2
+\Vert \bgamma_{ph}\Vert_{\bbQ_p}^2
\nonumber \\[1ex]
&\ds 
+ \Vert \bu_{ph} \Vert ^2_{\bL^2(\Omega_p)}
+\Vert p_{ph} \Vert_{\W_p}^2
+ s_0 \Vert \partial_t p_{ph} \Vert ^2_{\W_p}
+\Vert \lambda_h \Vert_{\Lambda_{ph}}^2 
+ \Vert \btheta_h \Vert_{\bLambda_{sh}}^2
\Big)ds
+ \Vert \bu_{fh}(t) \Vert ^2_{\bV_f}
+ \vert (\bu_{fh}-\btheta_h)(t) \vert ^2_{a_{\BJS}}
\nonumber \\[1ex]
&\ds 
+ \Vert p_{fh}(t) \Vert ^2_{\W_f}
+\Vert A^{1/2}(\bsi_{ph}+\alpha p_{ph}\bI)(t)\Vert^2_{\bbL^2(\Omega_p)} 
+ \Vert \bu_{ph}(t) \Vert ^2_{\bL^2(\Omega_p)}
+\Vert p_{ph}(t) \Vert^2_{\W_p} 
+\Vert \lambda_h(t) \Vert^2_{\Lambda_{ph}}
\nonumber \\[1ex]
&\ds \leq  
C\bigg(\int_{0}^{t}\big(
\Vert \f_f\Vert^2_{\bL^2(\Omega_f)}
+\Vert \f_p\Vert^2_{\bL^2(\Omega_p)}
+\Vert q_f\Vert^2_{\L^2(\Omega_f)}
+\Vert q_p\Vert^2_{\L^2(\Omega_p)}\big)ds
+ \|\f_f(t)\|^2_{\L^2(\Omega_f)}
\nonumber \\[1ex]
&\ds 
+ \int_{0}^{t}(\Vert \partial_t \f_f\Vert^2_{\bL^2(\Omega_f)}
+\Vert \partial_t \f_p\Vert^2_{\bL^2(\Omega_p)}
+\Vert \partial_t q_f\Vert^2_{\L^2(\Omega_f)}
+\Vert \partial_t q_p\Vert^2_{\L^2(\Omega_p)}) \, ds \,
+  \|q_f(t)\|^2_{\L^2(\Omega_f)}+ \|q_p(t)\|^2_{\L^2(\Omega_p)}
\nonumber \\[1ex]
&\ds 
+ \Vert \bu_{fh}(0) \Vert ^2_{\bV_f} 
+ \vert (\bu_{fh}-\btheta_h)(0) \vert ^2_{a_{\BJS}} 
+ \Vert p_{fh}(0) \Vert ^2_{\W_f}
+ \Vert A^{1/2}\bsi_{ph}(0)\Vert^2_{\bbL^2(\Omega_p)} 
+ \Vert \bu_{ph}(0) \Vert ^2_{\bL^2(\Omega_p)}
+ \Vert p_{ph}(0) \Vert ^2_{\W_p}
\nonumber \\[1ex]
&\ds 
+ \|q_f(0)\|^2_{\L^2(\Omega_f)} 
+ \|q_p(0)\|^2_{\L^2(\Omega_p)} 
\bigg).
\end{align}
We remark that in the above bound we have obtained control on
$\|p_{ph}(t)\|_{\L^2(\Omega_p)}$ independent of $s_0$.
To bound the initial data terms above, we recall that
$(\bu_\fh(0), p_\fh(0), \bsi_\ph(0), \bu_\ph(0), p_\ph(0), \lambda_h(0), \btheta_h(0))
= (\bu_{\fh,0}, p_{\fh,0},\bsi_{\ph,0},   \bu_{\ph,0}, p_{\ph,0}, \lambda_{h,0}, \btheta_{h,0})$
and the construction of the discrete initial data
\eqref{eq: dis ini S-D}--\eqref{eq: dis ini sigma}. Combining the two systems and
using the steady-state version of the arguments presented in
\eqref{eq: stability 1}--\eqref{eq: stability 4}, we obtain
\begin{align}
& \|\bu_{fh}(0)\|_{\bV_f} + \|p_\fh(0)\|_{\W_f} + \|A^{1/2}\bsi_\ph(0)\|_{\bbL^2(\Omega_p)}
+ \|\bu_\ph(0)\|_{\bL^2(\Omega_p)} + \|p_\ph(0)\|_{\W_p} 
+ |(\bu_{fh}-\btheta_h)(0)|_{a_{\BJS}}\nonumber \\
& \quad
\le C(\|\nabla \cdot (\bK\nabla p_{p,0})\|_{\L^2(\Omega_p)}
+ \| \f_f(0) \|_{\bL^2(\Omega_f)}
+ \| q_f(0) \|_{\L^2(\Omega_f)} 
+ \| \f_p(0) \|_{\bL^2(\Omega_p)}.
  \label{init-data-bound}
\end{align}
We complete the argument by deriving bounds for $\Vert \nabla \cdot
\bu_{ph}\Vert_{\L^2(\Omega_p)}$ and $\Vert \nabla \cdot
\bsi_{ph}\Vert_{\bL^2(\Omega_p)}$. Due to \eqref{div-prop}, we can choose $w_{ph}=\nabla
\cdot \bu_{ph}$ in \eqref{sd-7}, obtaining
\begin{align*}
& \ds 
\Vert \nabla \cdot \bu_{ph}\Vert^2_{\L^2(\Omega_p)}
=
-(A\partial_t(\bsi_{ph}+\alpha p_{ph}\bI), \nabla \cdot \bu_{ph})_{\Omega_p}
-(s_0 \partial_t p_{ph}, \nabla \cdot \bu_{ph})_{\Omega_p} 
+(q_p,\nabla \cdot \bu_{ph})_{\Omega_p}
\nonumber \\[1ex]
&\ds \quad \leq 
(
a_{\max}^{1/2} \Vert A^{1/2} \partial_t (\bsi_{ph}+\alpha p_{ph}\bI) \Vert_{\bbL^2(\Omega_p)}
+s_0\Vert \partial_t p_{ph} \Vert_{\L^2(\Omega_p)}
+ \Vert q_p \Vert_{\L^2(\Omega_p)} )\Vert \nabla \cdot \bu_{ph} \Vert_{\L^2(\Omega_p)},
\end{align*}
therefore
\begin{equation}\label{eq: bound for div up}
\ds \int_0^t\Vert \nabla \cdot \bu_{ph} \Vert^2_{\L^2(\Omega_p)}ds 
 \leq 
C\int_0^t(
\Vert A^{1/2} \partial_t (\bsi_{ph}+\alpha p_{ph} \bI)\Vert^2_{\bbL^2(\Omega_p)}
+ s_0 \Vert \partial_t p_{ph} \Vert^2_{\L^2(\Omega_p)}
+\Vert q_p \Vert^2_{\L^2(\Omega_p)}
)ds.
\end{equation}
Similarly, the choice of $\bv_{sh}=\nabla \cdot\bsi_{ph}$ in \eqref{sd-4} gives
\begin{equation}\label{eq: bound for div sigma}
\ds \Vert\nabla \cdot\bsi_{ph}\Vert_{\bL^2(\Omega_p)}
\leq \Vert \f_p \Vert_{\bL^2(\Omega_p)} \qan
\ds \int_0^t\Vert \nabla \cdot \bsi_{ph} \Vert^2_{\bL^2(\Omega_p)}ds 
\leq 
\int_0^t\Vert \f_p \Vert^2_{\bL^2(\Omega_p)} ds.
\end{equation}

Combining \eqref{eq: stability 9}--\eqref{eq: bound for div sigma},
we conclude \eqref{eq: stability analysis}, where we also use
\begin{equation*}
\Vert A^{1/2}\bsi_{ph}(t) \Vert_{\bbL^2(\Omega_p)} \leq C(\Vert A^{1/2}(\bsi_{ph}+\alpha p_{ph} \bI)(t) \Vert_{\bbL^2(\Omega_p)} + \Vert p_{ph}(t) \Vert_{\L^2(\Omega_p)}).
\end{equation*}
\end{proof}

\section{Error analysis}\label{sec:error}

In this section we derive an a priori error estimate for the
semi-discrete formulation \eqref{eq: semidiscrete formulation 1}.  We
assume that the finite element spaces contain polynomials of degrees
$s_{\bu_f}$ and $s_{p_f}$ for $\bV_{fh}$ and $\W_{fh}$, $s_{\bu_p}$
and $s_{p_p}$ for $\bV_{ph}$ and $\W_{ph}$, $s_{\bsi_p}$,
$s_{\bu_s}$, and $s_{\bgamma_p}$ for $\bbX_{ph}$, $\bV_{sh}$, and
$\bbQ_{ph}$, $s_{\btheta}$ and $s_{\lambda}$ for $\bLambda_{sh}$ and
$\Lambda_{ph}$. Next, we define interpolation operators into the
finite elements spaces that will be used in the error analysis.
 
We recall that $P_h^{\bLambda_s}: \bLambda_s \rightarrow
\bLambda_{sh}$ is the $\L^2$-projection operator, cf. \eqref{eq:
  interpolation 0}, and define $P_h^{\Lambda_p}: \Lambda_p
\rightarrow \Lambda_{ph}$ as the $\L^2$-projection operator,
satisfying, for any $\xi \in \L^2(\Gamma_{fp})$,
$\langle \xi - P_h^{\Lambda_p} \xi , \xi_h \rangle_{\Gamma_{fp}} = 0$
$\forall \, \xi_h \in \Lambda_{ph}$. Since the discrete Lagrange multiplier
spaces are chosen as
$\bLambda_{sh} = \bbX_{ph} \, \bn_p |_{\Gamma_{fp}}$ and
$\Lambda_{ph} = \bV_{ph} \cdot \bn_p |_{\Gamma_{fp}}$,  respectively, we have
\begin{align}\label{eq: interpolation 1}
  &\ds \langle \bphi - P_h^{\bLambda_s} \bphi, \btau_{ph} \,\bn_p \rangle_{\Gamma_{fp}} = 0,
  \quad \forall \, \btau_{ph} \in \bbX_{ph}, &\quad
  \langle \xi - P_h^{\Lambda_p} \xi, \bv_{ph} \cdot \bn_p \rangle_{\Gamma_{fp}} = 0,
  \quad \forall \, \bv_{ph} \in \bV_{ph}.
\end{align}
These operators have 
approximation properties \cite{ciarlet1978},
\begin{align}\label{eq: approx property 1}
&\ds \Vert \bphi - P_h^{\bLambda_s} \bphi \Vert_{\bL^2(\Gamma_{fp})} \leq Ch_p^{s_{\btheta}+1} \Vert \bphi \Vert_{\bH^{s_{\btheta}+1}(\Gamma_{fp})}, &\Vert \xi - P_h^{\Lambda_p} \xi \Vert_{\L^2(\Gamma_{fp})} \leq Ch_p^{s_{\lambda}+1} \Vert \xi \Vert_{\H^{s_{\lambda}+1}(\Gamma_{fp})}.
\end{align}
Similarly, we introduce $P_h^{\W_f}: \W_f \rightarrow \W_{fh}$,
$P_h^{\W_p}: \W_p \rightarrow \W_{ph}$, $P_h^{\bV_s}: \bV_s
\rightarrow \bV_{sh}$ and $P_h^{\bbQ_p}: \bbQ_p \rightarrow \bbQ_{ph}$
as $\L^2$-projection operators, satisfying
\begin{equation}\label{eq: interpolation 2}
\begin{array}{ll}
  \ds 
( w_f - P_h^{\W_f} w_f, w_{fh} )_{\Omega_f}  =  0,  \quad \forall  \, w_{fh} \in \W_{fh}, \quad
&( w_p - P_h^{\W_p} w_p , w_{ph} )_{\Omega_p}  =  0,  \quad \forall \, w_{ph} \in \W_{ph}, \\[1ex]
\ds
( \bv_s - P_h^{\bV_s} \bv_s,  \bv_{sh} )_{\Omega_p} = 0, \quad \forall \, \bv_{sh} \in \bV_{sh}, \quad
&( \bchi_p - P_h^{\bbQ_p} \bchi_p , \bchi_{ph} )_{\Omega_p} = 0, \quad \forall \, \bchi_{ph} \in \bbQ_{ph},
\end{array}
\end{equation}
with approximation properties \cite{ciarlet1978},
\begin{equation}\label{eq: approx property 2}
\begin{array}{ll}
\ds 
\Vert w_f - P_h^{\W_f} w_f \Vert_{\L^2(\Omega_f)} \leq Ch_f^{s_{p_f}+1} \Vert w_f \Vert_{\H^{s_{p_f}+1}(\Omega_f)}, 
& \Vert w_p - P_h^{\W_p} w_p \Vert_{\L^2(\Omega_p)} \leq Ch_p^{s_{p_p}+1} \Vert w_p \Vert_{\H^{s_{p_p}+1}(\Omega_p)}, \\[1ex]
\ds \Vert \bv_s - P_h^{\bV_s} \bv_s \Vert_{\bL^2(\Omega_p)} \leq Ch_p^{s_{\bu_s}+1} \Vert \bv_s \Vert_{\bH^{s_{\bu_s}+1}(\Omega_p)}, 
& \Vert \bchi_p - P_h^{\bbQ_p} \bchi_p \Vert_{\bbL^2(\Omega_p)} \leq Ch_p^{s_{\bgamma_p}+1} \Vert \bchi_p \Vert_{\bbH^{s_{\bgamma_p}+1}(\Omega_p)}.
\end{array}
\end{equation}

Next, we consider a Stokes-like projection operator $I_h^{\bV_f}:
\bV_f \rightarrow \bV_{fh}$, defined by solving the problem: find
$I_h^{\bV_f} \bv_f$ and $\wt{p}_{fh}\in \W_{fh}$ such that
\begin{equation} \label{eq: interpolation 3}
  \begin{array}{ll}
a_f(I_h^{\bV_f}\bv_f, \bv_{fh})
-b_f(\bv_{fh},\wt{p}_{fh})
=a_f(\bv_f, \bv_{fh}), 
& \quad  \forall \, \bv_{fh}\in \bV_{fh}, \\[1ex]
b_f(I_h^{\bV_f}\bv_f, w_{fh})=b_f(\bv_f, w_{fh}),
& \quad  \forall \, w_{fh}\in \W_{fh}. 
  \end{array}
\end{equation}
The operator $I_h^{\bV_f}$ satisfies the approximation property \cite{fern2011}:
\begin{equation}\label{eq: approx property 3}
\Vert \bv_f - I_h^{\bV_f}\bv_f \Vert_{\bH^1(\Omega_f)} \leq Ch_f^{s_{\bu_f}}\Vert \bv_f \Vert_{\bH^{s_{\bu_f}+1}(\Omega_f)}.
\end{equation}
Let $I_h^{\bV_p}$ be the mixed finite element interpolant onto
$\bV_{ph}$, which satisfies for all $\bv_p \in \bV_p \cap
\bH^1(\Omega_p)$,
\begin{equation}\label{eq: interpolation 4}
  \begin{array}{ll}
  ( \nabla \cdot I_h^{\bV_p}\bv_p, w_{ph} )_{\Omega_p}
  = (\nabla \cdot \bv_p, w_{ph})_{\Omega_p}, & \quad  \forall \, w_{ph} \in \W_{ph},\\[1ex]
  \langle I_h^{\bV_p}\bv_p \cdot \bn_p, \bv_{ph} \cdot \bn_p \rangle_{\Gamma_{fp}}
  = \langle \bv_p \cdot \bn_p, \bv_{ph} \cdot \bn_p \rangle_{\Gamma_{fp}},
  & \quad  \forall \, \bv_{ph} \in  \bV_{ph}, 
  \end{array}
\end{equation}
and
\begin{equation}\label{eq: approx property 4}
  \begin{array}{l}
\Vert \bv_p - I_h^{\bV_p}\bv_p \, \Vert_{\bL^2(\Omega_p)} \, \leq \, C h_p^{s_{\bu_p}+1} \Vert \, \bv_p \Vert_{\bH^{s_{\bu_p}+1}(\Omega_p)}, \\[1ex]
\Vert \, \nabla \cdot (\bv_p - I_h^{\bV_p}\bv_p) \, \Vert_{\L^2(\Omega_p)} \, \leq \, C h_p^{s_{\bu_p}+1} \Vert \, \nabla \cdot \bv_p \Vert_{\H^{s_{\bu_p}+1}(\Omega_p)}.
\end{array}
\end{equation}
For $\bbX_{ph}$, we consider the weakly symmetric elliptic projection introduced in
\cite{arnold2014} and extended in \cite{eldar_elastdd} to the case of
Neumann boundary condition: given $\bsi_p \in \bbX_p \cap \bbH^1(\Omega_p)$, find
$(\wt{\bsi}_{ph}, \wt{\bbeta}_{ph}, \wt{\brho}_{ph})
\in \bbX_{ph} \times \bV_{sh} \times \bbQ_{ph}$ such that
\begin{equation}\label{eq: interpolation 5}
  \begin{array}{ll}
    (\wt{\bsi}_{ph}, \btau_{ph}) + (\wt{\bbeta}_{ph}, \nabla \cdot \btau_{ph})
    + (\wt{\brho}_{ph}, \btau_{ph}) = (\bsi_p, \btau_{ph}),
    & \quad  \forall \, \btau_{ph} \, \in \, \bbX_{ph}^0, \\[1ex]
    (\nabla \cdot \wt{\bsi}_{ph}, \bv_{sh}) = (\nabla \cdot \bsi_p, \bv_{sh}),
    & \quad  \forall \, \bv_{sh} \, \in \, \bV_{sh}, \\[1ex]
    (\wt{\bsi}_{ph}, \bchi_{ph}) = (\bsi_p, \bchi_{ph}),
    & \quad  \forall \, \bchi_{ph} \, \in \, \bbQ_{ph}, \\[1ex]
    \langle  \wt{\bsi}_{ph} \bn_p, \btau_{ph} \bn_p \rangle_{\Gamma_{fp}}
    = \langle \bsi_p \bn_p, \btau_{ph} \bn_p \rangle_{\Gamma_{fp}},
    & \quad  \forall \, \btau_{ph} \, \in \, \bbX_{ph}^{\Gamma_{fp}}, 
  \end{array}
\end{equation}
where $\ds \bbX_{ph}^0=\{\btau_{ph}\in \bbX_{ph}: \btau_{ph} \bn_p =
\0 \, \text{ on } \, \Gamma_{fp} \}$, and $\bbX_{ph}^{\Gamma_{fp}}$ is
the complement of $\bbX_{ph}^0$ in $\bbX_{ph}$, which spans the degrees of
freedoms on $\Gamma_{fp}$. We define $I_h^{\bbX_p}\bsi_p :=
\wt{\bsi}_{ph}$, which satisfies
\begin{equation}\label{eq: approx property 5}
\begin{array}{l}
\Vert \bsi_p - I_h^{\bbX_p}\bsi_p \Vert_{\bbL^2(\Omega_p)} \leq h_p^{s_{\bsi_p}+1} \Vert \bsi_p \Vert_{\bbH^{s_{\bsi_p}+1}(\Omega_p)}, \\[1ex]
\Vert \nabla \cdot (\bsi_p - I_h^{\bbX_p}\bsi_p) \Vert_{\bL^2(\Omega_p)} \leq C h_p^{s_{\bsi_p}+1} \Vert \nabla \cdot \bsi_p \Vert_{\bH^{s_{\bsi_p}+1}(\Omega_p)}.
\end{array}
\end{equation}

We now establish the main result of this section.

\begin{theorem}\label{thm: error analysis}
  Assuming sufficient regularity of the solution to the continuous
  problem \eqref{weak-form},
  for the solution of the semi-discrete problem \eqref{eq: semidiscrete formulation 1},
    there exists a constant $C$ independent of $h$, $s_0$, and $a_{\min}$ such that
\begin{align}\label{eq: error analysis result}
&\ds 
\Vert \bu_f - \bu_{fh} \Vert_{\L^\infty(0,T; \bV_f)}
+ \Vert \bu_f - \bu_{fh} \Vert_{\L^2(0,T; \bV_f)}
+ \vert (\bu_f - \btheta) - (\bu_{fh} - \btheta_h) \vert_{\L^\infty(0,T; a_{\BJS})}
\nonumber \\[1ex]
&\ds \quad
+ \vert (\bu_f - \btheta) - (\bu_{fh} - \btheta_h) \vert_{\L^2(0,T; a_{\BJS})}
+ \Vert p_f - p_{fh} \Vert_{\L^\infty(0,T; \W_f)}
+ \Vert p_f - p_{fh} \Vert_{\L^2(0,T; \W_f)}
\nonumber \\[1ex]
&\ds \quad
+ \Vert A^{1/2}(\bsi_p - \bsi_{ph}) \Vert_{\L^\infty(0,T; \bbL^2(\Omega_p))}
+ \Vert \nabla \cdot (\bsi_p - \bsi_{ph})\Vert_{\L^\infty(0,T; \bL^2(\Omega_p))} 
+ \Vert \nabla \cdot (\bsi_p - \bsi_{ph})\Vert_{\L^2(0,T; \bL^2(\Omega_p))}
\nonumber \\[1ex]
&\ds \quad
+ \Vert A^{1/2}\partial_t ((\bsi_p+\alpha p_p \bI) - (\bsi_{ph}+\alpha p_{ph} \bI)) \Vert_{\L^2(0,T; \bbL^2(\Omega_p))}
+ \Vert \bu_s - \bu_{sh} \Vert_{\L^2(0,T; \bV_s)}
+ \Vert \bgamma_p - \bgamma_{ph} \Vert_{\L^2(0,T; \bbQ_p)}
\nonumber \\[1ex]
&\ds \quad
+\Vert \bu_p - \bu_{ph} \Vert_{\L^\infty(0,T; \bL^2(\Omega_p))}
+\Vert \bu_p - \bu_{ph} \Vert_{\L^2(0,T; \bV_p})
+ \Vert p_p - p_{ph} \Vert_{\L^\infty(0,T; \W_p)}
+ \Vert p_p - p_{ph} \Vert_{\L^2(0,T; \W_p)}
 \nonumber \\[1ex]
&\ds \quad
+ \sqrt{s_0}\Vert \partial_t(p_p - p_{ph}) \Vert_{\L^2(0,T; \W_p))}
+ \Vert \lambda - \lambda_h \Vert_{\L^\infty(0,T; \Lambda_{ph})}
+ \Vert \lambda - \lambda_h \Vert_{\L^2(0,T; \Lambda_{ph})}
+ \Vert \btheta - \btheta_h \Vert_{\L^2(0,T; \bLambda_{sh})} 
 \nonumber \\[1ex]
&\ds 
\leq 
C\sqrt{\exp(T)}\Big(
h_f^{s_{\bu_f}}
\Vert \bu_f \Vert_{\H^1(0,T; \bH^{s_{\bu_f}+1}(\Omega_f))}
+ h_f^{s_{p_f}+1}
\Vert p_f \Vert_{\H^1(0,T; \H^{s_{p_f}+1}(\Omega_f))}
+ h_p^{s_{\bsi_p}+1}
\Vert \bsi_p \Vert_{\H^1(0,T; \bbH^{s_{\bsi_p}+1}(\Omega_p))}
 \nonumber \\[1ex]
&\ds \quad
+ h_p^{s_{\bsi_p}+1}
\Vert \nabla\cdot\bsi_p \Vert_{\L^\infty(0,T; \bH^{s_{\bsi_p}+1}(\Omega_p))}
+ h_p^{s_{\bu_s}+1}
\Vert \bu_s \Vert_{\L^2(0,T; \bH^{s_{\bu_s}+1}(\Omega_p))}
+ h_p^{s_{\bgamma_p}+1}
\Vert \gamma_p \Vert_{\H^1(0,T; \bbH^{s_{\bgamma_p}+1}(\Omega_p))}
\nonumber \\[1ex]
&\ds \quad
+ h_p^{s_{\bu_p}+1}
(
\Vert \bu_p \Vert_{\H^1(0,T; \bH^{s_{\bu_p}+1}(\Omega_p))}
+ \Vert \nabla\cdot\bu_p \Vert_{\L^2(0,T; \H^{s_{\bu_p}+1}(\Omega_p))}
)
+ h_p^{s_{p_p}+1}
\Vert p_p \Vert_{\H^1(0,T; \H^{s_{p_p}+1}(\Omega_p))}
\nonumber \\[1ex]
&\ds \quad
+ h_p^{s_{\lambda}+1}
\Vert \lambda \Vert_{\H^1(0,T; \H^{s_{\lambda}+1}(\Gamma_{fp}))}
+ h_p^{s_{\btheta}+1}
\Vert \btheta \Vert_{\H^1(0,T; \bH^{s_{\btheta}+1}(\Gamma_{fp}))}
+ h_p^{s_{\bgamma_p}+1} \Vert \brho_{p}(0) \Vert_{\bbH^{s_{\bgamma_p}+1}(\Omega_p)}
\Big).
\end{align}

\end{theorem}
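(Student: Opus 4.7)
The plan is to follow the standard mixed finite element error analysis by splitting each error into an interpolation part and a discrete part using the projection operators introduced in \eqref{eq: interpolation 0}--\eqref{eq: interpolation 5}. For each variable I write, e.g., $\bu_f - \bu_{fh} = (\bu_f - I_h^{\bV_f}\bu_f) + (I_h^{\bV_f}\bu_f - \bu_{fh}) =: e_{\bu_f}^I + e_{\bu_f}^h$, and similarly for all nine unknowns. The approximation properties \eqref{eq: approx property 1}--\eqref{eq: approx property 5} will handle the interpolation parts; the work is to control the discrete parts. Subtracting the semidiscrete system \eqref{eq: semidiscrete formulation 1} from the continuous system \eqref{weak-form} (which, by Remark~\ref{rem: sd eq hold}, is valid against discrete test functions) yields error equations with identical structure. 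Crucially, the orthogonality properties of the projections are tailored to kill the interpolation contributions from most bilinear forms: $b_f$, $b_p$, $b_s$, $b_{\sk}$ vanish on interpolation errors by \eqref{eq: interpolation 2}--\eqref{eq: interpolation 5}; $b_n^p$ vanishes by \eqref{eq: interpolation 1} and \eqref{eq: interpolation 5}; $b_\Gamma$ vanishes by \eqref{eq: interpolation 1} and \eqref{eq: interpolation 4}; and $a_f$ reduces to a pressure term by \eqref{eq: interpolation 3}. The residual interpolation errors will therefore be concentrated in the terms $a_e(\partial_t(\bsi_p^I + \alpha p_p^I \bI), \cdot)$, $a_p^p(\partial_t p_p^I,\cdot)$, $a_p(\bu_p^I,\cdot)$, $a_{\BJS}$, and the BJS interface term.

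Next I mimic the stability argument of Theorem~\ref{thm: stability analysis} applied to the discrete errors $(e_{\bu_f}^h, e_{p_f}^h, e_{\bsi_p}^h, e_{\bu_s}^h, e_{\bgamma_p}^h, e_{\bu_p}^h, e_{p_p}^h, e_{\lambda}^h, e_{\btheta}^h)$: test with the discrete errors themselves and sum to obtain an energy identity in which the coercive terms $a_f(e_{\bu_f}^h, e_{\bu_f}^h)$, $a_p(e_{\bu_p}^h, e_{\bu_p}^h)$, $a_{\BJS}(e_{\bu_f}^h, e_{\btheta}^h; e_{\bu_f}^h, e_{\btheta}^h)$, $\partial_t \|A^{1/2}(e_{\bsi_p}^h + \alpha e_{p_p}^h \bI)\|^2$, and $s_0 \partial_t \|e_{p_p}^h\|^2$ appear on the left, while the right-hand side contains inner products of interpolation-error residuals with the discrete errors. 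Integrating in time and applying Cauchy--Schwarz and Young inequalities, then differentiating the appropriate equations in time and testing with $\partial_t$-errors (as in \eqref{eq: stability 7}) to gain control on $\|A^{1/2}\partial_t(e_{\bsi_p}^h + \alpha e_{p_p}^h\bI)\|_{L^2(0,T;\bbL^2)}$, and finally applying the discrete inf-sup bounds from Lemma~\ref{lem: discrete inf-sup} to recover $e_{\bu_s}^h$, $e_{\bgamma_p}^h$, $e_{\btheta}^h$, $e_{p_f}^h$, $e_{p_p}^h$, $e_{\lambda}^h$, and \eqref{div-prop} to recover $\|\nabla\cdot e_{\bu_p}^h\|$ and $\|\nabla\cdot e_{\bsi_p}^h\|$, will yield a discrete error bound in terms of interpolation residuals and initial error data.

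Two subtleties require extra care. First, the initial error contributions $\|e_{\bsi_p}^h(0)\|$, $\|e_{p_p}^h(0)\|$, $\|e_{\bu_f}^h(0)\|$, etc., must be controlled. This is handled by the discrete initial data construction \eqref{eq: dis ini S-D}--\eqref{eq: dis ini sigma}, which defines $\bp_{h,0}$, $\br_{h,0}$ as an elliptic/Stokes--Darcy/mixed-elasticity projection of the continuous initial data $\bp_0$, $\br_0$. A standard Galerkin orthogonality argument applied to those subsystems gives the bound $\|e_{\bsi_p}^h(0)\| + \|e_{p_p}^h(0)\| + \cdots \leq C h^{\min \text{rate}} \|p_{p,0}\|_{\H^{\cdot}}$, which contributes the last term of the theorem, picking up also an initial rotation term $h^{s_{\bgamma_p}+1}\|\brho_p(0)\|$. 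Second, the term $\sqrt{\exp(T)}$ in the bound indicates that a Gronwall inequality will be needed to absorb lower-order couplings of the form $\int_0^t (\text{residual}) \cdot e^h \,ds$ after using Young's inequality; this is where the long-time stability of the stability theorem is lost in the error analysis.

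The main obstacle will be the careful bookkeeping of the non-conforming interface terms with the weakly-symmetric elasticity projection $I_h^{\bbX_p}$: the projection \eqref{eq: interpolation 5} satisfies only the integral identity $\langle \wt\bsi_{ph}\bn_p, \btau_{ph}\bn_p\rangle_{\Gamma_{fp}} = \langle \bsi_p\bn_p, \btau_{ph}\bn_p\rangle_{\Gamma_{fp}}$, which forces $\bphi_h \in \bLambda_{sh} = \bbX_{ph}\bn_p|_{\Gamma_{fp}}$ to activate the orthogonality; one then needs the matching $\L^2$-orthogonality of $P_h^{\bLambda_s}$ on the other side to cancel the $b_n^p(e_{\bsi_p}^I, \cdot)$ and $b_n^p(\cdot, e_{\btheta}^I)$ contributions cleanly. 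Analogous care is needed for $I_h^{\bV_p}$ and $P_h^{\Lambda_p}$ across $\Gamma_{fp}$. Once these cancellations are verified, the remaining interpolation residuals all carry optimal approximation rates and the stated optimal estimate \eqref{eq: error analysis result} follows.
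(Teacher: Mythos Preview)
Your proposal is correct and follows essentially the same route as the paper: error decomposition via the projections, orthogonality cancellations, energy identity tested against $e^h$, a second pass with time-differentiated equations tested against $\partial_t e^h$, inf-sup recovery of $e^h_{p_f},e^h_{p_p},e^h_\lambda,e^h_{\bu_s},e^h_{\bgamma_p},e^h_\btheta$, and Gronwall. One bookkeeping point to fix when you write it out: the projection orthogonalities kill the cross terms only in one direction---for instance $b_{\sk}(e^I_{\bsi_p},e^h_{\bgamma_p})=0$ by \eqref{eq: interpolation 5} but $b_{\sk}(e^h_{\bsi_p},e^I_{\bgamma_p})$ survives (and, bounded via $\|A^{1/2}e^h_{\bsi_p}\|$, is precisely the term that forces Gronwall), and likewise $b_f(e^h_{\bu_f},e^I_{p_f})$ and the mixed $b_\Gamma$ terms survive, whereas $a_p^p(\partial_t e^I_{p_p},e^h_{p_p})$ actually vanishes by $\L^2$-orthogonality.
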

\begin{proof}
We introduce the error terms as the differences of the solutions to
\eqref{weak-form} and \eqref{eq: semidiscrete formulation 1} and decompose them
into approximation and discretization errors using the interpolation operators:
\begin{align}
&\ds e_{\bu_f}  :=  \bu_f - \bu_{fh}  =  (\bu_f - I_h^{\bV_f} \bu_f) + ( I_h^{\bV_f} \bu_f - \bu_{fh})  :=  e^I_{\bu_f} + e^h_{\bu_f}, \nonumber \\[1ex]
&\ds e_{p_f}  :=  p_f - p_{fh} = (p_f - P_h^{\W_f} p_f) + ( P_h^{\W_f} p_f - p_{fh}) := e^I_{p_f} + e^h_{p_f},  \nonumber \\[1ex]
&\ds e_{\bu_p} := \bu_p - \bu_{ph} = (\bu_p - I_h^{\bV_p} \bu_p) + ( I_h^{\bV_p} \bu_p - \bu_{ph}) := e^I_{\bu_p} + e^h_{\bu_p}, \nonumber \\[1ex]
&\ds e_{p_p}  :=  p_p - p_{ph} = (p_p - P_h^{\W_p} p_p) + ( P_h^{\W_p} p_p - p_{ph}) := e^I_{p_p} + e^h_{p_p},\nonumber \\[1ex]
&\ds e_{\bsi_p} := \bsi_p - \bsi_{ph} = (\bsi_p - I_h^{\bbX_p} \bsi_p) + ( I_h^{\bbX_p} \bsi_p - \bsi_{ph}) := e^I_{\bsi_p} + e^h_{\bsi_p},   \nonumber \\[1ex]
&\ds e_{\bu_s} := \bu_s - \bu_{sh} = (\bu_s - P_h^{\bV_s} \bu_s) + ( P_h^{\bV_s} \bu_s - \bu_{sh}) := e^I_{\bu_s} + e^h_{\bu_s},\nonumber \\[1ex]
&\ds e_{\bgamma_p} := \bgamma_p - \bgamma_{ph} = (\bgamma_p - P_h^{\bbQ_p} \bgamma_p) + ( P_h^{\bbQ_p} \bgamma_p - \bgamma_{ph}) := e^I_{\bgamma_p} + e^h_{\bgamma_p}, \nonumber \\[1ex]
&\ds e_{\btheta} := \btheta - \btheta_h = (\btheta - P_h^{\bLambda_s} \btheta) + ( P_h^{\bLambda_s} \btheta - \btheta_h) := e^I_{\btheta} + e^h_{\btheta}, \nonumber \\[1ex]
&\ds e_{\lambda} := \lambda - \lambda_h = (\lambda - P_h^{\bLambda_p} \lambda) + ( P_h^{\bLambda_p} \lambda - \lambda_h) := e^I_{\lambda} + e^h_{\lambda}.
\end{align}
We also define the approximation errors for non-differentiated variables:
\begin{equation*}
e_{\bbeta_p}^I = \bbeta_p - P_h^{\bV_s}\bbeta_{p}, \quad
e_{\brho_p}^I = \brho_p - P_h^{\bbQ_p}\brho_{p}, \quad
e_{\bpsi}^I = \bpsi - P_h^{\bLambda_s}\bpsi.
\end{equation*}
We form the error equations by subtracting the semi-discrete equations
\eqref{eq: semidiscrete formulation 1} from the continuous equations
\eqref{weak-form}:
\begin{subequations}\label{eq: error equation 1}
\begin{align}
&\ds a_f(e_{\bu_f},\bv_{fh})+b_f(\bv_{fh},e_{p_f})+b_{\Gamma}(\bv_{fh},\0,\0;e_{\lambda})+a_{\BJS}(e_{\bu_f},e_{\btheta};\bv_{fh},\0)=0, \label{eq: error equation 1a}
\\[1ex]
&\ds 
-b_f(e_{\bu_f}, w_{fh})=0, \label{eq: error equation 1b}\\[1ex]
&\ds 
a_e(\partial_t e_{\bsi_p}, \partial_t e_{p_p}; \btau_{ph}, 0) +b_s(\btau_{ph},e_{\bu_s})
+b_{\sk}(\btau_{ph}, e_{\bgamma_p})
-b_n^p(\btau_{ph}, e_{\btheta})=0,
\label{eq: error equation 1c}
\\[1ex]
&\ds 
-b_s(e_{\bsi_p},\bv_{sh})=0,
\label{eq: error equation 1d}
\\[1ex]
&\ds 
-b_{\sk}(e_{\bsi_p},\bchi_{ph})=0,
\label{eq: error equation 1e}
\\[1ex]
&\ds 
a_p(e_{\bu_p},\bv_{ph})+b_p(\bv_{ph}, e_{p_p})+b_{\Gamma}(\0,\bv_{ph},\0;e_{\lambda})=0,
\label{eq: error equation 1f}
\\[1ex]
&\ds 
a_p^p(\partial_t e_{p_p}, w_{ph})
+a_e(\partial_t e_{\bsi_p}, \partial_t e_{p_p}; \0, w_{ph}) 
-b_p(e_{\bu_p}, w_{ph}) =0,
\label{eq: error equation 1g}
\\[1ex]
&\ds 
-b_{\Gamma}(e_{\bu_f}, e_{\bu_p}, e_{\btheta} ;\xi_h) 
=0,
\label{eq: error equation 1h}
\\[1ex]
&\ds 
b_{\Gamma}(\0,\0,\bphi_h;e_{\lambda})
+a_{\BJS}(e_{\bu_f},e_{\btheta};\0,\bphi_h)
+b_n^p(e_{\bsi_p}, \bphi_h)
=0. \label{eq: error equation 1i}
\end{align}
\end{subequations}
Setting $\bv_{fh}=e^h_{\bu_f}, w_{fh}=e^h_{p_f}, \btau_{ph}=e^h_{\bsi_p}, \bv_{sh}=e^h_{\bu_s}, \bchi_{ph}=e^h_{\bgamma_p}, \bv_{ph}=e^h_{\bu_p}, w_{ph}=e^h_{p_p}, \xi_h=e^h_{\lambda}, \bphi_h=e^h_{\btheta}$, and summing the equations, we obtain
\begin{align}\label{eq: error equation 2}
&\ds a_f(e^I_{\bu_f},e^h_{\bu_f})
+a_f(e^h_{\bu_f},e^h_{\bu_f})
+a_{\BJS}(e^I_{\bu_f},e^I_{\btheta};e^h_{\bu_f},e^h_{\btheta}) 
+a_{\BJS}(e^h_{\bu_f},e^h_{\btheta};e^h_{\bu_f},e^h_{\btheta})
+a_e(\partial_t e^I_{\bsi_p}, \partial_t e^I_{p_p}; e^h_{\bsi_p}, e^h_{p_p})
 \nonumber \\[1ex]
&\ds
+a_e(\partial_t e^h_{\bsi_p}, \partial_t e^h_{p_p}; e^h_{\bsi_p}, e^h_{p_p})
+a_p(e^I_{\bu_p},e^h_{\bu_p})
+a_p(e^h_{\bu_p},e^h_{\bu_p})
+a_p^p(\partial_t e^I_{p_p}, e^h_{p_p})
+a_p^p(\partial_t e^h_{p_p}, e^h_{p_p})
 \nonumber \\[1ex]
&\ds
+b_n^p(e^I_{\bsi_p}, e^h_{\btheta})
+b_p(e^h_{\bu_p}, e^I_{p_p})
+b_f(e^h_{\bu_f},e^I_{p_f})
+b_s(e^h_{\bsi_p},e^I_{\bu_s})
+b_{\sk}(e^h_{\bsi_p}, e^I_{\bgamma_p})
+b_{\Gamma}(e^h_{\bu_f},e^h_{\bu_p},e^h_{\btheta};e^I_{\lambda})
\\[1ex]
&\ds
-b_n^p(e^h_{\bsi_p}, e^I_{\btheta})
-b_p(e^I_{\bu_p}, e^h_{p_p}) 
-b_f(e^I_{\bu_f}, e^h_{p_f})
-b_s(e^I_{\bsi_p},e^h_{\bu_s})
-b_{\sk}(e^I_{\bsi_p},e^h_{\bgamma_p})
-b_{\Gamma}(e^I_{\bu_f}, e^I_{\bu_p}, e^I_{\btheta} ;e^h_{\lambda}) \nonumber
=0. 
\end{align}
Due to \eqref{div-prop} and the properties of the projection operators
\eqref{eq: interpolation 1}, \eqref{eq: interpolation 2}, \eqref{eq: interpolation 3}, \eqref{eq: interpolation 4} and \eqref{eq: interpolation 5}, we have
\begin{gather*}
\ds b_n^p(e^h_{\bsi_p}, e^I_\btheta) =0, \quad \langle e^h_{\bu_p} \cdot \bn_p ,e^I_\lambda \rangle_{\Gamma_{fp}}
= 0, \qquad a_p^p(\partial_t e^I_{p_p},e^h_{p_p})_{\Omega_p} 
=0, \quad b_p(e^h_{\bu_p}, e^I_{p_p}) 
=0, \quad b_s(e^h_{\bsi_p}, e^I_{\bu_s})
= 0, \\[1ex]
\ds  
b_f(e^I_{\bu_f}, e^h_{p_f}) 
= 0, \qquad
b_p(e^I_{\bu_p},e^h_{p_p})
=0, \quad \langle e^I_{\bu_p} \cdot \bn_p ,e^h_\lambda \rangle_{\Gamma_{fp}}
= 0,\\[1ex]
\ds b_s(e^I_{\bsi_p}, e^h_{\bu_s})
=0, \quad b_{\sk}(e^I_{\bsi_p}, e^h_{\bgamma_p})
=0, \quad b_n^p(e^I_{\bsi_p}, e^h_\btheta) 
= 0.
\end{gather*}
With the use of the algebraic identity
$\int_S v \, \partial_t v = \frac{1}{2} \partial_t \Vert v \Vert^2_{\L^2(S)}$,
the error equation \eqref{eq: error equation 2} becomes
\begin{align}\label{eq: error equation 3}
&\ds 
a_f(e^h_{\bu_f},e^h_{\bu_f})
+a_{\BJS}(e^h_{\bu_f},e^h_{\btheta};e^h_{\bu_f},e^h_{\btheta})
+\frac{1}{2}\partial_t \Vert A^{1/2}(e^h_{\bsi_p} + \alpha_p\,e^h_{p_p}\,\bI) \Vert^2_{\bbL^2(\Omega_p)} 
+a_p(e^h_{\bu_p},e^h_{\bu_p})
+\frac{1}{2} s_0 \partial_t \Vert e^h_{p_p}\Vert^2_{\W_p}
\nonumber \\[1ex]
&\ds 
= -a_f(e^I_{\bu_f},e^h_{\bu_f})
-a_{\BJS}(e^I_{\bu_f},e^I_{\btheta};e^h_{\bu_f},e^h_{\btheta}) 
-a_e(\partial_t \,e^I_{\bsi_p}, \partial_t \, e^I_{p_p}; e^h_{\bsi_p}, e^h_{p_p})
-a_p(e^I_{\bu_p},e^h_{\bu_p})
\nonumber \\[1ex]
&\ds \quad
-b_f(e^h_{\bu_f}, e^I_{p_f})
-b_{\sk}(e^h_{\bsi_p}, e^I_{\bgamma_p})
- b_{\Gamma}(e^h_{\bu_f}, \0, e^h_{\btheta}; e^I_\lambda)
+ b_{\Gamma}(e^I_{\bu_f}, \0, e^I_{\btheta}; e^h_\lambda).
\end{align}
We proceed by integrating \eqref{eq: error equation 3} from
$0$ to $t\in (0,T]$, applying the coercivity properties of $a_f$ and
  $a_p$, the semi-positive definiteness of $a_{\BJS}$ \eqref{eq:
    continuous monotonicity 1}, the Cauchy-Schwarz inequality, the
  trace inequality \eqref{trace}, and Young's inequality, to
  get
\begin{align} \label{eq: error analysis 1}
&\ds 
\Vert e^h_{\bu_f} \Vert^2_{\L^2(0,t;\bV_f)}
+\vert e^h_{\bu_f}-e^h_{\btheta}\vert^2_{\L^2(0,t;a_{\BJS})}
+\Vert A^{1/2}(e^h_{\bsi_p} + \alpha e^h_{p_p} \bI)(t) \Vert^2_{\bbL^2(\Omega_p)}
+\Vert e^h_{\bu_p} \Vert^2_{\L^2(0,t;\bL^2(\Omega_p))}
+s_0\Vert e^h_{p_p}(t)\Vert^2_{\W_p}
\nonumber \\[1ex]
&\ds 
\leq 
\epsilon\Big(
\Vert e^h_{\bu_f} \Vert^2_{\L^2(0,t;\bV_f)}
+ \vert e^h_{\bu_f}-e^h_{\btheta} \vert^2_{\L^2(0,t;a_{\BJS})}
+ \Vert A^{1/2}(e^h_{\bsi_p}+\alpha \, e^h_{p_p}\, \bI)\Vert^2_{\L^2(0,t;\bbL^2(\Omega_p))}
+ \Vert A^{1/2} e^h_{\bsi_p} \Vert^2_{\L^2(0,t;\bbL^2(\Omega_p))}\nonumber \\[1ex]
&\ds  \quad
+ \Vert e^h_{\bu_p} \Vert^2_{\L^2(0,t;\bV_p)}
+ \Vert e^h_{\lambda} \Vert^2_{\L^2(0,t;\Lambda_{ph})}
+ \Vert e^h_{\btheta} \Vert^2_{\L^2(0,t;\bLambda_{sh})}
\Big)
+\frac{C}{\epsilon}\Big(
\Vert e^I_{\bu_f} \Vert^2_{\L^2(0,t;\bV_f)}
+ \vert e^I_{\bu_f}-e^I_{\btheta} \vert^2_{\L^2(0,t;a_{\BJS})}
\nonumber \\[1ex]
&\ds  \quad
+\Vert e^I_{p_f} \Vert^2_{\L^2(0,t;\W_f)}
+\Vert A^{1/2}\partial_t (e^I_{\bsi_p}+\alpha e^I_{p_p}\bI)\Vert^2_{\L^2(0,t;\bbL^2(\Omega_p))}
+\Vert e^I_{\bgamma_p} \Vert^2_{\L^2(0,t;\bbQ_p)}
+ \Vert e^I_{\bu_p}
\Vert^2_{\L^2(0,t;\bV_p)}
\nonumber \\[1ex]
&\ds  \quad
+ \Vert e^I_{\lambda} \Vert^2_{\L^2(0,t;\Lambda_{ph})}
+ \Vert e^I_{\btheta} \Vert^2_{\L^2(0,t;\bLambda_{sh})}
\Big)
+\Vert A^{1/2}(e^h_{\bsi_p} + \alpha e^h_{p_p} \bI)(0) \Vert^2_{\bbL^2(\Omega_p)}
+s_0 \Vert e^h_{p_p}(0)\Vert^2_{\W_p},
\end{align}
Here we also used that the extension of $A$ from $\bbS$ to $\bbM$ can be chosen as
the identity operator, therefore, cf. \cite{lee2016}, there exists $c > 0$ such that
\begin{equation}\label{eq: bsk bound}
\ds  b_{\sk}(e_{\bsi_p}^h, e_{\bgamma_p}^I)=\frac{1}{c}( e_{\bsi_p}^h, A e_{\bgamma_p}^I)_{\Omega_p}
  =\frac{1}{c}(A^{1/2} e_{\bsi_p}^h, A^{1/2} e_{\bgamma_p}^I)_{\Omega_p}
  \leq \frac{a_{\max}^{1/2}}{c}  \Vert A^{1/2} e_{\bsi_p}^h\Vert_{\bbL^2(\Omega_p)} \Vert  e_{\bgamma_p}^I\Vert_{\bbQ_p}.
\end{equation}

On the other hand, from the discrete inf-sup condition \eqref{eq: discrete inf-sup 1},
and using \eqref{eq: error equation 1a} and \eqref{eq: error equation 1f}, we have
\begin{align}\label{eq: error inf-sup 1}
&\ds 
\Vert e^h_{p_f} \Vert_{\W_f} 
+\Vert e^h_{p_p} \Vert_{\W_p} 
+\Vert e^h_{\lambda} \Vert_{\Lambda_{ph}} \nonumber \\[1ex]
&\ds \leq C
\underset{(\bv_{fh}, \bv_{ph}) \in \bV_{fh}\times\bV_{ph}}{\sup}\frac
{b_f(\bv_{fh},e^h_{p_f})+b_p(\bv_{ph},e^h_{p_p} )+b_{\Gamma}(\bv_{fh}, \bv_{ph}, \0; e^h_{\lambda})}
{\|(\bv_{fh}, \bv_{ph})\|_{\bV_{f}\times\bV_{p}}} \nonumber \\[1ex]
&\ds
=C\underset{(\bv_{fh}, \bv_{ph}) \in \bV_{fh}\times\bV_{ph}}{\sup}
\Bigg(\frac
{
-a_f(e^h_{\bu_f}, \bv_{fh})
-a_{\BJS}(e^h_{\bu_f},e^h_{\btheta};\bv_{fh},\0)
-a_f(e^I_{\bu_f}, \bv_{fh})
-a_{\BJS}(e^I_{\bu_f},e^I_{\btheta};\bv_{fh},\0)
}
{\|\bv_{fh}\|_{\bV_f}+\|\bv_{ph}\|_{\bV_p}} 
\nonumber \\[1ex]
&\ds  \qquad
+ \frac{
-a_p(e^h_{\bu_p}, \bv_{ph})
-a_p(e^I_{\bu_p}, \bv_{ph})
-b_f(\bv_{fh}, e_{p_f}^I)
-b_{\Gamma}(\bv_{fh}, \0, \0; e^I_{\lambda})
}
{\|\bv_{fh}\|_{\bV_f}+\|\bv_{ph}\|_{\bV_p}} \Bigg) \nonumber \\[1ex]
&\ds
\leq C(
\Vert e^h_{\bu_f} \Vert_{\bV_f}
+ \vert e^h_{\bu_f}-e^h_{\btheta} \vert_{a_{\BJS}}
+\Vert e^I_{\bu_f} \Vert_{\bV_f}
+ \vert e^I_{\bu_f}-e^I_{\btheta} \vert_{a_{\BJS}}
+ \Vert e^h_{\bu_p} \Vert_{\bL^2(\Omega_p)}
+ \Vert e^I_{\bu_p} \Vert_{\bL^2(\Omega_p)}
\nonumber \\[1ex]
&\ds \qquad 
+ \Vert e^I_{p_f} \Vert_{\W_f}
+ \Vert e^I_{\lambda} \Vert_{\Lambda_{ph}}
),
\end{align}
where we also used \eqref{div-prop}, \eqref{eq: interpolation 1} and
\eqref{eq: interpolation 2}. Similarly, the inf-sup condition
\eqref{eq: discrete inf-sup 2} and \eqref{eq: error equation 1c} give
\begin{align}\label{eq: error inf-sup 2}
&\ds 
\|e^h_{\bu_s}\|_{\bV_s}+\|e^h_{\bgamma_p}\|_{\bbQ_p}
\leq C
\underset{\btau_{ph} \in \bbX_{ph} \text{ s.t. } \btau_{ph}\bn_p=\0 \text{ on } \Gamma_{fp}}{\sup}
\frac{b_s(\btau_{ph},e^h_{\bu_s})+b_{\sk}(\btau_{ph},e^h_{\bgamma_p})}{\|\btau_{ph}\|_{\bbX_{p}}} \nonumber \\[1ex]
&\ds
= C \underset{\btau_{ph} \in \bbX_{ph} \text{ s.t. } \btau_{ph}\bn_p=\0 \text{ on } \Gamma_{fp}}{\sup} 
\Bigg(
\frac{
-a_e(\partial_t e^h_{\bsi_p}, \partial_t e^h_{p_p}; \btau_{ph},0)
+ b_n^p( \btau_{ph}, e^h_{\btheta}) 
}
{\|\btau_{ph}\|_{\bbX_p}} \nonumber \\[1ex]
&\ds \qquad
+\frac{
-a_e(\partial_t e^I_{\bsi_p},\partial_t e^I_{p_p}; \btau_{ph},0)
-b_{\sk}^p( \btau_{ph}, e^I_{\gamma_p}) 
}
{\|\btau_{ph}\|_{\bbX_p}}\Bigg) \nonumber \\[1ex]
&\ds
\leq C( 
\Vert A^{1/2} \partial_t (e^h_{\bsi_p}+\alpha e^h_{p_p} \bI) \Vert_{\bbL^2(\Omega_p)}
+\Vert A^{1/2} \partial_t (e^I_{\bsi_p}+\alpha e^I_{p_p} \bI) \Vert_{\bbL^2(\Omega_p)}
+\Vert e^I_{\bgamma_p}  \Vert_{\bbQ_p}),
\end{align}
where we also used \eqref{div-prop} and \eqref{eq: interpolation 2}. Finally,
using the inf-sup condition \eqref{eq: discrete inf-sup 3} and
\eqref{eq: error equation 1c}, we obtain
\begin{align}\label{eq: error inf-sup 3}
&\ds
\|e^h_{\btheta}\|_{\bLambda_{sh}} 
\leq C  \underset{\btau_{ph} \in \bbX_{ph} \, \text{s.t.}\, \nabla \cdot \btau_{ph}=\0 }
{\sup} \frac{b_n^p(\btau_{ph}, e^h_{\btheta}) }{\|\btau_{ph}\|_{\bbX_{p}}} \nonumber \\[1ex]
&\ds
= C\underset{\btau_{ph} \in \bbX_{ph} \, \text{s.t.}\, \nabla \cdot \btau_{ph}=\0 }{\sup}
\Bigg(
\frac{
a_e(\partial_t e^h_{\bsi_p}, \partial_t e^h_{p_p}; \btau_{ph},0)
+ b_s(\btau_{ph}, e^h_{\bu_s})
+ b_{\sk}(\btau_{ph}, e^h_{\bgamma_p})
}{\|\btau_{ph}\|_{\bbX_p}} \nonumber \\[1ex]
&\ds
\qquad
+ \frac{
a_e(\partial_t e^I_{\bsi_p},\partial_t e^I_{p_p};
\btau_{ph},0)
+ b_{\sk}(\btau_{ph}, e^I_{\bgamma_p})
}{\|\btau_{ph}\|_{\bbX_p}} \Bigg)
\nonumber \\[1ex]
&\ds
\leq C(  \Vert A^{1/2} \partial_t(e^h_{\bsi_p}+\alpha e^h_{p_p} \bI) \Vert_{\bbL^2(\Omega_p)}
+ \Vert e^h_{\bgamma_p} \Vert_{\bbQ_p}
+\Vert A^{1/2} \partial_t(e^I_{\bsi_p}+\alpha e^I_{p_p} \bI) \Vert_{\bbL^2(\Omega_p)}
+ \Vert e^I_{\bgamma_p} \Vert_{\bbQ_p}
),
\end{align}
where we also used \eqref{eq: interpolation 1}.

We next derive bounds for $\Vert \nabla \cdot e_{\bu_p}^h \Vert_{\L^2(\Omega_p)}$ and $\Vert \nabla \cdot e_{\bsi_p}^h \Vert_{\bL^2(\Omega_p)}$. Due to \eqref{div-prop}, we can choose $w_{ph}=\nabla \cdot e_{\bu_p}^h$ in \eqref{eq: error equation 1g}, obtaining
\begin{align}\label{eq: bound for error div up}
\ds 
\Vert \nabla \cdot e_{\bu_p}^h\Vert^2_{\L^2(\Omega_p)}
& =
-(s_0 \partial_t e_{p_p}^h, \nabla \cdot e_{\bu_p}^h)_{\Omega_p} 
-(A\partial_t(e_{\bsi_p}^h+\alpha e_{p_p}^h\bI), \nabla \cdot e_{\bu_p}^h)_{\Omega_p}
-(A\partial_t(e_{\bsi_p}^I+\alpha e_{p_p}^I\bI), \nabla \cdot e_{\bu_p}^h)_{\Omega_p}
\nonumber \\[1ex]
&\ds \leq 
(s_0\Vert \partial_t e_{p_p}^h \Vert_{\W_p}
+a_{\max}^{1/2} \Vert A^{1/2} \partial_t (e_{\bsi_p}^h+\alpha e_{p_p}^h\bI) \Vert_{\bbL^2(\Omega_p)}\nonumber \\[1ex]
&\ds \qquad 
+a_{\max}^{1/2} \Vert A^{1/2} \partial_t (e_{\bsi_p}^I+\alpha e_{p_p}^I \bI) \Vert_{\bbL^2(\Omega_p)} )\Vert \nabla \cdot e_{\bu_{p}}^h \Vert_{\L^2(\Omega_p)}.
\end{align}
Similarly, the choice of $\bv_{sh}=\nabla \cdot e_{\bsi_p}^h$ in \eqref{eq: error equation 1d} gives
\begin{equation}\label{eq: bound for error div sigma}
\ds \Vert\nabla \cdot e_{\bsi_p}^h(t)\Vert_{\bL^2(\Omega_p)}
=0 \qan
\ds \Vert \nabla \cdot e_{\bsi_p}^h \Vert_{\L^2(0,t;\bL^2(\Omega_p))}=0.
\end{equation}
Combining \eqref{eq: error analysis 1} with
\eqref{eq: error inf-sup 1}--\eqref{eq: bound for error div sigma}
and choosing $\epsilon$ small enough, results in
\begin{align} \label{eq: error analysis 2}
&\ds 
\Vert e^h_{\bu_f} \Vert^2_{\L^2(0,t;\bV_f)}
+\vert e^h_{\bu_f}-e^h_{\btheta}\vert^2_{\L^2(0,t;a_{\BJS})}
+\Vert e^h_{p_f} \Vert^2_{\L^2(0,t;\W_f)}
+\Vert A^{1/2}(e^h_{\bsi_p} + \alpha e^h_{p_p} \bI)(t) \Vert^2_{\bbL^2(\Omega_p)}
\nonumber \\[1ex]
&\ds  \quad
+\Vert \nabla \cdot e^h_{\bsi_p} \Vert^2_{\L^2(0,t;\bL^2(\Omega_p))}
+\Vert \nabla \cdot e^h_{\bsi_p}(t) \Vert^2_{\bL^2(\Omega_p)}
+\Vert e^h_{\bu_s} \Vert^2_{\L^2(0,t;\bV_s)} 
+\Vert e^h_{\bgamma_p} \Vert^2_{\L^2(0,t;\bbQ_p)}
+\Vert e^h_{\bu_p} \Vert^2_{\L^2(0,t;\bV_p)}
\nonumber \\[1ex]
&\ds  \quad
+\Vert e^h_{p_p} \Vert^2_{\L^2(0,t;\W_p)}
+s_0\Vert e^h_{p_p}(t)\Vert^2_{\W_p}
+\Vert e^h_{\lambda} \Vert^2_{\L^2(0,t;\Lambda_{ph})}
+\Vert e^h_{\btheta} \Vert^2_{\L^2(0,t;\bLambda_{sh})}
\nonumber \\[1ex]
&\ds 
\leq 
C(
\Vert A^{1/2}(e^h_{\bsi_p}+\alpha  e^h_{p_p} \bI)\Vert^2_{\L^2(0,t;\bbL^2(\Omega_p))}
+ \Vert A^{1/2} \partial_t (e^h_{\bsi_p}+\alpha e^h_{p_p} \bI) \Vert^2_{\L^2(0,t;\bbL^2(\Omega_p))}
+ s_0\Vert \partial_t e_{p_p}^h \Vert^2_{\L^2(0,t;\W_p)}
\nonumber \\[1ex]
&\ds  \quad
+ \Vert e^I_{\bu_f} \Vert^2_{\L^2(0,t;\bV_f)}
+ \vert e^I_{\bu_f}-e^I_{\btheta} \vert^2_{\L^2(0,t;a_{\BJS})}
+ \Vert e_{p_f}^I \Vert^2_{\L^2(0,t;\W_f)}
+\Vert A^{1/2} \partial_t (e^I_{\bsi_p}+\alpha e_{p_p}^I \bI)\Vert^2_{\L^2(0,t;\bbL^2(\Omega_p))}
\nonumber \\[1ex]
&\ds  \quad
+ \Vert e^I_{\bgamma_p} \Vert^2_{\L^2(0,t;\bbQ_p)}
+ \Vert e^I_{\bu_p} \Vert^2_{\L^2(0,t;\bV_p)}
+ \Vert e^I_{\lambda} \Vert^2_{\L^2(0,t;\Lambda_{ph})}
+ \Vert e^I_{\btheta} \Vert^2_{\L^2(0,t;\bLambda_{sh})}
\nonumber \\[1ex]
&\ds  \quad
+\Vert A^{1/2}(e^h_{\bsi_p} + \alpha e^h_{p_p} \bI)(0) \Vert^2_{\bbL^2(\Omega_p)}
+s_0 \Vert e^h_{p_p}(0)\Vert^2_{\L^2(\Omega_p)}),
\end{align}
where we also used 
\begin{equation}\label{eq: Absi bound}
\Vert A^{1/2} e_{\bsi_p}^h \Vert_{\L^2(0,t;\bbL^2(\Omega_p))} \leq C(\Vert A^{1/2} (e_{\bsi_p}^h+\alpha e_{p_p}^h\bI) \Vert_{\L^2(0,t;\bbL^2(\Omega_p))}+\Vert e_{p_p}^h\Vert_{\L^2(0,t;\W_p)}).
\end{equation}
In order to bound $\Vert A^{1/2} \partial_t (e^h_{\bsi_p}+\alpha e^h_{p_p}\bI) \Vert_{\L^2(0,t;\bbL^2(\Omega_p))}$ and $s_0\Vert \partial_t e_{p_p}^h \Vert_{\L^2(0,t;\W_p)}$, we differentiate in time \eqref{weak-form-1}, \eqref{weak-form-4}, \eqref{weak-form-5}, \eqref{weak-form-6}, and \eqref{weak-form-9} in the continuous equations and \eqref{sd-1}, \eqref{sd-4}, \eqref{sd-5}, \eqref{sd-6}, and \eqref{sd-9} in the semi-discrete equations, subtract the two systems, take 
$\ds (\bv_{fh},w_{fh},\btau_{ph},\bv_{sh},\bchi_{ph},\bv_{ph},w_{ph},\xi_h, \linebreak \bphi_h)
=(e^h_{\bu_f},\partial_t e^h_{p_f}, \partial_t e^h_{\bsi_p}, e^h_{\bu_s},  e^h_{\bgamma_p}, e^h_{\bu_p}, \partial_t e^h_{p_p}, \partial_t e^h_{\lambda}, e^h_{\btheta})$, and add all the equations together to obtain, in a way similar to \eqref{eq: error equation 3},
\begin{align}\label{eq: error equation 4}
&\ds 
\frac{1}{2}\partial_t a_f(e^h_{\bu_f},e^h_{\bu_f})
+\frac{1}{2}\partial_t a_{\BJS}(e^h_{\bu_f},e^h_{\btheta};e^h_{\bu_f},e^h_{\btheta})
+\Vert A^{1/2}\partial_t (e^h_{\bsi_p} + \alpha e^h_{p_p} \bI) \Vert^2_{\bbL^2(\Omega_p)}
\nonumber \\[1ex]
&\ds \quad
+\frac{1}{2}\partial_t a_p(e^h_{\bu_p},e^h_{\bu_p})
+ s_0 \Vert \partial_t e^h_{p_p}\Vert^2_{\W_p}
 \nonumber \\[1ex]
&\ds 
= -a_f(\partial_t e_{\bu_f}^I, e_{\bu_f}^h)-a_{\BJS}(\partial_t e^I_{\bu_f}, \partial_t e^I_{\btheta};e^h_{\bu_f},e^h_{\btheta}) 
-a_e(\partial_t e^I_{\bsi_p}, \partial_t e^I_{p_p}; \partial_t e^h_{\bsi_p}, \partial_t e^h_{p_p})
-a_p(\partial_t e_{\bu_p}^I, e_{\bu_p}^h)
\nonumber \\[1ex]
&\ds  \quad
-b_f(e_{\bu_f}^h, \partial_t e_{p_f}^I)
-b_{\sk}(\partial_t e_{\bsi_p}^h, e_{\bgamma_p}^I)
- b_{\Gamma}(e^h_{\bu_f},\0, e^h_{\btheta};\partial_t e^I_\lambda)
+ b_{\Gamma}(e^I_{\bu_f},\0, e^I_{\btheta};\partial_t e^h_\lambda).
\end{align}
Using integration by parts in time, we obtain
\begin{align*}
&\ds \int_0^t b_{\sk}(\partial_t e_{\bsi_p}^h, e_{\bgamma_p}^I) ds =
b_{\sk}( e_{\bsi_p}^h, e_{\bgamma_p}^I)\Big|_0^t 
- \int_0^t b_{\sk}( e_{\bsi_p}^h, \partial_t e_{\bgamma_p}^I) ds,  \\
&\ds \int_0^t \langle e^I_{\bu_f} \cdot \bn_f, \partial_t e^h_\lambda \rangle_{\Gamma_{fp}} ds =
\langle e^I_{\bu_f} \cdot \bn_f,
e^h_\lambda \rangle_{\Gamma_{fp}}\Big|_0^t 
- \int_0^t \langle \partial_t e^I_{\bu_f} \cdot \bn_f, e^h_\lambda \rangle_{\Gamma_{fp}} ds,  \\
&\ds \int_0^t \langle e^I_\btheta \cdot \bn_p, \partial_t e^h_\lambda \rangle_{\Gamma_{fp}} ds =
\langle e^I_\btheta \cdot \bn_p, e^h_\lambda \rangle_{\Gamma_{fp}}\Big|_0^t 
- \int_0^t \langle \partial_t e^I_\btheta \cdot \bn_p, e^h_\lambda \rangle_{\Gamma_{fp}} ds.
\end{align*}
We integrate \eqref{eq: error equation 4} over $(0,t)$ and apply the coercivity properties of $a_f$ and $a_p$,
the semi-positive definiteness of $a_{\BJS}$ \eqref{eq: continuous monotonicity 1}, the Cauchy-Schwarz inequality, the trace inequality \eqref{trace}, and Young's inequality,
to obtain
\begin{align}\label{eq: error analysis 3}
&\ds 
\Vert e^h_{\bu_f}(t) \Vert^2_{\bV_f}
+ \vert (e^h_{\bu_f}-e^h_{\btheta})(t) \vert^2_{a_{\BJS}}
+ \Vert A^{1/2}\partial_t (e^h_{\bsi_p}+\alpha e^h_{p_p}\bI) \Vert^2_{\L^2(0,t; \bbL^2(\Omega_p))}
\nonumber \\[1ex]
&\ds \quad
+ \Vert e^h_{\bu_p}(t) \Vert^2_{\bL^2(\Omega_p)}
+ s_0\Vert \partial_t e^h_{p_p} \Vert^2_{\L^2(0,t; W_p)}
 \nonumber \\[1ex]
&\ds 
\leq
\epsilon \Big(
\Vert e^h_{\bu_f}
\Vert^2_{\L^2(0,t;\bV_f)}
+\vert e^h_{\bu_f}-e^h_{\btheta} \vert^2_{\L^2(0,t;a_{\BJS})}
+ \Vert A^{1/2}\partial_t(e^h_{\bsi_p}+\alpha e^h_{p_p}\bI) \Vert^2_{\L^2(0,t;\bbL^2(\Omega_p))}
\nonumber \\[1ex]
&\ds \quad
+ \Vert A^{1/2} (e^h_{\bsi_p}+\alpha e^h_{p_p}\bI) \Vert^2_{\L^2(0,t;\bbL^2(\Omega_p))}
+ \Vert A^{1/2} (e^h_{\bsi_p}+\alpha e^h_{p_p}\bI)(t) \Vert^2_{\bbL^2(\Omega_p)}
+ \Vert e^h_{\bu_p} \Vert^2_{\L^2(0,t;\bV_p)}
\nonumber \\[1ex]
&\ds \quad
+ \Vert e^h_{p_p} \Vert^2_{\L^2(0,t;\W_p)}
+ \Vert e^h_{p_p}(t) \Vert^2_{\W_p}
+ \Vert e^h_{\lambda}(t) \Vert^2_{\Lambda_{ph}}
+ \Vert e^h_{\lambda} \Vert^2_{\L^2(0,t;\Lambda_{ph})}
+ \Vert e^h_{\btheta} \Vert^2_{\L^2(0,t;\bLambda_{sh})}
\Big) 
\nonumber \\[1ex]
&\ds \quad
+\frac{C}{\epsilon}\Big( 
\Vert \partial_t e^I_{\bu_f} \Vert^2_{\L^2(0,t;\bV_f)}
+\vert \partial_t (e^I_{\bu_f}-e^I_{\btheta}) \vert^2_{\L^2(0,t;a_{\BJS})}
+ \Vert \partial_t e^I_{p_f} \Vert^2_{\L^2(0,t;\W_f)}
\nonumber \\[1ex]
&\ds \quad
+ \Vert A^{1/2}\partial_t (e^I_{\bsi_p}+\alpha e_{p_p}^I) \Vert^2_{\L^2(0,t;\bbL^2(\Omega_p))}
+ \Vert \partial_t e^I_{\bgamma_p} \Vert^2_{\L^2(0,t;\bbQ_p)}
+ \Vert e^I_{\bgamma_p}(t) \Vert^2_{\bbQ_p}
\nonumber \\[1ex]
&\ds \quad
+ \Vert \partial_t e^I_{\bu_p} \Vert^2_{\L^2(0,t;\bV_p)}
+ \Vert \partial_t e^I_{\lambda} \Vert^2_{\L^2(0,t;\Lambda_{ph})} 
+ \Vert \partial_t e^I_{\btheta} \Vert^2_{\L^2(0,t;\bLambda_{sh})}
+ \Vert e^I_{\bu_f}(t) \Vert^2_{\bV_f}
+ \Vert e^I_{\btheta}(t) \Vert^2_{\bLambda_s}
\Big) 
\nonumber \\[1ex]
&\ds \quad
+ \Vert e^h_{\bu_f}(0) \Vert^2_{\bV_f}
+ \vert (e^h_{\bu_f}-e^h_{\btheta})(0) \vert^2_{a_{\BJS}}
+ \Vert A^{1/2} e^h_{\bsi_p}(0) \Vert^2_{\bbL^2(\Omega_p)}
+ \Vert e^h_{\bu_p}(0) \Vert^2_{\bL^2(\Omega_p)}
+\Vert e^h_{\lambda}(0)\Vert^2_{
\Lambda_p}
\nonumber \\[1ex]
&\ds \quad
+\Vert e^I_{\bu_f}(0)\Vert^2_{\bV_f}
+\Vert e^I_{\bgamma_p}(0)\Vert^2_{\bbQ_p}
+\Vert e^I_{\btheta}(0)\Vert^2_{\bLambda_{sh}},
\end{align}
where we also used 
$\ds  b_{\sk}( e_{\bsi_p}^h, \partial_t e_{\bgamma_p}^I)\leq C \Vert A^{1/2} e_{\bsi_p}^h\Vert_{\bbL^2(\Omega_p)} \Vert \partial_t e_{\bgamma_p}^I\Vert_{\bbQ_p}$, cf. \eqref{eq: bsk bound}, and 
\begin{equation*}
\Vert A^{1/2} e_{\bsi_p}^h(t) \Vert_{\bbL^2(\Omega_p)} \leq C(\Vert A^{1/2} (e_{\bsi_p}^h+\alpha e_{p_p}^h\bI)(t) \Vert_{\bbL^2(\Omega_p)}+\Vert e_{p_p}^h(t)\Vert_{\W_p}).
\end{equation*}
In addition, the choice of $\bv_{sh}=\nabla \cdot \partial_t e_{\bsi_p}^h$ in the time differentiated version of \eqref{eq: error equation 1} gives
\begin{equation}\label{eq: bound for error div sigma 2}
\ds \Vert\nabla \cdot \partial_t e_{\bsi_p}^h(t)\Vert_{\bL^2(\Omega_p)}
=0 \qan
\ds \Vert \nabla \cdot \partial_t e_{\bsi_p}^h \Vert_{\L^2(0,t;\bL^2(\Omega_p))}=0.
\end{equation}
Thus, combining \eqref{eq: error analysis 3} with \eqref{eq: error inf-sup 1}, \eqref{eq: error analysis 2} and \eqref{eq: bound for error div sigma 2}, and taking $\epsilon$ small enough, we obtain
\begin{align}\label{eq: error analysis 5}
&\ds 
\Vert e^h_{\bu_f} \Vert^2_{\L^2(0,t;\bV_f)}
+ \Vert e^h_{\bu_f}(t) \Vert^2_{\bV_f}
+\vert e^h_{\bu_f}-e^h_{\btheta}\vert^2_{\L^2(0,t;a_{\BJS})}
+ \vert (e^h_{\bu_f}-e^h_{\btheta})(t) \vert^2_{a_{\BJS}}
+\Vert e^h_{p_f} \Vert^2_{\L^2(0,t;\W_f)}
+\Vert e^h_{p_f}(t) \Vert_{\W_f}
\nonumber \\[1ex]
&\ds 
+\Vert \nabla \cdot e^h_{\bsi_p}  \Vert^2_{\L^2(0,t;\bbL^2(\Omega_p))}
+\Vert \nabla \cdot e^h_{\bsi_p} (t) \Vert^2_{\bbL^2(\Omega_p)}
+\Vert \nabla \cdot \partial_t e^h_{\bsi_p}  \Vert^2_{\L^2(0,t;\bbL^2(\Omega_p))}
+\Vert \nabla \cdot \partial_t e^h_{\bsi_p} (t) \Vert^2_{\bbL^2(\Omega_p)}
\nonumber \\[1ex]
&\ds 
+\Vert A^{1/2}(e^h_{\bsi_p} + \alpha e^h_{p_p} \bI)(t) \Vert^2_{\bbL^2(\Omega_p)}
+ \Vert A^{1/2}\partial_t (e^h_{\bsi_p}+\alpha e^h_{p_p}\bI) \Vert^2_{\L^2(0,t; \bbL^2(\Omega_p))}
+\Vert e^h_{\bu_s} \Vert^2_{\L^2(0,t;\bV_s)}
\nonumber \\[1ex]
&\ds 
+\Vert e^h_{\bgamma_p} \Vert^2_{\L^2(0,t;\bbQ_p)}
+\Vert e^h_{\bu_p} \Vert^2_{\L^2(0,t;\bV_p)}
+ \Vert e^h_{\bu_p}(t) \Vert^2_{\bL^2(\Omega_p)}
+\Vert e^h_{p_p} \Vert^2_{\L^2(0,t;\W_p)}
+\Vert e^h_{p_p}(t) \Vert^2_{\W_p}
\nonumber \\[1ex]
&\ds 
+s_0\Vert \partial_t e^h_{p_p}
 \Vert^2_{\L^2(0,t;\W_p)}
+\Vert e^h_{\lambda} \Vert^2_{\L^2(0,t;\Lambda_{ph})}
+\Vert e^h_{\lambda}(t) \Vert_{\Lambda_{ph}}
+\Vert e^h_{\btheta} \Vert^2_{\L^2(0,t;\bLambda_{sh})}
\nonumber \\[1ex]
&\ds  \leq
C(
\Vert A^{1/2}(e^h_{\bsi_p}+\alpha e^h_{p_p}\bI)\Vert^2_{\L^2(0,t;\bbL^2(\Omega_p))}
+\Vert e^I_{\bu_f} \Vert^2_{\L^2(0,t;\bV_f)}
+\Vert \partial_t e^I_{\bu_f} \Vert^2_{\L^2(0,t;\bV_f)}
+ \Vert e^I_{\bu_f}(t) \Vert^2_{\bV_f}
\nonumber \\[1ex]
&\ds  \quad
+ \vert e^I_{\bu_f}-e^I_{\btheta} \vert^2_{\L^2(0,t;a_{\BJS})} 
+ \vert \partial_t (e^I_{\bu_f}-e^I_{\btheta}) \vert^2_{\L^2(0,t;a_{\BJS})}
+ \vert (e^I_{\bu_f}-e^I_{\btheta})(t) \vert_{a_{\BJS}}
+ \Vert e^I_{p_f}\Vert^2_{\L^2(0,t;\W_f)}
\nonumber \\[1ex]
&\ds  \quad
+ \Vert \partial_t e^I_{p_f}\Vert^2_{\L^2(0,t;\W_f)}
+ \Vert e^I_{p_f}(t)\Vert^2_{\W_f}
+ \Vert A^{1/2}\partial_t (e^I_{\bsi_p}+ \alpha e_{p_p}^I \bI)\Vert^2_{\L^2(0,t;\bbL^2(\Omega_p))}
+ \Vert e^I_{\bgamma_p}\Vert^2_{\L^2(0,t;\bbQ_p)}
\nonumber \\[1ex]
&\ds  \quad
+ \Vert \partial_t e^I_{\bgamma_p}\Vert^2_{\L^2(0,t;\bbQ_p)}
+ \Vert e^I_{\bgamma_p}(t)\Vert^2_{\bbQ_p}
+ \Vert e^I_{\bu_p} \Vert^2_{\L^2(0,t;\bV_p)}
+ \Vert \partial_t e^I_{\bu_p} \Vert^2_{\L^2(0,t;\bV_p)}
+ \Vert e^I_{\bu_p}(t) \Vert_{\bV_p}
\nonumber \\[1ex]
&\ds  \quad
+ \Vert e^I_{\lambda} \Vert^2_{\L^2(0,t;\Lambda_{ph})}
+ \Vert \partial_t e^I_{\lambda} \Vert^2_{\L^2(0,t;\Lambda_{ph})}
+ \Vert e^I_{\lambda}(t) \Vert_{\Lambda_{ph}}
+ \Vert e^I_{\btheta} \Vert^2_{\L^2(0,t;\bLambda_{sh})}
+ \Vert \partial_t e^I_{\btheta} \Vert^2_{\L^2(0,t;\bLambda_{sh})} 
+ \Vert e^I_{\btheta}(t) \Vert^2_{\bLambda_{sh}}
\nonumber \\[1ex]
&\ds \quad
+ \Vert e^h_{\bu_f}(0) \Vert^2_{\bV_f}
+ \vert (e^h_{\bu_f}-e^h_{\btheta})(0) \vert^2_{a_{\BJS}}
+\Vert A^{1/2}(e^h_{\bsi_p} + \alpha e^h_{p_p} \bI)(0) \Vert^2_{\bbL^2(\Omega_p)}
+s_0 \Vert e^h_{p_p}(0)\Vert^2_{\L^2(\Omega_p)})
\nonumber \\[1ex]
&\ds \quad
+\Vert A^{1/2} e^h_{\bsi_p}(0) \Vert^2_{\bbL^2(\Omega_p)}
+ \Vert e^h_{\bu_p}(0) \Vert^2_{\bL^2(\Omega_p)}
+\Vert e^h_{\lambda}(0)\Vert^2_{
\Lambda_{ph}}
+\Vert e^I_{\bu_f}(0)\Vert^2_{\bV_f}
+\Vert e^I_{\bgamma_p}(0)\Vert^2_{\bbQ_p}
+\Vert e^I_{\btheta}(0)\Vert^2_{\bLambda_{sh}}).
\end{align}
We remark that in the above bound we have obtained control on $\Vert e_{p_p}^h(t)\Vert_{\W_p}$ independent of $s_0$. 

We next establish a bound on the initial data terms above. We recall that $(\bu_f(0), p_f(0),\bsi_p(0),
\bu_p(0), \linebreak  p_p(0), \lambda(0), \btheta(0)) =
(\bu_{f,0}, p_{f,0},\bsi_{p,0}, \bu_{p,0}, p_{p,0},
\lambda_{0}, \btheta_{0})$, cf. Corollary \ref{cor:init-data},
and $(\bu_\fh(0), p_\fh(0),\bsi_\ph(0), 
\linebreak \bu_\ph(0),  p_\ph(0), \lambda_h(0), \btheta_h(0)) =
(\bu_{\fh,0}, p_{\fh,0},\bsi_{\ph,0}, \bu_{\ph,0}, p_{\ph,0}, \lambda_{h,0}, \btheta_{h,0})$, cf. Theorem \ref{thm:well-posed-discr}.
 We first note that, since  $\btheta_{h,0}=P_h^{\bLambda_s} \, \btheta_0$,
\begin{equation}
  e_\btheta^h(0) =\0.
\end{equation}
Next, similarly to \eqref{init-data-bound}, we obtain %
\begin{align}\label{eq: error initial}
& \Vert e^h_{\bu_f}(0) \Vert^2_{\bV_f}
+ \vert (e^h_{\bu_f}-e^h_{\btheta})(0) \vert^2_{a_{\BJS}}
+\Vert A^{1/2}e^h_{\bsi_p} (0) \Vert^2_{\bbL^2(\Omega_p)}
+ \Vert e^h_{\bu_p}(0) \Vert^2_{\bL^2(\Omega_p)}
+\Vert e^h_{p_p}(0)\Vert^2_{\W_p}
+\Vert e^h_{\lambda}(0)\Vert^2_{
\Lambda_{ph}} 
\nonumber \\[1ex]
&\ds
\qquad \leq C \big( 
\Vert e_{\bu_f}^I(0) \Vert_{\bV_f} 
+ \vert e^I_{\bu_f}(0) - e^I_{\btheta}(0)\vert^2_{a_{\BJS}}
+\Vert e_{p_f}^I(0) \Vert_{\W_f} 
+ \Vert e_{\bsi_p}^I(0) \Vert_{\bbX_p} 
+ \Vert e_{\brho_p}^I(0) \Vert_{\bbQ_p} 
\nonumber \\[1ex]
&\ds \quad \qquad
+ \Vert e_{\bu_p}^I(0) \Vert_{\bV_p} 
+ \Vert e_{p_p}^I(0) \Vert_{\W_p}
+ \Vert e_{\lambda}^I(0) \Vert_{\Lambda_p}
+ \Vert e_{\btheta}^I(0) \Vert_{\bLambda_{sh}} 
\big).
\end{align}
Combining \eqref{eq: error analysis 5}-\eqref{eq: error initial}, using Gronwall's inequality for $\Vert A^{1/2}(e^h_{\bsi_p}+\alpha e^h_{p_p} \bI) \Vert^2_{\L^2(0,t;\bbL^2(\Omega_p))}$, the triangle inequality, and the approximation properties \eqref{eq: approx property 1}, \eqref{eq: approx property 2}, \eqref{eq: approx property 3}, and \eqref{eq: approx property 4}, we obtain \eqref{eq: error analysis result}.
\end{proof}

\section{Numerical results}\label{sec:numerical}

\iftrue 

In this section we present the results from a series of numerical
tests illustrating the performance of the proposed method. We employ
the backward Euler method for the time discretization. Let $\Delta t =
T/N$ be the time step, $t_n=n \Delta t$, $n=0, \cdots, N$. Let $d_t\,
u^n := (u^n - u^{n-1})/\Delta t$, where $u^n := u(t_n)$. The fully
discrete method reads: given $(\bp_h^0, \br_h^0)=(\bp_h(0), \br_h(0))$
satisfying \eqref{eq: discrete initial condition}, find $(\bp_h^n,
\br_h^n)\in \bQ_h \times \bS_h$, $n=1, \cdots, N$, such that for all
$(\bq_h, \bs_h)\in \bQ_h \times \bS_h$,
\begin{equation}\label{eq: discrete formulation}
  \arraycolsep=1.7pt
  \begin{array}{rcl}    
\ds d_t\,\cE_1\,(\bp_h^n)(\bq_h)
+ \cA\,(\bp_h^n)(\bq_h) + \cB'\,(\br_h^n)(\bq_h) & = & \bF(\bq_h), \\[1ex]
\ds - \cB\,(\bp_h^n)(\bs_h) & = & \bG(\bs_h).
  \end{array}
\end{equation}
Our implementation is on triangular grids, and it is based on the {\tt FreeFem++} finite element package \cite{freefem}. We use a monolithic scheme, in conjunction with the direct solver {\tt UMFPACK} \cite{umfpack}. We note that iterative solvers suitable for saddle point problems \cite{elman2014finite} could also be utilized. It is shown in \cite{byzz2015} that block-diagonal preconditioners based on split schemes involving individual physics solves can be very effective. Another alternative is non-overlapping domain decomposition methods, see \cite{manu} for a recent work on a fully mixed five-field formulation of the Biot system of poroelasticity. For spatial
discretization we use the MINI elements $\rP_1^b-\rP_1$ for the Stokes
spaces $(\bV_{fh}, \W_{fh})$, where $\rP_1^b$ stands for the space of
continuous piecewise linear polynomials enhanced elementwise by cubic
bubbles, the lowest order Raviart-Thomas elements $\RT_0-\rP_0$ for the
Darcy spaces $(\bV_{ph}, \W_{ph})$, and the $\BDM_1-\rP_0-\rP_1$
elements \cite{brezzi2008mixed} for the elasticity spaces $(\bbX_{ph},
\bV_{sh}, \bbQ_{ph})$. According to \eqref{Lagr-mult}, for the
Lagrange multiplier spaces we choose piecewise constants for
$\Lambda_h$ and discontinuous piecewise linears for $\bLambda_{sh}$. We note that the choice of the $\BDM_1-\rP_0-\rP_1$ spaces for elasticity fits in the framework of the multipoint stress mixed finite element
method \cite{msmfe-simpl}, where the stress and rotation variables can
be locally eliminated, resulting in a very efficient positive definite cell-centered scheme for the displacement.

We present two examples.  Example~1 is used to corroborate the rates
of convergence.  In Example 2 we present
simulations of the coupling of surface and subsurface hydrological
systems, focusing on the qualitative behavior of the solution.

\subsection{Example 1: convergence test}
For the convergence study we consider a test case with
domain $\Omega = (0,1)\times(-1,1)$ and a known analytical solution.
We associate the upper
half with the Stokes flow, while the lower half represents the flow in
the poroelastic structure governed by the Biot system. The physical
parameters are $\bK = \bI$, $\mu = 1$, $\alpha = 1$,
$\alpha_{\BJS} = 1$, $s_0 = 1$, $\lambda_p = 1$, and $\mu_p = 1$.
The solution in the Stokes region is
\begin{equation*}
\ds \bu_f = \pi \cos(\pi t)\begin{pmatrix}\ds -3x+\cos(y) \\[1ex]
\ds y+1 \end{pmatrix}, \quad p_f = e^t\sin(\pi x)\cos(\frac{\pi y}{2}) + 2\pi \cos(\pi t).
\end{equation*}
The Biot solution is chosen accordingly to satisfy the interface conditions at $y = 0$:
\begin{equation*}
\ds \bu_p = \pi e^t \begin{pmatrix} \ds -\cos(\pi x)\cos(\frac{\pi y}{2}) \\[1ex] \ds \frac12\sin(\pi x)\sin(\frac{\pi y}{2}) \end{pmatrix}, 
\quad p_p = e^t\sin(\pi x)\cos(\frac{\pi y}{2}), 
\quad \bbeta_p = \sin(\pi t) \begin{pmatrix} \ds -3x+\cos(y) \\[1ex] \ds y+1 \end{pmatrix}.
\end{equation*}
The right hand side functions $\f_f,\, q_f,\, \f_p$, and $q_p$
are computed using the above
solution. The model problem is complemented with 
Dirichlet boundary conditions and initial data obtained from the true solution.
The total simulation
time for this test case is $T=0.01$ and the time step is $\Delta t =
10^{-3}$. The time step is sufficiently small, so that the time
discretization error does not affect the spatial convergence rates.

In Table \ref{table: example 1-1}, we report errors on a sequence of
refined meshes, which are matching along the interface. We use the
notation $\|\cdot\|_{l^{\infty}(V)}$ and $\|\cdot\|_{l^2(V)}$ to
denote the time-discrete space-time errors. For all errors we
report the $\|\cdot\|_{l^2(V)}$ norms with the exception of the error
$e_{\bsi_p}$, for which we have a bound only in $l^\infty$ in time.
We observe at least $O(h)$ convergence for all norms, which is
consistent with the theoretical results stated in Theorem~\ref{thm:
  error analysis}. The observed $O(h^2)$ convergence for
$\|e_{\bsi_p}\|_{l^{\infty}(\bbL^2(\Omega_p))}$, $\|e_{\bgamma_p}\|_{l^2(\bbQ_p)}\|$,
and $\|e_{\btheta}\|_{l^2(\bLambda_{sh})}$ corresponds to the second order
of approximation in the spaces $\bbX_{ph}$, $\bbQ_{ph}$, and $\bLambda_{sh}$,
respectively,
and indicates that the convergence rates for these
variables are not affected by the lower rate for the rest of
the variables. Next, noting that the analysis in Theorem \ref{thm:
  error analysis} is not restricted to the case of matching grids, we
provide the convergence results obtained with non-matching grids along
the interface. The results in Table \ref{table: example 1-2} are
obtained by setting the ratio between the characteristic mesh sizes to be
$\ds h_{\text{Stokes}}=\frac{5}{8}h_{\text{Biot}}$. The results in Table
\ref{table: example 1-3} are with $\ds
h_{\text{Biot}}=\frac{5}{8}h_{\text{Stokes}}$. The convergence rates in both
tables agree with the statement of Theorem \ref{thm: error analysis}.

\begin{table}[H]
\begin{center}
\begin{tabular}{  c | c | c | c | c | c | c }
\hline
n & $\Vert e_{\bu_f} \Vert_{l^2(\bV_f)}$ & rate & $\Vert e_{p_f}\Vert_{l^2(\W_f)}$ & rate & $\Vert e_{\bsi_p}\Vert_{l^\infty(\bbL^2(\Omega_p))}$ & rate  \\ \hline
8& 7.731e-03 &  0.0   & 2.601e-03 &  0.0  &  7.454e-02 &  0.0 \\ \hline
16& 3.860e-03  & 1.0  & 8.319e-04 &  1.6  &  2.572e-02 &  1.5 \\ \hline
32& 1.929e-03  & 1.0  & 2.759e-04 &  1.6  &  8.775e-03 &  1.6 \\ \hline
64& 9.640e-04  & 1.0  & 9.419e-05 &  1.6  &  2.784e-03 &  1.7 \\ \hline
128& 4.819e-04 & 1.0  & 3.270e-05 &  1.5  &  8.224e-04 &  1.8 \\ \hline
\end{tabular}

\smallskip
\begin{tabular}{  c | c | c | c | c | c | c | c | c}
\hline
n & $\Vert e_{\nabla \cdot \bsi_p}\Vert_{l^2(\bL^2(\Omega_p))}$ & rate & $\Vert e_{\bu_s}\Vert_{l^2(\bV_s)}$ & rate & $\Vert e_{\bgamma_p}\Vert_{l^2(\bbQ_p)}$ & rate & $\Vert e_{\bu_p}\Vert_{l^2(\bL^2(\Omega_p))}$ & rate \\ \hline
8& 1.032e-01 & 0.0 &  7.141e-02 &  0.0 &  1.926e-01 &  0.0  & 1.046e-01 &  0.0   \\ \hline
16& 5.169e-02 & 1.0  & 3.550e-02 &  1.0  &  5.171e-02 &  1.9  & 5.224e-02 &  1.0   \\ \hline
32& 2.586e-02 & 1.0  & 1.773e-02 &  1.0  &  1.372e-02 &  1.9  & 2.612e-02 &  1.0   \\ \hline
64& 1.293e-02 & 1.0  & 8.862e-03 &  1.0  &  3.633e-03 &  1.9  & 1.306e-02 &  1.0   \\ \hline
128& 6.465e-03 & 1.0  & 4.431e-03 &  1.0  & 9.497e-04 &  1.9  & 6.532e-03 &  1.0   \\ \hline
\end{tabular}

\smallskip
\begin{tabular}{  c | c | c | c | c | c | c | c | c}
\hline
n & $\Vert e_{\nabla \cdot \bu_p}\Vert_{l^2(\bL^2(\Omega_p))}$ & rate & $\Vert e_{p_p}\Vert_{l^2(\W_p)}$ & rate
& $\Vert e_{\lambda}\Vert_{l^2(\Lambda_{ph})}$ & rate & $\Vert e_{\btheta}\Vert_{l^2(\bLambda_{sh})}$ & rate \\ \hline
8& 1.223e-01 & 0.0 & 1.033e-01 &  0.0 & 1.140e-01 &  0.0  &  3.232e-02  & 0.0   \\ \hline
16& 5.457e-02 & 1.2 & 5.172e-02 &  1.0   & 5.675e-02 &  1.0 &   6.446e-03 &   2.3   \\ \hline
32& 2.693e-02 & 1.0 & 2.587e-02 &  1.0   &  2.835e-02 &  1.0 &   1.238e-03 &  2.4   \\ \hline
64& 1.442e-02 & 0.9 & 1.294e-02 &  1.0   &  1.417e-02 &  1.0  &  2.328e-04  & 2.4   \\ \hline
128& 9.001e-03 & 0.7 &  6.468e-03 &  1.0   & 7.085e-03 &  1.0  &  4.442e-05  & 2.4   \\ \hline
\end{tabular}
\end{center}
\caption{{\sc Example 1}, Mesh sizes, errors and rates of convergences in matching grids.}\label{table: example 1-1}
\end{table}

\begin{table}[H]
\begin{center}
\begin{tabular}{  c | c | c | c | c | c | c }
\hline
n & $\Vert e_{\bu_f} \Vert_{l^2(\bV_f)}$ & rate & $\Vert e_{p_f}\Vert_{l^2(\W_f)}$ & rate & $\Vert e_{\bsi_p}\Vert_{l^\infty(\bbL^2(\Omega_p))}$ & rate  \\ \hline
8& 1.171e-02 &  0.0 & 8.326e-03  & 0.0 & 8.800e-02 &  0.0     \\ \hline
16& 5.725e-03 &  1.0  & 2.616e-03 &  1.7   & 3.220e-02 &  1.5     \\ \hline
32& 2.835e-03 &  1.0   & 9.239e-04 &  1.5   & 1.084e-02 &  1.6     \\ \hline
64& 1.411e-03 &  1.0   & 3.256e-04 &  1.5   &  3.262e-03 &  1.7    \\ \hline
128& 7.037e-04 &   1.0   & 1.152e-04 &  1.5   & 9.161e-04 &  1.8    \\ \hline
\end{tabular}

\begin{tabular}{  c | c | c | c | c | c | c | c | c}
\hline
n & $\Vert e_{\nabla \cdot \bsi_p}\Vert_{l^2(\bL^2(\Omega_p))}$ & rate & $\Vert e_{\bu_s}\Vert_{l^2(\bV_s)}$ & rate & $\Vert e_{\bgamma_p}\Vert_{l^2(\bbQ_p)}$ & rate & $\Vert e_{\bu_p}\Vert_{l^2(\bL^2(\Omega_p))}$ & rate \\ \hline
8& 1.032e-01  & 0.0 & 7.632e-02 &  0.0 & 2.255e-01 &  0.0  & 1.049e-01 &  0.0   \\ \hline
16& 5.170e-02 &  1.0  & 3.810e-02 &  1.0   & 6.617e-02 &  1.8  & 5.226e-02 &  1.0   \\ \hline
32& 2.587e-02 &  1.0  & 1.905e-02 &  1.0   & 1.955e-02 &  1.8  & 2.613e-02 &  1.0   \\ \hline
64& 1.293e-02 &  1.0  & 9.524e-03 &  1.0   & 5.773e-03 &  1.8  & 1.306e-02 &  1.0   \\ \hline
128& 6.467e-03 &  1.0  & 4.762e-03 &  1.0   & 1.638e-03  & 1.8  & 6.532e-03  & 1.0   \\ \hline
\end{tabular}

\begin{tabular}{  c | c | c | c | c | c | c | c | c}
\hline
n & $\Vert e_{\nabla \cdot \bu_p}\Vert_{l^2(\bL^2(\Omega_p))}$ & rate & $\Vert e_{p_p}\Vert_{l^2(\W_p)}$ & rate
& $\Vert e_{\lambda}\Vert_{l^2(\Lambda_{ph})}$ & rate & $\Vert e_{\btheta}\Vert_{l^2(\bLambda_{sh})}$ & rate \\ \hline
8&  1.323e-01 &  0.0 & 1.033e-01 &  0.0 &  1.141e-01 &  0.0  &  3.272e-02 &  0.0   \\ \hline
16& 5.742e-02 &  1.2  & 5.172e-02 &  1.0   & 5.675e-02  & 1.0  &  6.733e-03 &  2.3   \\ \hline
32& 2.738e-02 &  1.1  & 2.587e-02 &  1.0   & 2.835e-02 &  1.0  &  1.314e-03 &  2.4   \\ \hline
64& 1.448e-02 &  0.9  & 1.294e-02 &  1.0   & 1.417e-02 &  1.0  &  2.502e-04 &  2.4   \\ \hline
128& 9.007e-03 &  0.7  & 6.468e-03 &  1.0  & 7.085e-03  & 1.0  &  4.820e-05 &  2.4   \\ \hline
\end{tabular}
\end{center}
\caption{{\sc Example 1}, Mesh sizes, errors and rates of convergences with non-matching grids, using finer mesh in the Stokes region.}\label{table: example 1-2}
\end{table}

\begin{table}[H]
\begin{center}
\begin{tabular}{  c | c | c | c | c | c | c }
\hline
n & $\Vert e_{\bu_f} \Vert_{l^2(\bV_f)}$ & rate & $\Vert e_{p_f}\Vert_{l^2(\W_f)}$ & rate & $\Vert e_{\bsi_p}\Vert_{l^\infty(\bbL^2(\Omega_p))}$ & rate  \\ \hline
8& 7.203e-03 &  0.0 & 5.066e-03 &  0.0 & 1.661e-01 &  0.0     \\ \hline
16& 3.561e-03 &  1.0  & 1.404e-03 &  1.9   & 6.387e-02 &  1.4     \\ \hline
32& 1.768e-03 &  1.0   & 4.843e-04 &  1.5   & 2.298e-02 &  1.5     \\ \hline
64& 8.807e-04 &  1.0   & 1.697e-04 &  1.5   &  7.441e-03 &  1.6    \\ \hline
128& 4.396e-04 &  1.0   & 5.977e-05 &  1.5   & 2.178e-03  & 1.8    \\ \hline
\end{tabular}

\begin{tabular}{  c | c | c | c | c | c | c | c | c}
\hline
n & $\Vert e_{\nabla \cdot \bsi_p}\Vert_{l^2(\bL^2(\Omega_p))}$ & rate & $\Vert e_{\bu_s}\Vert_{l^2(\bV_s)}$ & rate & $\Vert e_{\bgamma_p}\Vert_{l^2(\bbQ_p)}$ & rate & $\Vert e_{\bu_p}\Vert_{l^2(\bL^2(\Omega_p))}$ & rate \\ \hline
8& 1.644e-01 &  0.0 & 1.230e-01 &  0.0 &  4.521e-01 &  0.0  & 1.698e-01 &  0.0   \\ \hline
16& 8.264e-02 &  1.0  & 6.100e-02 &  1.0   & 1.504e-01 &  1.6  & 8.374e-02 &  1.0   \\ \hline
32& 4.137e-02 &  1.0  & 3.048e-02 &  1.0   & 4.373e-02 &  1.8  & 4.180e-02 &  1.0   \\ \hline
64& 2.069e-02 &  1.0  & 1.524e-02 &  1.0   &  1.293e-02  & 1.8  & 2.090e-02 &  1.0   \\ \hline
128& 1.035e-02 &  1.0  & 7.619e-03 &  1.0   &  3.798e-03 &  1.8  & 1.045e-02 &  1.0   \\ \hline
\end{tabular}

\begin{tabular}{  c | c | c | c | c | c | c | c | c}
\hline
n & $\Vert e_{\nabla \cdot \bu_p}\Vert_{l^2(\bL^2(\Omega_p))}$ & rate & $\Vert e_{p_p}\Vert_{l^2(\W_p)}$ & rate
& $\Vert e_{\lambda}\Vert_{l^2(\Lambda_{ph})}$ & rate & $\Vert e_{\btheta}\Vert_{l^2(\bLambda_{sh})}$ & rate \\ \hline
8& 2.430e-01 &  0.0 & 1.649e-01  & 0.0 &  1.849e-01 &  0.0  &  9.021e-02 &  0.0   \\ \hline
16& 1.004e-01 &  1.3  & 8.270e-02  & 1.0   &  9.101e-02 &  1.0 &   1.977e-02 &  2.2   \\ \hline
32& 4.474e-02 &  1.2  & 4.138e-02 &  1.0   &  4.538e-02 &  1.0 &   3.990e-03 &  2.3   \\ \hline
64& 2.203e-02 &  1.0  & 2.070e-02 &  1.0   &  2.268e-02 &  1.0  &  7.683e-04 &  2.4   \\ \hline
128& 1.215e-02 &  0.9  & 1.035e-02  & 1.0   &  1.134e-02 &  1.0 &   1.461e-04 &  2.4   \\ \hline
\end{tabular}
\end{center}
\caption{{\sc Example 1}, Mesh sizes, errors and rates of convergences with non-matching grids, using finer mesh in the Biot region.}\label{table: example 1-3}
\end{table}

\subsection{Example 2: coupling of surface and subsurface hydrological systems}
In this example, we illustrate the behavior of the method for a
problem motivated by the coupling of surface and subsurface
hydrological systems and test its robustness with respect to physical
parameters. On the domain $\Omega =
(0,2)\times(-1,1)$, we associate the upper half with surface flow, such as
lake or river, modeled by the Stokes equations
while the lower half represents subsurface flow in a poroelastic aquifer,
governed by the Biot system. In each subdomain, we construct $64\times 64$ rectangular grid, which is then sub-divided into triangles, resulting in $8192$ finite elements in each region. The appropriate interface conditions are enforced along the interface $y = 0$.
We consider three cases with different values of $\bK$, $s_0$,
$\lambda_p$ and $\mu_p$, as described in Table~\ref{param},
\begin{table}[ht!]
\begin{center}
\begin{tabular}{  c | c | c | c | c }
\hline
 & $\bK$ & $s_0$ & $\lambda_p$ & $\mu_p$\\  \hline
Case 1 & $ \bI$ & $1$ & $1$ & $1$ \\ \hline
Case 2 & $10^{-4}\times \bI$ & $10^{-4}$ & $10^6$ & $1$ \\ \hline
Case 3 & $10^{-4}\times \bI$ & $10^{-4}$ & $10^6$ & $10^6$ \\ \hline
\end{tabular}
\caption{Set of parameters for the sensitivity analysis}
\label{param}
\end{center}
\end{table}
while we set the rest of the physical parameters to be $\mu = 1$,
$\alpha = 1$, and $\alpha_{\BJS} = 1$. In the discussion we will also refer
to the Young's modulus $E$ and the Poisson's ratio $\nu$, which are
related to the Lam\'e  coefficients via 
\begin{equation*}
\ds \nu = \frac{\lambda_p}{2(\lambda_p+\mu_p)}, \quad 
E=\frac{(3\lambda_p+2\mu_p)\mu_p}{\lambda_p+\mu_p}.
\end{equation*}
The body forces and
external source are zero, as well as the initial conditions. The flow
is driven by a parabolic fluid velocity on the left boundary of
fluid region. The boundary conditions are as follows:
\begin{gather*}
  \ds \bu_f = (-40y(y-1) \ \  0)^{\rt}  \qon  \Gamma_{f,{\rm left}},
  \quad \bu_f = \0  \qon  \Gamma_{f,{\rm top}}\cup\Gamma_{f,{\rm right}}, \\[1ex]
\ds p_p = 0 \qan \bsi_p\bn_p = \0   \qon  \Gamma_{p,{\rm bottom}}, \\[1ex]
\ds \bu_p\cdot\bn_p = 0 \qan \bu_{s} = \0  \qon \Gamma_{p,{\rm left}}\cup \Gamma_{p,right},
\end{gather*}
The simulation is run for a total time $T=3$ with a time step 
$\Delta t = 0.06$.

For each case, we present the plots of
computed velocities, first and second columns of stresses (top plots),
first column components of poroelastic stress (middle plots),
displacement and Darcy pressure (bottom plots) at final time $T=3$.

Case 1 focuses on the qualitative behavior of the solution. The
computed solution at the final time $T=3$ is shown in Figure~\ref{fig:
  Example 3-1}. On the top left, the arrows represent the velocity
vectors $\bu_f$ and $\bu_p + \partial_t\bbeta_p$ in the two regions,
while the color shows the vertical components of these vectors. The
other two plots on the top show the computed stress. The arrows in
both plots represent the second columns of the negative stresses
$-(\bsi_{f,12},\bsi_{f,22})^\rt$ and
$-(\bsi_{p,12},\bsi_{p,22})^\rt$. The colors show $-\bsi_{f,12}$ and
$-\bsi_{p,12}$ in the middle plot and $-\bsi_{f,22}$ and
$-\bsi_{p,22}$ in the right plot. Since the Stokes stress is much
larger than the poroelastic stress, the arrows in the fluid region are scaled
by a factor $1/5$ for visualization purpose and the color scale is
more suitable for the Stokes region. The poroelastic stresses
are presented separately in the middle row with their own color range.
The bottom plots show the displacement vector and its magnitude on the
left and the poroelastic pressure on the right.

From the velocity plot we observe that the fluid is driven into the
poroelastic medium due to zero pressure at the bottom, which simulates
gravity. The mass conservation $\ds \bu_{f}\cdot \bn_f + (\partial_t
\bbeta_p +\bu_{p})\cdot \bn_p = 0$ on the interface with $\bn_p =
(0,1)^\rt$ indicates continuity of second components of these two
velocity vectors, which is observed from the color plot of the
velocity. In addition, the conservation of momentum $\ds \bsi_f \bn_f
+\bsi_p \bn_p=0$ implies that $-\bsi_{f,12}=-\bsi_{p,12}$ and
$-\bsi_{f,22}=-\bsi_{p,22}$ on the interface. These conditions are
verified from the two stress color plots on the top row. We observe large
fluid stress near the top boundary, which is due to the no slip
condition there, as well as large fluid stress along the interface,
which is due to the slip with friction interface condition.  A
singularity in the left lower corner appears due to the mismatch in
inflow boundary conditions between the fluid and poroelastic
regions. The bottom plots show that the infiltration of fluid from the
Stokes region into the poroelastic region causes deformation of the
medium and larger Darcy pressure. Furthermore, comparing the right middle and bottom
plots, we note the match along the interface between $-\bsi_{p,22}$ and $p_p$, which
is consistent with the balance of force and momentum conservation
conditions $-(\bsi_f\bn_f)\cdot\bn_f = p_p$ and $\bsi_f\bn_f + \bsi_p\bn_p = 0$,
respectively.

\begin{figure}[ht!]
\begin{center}
\includegraphics[width=0.32\textwidth]{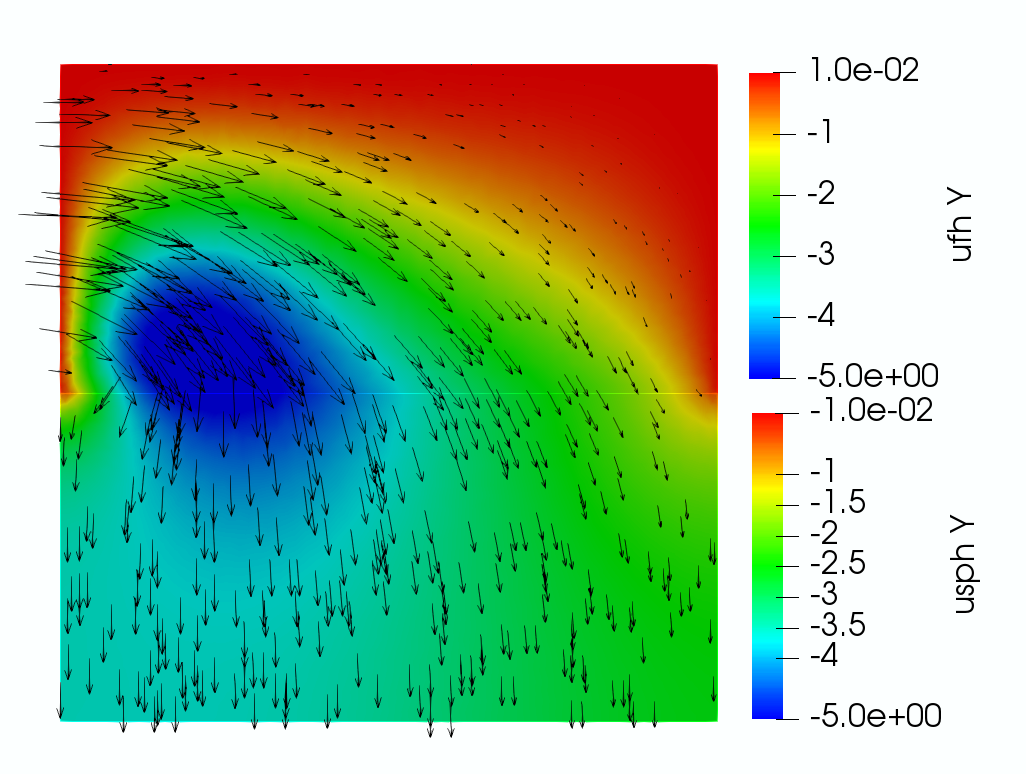}
\includegraphics[width=0.32\textwidth]{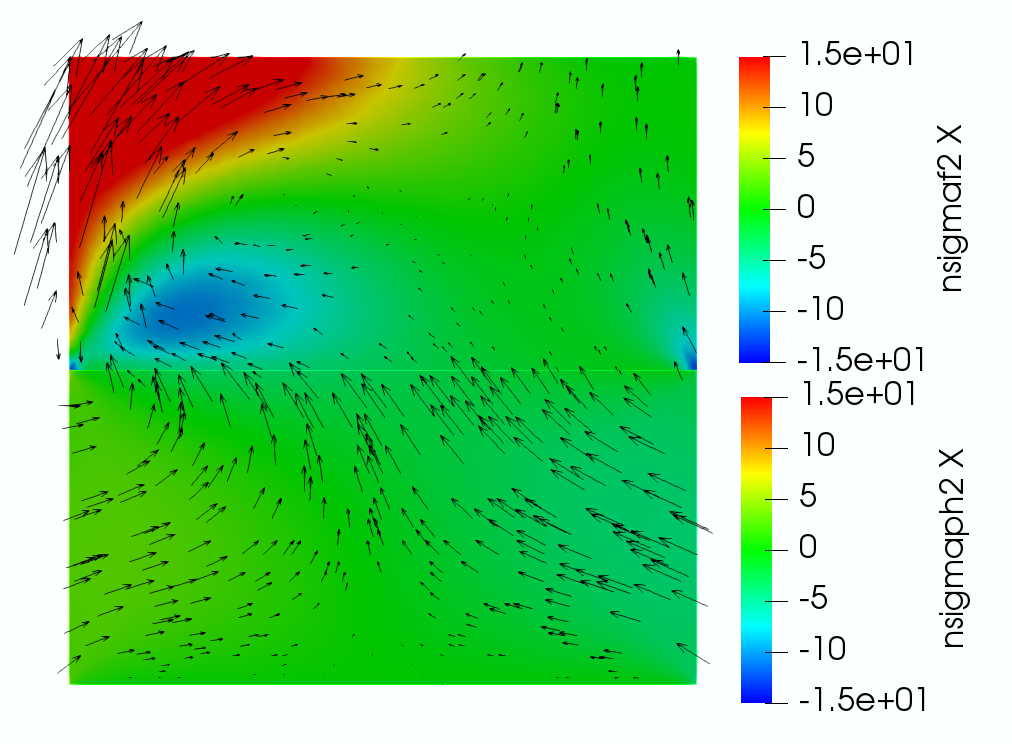}
\includegraphics[width=0.32\textwidth]{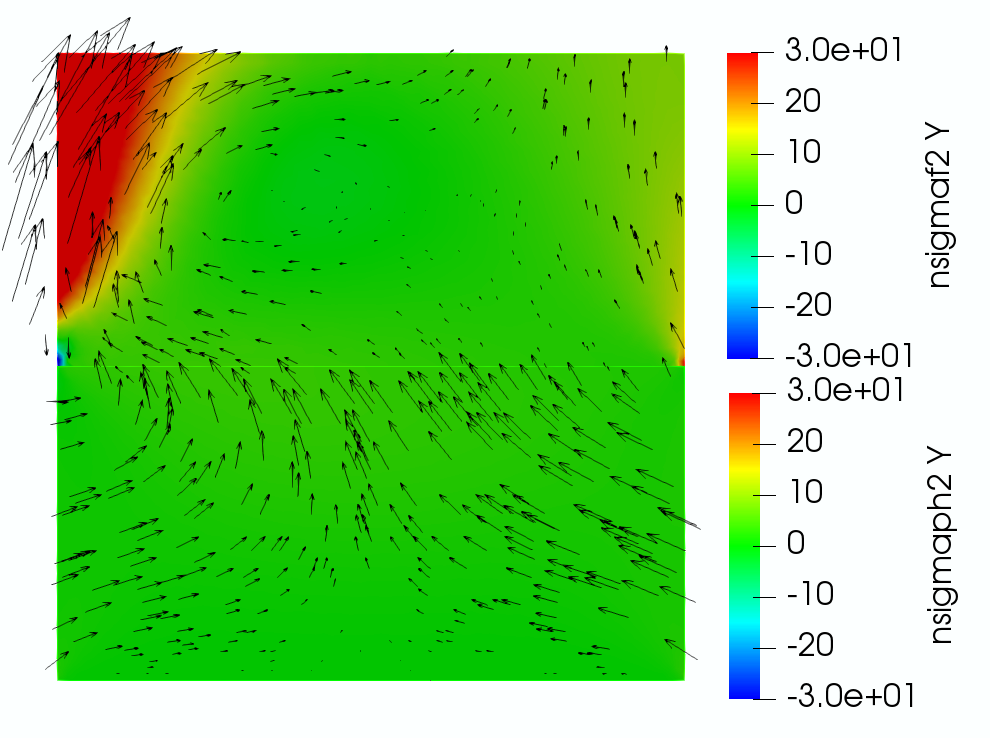}\\
\includegraphics[width=0.4\textwidth]{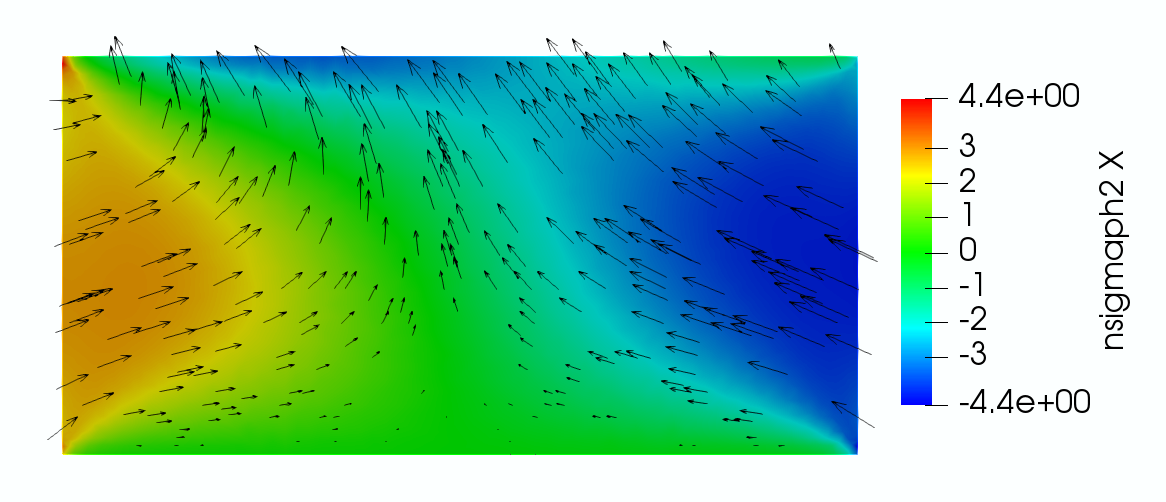}
\includegraphics[width=0.4\textwidth]{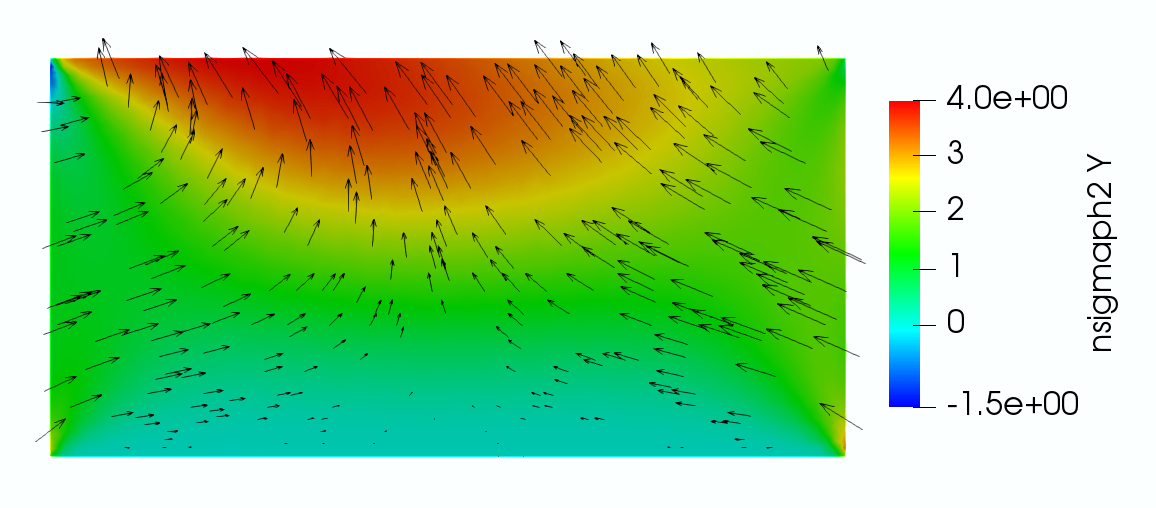}\\
\includegraphics[width=0.4\textwidth]{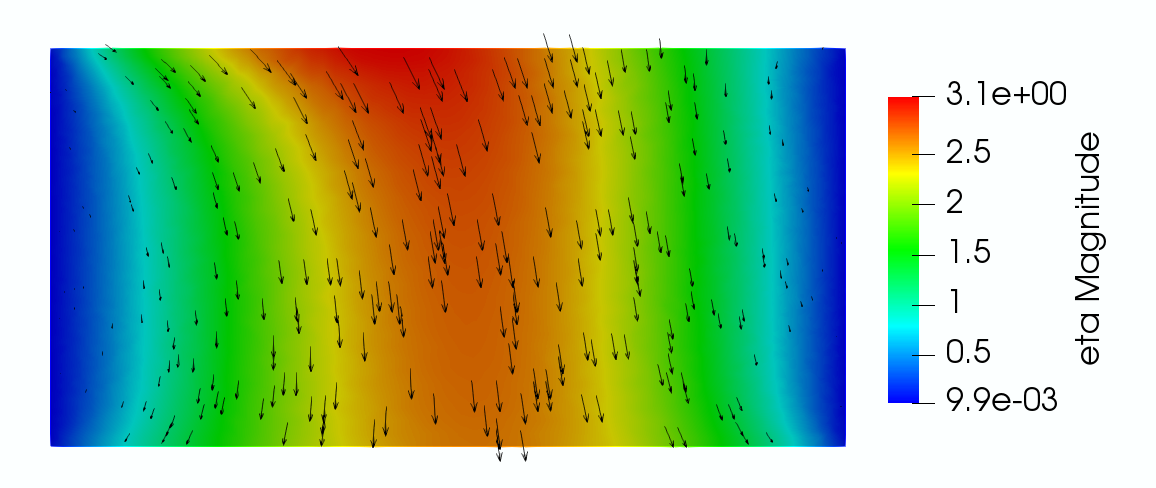}
\includegraphics[width=0.4\textwidth]{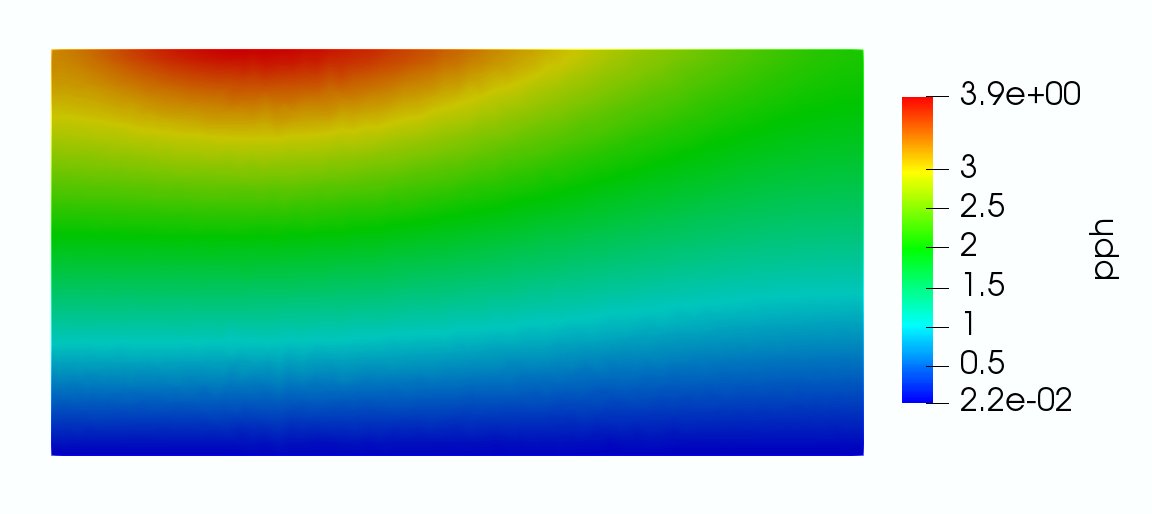}
\end{center}
\caption{Example 2, Case 1, $\bK=\bI$, $s_0=1$, $\lambda_p=1$,
  $\mu_p=1$.  Computed solution at final time $T=3$. Top left:
  velocities $\bu_f$ and $\bu_p + \partial_t\bbeta_p$ (arrows),
  $\bu_{f,2}$ and $\bu_{p,2} + \partial_t\bbeta_{p,2}$ (color). Top
  middle and right: stresses $-(\bsi_{f,12},\bsi_{f,22})^\rt$ and
  $-(\bsi_{p,12},\bsi_{p,22})^\rt$ (arrows); top middle:
  $-\bsi_{f,12}$ and $-\bsi_{p,12}$ (color); top right: $-\bsi_{f,22}$
  and $-\bsi_{p,22}$ (color). Middle: poroelastic stress
  $-(\bsi_{p,12},\bsi_{p,22})^\rt$ (arrows); middle left:
  $-\bsi_{p,12}$ (color); middle right: $-\bsi_{p,22}$ (color).
  Bottom left: displacement $\bbeta_p$ (arrows), $|\bbeta_p|$ (color).
  Bottom right: Darcy pressure $p_p$.}
\label{fig: Example 3-1}
\end{figure}

\begin{figure}[ht!]
\begin{center}
\includegraphics[width=0.32\textwidth]{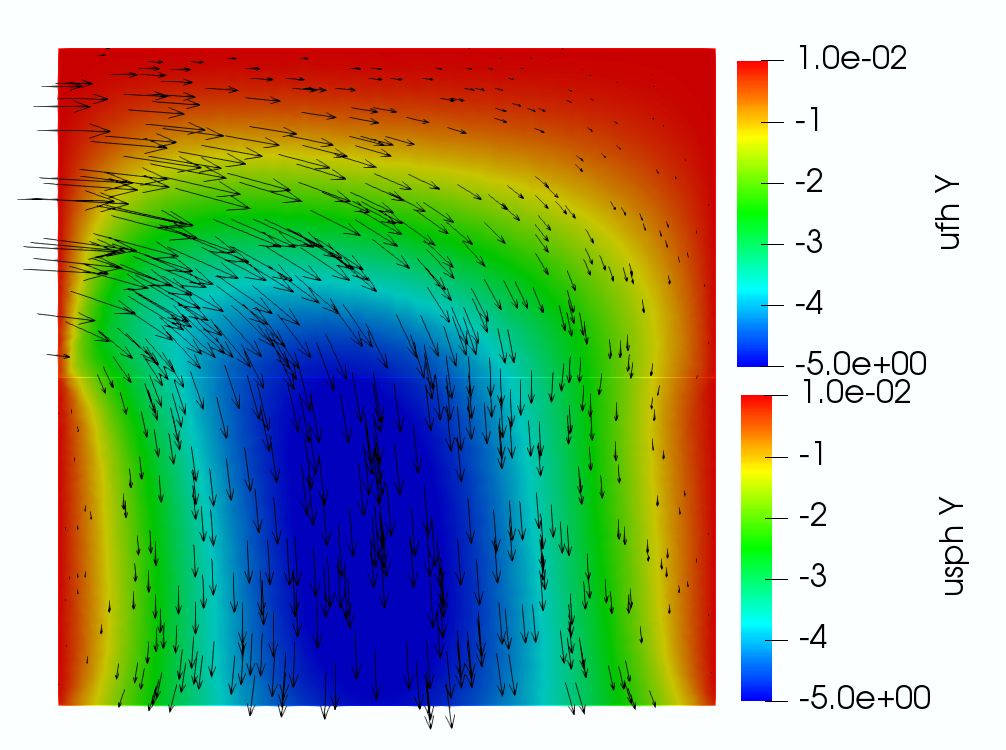}
\includegraphics[width=0.32\textwidth]{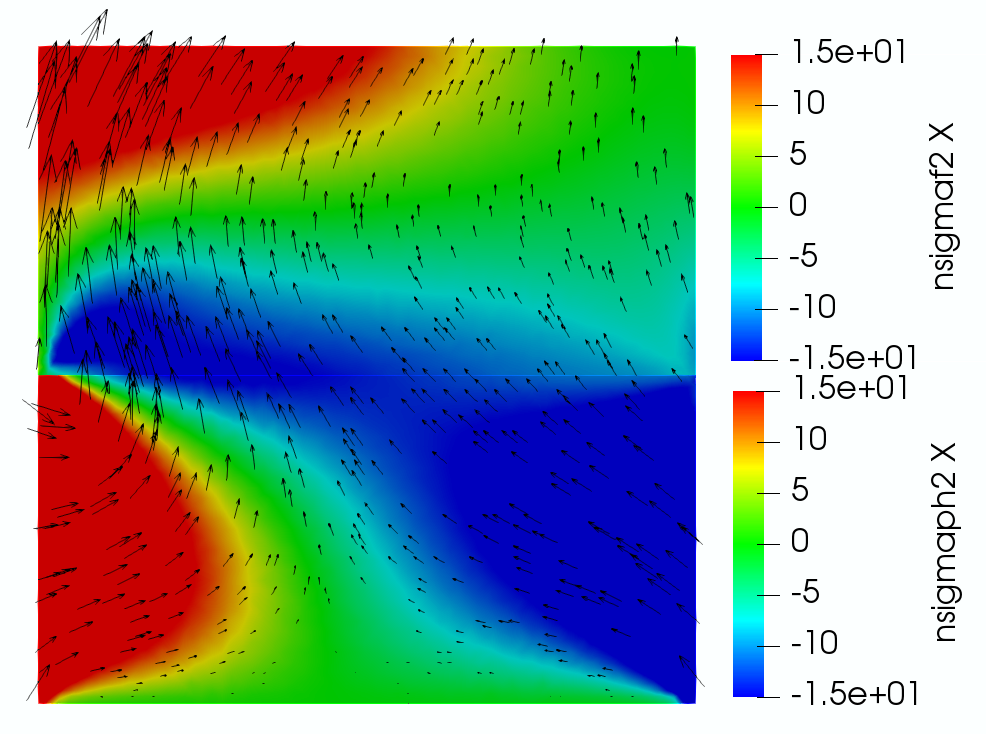}
\includegraphics[width=0.32\textwidth]{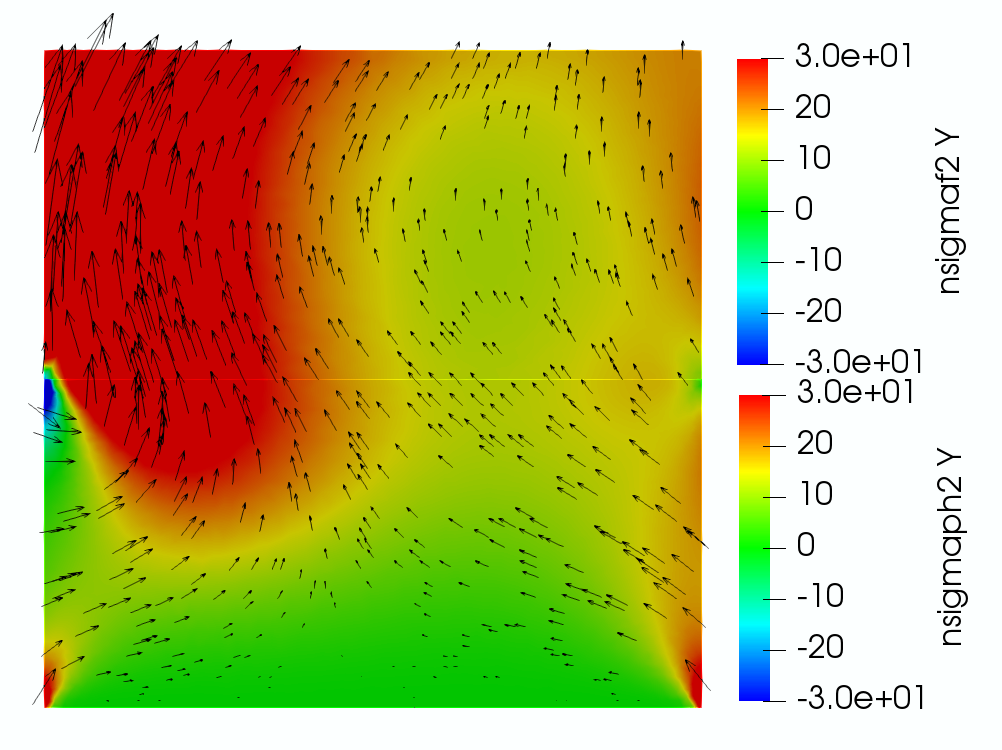}
\includegraphics[width=0.4\textwidth]{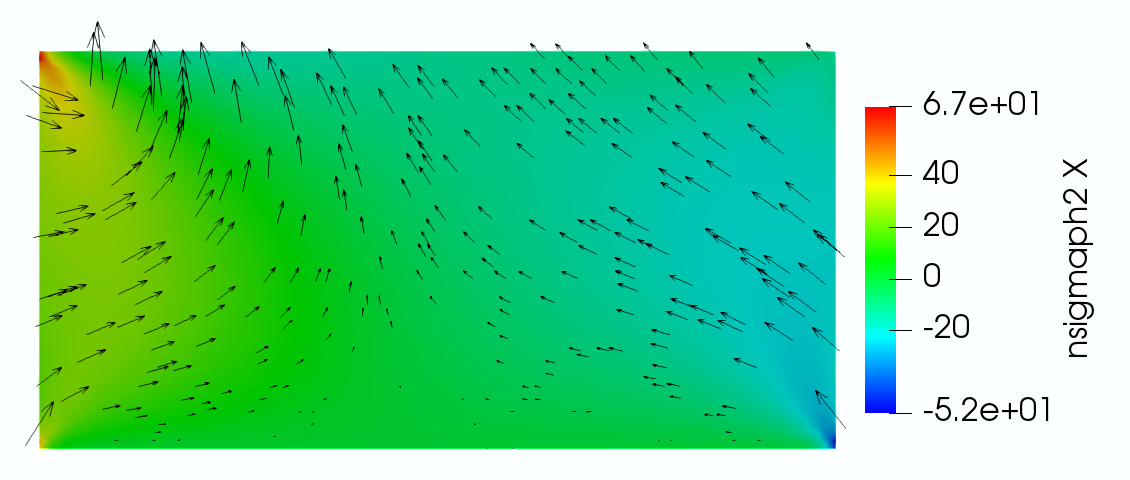}
\includegraphics[width=0.4\textwidth]{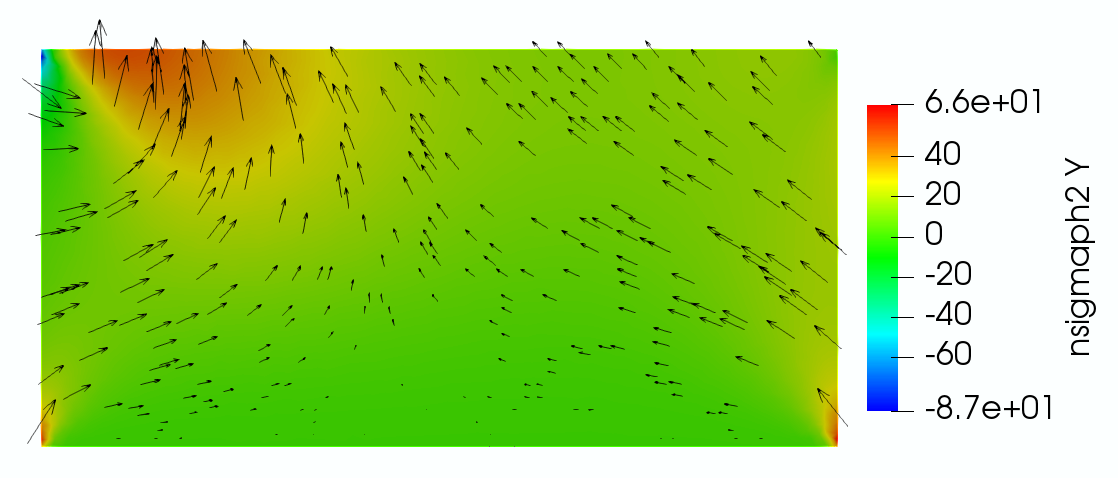}\\
\includegraphics[width=0.4\textwidth]{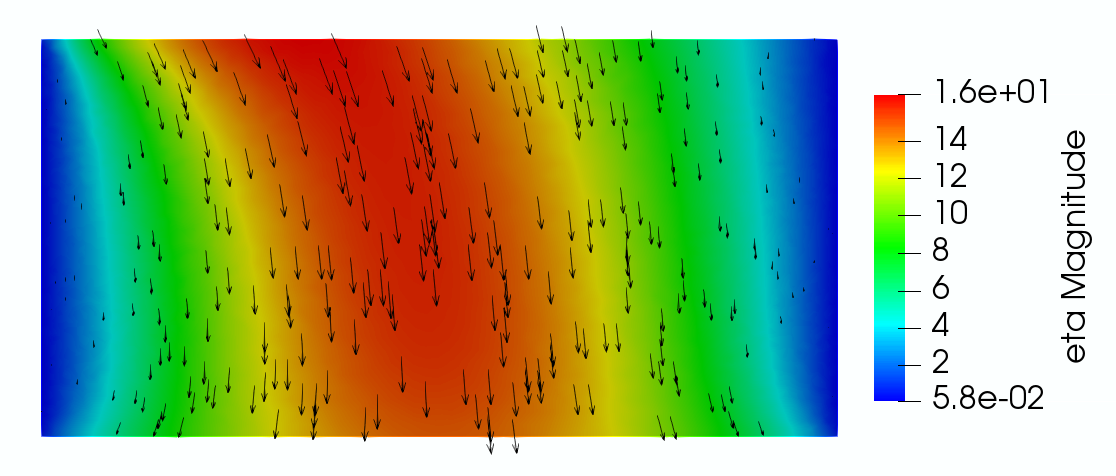}
\includegraphics[width=0.4\textwidth]{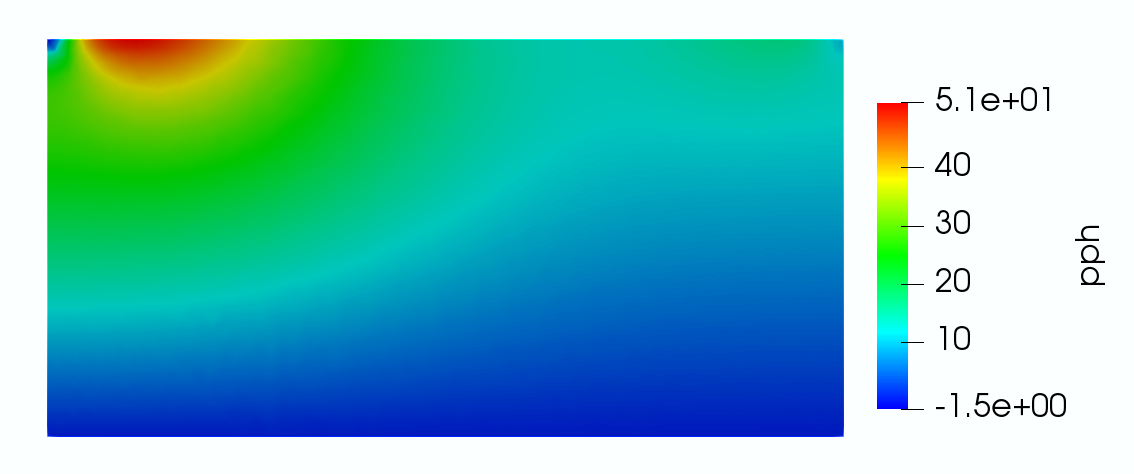}
\end{center}
\caption{Example 2, Case 2, $\bK=10^{-4} \times \bI$, $s_0=10^{-4}$,
  $\lambda_p=10^6$, $\mu_p=1$. Computed solution at final time $T=3$. Top left:
  velocities $\bu_f$ and $\bu_p + \partial_t\bbeta_p$ (arrows),
  $\bu_{f,2}$ and $\bu_{p,2} + \partial_t\bbeta_{p,2}$ (color). Top
  middle and right: stresses $-(\bsi_{f,12},\bsi_{f,22})^\rt$ and
  $-(\bsi_{p,12},\bsi_{p,22})^\rt$ (arrows); top middle:
  $-\bsi_{f,12}$ and $-\bsi_{p,12}$ (color); top right: $-\bsi_{f,22}$
  and $-\bsi_{p,22}$ (color). Middle: poroelastic stress
  $-(\bsi_{p,12},\bsi_{p,22})^\rt$ (arrows); middle left:
  $-\bsi_{p,12}$ (color); middle right: $-\bsi_{p,22}$ (color).
  Bottom left: displacement $\bbeta_p$ (arrows), $|\bbeta_p|$ (color).
  Bottom right: Darcy pressure $p_p$.}
  \label{fig: Example 3-2}
\end{figure}

\begin{figure}[ht!]
\begin{center}
\includegraphics[width=0.32\textwidth]{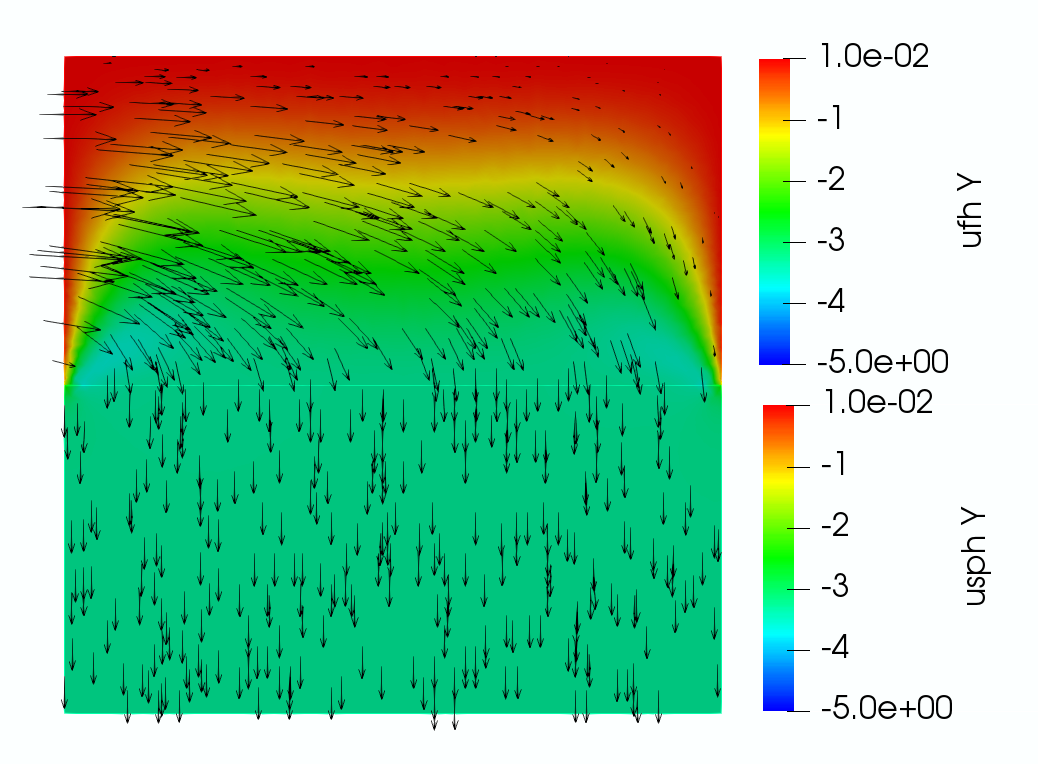}
\includegraphics[width=0.32\textwidth]{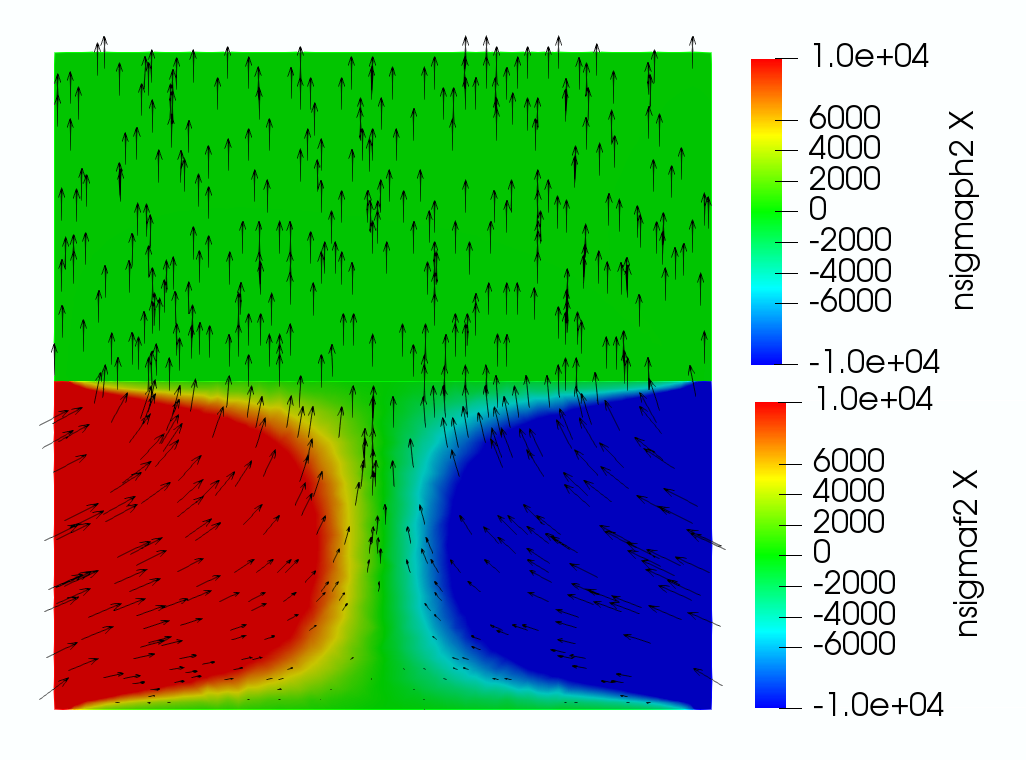}
\includegraphics[width=0.32\textwidth]{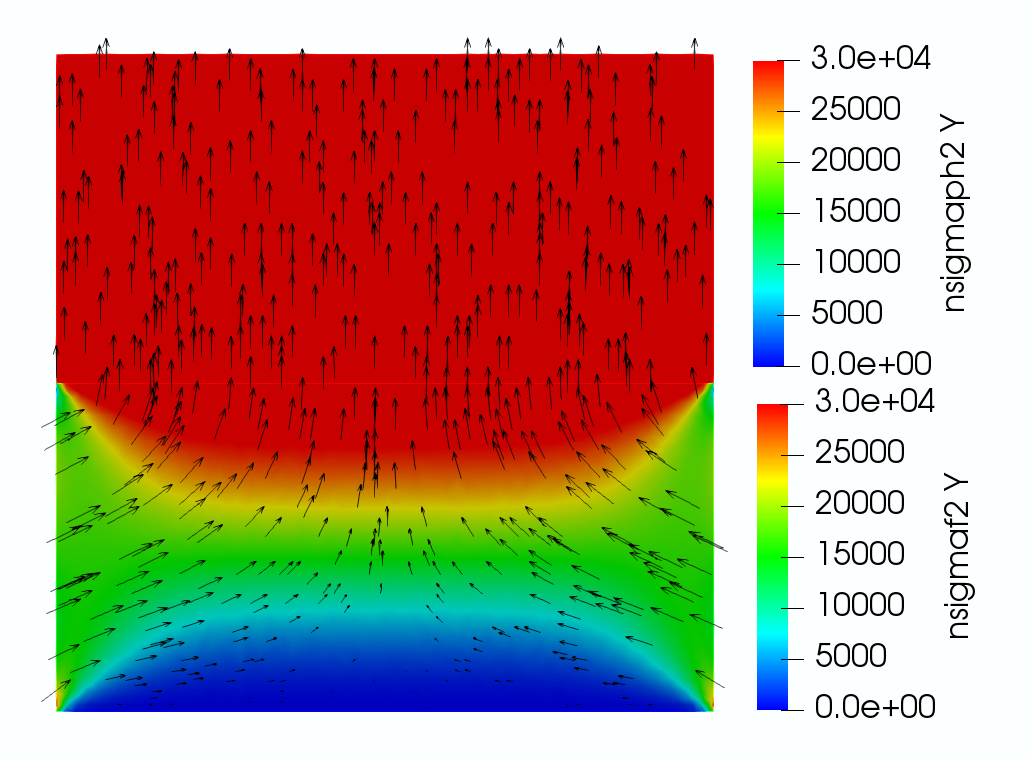}\\
\includegraphics[width=0.4\textwidth]{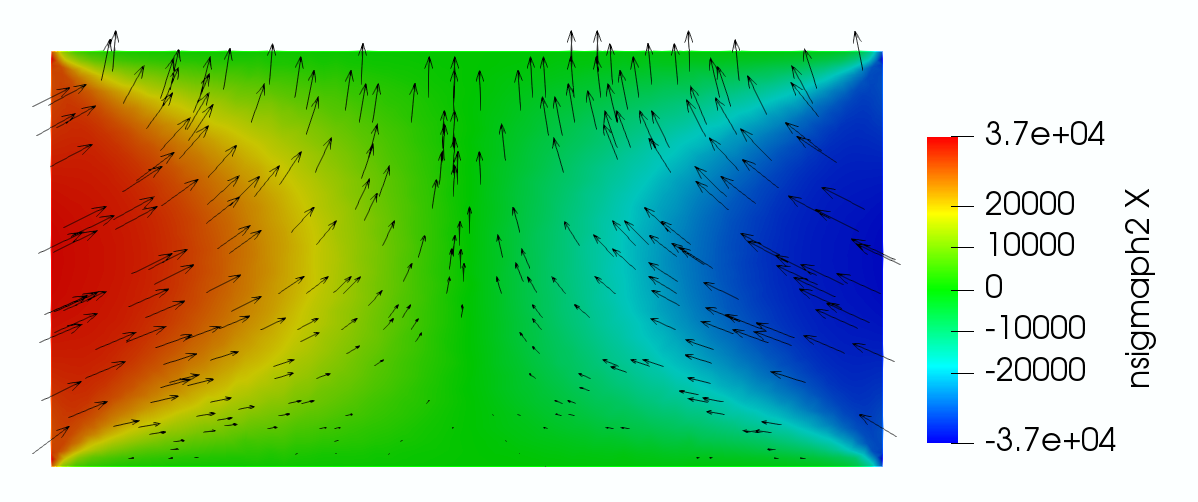}
\includegraphics[width=0.4\textwidth]{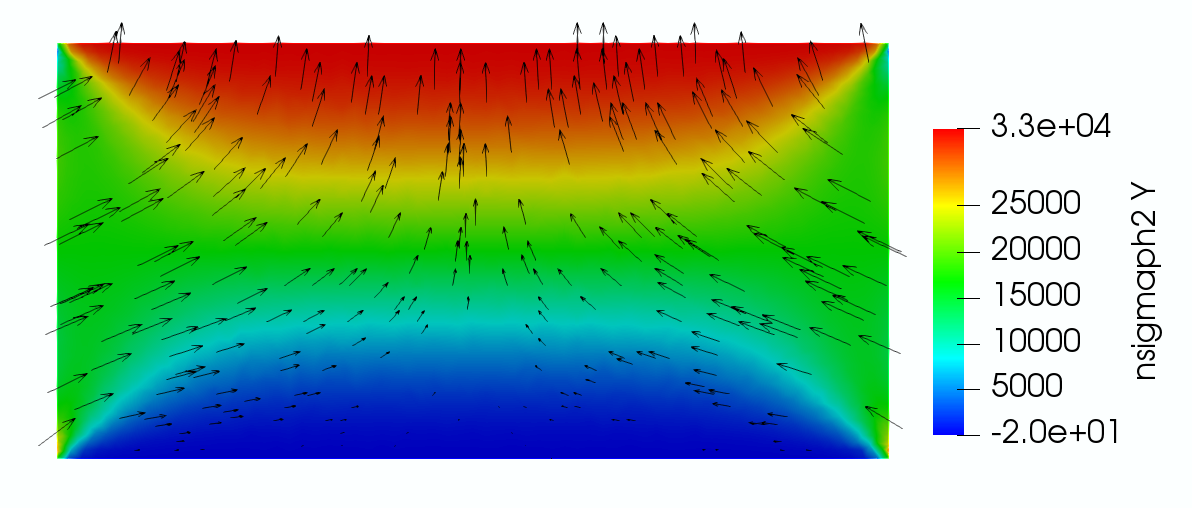}\\
\includegraphics[width=0.4\textwidth]{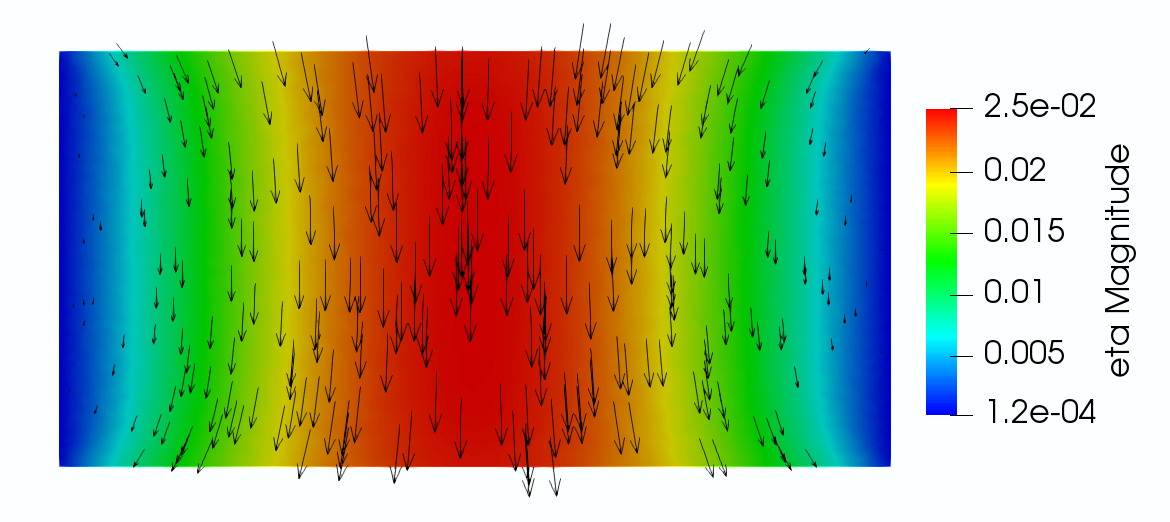}
\includegraphics[width=0.4\textwidth]{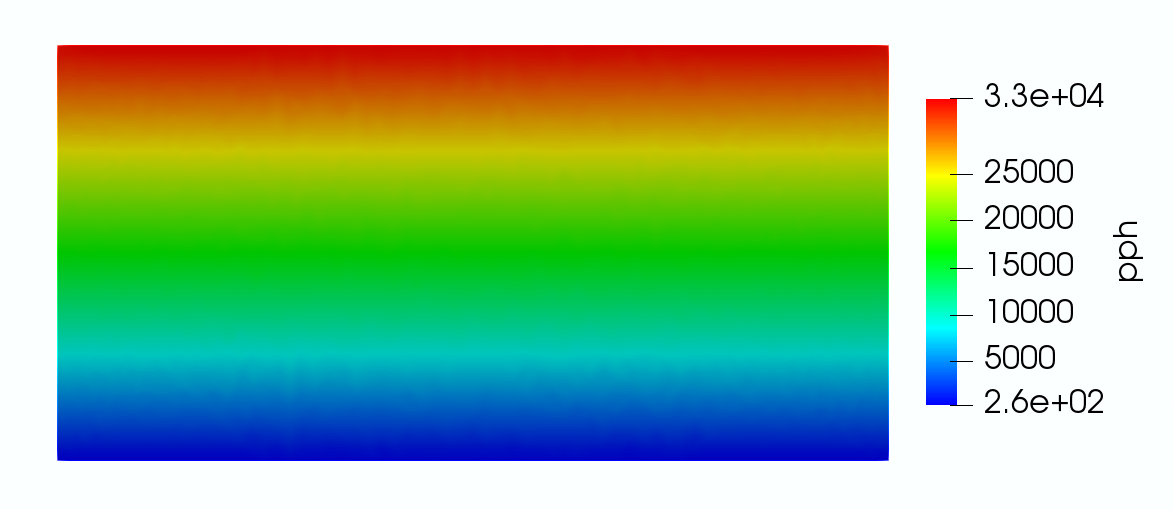}
\end{center}
\caption{Example 2, Case 3, $\bK=10^{-4} \times \bI$, $s_0=10^{-4}$,
  $\lambda_p=10^6$, $\mu_p=10^6$. Computed solution at final time $T=3$. Top left:
  velocities $\bu_f$ and $\bu_p + \partial_t\bbeta_p$ (arrows),
  $\bu_{f,2}$ and $\bu_{p,2} + \partial_t\bbeta_{p,2}$ (color). Top
  middle and right: stresses $-(\bsi_{f,12},\bsi_{f,22})^\rt$ and
  $-(\bsi_{p,12},\bsi_{p,22})^\rt$ (arrows); top middle:
  $-\bsi_{f,12}$ and $-\bsi_{p,12}$ (color); top right: $-\bsi_{f,22}$
  and $-\bsi_{p,22}$ (color). Middle: poroelastic stress
  $-(\bsi_{p,12},\bsi_{p,22})^\rt$ (arrows); middle left:
  $-\bsi_{p,12}$ (color); middle right: $-\bsi_{p,22}$ (color).
  Bottom left: displacement $\bbeta_p$ (arrows), $|\bbeta_p|$ (color).
  Bottom right: Darcy pressure $p_p$.}
  \label{fig: Example 3-3}
\end{figure}

In Case 2 we test the model for a problem that exhibits both locking
regimes for poroelasticity: 1) small permeability and storativity and
2) almost incompressible material \cite{ Yi-Biot-locking}. In
particular, we take $\bK=10^{-4} \times \bI$ and
$s_0=10^{-4}$. Furthermore, the choice $\lambda_p=10^6$, $\mu_p=1$
results in Poisson's ratio $\nu = 0.4999995$.  The computed solution
does not exhibit locking or oscillations. The behavior is
qualitatively similar to Case 1, with larger fluid and poroelastic
stresses and a Darcy pressure gradient.

In Case 3, the Lam\'e coefficient $\mu_p$ is increased from $1$ to
$10^6$, resulting in a much stiffer poroelastic medium, which is
typical in subsurface flow applications. The solution is again free
of locking effects or oscillations, but it differs significantly from
Case 2, including three orders of magnitude larger stresses and Darcy
pressure, as well as smaller displacement and Darcy velocity. 

\fi

\section{Conclusions}\label{sec:conclusions}

In this paper we developed and analyzed a new mixed elasticity
formulation for the Stokes--Biot problem, as well as its mixed finite
element approximation. We consider a five-field Biot formulation based
on a weakly symmetric stress--displacement--rotation elasticity
formulation and a mixed velocity--pressure Darcy formulation. The
classical velocity--pressure formulation is used for the Stokes
system. Suitable Lagrange multipliers are introduced to enforce weakly
the balance of force, slip with friction, and continuity of normal
flux on the interface. The advantages of the resulting mixed finite
element method, compared to previous works, include local momentum
conservation, accurate stress with continuous normal component, and
robustness with respect to the physical parameters. In particular, the
numerical results indicate locking-free and oscillation-free behavior
in the regimes of small storativity and permeability, as well as for
almost incompressible media.

\bibliographystyle{abbrv}
\bibliography{mixedelasticity}

\end{document}